\setlist{itemsep=4pt, topsep=4pt}
\def\chaptermark#1{}
\def\chapter{%
  \if@openright\cleardoublepage\else\clearpage\fi
  \thispagestyle{plain}\global\@topnum\z@
  \@afterindenttrue \secdef\@chapter\@schapter}
\def\@chapter[#1]#2{\refstepcounter{chapter}%
  \ifnum\c@secnumdepth<\z@ \let\@secnumber\@empty
  \else \let\@secnumber\thechapter \fi
  \typeout{\chaptername\space\@secnumber}%
  \def\@toclevel{0}%
  \ifx\chaptername\appendixname \@tocwriteb\tocappendix{chapter}{#2}%
  \else \@tocwriteb\tocchapter{chapter}{#2}\fi
  \chaptermark{#1}%
  \addtocontents{lof}{\protect\addvspace{10\p@}}%
  \addtocontents{lot}{\protect\addvspace{10\p@}}%
  \@makechapterhead{#2}\@afterheading}
\def\@schapter#1{\typeout{#1}%
  \let\@secnumber\@empty
  \def\@toclevel{0}%
  \ifx\chaptername\appendixname \@tocwriteb\tocappendix{chapter}{#1}%
  \else \@tocwriteb\tocchapter{chapter}{#1}\fi
  \chaptermark{#1}%
  \addtocontents{lof}{\protect\addvspace{10\p@}}%
  \addtocontents{lot}{\protect\addvspace{10\p@}}%
  \@makeschapterhead{#1}\@afterheading}
\newcommand\chaptername{Chapter}
\def\@makechapterhead#1{\global\topskip 7.5pc\relax
  \begingroup
  \fontsize{\@xivpt}{18}\bfseries\centering
    \ifnum\c@secnumdepth>\m@ne
      \leavevmode \hskip-\leftskip
      \rlap{\vbox to\z@{\vss
          \centerline{\normalsize\mdseries
              \uppercase\@xp{\chaptername}\enspace\thechapter}
          \vskip 3pc}}\hskip\leftskip\fi
     #1\par \endgroup
  \skip@34\p@ \advance\skip@-\normalbaselineskip
  \vskip\skip@ }
\def\@makeschapterhead#1{\global\topskip 7.5pc\relax
  \begingroup
  \fontsize{\@xivpt}{18}\bfseries\centering
  #1\par \endgroup
  \skip@34\p@ \advance\skip@-\normalbaselineskip
  \vskip\skip@ }
\def\appendix{\par
  \c@chapter\z@ \c@section\z@
  \let\chaptername\appendixname
  \def\thechapter{\@Alph\c@chapter}}
\newcounter{chapter}
\newif\if@openright
\def\@cite#1#2{{\m@th\upshape\bfseries%
[{#1\if@tempswa{\m@th\upshape\mdseries, #2}\fi}]}}
\theoremstyle{plain}
\newtheorem{thm}{Theorem}[section]
\newtheorem{cor}[thm]{Corollary}
\newtheorem{ass}[thm]{Assumption}
\newtheorem{prop}[thm]{Proposition}
\newtheorem{lem}[thm]{Lemma}
\newtheorem{sublem}[thm]{Sublemma}
\newtheorem{con}[thm]{Convention}
\theoremstyle{definition}
\newtheorem{defn}[thm]{Definition}
\theoremstyle{remark}
\newtheorem{rem}[thm]{Remark}
\numberwithin{equation}{subsection}
\renewcommand{\bold}[1]{\medskip \noindent {\bf #1 }\nopagebreak}
\newcommand{\nc}{\newcommand}
\newcommand{\rnc}{\renewcommand}
\nc\bA{\mathbb{A}}
\nc\bB{\mathbb{B}}
\nc\bC{\mathbb{C}}
\nc\bD{\mathbb{D}}
\nc\bE{\mathbb{E}}
\nc\bF{\mathbb{F}}
\nc\bG{\mathbb{G}}
\nc\bH{\mathbb{H}}
\nc\bI{\mathbb{I}}
\nc{\bJ}{\mathbb{J}} 
\nc\bK{\mathbb{K}}
\nc\bL{\mathbb{L}}
\nc\bM{\mathbb{M}}
\nc\bN{\mathbb{N}}
\nc\bO{\mathbb{O}}
\nc\bP{\mathbb{P}}
\nc\bQ{\mathbb{Q}}
\nc\bR{\mathbb{R}}
\nc\bS{\mathbb{S}}
\nc\bT{\mathbb{T}}
\nc\bU{\mathbb{U}}
\nc\bV{\mathbb{V}}
\nc\bW{\mathbb{W}}
\nc\bY{\mathbb{Y}}
\nc\bX{\mathbb{X}}
\nc\bZ{\mathbb{Z}}
\nc\cA{\mathcal{A}}
\nc\cB{\mathcal{B}}
\nc\cC{\mathcal{C}}
\rnc\cD{\mathcal{D}}
\nc\cE{\mathcal{E}}
\nc\cF{\mathcal{F}}
\nc\cG{\mathcal{G}}
\rnc\cH{\mathcal{H}}
\nc\cI{\mathcal{I}}
\nc{\cJ}{\mathcal{J}} 
\nc\cK{\mathcal{K}}
\rnc\cL{\mathcal{L}}
\nc\cM{\mathcal{M}}
\nc\cN{\mathcal{N}}
\nc\cO{\mathcal{O}}
\nc\cP{\mathcal{P}}
\nc\cQ{\mathcal{Q}}
\rnc\cR{\mathcal{R}}
\nc\cS{\mathcal{S}}
\nc\cT{\mathcal{T}}
\nc\cU{\mathcal{U}}
\nc\cV{\mathcal{V}}
\nc\cW{\mathcal{W}}
\nc\cY{\mathcal{Y}}
\nc\cX{\mathcal{X}}
\nc\cZ{\mathcal{Z}}
\nc\bfA{\mathbf{A}}
\nc\bfB{\mathbf{B}}
\nc\bfC{\mathbf{C}}
\nc\bfD{\mathbf{D}}
\nc\bfE{\mathbf{E}}
\nc\bfF{\mathbf{F}}
\nc\bfG{\mathbf{G}}
\nc\bfH{\mathbf{H}}
\nc\bfI{\mathbf{I}}
\nc{\bfJ}{\mathbf{J}} 
\nc\bfK{\mathbf{K}}
\nc\bfL{\mathbf{L}}
\nc\bfM{\mathbf{M}}
\nc\bfN{\mathbf{N}}
\nc\bfO{\mathbf{O}}
\nc\bfP{\mathbf{P}}
\nc\bfQ{\mathbf{Q}}
\nc\bfR{\mathbf{R}}
\nc\bfS{\mathbf{S}}
\nc\bfT{\mathbf{T}}
\nc\bfU{\mathbf{U}}
\nc\bfV{\mathbf{V}}
\nc\bfW{\mathbf{W}}
\nc\bfY{\mathbf{Y}}
\nc\bfX{\mathbf{X}}
\nc\bfZ{\mathbf{Z}}
\newcommand{\bk}{{\mathbf{k}}}
\nc{\dmo}{\DeclareMathOperator}
\nc{\wt}{\widetilde}
\rnc{\Re}{\operatorname{Re}}
\rnc{\Im}{\operatorname{Im}}
\rnc{\span}{\operatorname{span}}
\dmo{\rank}{rank}
\dmo{\End}{End}
\dmo{\Hom}{Hom}
\dmo{\Jac}{Jac}
\dmo{\Id}{Id}
\dmo{\Ann}{Ann}
\dmo{\Area}{Area}
\dmo{\CP}{\bC P^1}
\dmo{\rk}{rk}
\dmo{\rel}{rel}
\dmo{\ra}{\rightarrow}
\rnc{\Col}{\operatorname{Col}}
\nc{\ColOne}{\Col_{\bfC_1}}
\nc{\ColOneX}{\ColOne(X,\omega)}
\nc{\ColTwo}{\Col_{\bfC_2}}
\nc{\ColTwoX}{\ColTwo(X,\omega)}
\nc{\ColThree}{\Col_{\bfC_3}}
\nc{\ColThreeX}{\ColThree(X,\omega)}
\nc{\ColOneTwo}{\Col_{\bfC_1, \bfC_2}}
\nc{\ColOneTwoX}{\ColOneTwo(X,\omega)}
\nc{\ColOneThree}{\Col_{\bfC_1, \bfC_3}}
\nc{\ColOneThreeX}{\ColOneThree(X,\omega)}
\nc{\MOne}{\cM_{\bfC_1}}
\nc{\MTwo}{\cM_{\bfC_2}}
\nc{\MOneTwo}{\cM_{\bfC_1, \bfC_2}}
\nc{\MThree}{\cM_{\bfC_3}}
\nc{\MOneThree}{\cM_{\bfC_1, \bfC_3}}
\dmo{\For}{\cF}
\nc{\GL}{\mathrm{GL}^+(2, \bR)}
\title[Minimal Homological Dimension]{Invariant Subvarieties of Minimal Homological Dimension, Zero Lyapunov Exponents, and Monodromy}
\author[Apisa]{Paul~Apisa}
\begin{document}
\maketitle
\thispagestyle{empty}

\setcounter{tocdepth}{1} 
\tableofcontents

\section{Introduction}\label{S:intro}

Over a finite cover of the moduli space, $\cM_g$, of genus $g$ Riemann surfaces there are two well-studied vector bundles. One, denoted $\Omega \cM_g$, is a $g$-dimensional complex vector bundle whose fiber over a Riemann surface $X \in \cM_g$ is the vector space of holomorphic one-forms on $X$. The other, which we denote $H$, is the $2g$-dimensional real vector bundle whose fiber over $X$ is $H^1(X, \mathbb{R})$. This bundle is equipped with the Gauss-Manin connection. Let $H_{\mathbb{R}}^1$ denote the pullback of $H$ to $\Omega \cM_g$. 

The space $\Omega \cM_g$ is equipped with a $\mathrm{GL}(2, \mathbb{R})$ action induced from complex scalar multiplication and Teichm\"uller geodesic flow and admits an invariant stratification by specifying the number and order of zeros of the holomorphic one-forms.

Parallel transport by the Gauss-Manin connection over Teichm\"uller geodesics induces a cocycle on $H_{\mathbb{R}}^1$ called the Kontsevich-Zorich cocycle.  Zorich \cite{ZorichWinding} and Forni \cite{F} demonstrated that the Lyapunov exponents of this cocycle govern the deviations of ergodic averages along straight line flow on flat surfaces and Forni \cite{F} connected these exponents to the geometry of $\cM_g$, a topic elaborated upon in Forni-Matheus-Zorich \cite{FMZ}.

\vspace{3mm}
\noindent \textbf{Homological Dimension of an Invariant Subvariety}

Given an $\mathrm{SL}(2, \mathbb{R})$ invariant measure on a stratum of $\Omega \cM_g$, one could ask for the Lyapunov exponents of the Kontsevich-Zorich cocycle. 

By work of Eskin and Mirzakhani \cite{EM}, every $\mathrm{SL}(2, \mathbb{R})$ ergodic invariant measure  is Lebesgue measure restricted to invariant subvarieties (the fact that the support of such a measures is a subvariety and not simply a sub-orbifold is due to Filip \cite{Fi1}). Forni's criterion (see \cite{F} and \cite{Forni:Criterion}) states that for such measures the number of nonzero Lyapunov exponents of the Kontsevich-Zorich cocycle is bounded below by twice the homological dimension. 

\begin{defn}
Given an invariant subvariety $\cM$, its \emph{homological dimension} is the maximum, taken over all horizontally periodic surfaces $(X, \omega) \in \cM$, of the dimension of the span in $H^1(X, \mathbb{R})$ of the core curves of horizontal cylinders on $(X, \omega)$.
\end{defn}

While the homological dimension of an invariant subvariety $\cM$ is often difficult to compute, it is bounded below by a more tractable quantity, called the rank of $\cM$, which we define now. The tangent space of an invariant subvariety $\cM$ at a point $(X, \omega)$, where $X$ is a Riemann surface and $\omega$ a holomorphic one-form on it, can be identified with a subspace of $H^1\left( X, \Sigma; \mathbb{C} \right)$ where $\Sigma$ is the collection of zeros of $\omega$. Letting $p: H^1\left( X, \Sigma; \mathbb{C} \right) \ra H^1(X; \mathbb{C})$ be the projection to absolute cohomology, work of Avila-Eskin-M\"oller \cite{AEM} states that $p\left( T_{(X, \omega)} \cM \right)$ is a complex symplectic subspace. 

\begin{defn}
The \emph{rank} of an invariant subvariety $\cM$ is half the dimension of $p\left( T_{(X, \omega)} \cM \right)$, where $(X, \omega) \in \cM$. The \emph{rel} of $\cM$ is the dimension of the kernel of $p \restriction_{T_{(X, \omega)} \cM}$. The definitions are independent of the choice of point in $\cM$.
\end{defn}

Building on work of Eskin, Mirzakhani, and Mohammadi \cite{EMM} and Smillie-Weiss \cite{SW2}, Wright \cite{Wcyl} showed that the homological dimension of an invariant subvariety is always bounded below by its rank. This observation inspires the following definition. 

\begin{defn}
An invariant subvariety $\cM$ is said to have \emph{minimal homological dimension} if its rank and homological dimension coincide. 
\end{defn}
\begin{rem}
In Apisa-Wright \cite[Example 2.5]{ApisaWrightGeminal}, it was shown that there are infinitely many examples of invariant subvarieties of every rank and every dimension that have minimal homological dimension, although this terminology is not used there. These examples include the infinite sequence of square-tiled surfaces studied by Matheus-Yoccoz \cite{MY}. Moreover, these examples are nontrivial in the sense that if $\cM$ is one such example, then it is not \emph{full rank}, i.e. it is not the case that $\cM \subseteq \Omega \cM_g$ and $\mathrm{rank}(\cM) = g$. If $\cM$ is full rank then it automatically has minimal homological dimension and, by Mirzakhani-Wright \cite{MirWri2}, it is a component of a stratum of Abelian differentials or a hyperelliptic loci therein. 
\end{rem}

Our first theorem is a characterization of invariant subvarieties of minimal homological dimension. 

\begin{thm}\label{T:main}
If $\cM$ has minimal homological dimension then it is either a component of a stratum of Abelian differentials or a full locus of covers of the hyperelliptic locus in a stratum of Abelian differentials. 

Moreover, if $\mathrm{rank}(\cM) > 1$, then after marking all periodic points, $\cM$ is $h$-geminal.
\end{thm}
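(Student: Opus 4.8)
The plan is to analyze a horizontally periodic surface $(X,\omega) \in \cM$ that realizes the homological dimension, and exploit the hypothesis that this equals $\mathrm{rank}(\cM)$. By Wright's cylinder deformation theorem \cite{Wcyl}, horizontal cylinders on $(X,\omega)$ are partitioned into equivalence classes, and the subspace of $H^1(X,\Sigma;\bR)$ spanned by the relevant deformations of each equivalence class lies in $T_{(X,\omega)}\cM$. The image under $p$ of each such subspace is a one-dimensional isotropic subspace, and these images, together with their symplectic complements supplied by the symplectic structure on $p(T_{(X,\omega)}\cM)$, must account for all of $p(T_{(X,\omega)}\cM)$, which has dimension $2\,\mathrm{rank}(\cM)$. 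The minimal homological dimension hypothesis forces the core curves of horizontal cylinders to span exactly a Lagrangian inside $p(T_{(X,\omega)}\cM)$ and to be "as degenerate as possible": the cylinder equivalence classes are in bijection with a basis of that Lagrangian. I would first show this rigidity holds on a dense set of horizontally periodic surfaces, hence (by the cylinder deformation machinery and the density of horizontally periodic directions) propagates to a structural statement about $\cM$ everywhere.

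The next step is to feed this into the classification machinery of Apisa-Wright. The key structural input is that minimal homological dimension, combined with the above, says that in every periodic direction the cylinders behave "rank-one-in-each-class", which is precisely the combinatorial shadow of being a locus of covers of a hyperelliptic locus or a whole stratum. Concretely, I would invoke the cylinder-proportion and "no bad cylinders" arguments: one shows that the generic surface in $\cM$ has no cross curves obstructing a translation covering structure, and then applies the characterization of full loci of covers (via Apisa-Wright \cite{ApisaWrightGeminal} and Mirzakhani-Wright \cite{MirWri2}) to conclude $\cM$ is either a component of a stratum of Abelian differentials or a full locus of covers of the hyperelliptic locus in such a stratum. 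The full-rank case is handled directly by Mirzakhani-Wright, so the content is in the positive-codimension case, where one must rule out all other geometrically primitive invariant subvarieties; here the homological dimension bound is exactly what excludes them, since any "extra" cylinder equivalence class in some periodic direction would push the homological dimension strictly above the rank.

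For the second assertion, I would mark all periodic points — passing to the invariant subvariety $\hat\cM$ obtained by marking periodic points changes neither rank nor homological dimension, so $\hat\cM$ still has minimal homological dimension — and then verify the $h$-geminal property directly from the $\mathrm{rank}(\cM) > 1$ hypothesis using the cylinder structure. Recall that being $h$-geminal means that in every horizontally periodic direction, the cylinders come in pairs (or fixed configurations) so that each cylinder equivalence class contributes exactly one dimension to homology with the geminal pairing structure; this is essentially the translation of "each equivalence class is homologically one-dimensional inside a Lagrangian" into the language of \cite{ApisaWrightGeminal}. The verification amounts to checking that after marking periodic points there are no free cylinders and no equivalence class whose cylinders fail the geminal incidence condition, both of which follow from the rigidity established in the first paragraph together with the fact that rank exceeding one prevents the degenerate low-rank configurations (torus covers, the Eierlegende-Wollmilchsau-type examples) where the geminal property can fail.

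The main obstacle I anticipate is the first paragraph's rigidity step: translating "homological dimension $=$ rank" — a statement about the single best periodic direction — into a statement that constrains the cylinder structure in \emph{every} periodic direction and hence the global geometry of $\cM$. Wright's results give an inequality $\mathrm{hom.dim} \ge \mathrm{rank}$ in each direction, but upgrading the equality in one direction to uniform control requires a careful argument that the cylinder deformation subspaces fill out $p(T_{(X,\omega)}\cM)$ compatibly across directions; this is where one must combine the $\mathrm{SL}(2,\bR)$-action, the density of horizontally periodic surfaces (Smillie-Weiss \cite{SW2}), and the rank-stability of cylinder equivalence classes. Once that is in hand, matching against the Apisa-Wright covering classification is comparatively routine, though one must be careful about the distinction between hyperelliptic loci and their covers and about the periodic-point marking.
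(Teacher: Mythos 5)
Your first paragraph is essentially the paper's Theorem \ref{T:MHDImpliesHomology}: minimal homological dimension forces the core curves of horizontal cylinders on a cylindrically stable surface to span a Lagrangian of dimension $\rank(\cM)$ in $p(T_{(X,\omega)}\cM)$, and hence forces $\cM$-parallel cylinders to have homologous core curves; the propagation to every periodic direction that you worry about is handled exactly as you suggest, by combining \cite[Lemma 4.7]{Wcyl} with the injectivity of $\phi\restriction_W$. That part of the proposal is sound.

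The genuine gap is the passage from ``homologous core curves'' to ``$h$-geminal.'' Homologous core curves give equal circumferences within an equivalence class, but the geminal condition requires two further facts that do not follow from any Lagrangian or dimension count: (a) each subequivalence class contains at most two cylinders, and (b) twin cylinders have equal \emph{heights}. Your third paragraph asserts that these ``follow from the rigidity established in the first paragraph,'' but they do not; heights are transverse to everything the homology statement controls. The paper has to work hard here even in the base case (rank two, rel zero): it bounds the number of cylinders per class at two by cutting along the homologous core curves and running an overcollapsing argument (Lemmas \ref{L:TwoCylinders} and \ref{L:ModuliRelZero2}), and it proves equal heights by degenerating to the Mirzakhani--Wright boundary, where all four relevant cylinders become homologous, and then overcollapsing there to compute height derivatives (Lemma \ref{L:SameHeight1}, using the CR-subvariety machinery to transport the conclusion back). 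The general case is then an induction on dimension in which one must reconstruct the optimal covering map $\pi_{X_{min}}$ from two independent cylinder degenerations via the diamond lemma (Lemma \ref{L:MinIsOptimal}) and reduce to showing $\cM_{min}$ has full rank. None of this structure appears in your proposal. Relatedly, your second paragraph's appeal to ``the characterization of full loci of covers via Apisa--Wright'' is circular: that characterization (Theorem \ref{T:geminal2}) takes $h$-geminal as its \emph{hypothesis}, and the dichotomy ``stratum or full locus of covers of a hyperelliptic locus'' is its \emph{output}; so you cannot use it to establish the covering structure before geminality is proven.
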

\begin{rem}
The definition of $h$-geminal first appeared in Apisa-Wright \cite[Definition 4.3]{ApisaWrightGeminal} and will be recalled in Section \ref{S:GeminalandMP}. Knowing that $\cM$ is $h$-geminal places strong constraints on the covers of which the surfaces in $\cM$ are the domain.  

The definition of periodic point best adapted to our purposes here is the one in Apisa-Wright \cite{ApisaWright}. Note that, in Theorem \ref{T:main}, when periodic points are marked, this only requires marking finitely many points by Eskin-Filip-Wright \cite[Theorem 1.5]{EFW} (see Apisa-Wright \cite[Section 4.2]{ApisaWright} for a discussion).

Finally, when $\cM$ has rank one, Theorem \ref{T:main} is equivalent to the statement that $\cM$ is a locus of torus covers. This is a consequence of Wright \cite[Theorem 1.9]{Wcyl} and the fact that loci of torus covers are precisely rank one invariant subvarieties with field of definition $\mathbb{Q}$. This enables us to exclusively consider invariant subvarieties of rank at least two in the proof of Theorem \ref{T:main}. 
\end{rem}

A tool used in the proof of Theorem \ref{T:main} is the following (note that the definition of $\cM$-parallelism is recalled in  Definition \ref{D:Cylinder}). 

\begin{thm}\label{T:MHDImpliesHomology}
$\cM$ has minimal homological dimension if and only if any two $\cM$-parallel cylinders have homologous core curves. 
\end{thm}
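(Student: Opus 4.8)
The plan is to relate the homological dimension to the structure of $\cM$-parallel cylinders by working on a horizontally periodic surface $(X,\omega) \in \cM$ and analyzing the span of the core curves of its horizontal cylinders in $H^1(X;\bR)$. Recall from Wright \cite{Wcyl} that the equivalence classes of $\cM$-parallel cylinders partition the horizontal cylinders, and that for each equivalence class $\cC$ the associated ``cylinder deformation'' and ``twist'' directions lie in $T_{(X,\omega)}\cM$; moreover the image under $p$ of the span of the classes $\{[\gamma_C]\}_{C \in \cC}$ of the core curves is a single line (this is Wright's cylinder deformation theorem, which forces the core curves in one class to be pairwise parallel in cohomology, i.e. scalar multiples of one another in $p(T_{(X,\omega)}\cM)$, hence after rescaling define the same element of $H^1(X;\bR)$ up to a positive scalar). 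The key quantitative input is that $\mathrm{rank}(\cM)$ equals the number of equivalence classes of $\cM$-parallel cylinders on a generic horizontally periodic surface (again by Wright \cite{Wcyl}, combined with the fact that one can always degenerate to make the cylinder classes span a Lagrangian in $p(T_{(X,\omega)}\cM)$).

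For the forward direction, suppose $\cM$ has minimal homological dimension. Take a horizontally periodic $(X,\omega)$ realizing the homological dimension, so that the span of the core curves in $H^1(X;\bR)$ has dimension equal to $\mathrm{rank}(\cM) = k$. The cylinders fall into some number $m \geq k$ of $\cM$-parallel classes (it is $\geq k$ because the images $p([\gamma_C])$ already span a $k$-dimensional subspace and distinct classes have non-proportional images — here one uses that $\cM$-parallel classes are detected by $p$). Within each class, all core curves have proportional images under $p$; I must upgrade ``proportional under $p$'' to ``homologous in $H^1(X;\bR)$.'' The point is that the absolute cohomology classes $[\gamma_C] \in H^1(X;\bR)$ of the core curves in a fixed class $\cC$ span a subspace $V_\cC$ with $\dim p(V_\cC) = 1$; if some two of them were not homologous, the total span $\sum_\cC V_\cC$ would have dimension strictly greater than $m \geq k$, contradicting that the span has dimension exactly $k$ and that $m = k$ (the latter equality itself following because the span has dimension $\geq m$ always, so $k \geq m \geq k$). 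Thus all core curves in each class are homologous, which is the claim for this particular surface; to get it for \emph{all} horizontally periodic surfaces one applies the same argument after noting that the homological dimension is realized generically — more precisely, one uses that if two $\cM$-parallel cylinders failed to be homologous on some horizontally periodic surface, a cylinder deformation (staying in $\cM$) would produce a horizontally periodic surface whose cylinder core curves span a subspace of dimension $> k$, contradicting minimal homological dimension.

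For the converse, suppose any two $\cM$-parallel cylinders have homologous core curves. Then on any horizontally periodic $(X,\omega) \in \cM$, the span of the core curves in $H^1(X;\bR)$ has dimension at most the number $m$ of $\cM$-parallel classes, since each class contributes a single homology class. On the other hand, the images under $p$ of representatives of the $m$ classes span an $m$-dimensional subspace of $p(T_{(X,\omega)}\cM)$, so the span of the core curves has dimension exactly $m$, and in particular $m$ is bounded above by $\mathrm{rank}(\cM)$; taking $(X,\omega)$ to be a surface on which the number of classes equals $\mathrm{rank}(\cM)$ (such a surface exists, e.g. by degenerating along $\cM$ to increase the number of cylinder classes until it reaches the rank, as in Wright \cite{Wcyl}) shows the homological dimension is at most, hence equal to, $\mathrm{rank}(\cM)$.

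The main obstacle I anticipate is the careful bookkeeping in the forward direction: translating the statement ``the homological dimension equals the rank'' — which a priori only constrains one maximizing surface — into a statement about \emph{every} horizontally periodic surface and \emph{every} pair of $\cM$-parallel cylinders on it. This requires knowing that any configuration of non-homologous $\cM$-parallel cylinders can be promoted, via cylinder deformations preserving $\cM$ and preserving horizontal periodicity, to a surface whose horizontal cylinder core curves span more than $\mathrm{rank}(\cM)$ dimensions in homology; establishing that no ``collisions'' or degenerations destroy the relevant independence is the delicate part, and is where the cylinder deformation machinery of \cite{Wcyl} together with the equality of rank with the generic number of cylinder classes must be invoked with care.
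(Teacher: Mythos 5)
Your overall strategy (compare the span of the core curves with $\rank(\cM)$ via Wright's cylinder deformation theorem) is the right one, but the proposal rests on a false quantitative input and this creates genuine gaps in both directions. The claim that $\rank(\cM)$ equals the number of equivalence classes of horizontal cylinders on a generic horizontally periodic surface is wrong when $\cM$ has rel: the number of classes can be as large as $\rank(\cM)+\mathrm{rel}(\cM)$ (Lemma \ref{L:Stable}\eqref{I:MaxEC}), and already in a stratum such as $\cH(1,1)$ one finds three free horizontal cylinders, i.e.\ three classes, with rank two. Consequently, in the forward direction your inequality $k \geq m$ fails (distinct classes need not contribute independent homology classes --- two free cylinders can perfectly well be homologous), so the bookkeeping $k \geq m \geq k$ does not close. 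The step you flag as the ``main obstacle'' --- passing from one maximizing surface to an arbitrary equivalence class on an arbitrary surface --- is indeed where the content lies, and you do not supply it. The paper's mechanism is different and avoids the count of classes entirely: given any equivalence class $\bfC$ on any surface, one perturbs to a \emph{cylindrically stable} surface on which $\bfC$ persists inside a horizontal class (Lemma \ref{L:Stable}\eqref{I:PerturbToStable}); there the core curves span a full $\rank(\cM)$-dimensional space of functionals on $p(T_{(X,\omega)}\cM)$ (Lemma \ref{L:Stable}\eqref{I:CCSpan}), while minimal homological dimension bounds their span $W$ in $H_1(X)$ by $\rank(\cM)$, so the evaluation map $W \to p(T_{(X,\omega)}\cM)^*$ is injective. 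Injectivity is exactly what upgrades Wright's ``collinear as functionals'' (\cite[Lemma 4.7]{Wcyl}) to ``collinear, hence homologous, in $H_1$''. Without cylindrical stability you have no way to rule out a kernel of that evaluation map, and the argument does not go through.

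The converse direction has a second gap of the same origin. You deduce $\dim W \leq \rank(\cM)$ by asserting that the $m$ classes give an $m$-dimensional subspace of $p(T_{(X,\omega)}\cM)$ and that $m \leq \rank(\cM)$; the latter is false in general, and an $m$-dimensional subspace of the $2\rank(\cM)$-dimensional space $p(T_{(X,\omega)}\cM)$ only gives $m \leq 2\rank(\cM)$ anyway. The missing ingredient is the symplectic one: since the horizontal core curves are disjoint, $W$ is isotropic for the intersection form, so its Poincar\'e dual image --- which lies in $p(T_{(X,\omega)}\cM)$ precisely because each $p(\gamma_C^*)$ is collinear to $p(\sigma_{\bfC})$ once all core curves in a class are homologous --- is an isotropic subspace of a symplectic space of dimension $2\rank(\cM)$, hence of dimension at most $\rank(\cM)$. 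This Lagrangian-type bound, not a count of equivalence classes, is what makes the converse work.
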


Theorems \ref{T:main} and \ref{T:MHDImpliesHomology} answer a question of Mirzakhani-Wright \cite[Question 1.7]{MirWri2} which asks what invariant subvarieties $\cM$ of rank at least two have the property that any two $\cM$-parallel cylinders have homologous core curves.

\vspace{6mm}
\noindent \textbf{Vanishing Lyapunov Exponents of the Kontsevich-Zorich Cocycle}

Since the rank, $r$, of an invariant subvariety, $\cM \subseteq \Omega \cM_g$, is a lower bound on its homological dimension, Forni's criterion (and the cylinder deformation theorem of Wright \cite{Wcyl}) implies that at most $2g - 2r$ Lyapunov exponents of the Kontsevich-Zorich cocycle for Lebesgue measure on $\cM$ vanish. (Note that if equality holds, then $\cM$ has minimal homological dimension). 

There are two known invariant subvarieties that are not full rank, i.e $r \ne g$, and that have  $2g-2r$ zero Lyapunov exponents. These are the Eierlegende-Wollmilchsau (independently studied by Forni \cite{Fsur} and Herrlich-Schmith\"{u}sen \cite{HS}) and the Ornithorynque (studied by Forni-Matheus \cite{FM}, see also Forni-Matheus-Zorich \cite{FMZ-Square}). Both of these subvarieties are Teichm\"uller curves and hence rank one\footnote{The projections of the Eierlegende-Wollmilchsau and Ornithorynque to $\cM_g$ are the families of compact Riemann surfaces defined by $y^d = x(x-1)(x-t)$, where $t \in \mathbb{P}^1 - \{0, 1, \infty\}$, for $d = 4$ and $d=6$ respectively. McMullen \cite[Theorem 8.3]{McM:braid} identified rigid factors in the Jacobian of these families that also explain the vanishing of $2g-2$ Lyapunov exponents.}. Building on work of M\"oller \cite{M5}, recent work of Aulicino-Norton \cite{AulicinoNorton} showed that these examples are the only rank one invariant subvarieties with $2g-2$ vanishing exponents. Via Theorem \ref{T:main}, we extend this to the following. 

\begin{thm}\label{T:main2}
Let $\cM \subseteq \Omega \cM_g$ be an invariant subvariety of rank $r$. Then $\cM$ has $2g-2r$ zero Lyapunov exponents (which is the maximum possible given the rank of $\cM$) if and only if $\cM$ is the Eierlegende-Wollmilchsau, the Ornithorynque, or full rank.
\end{thm}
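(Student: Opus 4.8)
The plan is to prove both implications, deducing the forward direction from Theorem~\ref{T:main} together with standard facts about the Kontsevich--Zorich cocycle. First note the maximality assertion: if $h$ denotes the homological dimension of $\cM$, then $h \ge r$ (Wright \cite{Wcyl}), so Forni's criterion \cite{F, Forni:Criterion} produces at least $2h \ge 2r$ nonzero exponents, hence at most $2g-2r$ zero exponents, with equality precisely when $h = r$, i.e.\ when $\cM$ has minimal homological dimension.

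\emph{The ``if'' direction.} If $\cM$ is full rank then $h = g$, so Forni's criterion forces all $2g$ exponents to be nonzero, which matches $2g-2r = 0$. If $\cM$ is the Eierlegende--Wollmilchsau or the Ornithorynque then it is a rank one Teichm\"uller curve in genus $3$ or $4$, and the computations of Forni \cite{Fsur}, Forni--Matheus \cite{FM}, and Forni--Matheus--Zorich \cite{FMZ-Square} exhibit exactly $2g-2 = 2g-2r$ zero exponents.

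\emph{The ``only if'' direction.} Suppose $\cM$ has exactly $2g-2r$ zero exponents. By the remark above it has minimal homological dimension, so Theorem~\ref{T:main} applies: $\cM$ is a component of a stratum of Abelian differentials or a full locus of covers of a hyperelliptic locus in a stratum, and, in the latter case, if $r > 1$ then after marking periodic points---which changes neither the rank nor, since it affects only relative cohomology, the Lyapunov spectrum---$\cM$ is $h$-geminal. If $\cM$ is full rank we are done, so assume not; then $\cM$ is a full locus of covers $X \to Y$ of a hyperelliptic locus $\cL$, which is itself full rank (the hyperelliptic involution acts as $-1$ on $H^1$) and hence has rank $r$, the cover having higher genus. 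If $r = 1$ then $\cM$ is a rank one invariant subvariety with $2g-2$ zero exponents, so by Aulicino--Norton \cite{AulicinoNorton} it is the Eierlegende--Wollmilchsau or the Ornithorynque. It therefore remains to show that the case $r \ge 2$ does not occur.

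\emph{Ruling out $r \ge 2$.} Here one argues by monodromy. The tautological subbundle $\pi^{*}H^1(\cL) \subseteq H^1(X;\mathbb{R})$ is $2r$-dimensional and carries only nonzero exponents, so the $2g-2r \ge 2$ zero exponents are exactly the exponents of the new cohomology $W$ of the cover, a sub-local-system on which every Lyapunov exponent vanishes. By Filip's analysis of the Kontsevich--Zorich cocycle---in particular his characterization of vanishing Lyapunov exponents via Hodge theory and monodromy---such a $W$ carries a variation of Hodge structure with constant Hodge filtration, so its algebraic monodromy group preserves a positive-definite form and, the underlying local system being integral, is finite; thus $W$ becomes a constant polarized weight one Hodge structure after a finite cover of $\cM$. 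On the other hand, $\cL$ is a full-rank hyperelliptic locus of rank $r \ge 2$, so its monodromy on $H^1(\cL)$ is Zariski-dense in the corresponding symplectic group, and $\cM$ is an $h$-geminal cover of $\cL$, so the classification of $h$-geminal invariant subvarieties in Apisa--Wright \cite{ApisaWrightGeminal} confines the covers $X \to Y$ to an explicit list. Combining this list with the computation of the algebraic hull of the Kontsevich--Zorich cocycle over loci of covers in Eskin--Filip--Wright \cite{EFW}, one checks that for every such cover the Hodge filtration on $W$ genuinely varies---equivalently, $W$ has a nonzero exponent---contradicting the previous sentence and completing the proof. I expect this last step to be the main obstacle: one must rule out, uniformly over the covering data permitted by the $h$-geminal condition, the ``rigid factor'' mechanism that produces the vanishing exponents of the (rank one) Eierlegende--Wollmilchsau and Ornithorynque, i.e.\ one must show that the large monodromy of a rank $\ge 2$ hyperelliptic base always propagates to a non-unitary summand of $W$; this is precisely where the monodromy input of the paper is needed.
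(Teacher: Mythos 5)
Your reduction steps match the paper: the backwards direction via Forni, Forni--Matheus, and Filip; the observation that $2g-2r$ zero exponents forces minimal homological dimension via Forni's criterion; the appeal to Theorem \ref{T:main} to get the $h$-geminal structure after marking periodic points; and the delegation of the rank one case to Aulicino--Norton. The decomposition $H^1_{\mathbb{R}}\restriction_{\cM} = p(T\cM)\oplus W$ and the deduction (via Filip) that all exponents of $W$ vanish, hence $W$ has compact monodromy closure, also appears in the paper (it is the content of Lemma \ref{L:Nonvarying}).

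However, the crucial step---ruling out $r\ge 2$ when $\cM$ is not full rank---is not actually proved in your proposal. You reduce it to the assertion that ``one checks that for every such cover the Hodge filtration on $W$ genuinely varies,'' citing an ``explicit list'' of covers coming from the $h$-geminal classification together with a ``computation of the algebraic hull'' from Eskin--Filip--Wright. Neither input exists in the form you need: the $h$-geminal classification (Theorem \ref{T:geminal2}) says only that $\cM$ is a full locus of covers of a genus zero quadratic double, with no a priori bound on the degree $d$ or the branching data, so there is no finite list to check against; and \cite{EFW} is used in this paper only for finiteness of periodic points, not for a computation of the algebraic hull of $W$ over an arbitrary locus of covers. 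Bounding the covering data is precisely the hard content. The paper does this quantitatively: Lemma \ref{L:Nonvarying} shows the sum of positive exponents is non-varying and equals $\frac14\sum_{d_i\,\mathrm{odd}}\frac{1}{d_i+2}$ on every invariant subvariety of $\cM$; evaluating this on a Teichm\"uller curve generated by a square-tiled surface via the Eskin--Kontsevich--Zorich formulas (Corollaries 1 and 8 of \cite{EKZbig}), comparing the cylinder terms upstairs and downstairs through the degree-$d$ optimal map, and feeding in Riemann--Hurwitz together with the branching constraint of Corollary \ref{C:h-geminal-branching} (all but one regular Weierstrass point is a branch point) yields $d\le 4$, $g'=2$, and finally quadratic equations in $d$ with no real roots. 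You would need to supply an argument of comparable force to close the gap you yourself flag as ``the main obstacle.''
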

\begin{rem}\label{R:T:main2}
The backwards implication is an immediate consequence of Filip \cite[Corollary 1.7 (ii)]{FiZero} (for full rank), Forni \cite{Fsur} (for the Eierlegende-Wollmilchsau), and Forni-Matheus \cite{FM} (for the Ornithorynque).

The proof will show that if $\cM$ has rank $r > 1$, is not full rank, and has minimal homological dimension then there are never $2g-2r$ zero Lyapunov exponents. This reduces Theorem \ref{T:main2} to the work of Aulicino-Norton \cite{AulicinoNorton} and M\"oller \cite{M5} mentioned above. We would like to stress that the present work relies on Aulicino-Norton for the rank one case and does not offer a new proof of their result.

To implement this proof strategy we rely on properties of geminal varieties shown in Apisa-Wright \cite{ApisaWrightGeminal} and use the non-varying phenomena and formulas for sums of Lyapunov exponents found in Eskin-Kontsevich-Zorich \cite{EKZbig}.
\end{rem}

\noindent \textbf{Monodromy}

Let $\cM$ be an invariant subvariety. Since $p(T\cM)$ is complex-symplectic (by Avila-Eskin-M\"oller \cite{AEM}) letting $W$ be the subbundle of $H_{\mathbb{R}}^1 \restriction_{\cM}$ that is symplectically orthogonal to $p(T\cM)$, implies that
\[ H_{\mathbb{R}}^1\restriction_{\cM} = p(T\cM) \oplus W.\] 
The Zariski closure of the monodromy of $p(T\cM)$ is the full symplectic group by Filip \cite[Corollary 1.7 (i)]{FiZero}. However, the monodromy of $W$ is mysterious. 

By Filip \cite[Corollary 1.7 (ii)]{FiZero}, there are no zero Lyapunov exponents in $p(T\cM)$. This implies that outside of the three exceptions listed in Theorem \ref{T:main2}, the Lyapunov exponents of the Kontsevich-Zorich cocycle restricted to $W$ are not all zero. An immediate corollary is the following.

\begin{cor}
The Zariski closure of the monodromy of the bundle $W$ over $\cM$ belongs to a compact group if and only if $\cM$ is one of the following: the Eierlegende-Wollmilchsau, the Ornithorynque, or full rank (in which case $W$ is a zero-dimensional bundle and the claim is vacuous).
\end{cor}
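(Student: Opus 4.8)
The plan is to convert the statement about Zariski closures of monodromy into a statement about vanishing of Lyapunov exponents and then quote Theorem~\ref{T:main2}. Two standard inputs are needed. First, if the Zariski closure $G$ of the monodromy of a flat sub-bundle $V$ of $H_{\mathbb{R}}^1\restriction_{\cM}$ is compact, then $G$ preserves a positive-definite form on each fiber; since the monodromy lies in $G$, averaging produces a parallel, monodromy-invariant metric on $V$ with respect to which the Kontsevich--Zorich cocycle acts by isometries, so every Lyapunov exponent of the cocycle on $V$ vanishes. Second, Filip's work on zero Lyapunov exponents and monodromy \cite{FiZero} provides the converse for sub-bundles coming from the flat structure: the Kontsevich--Zorich cocycle restricted to the flat sub-bundle $W$ has all Lyapunov exponents equal to zero if and only if the Zariski closure of the monodromy of $W$ is compact. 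I will also use that, with $r = \mathrm{rank}(\cM)$, the bundle $W$ has real rank $2g-2r$, and that by Filip \cite[Corollary 1.7 (ii)]{FiZero} the cocycle on $p(T\cM)$ has no zero Lyapunov exponents, so that the zero exponents of the Kontsevich--Zorich cocycle on $H_{\mathbb{R}}^1\restriction_{\cM}$ are exactly those contributed by $W$.

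For the forward direction, suppose the Zariski closure of the monodromy of $W$ is compact. By the first input all $2g-2r$ Lyapunov exponents attached to $W$ vanish, and since $p(T\cM)$ contributes no zero exponents the Kontsevich--Zorich cocycle on $\cM$ then has exactly $2g-2r$ zero Lyapunov exponents, the maximum permitted by the rank. Theorem~\ref{T:main2} therefore forces $\cM$ to be the Eierlegende-Wollmilchsau, the Ornithorynque, or full rank.

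For the converse, if $\cM$ is full rank then $W$ is the zero bundle and the claim is vacuous. If $\cM$ is the Eierlegende-Wollmilchsau or the Ornithorynque, then $r=1$ and, by Theorem~\ref{T:main2} (equivalently, directly by Forni \cite{Fsur} and Forni--Matheus \cite{FM}), the cocycle has $2g-2$ zero Lyapunov exponents; as before these all lie in $W$, so the cocycle on $W$ has all Lyapunov exponents zero, whence by the second input the Zariski closure of the monodromy of $W$ is compact. (Alternatively, one can invoke the explicit descriptions of these loci as cyclic covers together with McMullen's \cite{McM:braid} identification of the rigid factors in their Jacobians to see directly that the monodromy of $W$ is finite.)

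The only ingredient that is not purely formal is the equivalence, for the flat sub-bundle $W$, between vanishing of all its Lyapunov exponents and compactness of the Zariski closure of its monodromy; this rests on the polarized variation of Hodge structure underlying the Kontsevich--Zorich cocycle and is the one place where real content is used. Granting it, the corollary is an immediate consequence of Theorem~\ref{T:main2}, which is presumably why the paper records it as such.
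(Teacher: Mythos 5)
Your proposal is correct and follows essentially the same route as the paper, which derives the corollary immediately from the preceding paragraph: Filip's result that $p(T\cM)$ carries no zero exponents, the standard equivalence (via the isometric action coming from an invariant metric, and conversely Filip's theorem that zero exponents arise from compact monodromy) between vanishing of all Lyapunov exponents on the flat bundle $W$ and compactness of the Zariski closure of its monodromy, and then Theorem~\ref{T:main2}. The only difference is that you spell out the two standard inputs that the paper leaves implicit in calling this ``an immediate corollary.''
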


%

\vspace{3mm}
\noindent \textbf{Context}

Many authors have studied the occurrence of zero Lyapunov exponents in the spectrum of the Kontsevich-Zorich cocycle. Filip proved in \cite{FiZero} that zero Lyapunov exponents ``arise from monodromy". The problem of which invariant subvarieties have completely degenerate Lyapunov spectra was solved in Aulicino-Norton \cite{AulicinoNorton}. Further examples of invariant subvarieties with zero exponents were constructed by Forni-Matheus-Zorich \cite{FMZ-Zero} and Grivaux-Hubert \cite{GrivauxHubert-Fully}. At the other end of the spectrum, Avila-Viana \cite{AvilaViana} proved the simplicity of the Lyapunov spectrum for Masur-Veech measure on strata. 

For work on sums of Lyapunov exponents see Eskin-Kontsevich-Zorich \cite{EKZbig}. For work on the Lyapunov spectrum of Veech surfaces see Bouw-M\"oller \cite{BM}, Eskin-Kontsevich-Zorich \cite{EKZsmall} and Forni-Matheus-Zorich \cite{FMZ-Square} (for square-tiled surfaces), and Bainbridge \cite{BainbridgeThesis} (for genus two). Bainbridge's work builds on McMullen's classification of $\mathrm{SL}(2, \mathbb{R})$-invariant probability measures in genus two \cite{Mc5}.

An infinite collection of square-tiled surfaces with homological dimension one was produced by Matheus-Yoccoz \cite{MY}. McMullen \cite{McM:Modular} introduced a notion similar to that of a square-tiled surface having homological dimension one, namely square-tiled surfaces in which every cusp has rank one. These surfaces are precisely the ones for which any sequence of closed flat geodesics, whose lengths tend to infinity, equidistribute on the surface, see McMullen \cite[Theorem 9.1]{McM:Modular} for a precise statement of a stronger result.

Finally, exponents have recently been connected to the wind-tree model, see Delecroix-Hubert-Leli\`evre \cite{DelecroixHubertLelievre}.

\bold{Acknowledgments.}  The author is grateful to Alex Wright, David Aulicino, and Curt McMullen for helpful conversations and comments.  During the preparation of this paper, the  author was partially supported by NSF Postdoctoral Fellowship DMS 1803625.

%
%
\section{Preliminaries on cylinder deformations}

In this section we will summarize a collection of results on cylinder deformations. To avoid repetition, throughout this section we will let $(X, \omega)$ be a translation surface in an invariant subvariety $\cM$. We will also let $\Sigma$ denote the singularities and marked points of $\omega$. 

\begin{defn}\label{D:Cylinder}
A \emph{cylinder} on $(X, \omega)$ is a maximal open embedded Euclidean cylinder. Two parallel cylinders on $(X, \omega)$ are said to be \emph{$\cM$-equivalent} (or sometimes \emph{$\cM$-parallel}) if they remain parallel on all surfaces in some neighborhood of $(X, \omega)$ in $\cM$ (see Wright \cite[Remark 4.2]{Wcyl} for an explanation of the formal meaning of a cylinder persisting in a neighborhood of a translation surface). Two cylinder equivalence classes are \emph{disjoint} if every cylinder in one equivalence class is disjoint from every cylinder in the other. A cylinder is called \emph{generic} if all of the saddle connections on its boundary remain parallel to the core curve of the cylinder in a neighborhood of $(X, \omega)$ in $\cM$. An equivalence class of cylinders $\bfC$ on $(X, \omega)$ in $\cM$ is called \emph{generic} if it consists of generic cylinders and if there is an open neighborhood of $(X, \omega)$ in $\cM$ on which $\bfC$ remains an equivalence class (and not just a subset of one).
\end{defn}

\begin{defn}\label{D:StandardShearAndTwistSpace}
Given a collection of disjoint cylinders $\bfC$ on $(X, \omega)$, let $\{\gamma_C\}_{C \in \bfC}$ be the oriented core curves of the cylinders in $\bfC$ and let $h_C$ denote the height of cylinder $C$. The \emph{standard deformation} is defined to be $\sigma_{\bfC} := \sum_{C \in \bfC} h_C \gamma_C^*$ where $\gamma_C^*$ is the element of $H^1(X, \Sigma; \mathbb{C})$ that sends a homology class to its oriented intersection number with $\gamma_C$. When $\bfC$ is horizontal we will also call $\sigma_{\bfC}$ the \emph{standard shear} and $i \sigma_{\bfC}$ the \emph{standard dilation}.

The \emph{twist space}, denoted $\mathrm{Twist}\left( \bfC, \cM \right)$, is the collection of all $v \in T_{(X, \omega)} \cM$ that can be written as $\sum_{C \in \bfC} a_C \gamma_C^*$ where $a_C \in \mathbb{C}$. The \emph{support of $v$} is the collection of cylinders $C$ with $a_C \ne 0$.
\end{defn}

The main theorem of Wright \cite[Theorem 1.1]{Wcyl}, called the cylinder deformation theorem, says that if $\bfC$ is an $\cM$-equivalence class on $(X, \omega)$ then $\sigma_{\bfC}$ belongs to $T_{(X, \omega)} \cM$.

Building on the cylinder deformation theorem, Mirzakhani and Wright  \cite[Theorem 1.5]{MirWri} characterized the vectors that belong to the twist space of an equivalence class $\bfC$ as follows. 

\begin{thm}\label{T:CylECTwistSpace}
Every element of $\mathrm{Twist}(\bfC, \cM)$ can be written uniquely as the sum of a  multiple of $\sigma_{\bfC}$ and an element of $\ker(p)$.
\end{thm}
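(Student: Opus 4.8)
The plan is to prove uniqueness and existence separately; uniqueness is elementary, while existence is where the real work lies. For uniqueness it suffices to check that $p(\sigma_{\bfC}) \neq 0$, since then an equality $c\sigma_{\bfC} + w = c'\sigma_{\bfC} + w'$ with $w, w' \in \ker(p)$ yields $(c - c')\,p(\sigma_{\bfC}) = 0$ upon applying $p$, hence $c = c'$ and $w = w'$. To see $p(\sigma_{\bfC}) \neq 0$, act by an element of $\GL$ so that $\bfC$ is horizontal. For each $C \in \bfC$ the class $p(\gamma_C^*) \in H^1(X;\mathbb{R})$ is Poincar\'e dual to $[\gamma_C] \in H_1(X;\mathbb{R})$, so $p(\sigma_{\bfC})$ is Poincar\'e dual to $\sum_{C \in \bfC} h_C [\gamma_C]$, whose intersection pairing with $[\Re\,\omega] \in H^1(X;\mathbb{R})$ is $\sum_{C \in \bfC} h_C \int_{\gamma_C}\Re\,\omega = \sum_{C \in \bfC} h_C\cdot(\text{circumference of }C) > 0$.

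For existence I must show that each $v = \sum_{C \in \bfC} a_C\gamma_C^* \in T_{(X,\omega)}\cM$ satisfies $p(v) \in \mathbb{C}\,p(\sigma_{\bfC})$. By the cylinder deformation theorem $\sigma_{\bfC} \in T_{(X,\omega)}\cM$, so together with the uniqueness step this reduces existence to the claim that $p\big(\mathrm{Twist}(\bfC,\cM)\big)$ is exactly the line $\mathbb{C}\,p(\sigma_{\bfC})$. A partial constraint comes for free: the core curves $\{\gamma_C\}_{C \in \bfC}$ are pairwise disjoint simple closed curves, so the classes $p(\gamma_C^*)$ have vanishing pairwise intersection numbers, and hence $p\big(\mathrm{Twist}(\bfC,\cM)\big)$ is an isotropic subspace of the complex-symplectic space $p(T\cM)$ (symplectic by Avila--Eskin--M\"oller \cite{AEM}). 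The remaining content is a rigidity statement: no deformation of $(X,\omega)$ inside $\cM$ supported on $\bfC$ shears and dilates the cylinders of $\bfC$ by amounts that are not jointly proportional to their heights, once one works modulo $\ker(p)$. I would attack this by contradiction, taking a hypothetical $v \in \mathrm{Twist}(\bfC,\cM)$ with $p(v) \notin \mathbb{C}\,p(\sigma_{\bfC})$, running the deformations $\omega \mapsto \omega + tv$ and $\omega \mapsto \omega + t\sigma_{\bfC}$ (which remain in $\cM$ since $\cM$ is linear in period coordinates), possibly combined with Teichm\"uller geodesic flow, and tracking the circumferences and heights of the cylinders of $\bfC$, so as to contradict either the persistence of $\bfC$ as a single $\cM$-equivalence class or the surface's staying in $\cM$.

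The main obstacle is exactly this rigidity step. It cannot be extracted from linear algebra and the $\GL$-action alone, because the core curves of two $\cM$-parallel cylinders need not be homologous, or even proportional in homology — this is precisely the content of Theorem~\ref{T:MHDImpliesHomology} — so a priori $p\big(\mathrm{Twist}(\bfC,\cM)\big)$ could be higher-dimensional. Ruling this out relies on the structural consequences of Wright's cylinder deformation theorem \cite{Wcyl} concerning how an invariant subvariety constrains deformations supported on an equivalence class.
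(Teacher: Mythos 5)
The paper does not prove this statement at all: it is quoted verbatim from Mirzakhani--Wright \cite[Theorem 1.5]{MirWri}, so there is no internal proof to compare against. Judged on its own terms, your write-up establishes only the easy half. The uniqueness argument is correct: $p(\sigma_{\bfC})$ pairs with $[\Re\,\omega]$ to give $\sum_{C\in\bfC} h_C c_C$, the area of $\bfC$, hence is nonzero. The observation that $p(\mathrm{Twist}(\bfC,\cM))$ is isotropic is also correct but far too weak, since an isotropic subspace can have dimension up to $g$.

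The genuine gap is the existence half, i.e.\ the claim that $p(\mathrm{Twist}(\bfC,\cM))$ is exactly the line $\mathbb{C}\,p(\sigma_{\bfC})$ --- which is the entire content of the theorem --- and your proposed mechanism for it cannot work as described. Any $v\in\mathrm{Twist}(\bfC,\cM)$ evaluates to zero on every core curve $\gamma_C$ (the $\gamma_C$ are disjoint simple closed curves), so the path $\omega+tv$ changes no circumference of any cylinder in $\bfC$; and since $\cM$ is linear in period coordinates and $v\in T_{(X,\omega)}\cM$, the path stays in $\cM$ for small $t$. Thus neither ``tracking circumferences'' nor ``contradicting the surface's staying in $\cM$'' yields anything, and heights simply change linearly with no a priori constraint. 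The missing input is precisely the Cylinder Finiteness Theorem of Mirzakhani--Wright (the ratios of circumferences, and in suitable situations moduli, of $\cM$-parallel cylinders take only finitely many values over all of $\cM$, hence are locally constant), combined with a degeneration argument: one flows along a vector of the form $i(v-s\sigma_{\bfC})$ chosen to collapse some but not all cylinders of $\bfC$ and analyzes the limit in the partial compactification to contradict the persistence of $\bfC$ as a single equivalence class. You correctly locate the difficulty but supply none of this machinery, so the proposal does not constitute a proof; the honest course here is to cite \cite[Theorem 1.5]{MirWri}, as the paper does.
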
  

We will now prove a result about the structure of the twist space.

\begin{lem}\label{L:PerturbDisjoint}
Suppose that $\bfC_1$ and $\bfC_2$ are $\cM$-equivalence classes of cylinders on $(X, \omega)$. Let $(X', \omega')$ be a surface in $\cM$ on which the cylinders in $\bfC_i$ persist and belong to a (possibly larger) equivalence class $\bfC_i'$ of cylinders for $i \in \{1, 2\}$. If $\bfC_1$ and $\bfC_2$ are disjoint equivalence classes, then so are $\bfC_1'$ and $\bfC_2'$.
\end{lem}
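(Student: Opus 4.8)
The plan is to connect $(X,\omega)$ to $(X',\omega')$ by a path in $\cM$ along which all cylinders in $\bfC_1\cup\bfC_2$ persist, and to propagate disjointness along this path. For $t$ in the parameter interval $[0,1]$, write $\bfC_i(t)$ for the equivalence class of the persisted copies of $\bfC_i$ on the time-$t$ surface; this is well defined because cylinders that are $\cM$-parallel on one surface remain so wherever they persist (which one sees from Wright's characterization of $\cM$-parallelism in \cite{Wcyl}), so the persisted cylinders of $\bfC_i$ stay mutually $\cM$-parallel, and $\bfC_i(0)=\bfC_i$, $\bfC_i(1)=\bfC_i'$. Let $S\subseteq[0,1]$ be the set of $t$ for which $\bfC_1(t)$ and $\bfC_2(t)$ are disjoint. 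Then $0\in S$ and I want $1\in S$, so it suffices to show that $S$ is both open and closed.

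Closedness is the easier half: having disjoint interiors is a closed condition on a pair of cylinders (it is the vanishing of the area of the intersection, which varies continuously), and as $t\to t_\infty$ the cylinders of $\bfC_i(t)$ either limit onto cylinders of $\bfC_i(t_\infty)$ or degenerate, their core curves collapsing onto chains of saddle connections parallel to $\bfC_i$; the only new cylinders of $\bfC_i(t_\infty)$ arise by such a chain re-expanding, and this happens within the subsurface associated to the class, which I would use is disjoint from that of $\bfC_j$ whenever the two classes are disjoint (a structural fact about equivalence classes from Mirzakhani--Wright \cite{MirWri} and Wright \cite{Wcyl}). Openness is the crux: for $t$ near $t_0\in S$, a cylinder of $\bfC_i(t)$ is either the persisted copy of a cylinder already present at time $t_0$, in which case it lies in $\bfC_i(t_0)$ by the constancy of the $\cM$-parallelism relation among persisting cylinders, or is a cylinder newly born from a degenerating chain of saddle connections parallel to $\bfC_i$, in which case it lies in an arbitrarily small neighborhood of the subsurface associated to $\bfC_i(t_0)$; in either case it is disjoint from every cylinder of $\bfC_j(t_0)$, hence (by stability of cylinders under small perturbation) from every cylinder of $\bfC_j(t)$, so $t\in S$.

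The main obstacle I anticipate is precisely the bookkeeping in the openness step: controlling where a new cylinder of the class $\bfC_i$ can be created as the parameter varies, and ruling out that its interior sweeps across a boundary saddle connection of a cylinder of the disjoint class $\bfC_j$. A naive continuity argument does not suffice here---two cylinders can pass from sharing only a boundary saddle connection to overlapping---so I would route this step through the structure theory of the subsurface carved out by an equivalence class and of its complement, as developed by Mirzakhani--Wright \cite{MirWri} and Wright \cite{Wcyl}: disjoint equivalence classes occupy disjoint such subsurfaces, this decomposition is stable under persistence of the classes, and every cylinder and every saddle connection parallel to a class is confined to the subsurface of that class.
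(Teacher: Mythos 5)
Your open–closed argument along a connecting path is a genuinely different strategy from the paper's, and it founders exactly where you say the crux is: the openness step. The problem is the cylinders of $\bfC_1(t)$ that are newly born at times $t$ near $t_0$. Such a cylinder has circumference bounded below (by the cylinder finiteness theorem) but height tending to zero as $t \to t_0$, so it collapses onto a concatenation of saddle connections in the direction of $\bfC_1(t_0)$. Nothing in the local geometry prevents a saddle connection in that direction from crossing the interior of a cylinder of $\bfC_2(t_0)$ when the two classes point in different directions; if it did, the thin new cylinder would overlap a persisting cylinder of $\bfC_2(t)$ and disjointness would fail instantly. You propose to rule this out via a "structure theory of subsurfaces carved out by equivalence classes" with the properties that disjoint classes occupy disjoint subsurfaces stable under persistence and that every parallel saddle connection is confined to the subsurface of its class. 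No such theory appears in Mirzakhani--Wright or Wright: the second property is false as stated (on a horizontally periodic surface a horizontal saddle connection typically lies on the common boundary of cylinders from two different equivalence classes, and a saddle connection parallel to a class need not be anywhere near it), and the first property, in the form you need it ("stable under persistence"), is essentially the lemma you are trying to prove. So the key step is deferred to a nonexistent black box, and the argument has a genuine gap.

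The missing ingredient is cohomological rather than geometric, and it makes the whole path argument unnecessary. Disjointness of two equivalence classes $\bfD_1, \bfD_2$ is equivalent to the vanishing of the symplectic pairing of $p(\sigma_{\bfD_1})$ and $p(\sigma_{\bfD_2})$: this pairing equals $\sum h_C h_D \langle \gamma_C, \gamma_D \rangle$, and for two transverse overlapping cylinders every intersection of the core curves carries the same sign, so there is no cancellation. Since the cylinders of $\bfC_i$ persist on $(X',\omega')$ and lie in $\bfC_i'$, the class $\sigma_{\bfC_i}$ is an element of $\mathrm{Twist}(\bfC_i',\cM)$ there, and Theorem \ref{T:CylECTwistSpace} writes it as a multiple of $\sigma_{\bfC_i'}$ plus an element of $\ker(p)$; as $\sigma_{\bfC_i} \notin \ker(p)$, the projection $p(\sigma_{\bfC_i'})$ is a nonzero multiple of $p(\sigma_{\bfC_i})$. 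The pairing of the primed classes is therefore a nonzero multiple of the pairing of the unprimed ones, which is zero, and $\bfC_1'$, $\bfC_2'$ are disjoint. This is the step that controls the newly born cylinders, and it is precisely what your continuity argument cannot see.
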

\begin{proof}
Two equivalence classes $\bfD_1$ and $\bfD_2$ are disjoint if and only if the symplectic pairing of $p(\sigma_{\bfD_1})$ and $p(\sigma_{\bfD_2})$ is zero. Let $\sigma_i$ denote the standard deformation in $\bfC_i$ on $(X, \omega)$ and note that this cohomology class does not belong to $\ker(p)$. Therefore, by Theorem \ref{T:CylECTwistSpace}, on $(X', \omega')$, $p(\sigma_i)$ is collinear to $p(\sigma_{\bfC_i'})$. Since the symplectic pairing of $p(\sigma_1)$ and $p(\sigma_2)$ is zero the same holds for $p(\sigma_{\bfC_1'})$ and $p(\sigma_{\bfC_2'})$.
\end{proof}

\begin{lem}\label{L:TwistSpaceSumDecomposition}
Suppose that $(X, \omega)$ has a collection $\{ \bfC_i \}_{i=1}^n$ of pairwise disjoint equivalence classes. Set $\bfC := \bigcup_{i=1}^n \bfC_i$. Then 
\[ \mathrm{Twist}(\bfC, \cM) = \bigoplus_{i=1}^n \mathrm{Twist}(\bfC_i, \cM).\]
\end{lem}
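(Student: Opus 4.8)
The plan is to prove the two inclusions separately, with the easy direction being $\supseteq$ and the substantive direction being $\subseteq$.

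First, the inclusion $\supseteq$ is essentially formal. Any element of $\mathrm{Twist}(\bfC_i, \cM)$ is a tangent vector of the form $\sum_{C \in \bfC_i} a_C \gamma_C^*$; since $\bfC_i \subseteq \bfC$, it is also of the form $\sum_{C \in \bfC} a_C \gamma_C^*$ (with the remaining coefficients zero), and it lies in $T_{(X,\omega)}\cM$ by hypothesis, so it lies in $\mathrm{Twist}(\bfC, \cM)$. Because $\mathrm{Twist}(\bfC, \cM)$ is a linear subspace, it therefore contains the sum $\sum_{i=1}^n \mathrm{Twist}(\bfC_i, \cM)$.

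For the reverse inclusion $\subseteq$, take $v \in \mathrm{Twist}(\bfC, \cM)$. By definition we may write $v = \sum_{i=1}^n v_i$ where $v_i := \sum_{C \in \bfC_i} a_C \gamma_C^*$ (this decomposition of the cohomology class is forced, since the $\gamma_C^*$ for $C$ in distinct $\bfC_i$ are linearly independent in $H^1(X,\Sigma;\bC)$, the cylinders being disjoint). The content is to show each $v_i \in T_{(X,\omega)}\cM$; then $v_i \in \mathrm{Twist}(\bfC_i,\cM)$ by definition and we are done, and the sum is direct for the same linear-independence reason. To show $v_i \in T_{(X,\omega)}\cM$, I would invoke Theorem \ref{T:CylECTwistSpace} applied to the equivalence class $\bfC_i$: every element of $\mathrm{Twist}(\bfC_i,\cM)$ is uniquely a multiple of $\sigma_{\bfC_i}$ plus an element of $\ker p$. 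The idea is to pass to a nearby surface $(X',\omega')$ on which the classes $\bfC_i$ persist inside larger equivalence classes $\bfC_i'$, which remain pairwise disjoint by Lemma \ref{L:PerturbDisjoint}. The point of enlarging is that on $(X',\omega')$ the standard deformations $\sigma_{\bfC_i'}$ have pairwise disjoint supports and each $\gamma_C$ for $C \in \bfC$ is parallel to exactly one of them, so the ambient twist space $\mathrm{Twist}(\bfC', \cM)$ at $(X',\omega')$ is spanned by the $\gamma_C^*$ with $C \in \bfC'$; applying Theorem \ref{T:CylECTwistSpace} to each $\bfC_i'$ and using the fact that $p$ separates the contributions from disjoint equivalence classes lets one project $v$ onto its $\bfC_i$-component inside $T\cM$, i.e. realize each $v_i$ as a tangent vector.

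I expect the main obstacle to be the bookkeeping in the passage to $(X',\omega')$: one must check that the partial sum $v_i$ (defined as a cohomology class using the core curves on $(X,\omega)$) is still a well-defined twist vector after perturbation, i.e. that no coefficient relations collapse and that membership in $T_{(X,\omega)}\cM$ can be deduced from membership in $T_{(X',\omega')}\cM$ by continuity of the tangent distribution along $\cM$. An alternative, possibly cleaner, route avoiding perturbation: apply Theorem \ref{T:CylECTwistSpace} directly at $(X,\omega)$ to the equivalence class $\bfC_i$ to learn that $\mathrm{Twist}(\bfC_i,\cM) = \bR \sigma_{\bfC_i} \oplus (\ker p \cap \mathrm{Twist}(\bfC_i,\cM))$, note $\sigma_{\bfC_i} \in T\cM$ by the cylinder deformation theorem, and then argue that the $\ker p$ part of $v$ supported on $\bfC_i$ is individually tangent — using that $\ker p \cap T\cM$ is exactly $\mathrm{rel}$, which splits along disjoint equivalence classes. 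I would try this direct argument first and fall back on the perturbation argument only if the $\ker p$ splitting is not immediate.
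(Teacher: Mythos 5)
Your skeleton matches the paper's: write $v=\sum_i v_i$ with $v_i$ supported on $\bfC_i$, perturb so that each $\bfC_i$ sits inside a generic equivalence class $\bfC_i'$, keep the classes disjoint via Lemma \ref{L:PerturbDisjoint}, and reduce to showing each $v_i\in T_{(X,\omega)}\cM$. But the mechanism you propose for that last (and only substantive) step does not close, in either of your two routes.

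Theorem \ref{T:CylECTwistSpace} is the wrong tool here: it describes elements \emph{already known} to lie in $\mathrm{Twist}(\bfC_i,\cM)$ (they are a multiple of $\sigma_{\bfC_i}$ plus rel), and it applies only to a single equivalence class, not to the union $\bfC$, so it cannot be applied to $v$ itself. Saying that ``$p$ separates the contributions from disjoint equivalence classes'' only controls the absolute parts $p(v_i)$; it says nothing about whether the rel components of $v$ can be redistributed among the $\bfC_i$ inside $T_{(X,\omega)}\cM$. Your fallback — that $\ker p\cap T\cM$ ``splits along disjoint equivalence classes'' — is not a known fact; it is essentially the lemma itself restricted to $\ker p$, so that route is circular. (Also, the incidental claim that $\mathrm{Twist}(\bfC',\cM)$ is \emph{spanned} by the $\gamma_C^*$ is false in general; the twist space is typically a proper subspace of that span.) What the paper actually uses is the twist space decomposition of Apisa--Wright \cite[Proposition 4.20]{ApisaWrightHighRank}: since $v$ pairs trivially with the core curves of $\bfC_1'$, it decomposes uniquely as $\eta_{\bfC_1'}+\eta_{(X,\omega)\setminus\bfC_1'}$ with $\eta_{\bfC_1'}\in\mathrm{Twist}(\bfC_1',\cM)$ and the second term vanishing on every saddle connection in $\overline{\bfC_1'}$; the uniqueness clause, combined with disjointness of the $\bfC_i'$, forces $\eta_{\bfC_1'}=v_1$, hence $v_1\in T_{(X,\omega)}\cM$. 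Without that decomposition (or an equivalent projection-plus-uniqueness statement), your argument has a genuine gap.
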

\begin{proof}
Let $v \in \mathrm{Twist}(\bfC, \cM)$ and write it as $\sum_{i=1}^n v_i$ where $v_i = \sum_{C \in \bfC_i} a_C \gamma_C^*$ where $a_C \in \mathbb{C}$ and $\gamma_C^*$ is the Poincare dual of the core curve of $C$. By symmetry of hypotheses, it suffices to show that $v_1 \in \mathrm{Twist}(\bfC_1, \cM)$. Perturb $(X, \omega)$ to a nearby surface $(X', \omega')$ where all the cylinders in $\bfC_i$ persist and belong to a generic equivalence class $\bfC_i'$ of cylinders. By Lemma \ref{L:PerturbDisjoint}, $\{ \bfC_i' \}_{i=1}^n$ remains a collection of pairwise disjoint equivalence classes. Since $v$ pairs trivially with the core curves in $\bfC_i$, it also pairs trivially with those in $\bfC_i'$. The twist space decomposition \cite[Proposition 4.20]{ApisaWrightHighRank} implies that $v = \eta_{\bfC_1'} + \eta_{(X, \omega) \backslash \bfC_1'}$ where $\eta_{\bfC_1'} \in \mathrm{Twist}(\bfC_1', \cM)$, and $\eta_{(X,\omega)\setminus \bfC_1'}\in T_{(X, \omega)}(\cM)$ evaluates to zero on every saddle connection in $\overline{\bfC_1'}$. Moreover, this decomposition is unique, in that if $v = v' + v''$ where $v' = \sum_{C \in \bfC_1'} b_C \gamma_C^*$ where $b_C \in \mathbb{C}$ and $v''$ is a cohomology class that evaluates to zero on every saddle connection in $\overline{\bfC_1'}$, then $v' = \eta_{\bfC_1'}$ and $v'' = \eta_{(X, \omega) \backslash \bfC_1'}$. Since $v = v_1 + \left( \sum_{i=2}^n v_i \right)$ is such a decomposition (this uses that $\{ \bfC_i' \}_{i=1}^n$ is a collection of pairwise disjoint equivalence classes), $v_1 = \eta_{\bfC_1'} \in \mathrm{Twist}(\bfC_1', \cM)$ as desired. 
\end{proof}

%
%

\subsection{Cylindrical Stability}

An important tool in the sequel is $\cM$-cylindrical stability, originally defined in Aulicino-Nguyen \cite[Definition 2.4]{aulicino2016rank}.

\begin{defn}
A surface $(X, \omega)$ is called \emph{$\cM$-cylindrically stable} if it is horizontally periodic and, letting $\bfC$ denote the collection of horizontal cylinders, $\mathrm{Twist}(\bfC, \cM)$ coincides with the space $\mathrm{Pres}\left( (X, \omega), \cM \right)$, defined to be the cohomology classes in $T_{(X, \omega)} \cM$ that evaluate to zero on the core curve of every horizontal cylinder. 
\end{defn}

The following properties of cylindrically stable surfaces were first shown in Wright \cite{Wcyl}. Recall that the field $\mathbf{k}(\cM)$ is defined to be the smallest extension of $\mathbb{Q}$ needed to define $\cM$ by linear equations in period coordinates. Recall too that $\mathrm{rel}(\cM) := \dim(\cM) - 2\mathrm{rank}(\cM)$.

\begin{lem}\label{L:Stable}
The following hold:
\begin{enumerate}
    \item\label{I:MaxEC} Any horizontally periodic surface in $\cM$ has at most $\mathrm{rank}(\cM) + \mathrm{rel}(\cM)$ equivalence classes of cylinders. If $\mathrm{rel}(\cM) = 0$, then any cylindrically stable surface in $\cM$ has precisely this many horizontal equivalence classes.
    \item\label{I:CCSpan} If a surface is cylindrically stable, then the horizontal core curves span a subset of $T_{(X,\omega)}(\cM)^*$  of dimension $\rank(\cM)$.
    \item\label{I:PerturbToStableQ} If $\bk(\cM)=\bQ$, every surface in $\cM$ can be perturbed to become cylindrically stable and square-tiled in such a way that all horizontal cylinders on the original surface stay horizontal in the perturbation. 
    \item\label{I:PerturbToStable} Given a collection of horizontal cylinders $\bfC$ on a surface $(X, \omega)$ in $\cM$ there is a path in $\cM$ from $(X, \omega)$ to a cylindrically stable surface $(X', \omega')$ along which the cylinders in $\bfC$ persist and remain horizontal. 
\end{enumerate}
\end{lem}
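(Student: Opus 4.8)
The plan is to establish (1)--(4), all due to Wright \cite{Wcyl}, by deducing the ``easy half'' of (1) and (2) from the tools assembled above, isolating a single Lagrangian-type statement about cylindrically stable surfaces for the ``hard half'', and quoting Wright for the deformation-existence statements (3) and (4).

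The starting point is that twist spaces project to \emph{isotropic} subspaces of $p(T\cM)$: if $\bfC$ is a collection of horizontal cylinders on a horizontally periodic surface, then for $v,w\in\mathrm{Twist}(\bfC,\cM)$ the symplectic pairing $\langle p(v),p(w)\rangle$ is a $\bC$-linear combination of algebraic intersection numbers of distinct, disjoint horizontal core curves, and hence vanishes. Since $p(T\cM)$ is symplectic of dimension $2\,\mathrm{rank}(\cM)$ (Avila-Eskin-M\"oller \cite{AEM}) and $\ker(p\restriction_{T\cM})$ has dimension $\mathrm{rel}(\cM)$, this forces $\dim\mathrm{Twist}(\bfC,\cM)\le\mathrm{rank}(\cM)+\mathrm{rel}(\cM)$. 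Taking $\bfH$ to be the union of all horizontal equivalence classes $\bfC_1,\dots,\bfC_k$ and invoking Lemma \ref{L:TwistSpaceSumDecomposition} (the distinct horizontal equivalence classes being pairwise disjoint) together with the cylinder deformation theorem (each summand $\mathrm{Twist}(\bfC_i,\cM)$ containing the nonzero class $\sigma_{\bfC_i}$) gives $k\le\dim\mathrm{Twist}(\bfH,\cM)\le\mathrm{rank}(\cM)+\mathrm{rel}(\cM)$, which is the first assertion of (1). For (2), on a cylindrically stable surface the evaluation functionals $v\mapsto v(\gamma_C)$ on $T\cM$ have common kernel $\mathrm{Pres}((X,\omega),\cM)=\mathrm{Twist}(\bfH,\cM)$, so the horizontal core curves span a subspace of $(T\cM)^*$ of dimension $\dim T\cM-\dim\mathrm{Twist}(\bfH,\cM)$, which by the preceding inequality is \emph{at least} $\mathrm{rank}(\cM)$.

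What remains, and this is the crux, is the matching bound $\dim\mathrm{Twist}(\bfH,\cM)\ge\mathrm{rank}(\cM)+\mathrm{rel}(\cM)$ on a cylindrically stable surface, equivalently that $p(\mathrm{Twist}(\bfH,\cM))$ is not merely isotropic but \emph{Lagrangian} in $p(T\cM)$ (on a cylindrically stable surface the rel directions lie in $\mathrm{Pres}((X,\omega),\cM)=\mathrm{Twist}(\bfH,\cM)$, since they evaluate to zero on every closed curve, so only the absolute part is at issue). This is where cylindrical stability enters essentially, and I would quote Wright \cite{Wcyl}, whose argument plays the horizontal data against the $\mathrm{GL}(2,\bR)$-action and the cylinder deformation theorem to show that the ``horizontal half'' of $T\cM$ spans a Lagrangian. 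Granting it, (2) holds with equality, and when $\mathrm{rel}(\cM)=0$ Theorem \ref{T:CylECTwistSpace} makes each $\mathrm{Twist}(\bfC_i,\cM)$ one-dimensional, so $k=\dim\mathrm{Twist}(\bfH,\cM)=\mathrm{rank}(\cM)$, which is the second assertion of (1).

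For (3) and (4) I would cite Wright \cite{Wcyl} directly. Starting from $(X,\omega)$, one first deforms along a path in $\cM$ keeping the prescribed cylinders horizontal until the surface is horizontally periodic (completing each complementary subsurface to a horizontally periodic one, using that a minimal component can be perturbed to be periodic). If the resulting surface is not cylindrically stable — i.e. $\mathrm{Twist}(\bfH,\cM)\subsetneq\mathrm{Pres}((X,\omega),\cM)$ — Wright shows that a deformation lying in $\mathrm{Pres}((X,\omega),\cM)$ but not in $\mathrm{Twist}(\bfH,\cM)$ can be followed to a horizontally periodic surface with strictly more horizontal equivalence classes, all previously horizontal cylinders remaining horizontal; since the number of horizontal equivalence classes is bounded (by the already-established first assertion of (1)), iterating terminates at a cylindrically stable surface, which proves (4). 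For (3) one carries out the same argument within the integer lattice of period coordinates — available because $\bk(\cM)=\bQ$ — and rescales to land on a square-tiled surface. I expect these deformation-existence statements, together with the Lagrangian property underlying the exactness in (1)--(2), to be the main obstacles: unlike the inequalities above, they are not formal consequences of the linear algebra of this section but rely on Wright's analysis of how cylinder deformations interact with degeneration and with the $\mathrm{GL}(2,\bR)$-action.
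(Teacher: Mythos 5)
Your proposal is correct and follows essentially the same route as the paper, which handles (1)--(3) by citing the cylinder deformation theorem together with Apisa--Wright [ApisaWrightHighRank, Lemma 7.10] and proves (4) by exactly the Smillie--Weiss/``$\mathrm{Pres}$ minus $\mathrm{Twist}$'' iteration you sketch, terminating by the bound in (1). The extra detail you supply for the upper bound in (1) --- that $p(\mathrm{Twist}(\bfH,\cM))$ is isotropic because disjoint horizontal core curves have vanishing intersection numbers, combined with Lemma \ref{L:TwistSpaceSumDecomposition} --- is the correct standard argument underlying the cited references, and your deferral of the matching Lagrangian lower bound to Wright mirrors the paper's own citation.
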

\begin{proof}
The first claim follows immediately from the cylinder deformation theorem and Apisa-Wright \cite[Lemma 7.10 (2) and (3)]{ApisaWrightHighRank}. The second and third claims are simply Apisa-Wright \cite[Lemma 7.10 (4) and (6)]{ApisaWrightHighRank}. We sketch a proof of the final claim following Wright \cite{Wcyl}.

The method for producing $(X', \omega')$ is iterative and proceeds as follows. Begin by applying Smillie-Weiss \cite[Cor. 6]{SW2} to produce a horizontally periodic surface $(X_1, \omega_1)$ in the horocycle orbit closure of $(X, \omega)$ on which the cylinders in $\bfC$ persist (it is possible to connect $(X, \omega)$ to $(X_1, \omega_1)$ by following the horocycle flow of $(X, \omega)$ until coming arbitrarily close to $(X_1, \omega_1)$ and then perturbing). Let $\bfC_1$ denote the collection of horizontal cylinders on $(X_1, \omega_1)$. If $\mathrm{Twist}\left( \bfC_1, \cM \right) = \mathrm{Pres}\left( (X_1, \omega_1), \cM \right)$, then we are done. 

If we are not done at stage $n$, then let $v$ be a purely imaginary element of $\mathrm{Pres}\left( (X_n, \omega_n), \cM \right) -  \mathrm{Twist}\left( \bfC_n, \cM \right)$. By \cite[Sublemma 8.7]{Wcyl}, if we travel from $(X_n, \omega_n)$ in $\cM$ any sufficiently small amount in the direction of the tangent vector $v$ we produce a surface $(X_n', \omega_n')$ where the cylinders in $\bfC_n$ persist and remain horizontal, but do not cover all of the $(X_n', \omega_n')$. Applying Smillie-Weiss \cite[Cor. 6]{SW2} again we produce a horizontally periodic surface $(X_{n+1}, \omega_{n+1})$ in the horocycle orbit closure of $(X_n, \omega_n)$ on which the cylinders in $\bfC_n$ persist and remain horizontal, but are a strict subset of the horizontal cylinders $\bfC_{n+1}$. When the process terminates we have found the desired surface. The previous two paragraphs are essentially contained in \cite[Proof of Lemma 8.6]{Wcyl}.
\end{proof}

%
%

\subsection{The Mirzakhani-Wright Partial Compactification}

In Mirzakhani-Wright \cite{MirWri}, a partial compactification was constructed for every invariant subvariety $\cM$. The main feature is the boundary tangent formula, which we state now. 

Suppose that $(X_n, \omega_n)$ is a sequence of translation surfaces that converge in the Mirzakhani-Wright partial compactification to a surface $(X_\infty, \omega_\infty)$ in a component $\cM'$ of the boundary. By Mirzakhani-Wright \cite[Proposition  2.4]{MirWri} there is a map, called the \emph{collapse map}, $f_n: (X_n, \omega_n) \ra (X_\infty, \omega_\infty)$ that takes the singular set of $\omega_n$ to the singular set of $\omega_\infty$. Given the collapse map, the collection of $\emph{vanishing cycles}\ V_n$, is defined to be the kernel of $(f_n)_*: H_1(X_n, \Sigma_n) \ra H_1(X_\infty, \Sigma_\infty)$ where $\Sigma_i$ denotes the singular set of $\omega_i$. 

The following is due to Mirzakhani-Wright \cite[Theorem 2.9]{MirWri} when $(X_\infty, \omega_\infty)$ is connected and Chen-Wright \cite[Theorem 1.2 (see also Proposition 2.5)]{ChenWright} when $(X_\infty, \omega_\infty)$ is disconnected.

\begin{thm}\label{T:BoundaryTangent2}
For large enough $n$, $f_n^*$ induces an isomorphism, 
\[ T_{(X_\infty, \omega_\infty)} \cM' \cong  T_{(X_n, \omega_n)} \cM \cap \mathrm{Ann}(V_n) \]
where $\mathrm{Ann}(V_n)$ denotes the cohomology classes in $H^1(X_n, \Sigma_n)$ that evaluate to zero on the elements of $V_n$.
\end{thm}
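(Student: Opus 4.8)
\textbf{Proof proposal for Theorem \ref{T:BoundaryTangent2}.}
The plan is to deduce the stated isomorphism directly from the two results it is attributed to, treating the connected and disconnected cases separately and then checking that the statements cited really do give the formula as worded. First I would set up notation: fix a sequence $(X_n, \omega_n)$ converging in the Mirzakhani-Wright partial compactification to $(X_\infty, \omega_\infty) \in \cM'$, fix the collapse maps $f_n \colon (X_n, \omega_n) \ra (X_\infty, \omega_\infty)$ supplied by \cite[Proposition 2.4]{MirWri}, and let $V_n = \ker\left( (f_n)_* \colon H_1(X_n, \Sigma_n) \ra H_1(X_\infty, \Sigma_\infty) \right)$ be the subspace of vanishing cycles. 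Dualizing, $f_n^*$ is a map $H^1(X_\infty, \Sigma_\infty) \ra H^1(X_n, \Sigma_n)$, and since $V_n$ is exactly the kernel of $(f_n)_*$, the image of $f_n^*$ lands in $\mathrm{Ann}(V_n)$; I would record that $f_n^*$ is injective on the relevant cohomology because $(f_n)_*$ is surjective on homology (the collapse map is a degree-one map collapsing a subsurface, hence surjective on $H_1$), so $f_n^*$ identifies $H^1(X_\infty, \Sigma_\infty)$ with $\mathrm{Ann}(V_n) \subseteq H^1(X_n, \Sigma_n)$.

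Next, when $(X_\infty, \omega_\infty)$ is connected, I would simply invoke Mirzakhani-Wright \cite[Theorem 2.9]{MirWri}: that theorem states precisely that, for all sufficiently large $n$, $f_n^*$ carries $T_{(X_\infty, \omega_\infty)} \cM'$ isomorphically onto $T_{(X_n, \omega_n)} \cM \cap \mathrm{Ann}(V_n)$. The only thing to check is that the hypotheses there — that the $(X_n, \omega_n)$ converge to $(X_\infty, \omega_\infty)$ in the partial compactification and that $\cM'$ is the boundary component containing the limit — match what we have assumed, which they do by construction. When $(X_\infty, \omega_\infty)$ is disconnected, I would instead cite Chen-Wright \cite[Theorem 1.2, and Proposition 2.5]{ChenWright}, which extends the same boundary tangent formula to the disconnected setting; here $H^1(X_\infty, \Sigma_\infty)$ is understood as the direct sum of the absolute-rel cohomologies of the connected components, and $T_{(X_\infty, \omega_\infty)} \cM'$ lives inside this direct sum. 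Again the content is that $f_n^*$ realizes the isomorphism, so the proof reduces to quoting the statement with matched hypotheses.

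The one genuine point requiring care — and the step I expect to be the main obstacle — is the bookkeeping in the disconnected case: one must make sure that the "cohomology" $H^1(X_\infty, \Sigma_\infty)$ appearing in the statement of Theorem \ref{T:BoundaryTangent2} is interpreted consistently with Chen-Wright, that $\Sigma_\infty$ is the full singular set (zeros together with marked points) across all components, and that the collapse map $f_n$ and the vanishing cycles $V_n$ are defined so that $\mathrm{Ann}(V_n)$ still picks out exactly the image of $f_n^*$ when the target has several components. I would verify that $(f_n)_*$ remains surjective onto $H_1(X_\infty, \Sigma_\infty) = \bigoplus_j H_1(X_\infty^{(j)}, \Sigma_\infty^{(j)})$ — which holds because each component of $X_\infty$ is hit by the collapse — so that the dualization argument from the first paragraph goes through verbatim, and then the Chen-Wright statement supplies the refinement to tangent spaces. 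With that identification in place, the two cited theorems cover all cases and the proof is complete.
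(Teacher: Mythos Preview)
Your proposal is correct and matches the paper's approach exactly: the paper does not give an independent proof of this theorem but simply attributes it to Mirzakhani--Wright \cite[Theorem 2.9]{MirWri} in the connected case and Chen--Wright \cite[Theorem 1.2 and Proposition 2.5]{ChenWright} in the disconnected case, which is precisely what you do. The extra dualization and bookkeeping you include are reasonable sanity checks but not required, since the cited statements already deliver the isomorphism as worded.
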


The following is the form in which we will primarily use the previous result. 

\begin{thm}\label{T:CylinderBoundaryTangentFormula}
Suppose that $(X_n, \omega_n)$ is a sequence in $\cM$ converging to $(X_\infty, \omega_\infty)$ in a component $\cM'$ of the boundary of $\cM$. Let $f_n: (X_n, \omega_n) \ra (X_\infty, \omega_\infty)$ be the collapse maps. 
\begin{enumerate}
    \item\label{I:boundary:FindingCn} If $C$ is a cylinder on $(X_\infty, \omega_\infty)$, then for sufficiently large $n$, there is a cylinder $C_n$ whose modulus, height, and circumference converge to that of $C$ and so $f_n(C_n)$ is homotopic to $C$.
    \item\label{I:boundary:CnConvergence} If $C_n$ is a cylinder on $(X_n, \omega_n)$ whose height and circumference are bounded below and above, then there is a cylinder $C$ on $(X_\infty, \omega_\infty)$ so that $f_n(C_n)$ is homotopic to $C$ for sufficiently large $n$. 
    \item\label{I:boundary:TwistIsom} Suppose that $\bfC$ is a collection of cylinders on $(X_\infty, \omega_\infty)$. For sufficiently large $n$, let $f_n^*(\bfC)$ be the cylinders on $(X_n, \omega_n)$ produced by \eqref{I:boundary:FindingCn} corresponding to the cylinders in $\bfC$. Then $f_n^*$ induces an isomorphism from $\mathrm{Twist}(\bfC, \cM')$ to $\mathrm{Twist}(f_n^*(\bfC), \cM)$.
\end{enumerate}
\end{thm}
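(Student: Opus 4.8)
The plan is to derive all three statements from the local geometry of the collapse map underlying Theorem~\ref{T:BoundaryTangent2}. The first step is to record the geometric input from Mirzakhani--Wright \cite[Section 2]{MirWri} and Chen--Wright \cite[Section 2]{ChenWright}: for all large $n$ the surface $(X_n, \omega_n)$ contains a ``degenerating region'' $P_n$ with (a) $f_n$ restricting to an isometry of flat surfaces with boundary from $(X_n, \omega_n)\setminus P_n$ onto the complement of an $\e_n$-neighborhood of $\Sigma_\infty$ in $(X_\infty, \omega_\infty)$, where $\e_n \to 0$; (b) $\Area(P_n) \to 0$; and (c) the number of connected components of $P_n$ is bounded independently of $n$, each of diameter tending to $0$. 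Everything that follows is bookkeeping around (a)--(c) together with Theorem~\ref{T:BoundaryTangent2}.

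I would prove \eqref{I:boundary:CnConvergence} first. Let $C_n$ be a cylinder of $(X_n, \omega_n)$ with height and circumference in a fixed interval $[a,b]$, $a>0$, and identify $C_n$ with $[0,h_n]\times(\bR/c_n\bZ)$. By (c) the set of heights $t$ for which the slice $\{t\}\times(\bR/c_n\bZ)$ meets $P_n$ has measure $o(1)<h_n$ for $n$ large, so some core curve of $C_n$ is disjoint from $P_n$; by (a) it descends under $f_n$ to a closed geodesic of $(X_\infty, \omega_\infty)$ avoiding $\Sigma_\infty$, hence to the core curve of a cylinder $C$, and $f_n$ carries the core curve of $C_n$ to a curve freely homotopic to the core curve of $C$. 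Since $f_n$ maps $C_n\setminus P_n$ isometrically into $C$ and $\Area(C)\ge\Area(C_n)-\Area(P_n)\ge a^2-o(1)$, the cylinder $C$ is nondegenerate, proving \eqref{I:boundary:CnConvergence}. For \eqref{I:boundary:FindingCn}, given a cylinder $C$ on $(X_\infty, \omega_\infty)$ and $\e>0$, let $C_\e\subset C$ be the concentric subcylinder obtained by deleting a collar of height $\e$; it is compact and disjoint from $\Sigma_\infty$, so by (a) it is disjoint from the $\e_n$-neighborhood of $\Sigma_\infty$ for $n$ large, and $f_n^{-1}$ carries it to an isometrically embedded cylinder $C_{n,\e}\subset(X_n,\omega_n)$ with the circumference of $C$ and height $(\text{height of }C)-\e$. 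Fixing $\e_n\to 0$ and letting $C_n$ be the maximal cylinder containing $C_{n,\e_n}$, the bound $\Area(C_n)\le\Area(X_n,\omega_n)$ keeps $\{C_n\}$ of bounded geometry, so \eqref{I:boundary:CnConvergence} applies and produces a cylinder $C'$ with $f_n$ carrying the core curve of $C_n$ to one freely homotopic to the core curve of $C'$; but that core curve is also freely homotopic to the core curve of $C$, and a cylinder on a fixed translation surface is determined by the free homotopy class of its core curve, so $C'=C$. The same area comparison forces the circumference, height, and modulus of $C_n$ to converge to those of $C$, giving \eqref{I:boundary:FindingCn}.

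For \eqref{I:boundary:TwistIsom}, write $C_n$ for the cylinder from \eqref{I:boundary:FindingCn} associated to $C\in\bfC$ and $\gamma_{C_n}$ for its core curve, which lies in $(X_n,\omega_n)\setminus P_n$. Two observations. (i) $f_n^*(\gamma_C^*)=\gamma_{C_n}^*$: a cocycle representing $\gamma_C^*$ may be chosen supported in a thin annular neighborhood of the core curve of $C$, which avoids $\Sigma_\infty$ and hence $P_n$, and on that neighborhood $f_n$ is an isometric embedding taking $\gamma_{C_n}$ onto the core curve of $C$. (ii) Every $v\in V_n$ has $\langle v,\gamma_{C_n}\rangle=0$: this intersection number is computed in a neighborhood of $\gamma_{C_n}$ on which $f_n$ is an embedding, so it equals $\langle (f_n)_*v,\text{core curve of }C\rangle=0$ since $(f_n)_*v=0$. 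Now $f_n^*$ is injective on $H^1(X_\infty,\Sigma_\infty;\bC)$ (being dual to $(f_n)_*$, which is surjective on relative homology because $f_n$ is a degree-one map sending $\Sigma_n$ onto $\Sigma_\infty$), so by (i) it maps $\mathrm{span}_\bC\{\gamma_C^*:C\in\bfC\}$ isomorphically onto $\mathrm{span}_\bC\{\gamma_{C_n}^*:C\in\bfC\}$; intersecting with tangent spaces and using the isomorphism $f_n^*:T_{(X_\infty,\omega_\infty)}\cM'\cong T_{(X_n,\omega_n)}\cM\cap\Ann(V_n)$ of Theorem~\ref{T:BoundaryTangent2},
\[ f_n^*\big(\mathrm{Twist}(\bfC,\cM')\big)=\mathrm{span}_\bC\{\gamma_{C_n}^*:C\in\bfC\}\cap T_{(X_n,\omega_n)}\cM\cap\Ann(V_n). \]
By (ii) the first two factors on the right already lie in $\Ann(V_n)$, so the right-hand side equals $\mathrm{span}_\bC\{\gamma_{C_n}^*:C\in\bfC\}\cap T_{(X_n,\omega_n)}\cM=\mathrm{Twist}(f_n^*(\bfC),\cM)$; since $f_n^*$ is injective this is the asserted isomorphism.

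I expect the main obstacle to be extracting and justifying the structural facts (a)--(c) about the collapse map in exactly the form used above — in particular the bookkeeping with marked points and with relative rather than absolute (co)homology, and the verification that the cocycle and cycle representatives in (i) and (ii) really can be taken supported away from $P_n$ for all large $n$. That is where the argument leans on Mirzakhani--Wright \cite{MirWri} and Chen--Wright \cite{ChenWright}; the remainder is formal manipulation of the twist spaces and Theorem~\ref{T:BoundaryTangent2}.
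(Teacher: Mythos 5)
Your proposal is correct and, for the substantive part \eqref{I:boundary:TwistIsom}, follows essentially the same route as the paper: identify $f_n^*\gamma_C^*$ with $\gamma_{C_n}^*$, show that $\gamma_{C_n}^*$ annihilates the vanishing cycles, and then restrict the isomorphism of Theorem \ref{T:BoundaryTangent2} to the twist spaces. The one real difference is how the containment $\gamma_{C_n}^*\in\mathrm{Ann}(V_n)$ is obtained: you argue directly via naturality of the intersection pairing (a cocycle dual to the core curve is supported in an annulus avoiding the degenerating region, so $\langle\gamma_{C_n}^*,v\rangle=\langle\gamma_C^*,(f_n)_*v\rangle=0$), whereas the paper gets it for free by applying Theorem \ref{T:BoundaryTangent2} to the ambient stratum $\cH\ni(X_\infty,\omega_\infty)$, using that $\gamma_C^*$ is always tangent to the stratum. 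Both are valid; the paper's version avoids any discussion of supports of representatives, while yours is more self-contained and makes the injectivity of $f_n^*$ on the span of the $\gamma_C^*$ explicit (the paper leaves this implicit). For parts \eqref{I:boundary:FindingCn} and \eqref{I:boundary:CnConvergence} the paper simply cites Mirzakhani--Wright \cite[Lemma 2.15]{MirWri}; your sketch from the structural facts (a)--(c) about the collapse map is a reasonable reconstruction of that lemma, but as you acknowledge, those facts themselves are exactly what the citation supplies, so nothing is gained by re-proving them here.
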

\begin{proof}
The first two claims are Mirzakhani-Wright \cite[Lemma 2.15]{MirWri}. Given a cylinder $C$ let $\gamma_C^*$ denote the cohomology class that records intersections with the core curve of $C$. For the final claim, notice that, if $\cH$ is the stratum containing $(X_\infty, \omega_\infty)$, then $\gamma_C^* \in T_{(X_\infty, \omega_\infty)} \cH$. Letting $C_n$ be as in \eqref{I:boundary:FindingCn}, it follows from Theorem \ref{T:BoundaryTangent2} that $\gamma_{C_n}^* \in \mathrm{Ann}(V_n)$. In particular, $\mathrm{Twist}(f_n^*(\bfC), \cM) \subseteq \mathrm{Ann}(V_n)$, so the result follows by Theorem \ref{T:BoundaryTangent2}.
%
\end{proof}

In general it can be cumbersome to establish convergence in the Mirzakhani-Wright partial compactification, but it is straightforward in the following case.

\begin{defn}\label{D:Collapse}
Suppose that $\bfC$ is an equivalence class of generic cylinders on a surface $(X, \omega)$ in an invariant subvariety $\cM$. Let $v \in \mathrm{Twist}(\bfC, \cM)$ so that there is a number $t_v > 0$ so that the \emph{collapse path}, i.e. $(X, \omega) + tv$ for $t \in [0, t_v)$, diverges as $t$ approaches $t_v$ and so that each cylinder in $\bfC$ persists along the collapse path. By Apisa-Wright \cite[Lemma 4.9]{ApisaWrightHighRank}, this sequence converges in the Mirzakhani-Wright partial compactification to a surface $\Col_v(X, \omega)$ in a component $\cM_v$ of the boundary of $\cM$. We will let $\bfC_v$ denote the cylinders in $\bfC$ whose height go to zero along the collapse path. 

Similarly, when $\bfC$ is collection of horizontal cylinders with $\sigma_{\bfC} \in T_{(X, \omega)} \cM$, we will define $\Col_{\bfC}(X, \omega)$ to be the limit of the path $(X, \omega) - t(i\sigma_{\bfC})$ for $t \in [0, 1)$ regardless of whether or not this path diverges. The surface $\Col_{\bfC}(X, \omega)$ can be thought of as the result of vertically collapsing $\bfC$. Moreover, this definition makes sense for any collection of cylinders $\bfC$ (by rotating the surface so the cylinders in $\bfC$ are horizontal, collapsing, and then rotating back). When $\bfC$ contains a vertical saddle connection, $\Col_{\bfC}(X, \omega) = \Col_{v}(X, \omega)$ for $v = -i \sigma_{\bfC}$ and we will let $\cM_{\bfC}$ denote $\cM_v$.
\end{defn}

In the sequel we will rely on the general theory of cylinder degenerations developed in Apisa-Wright \cite[Section 4]{ApisaWrightHighRank}. We begin by making the following definitions.

\begin{defn}\label{D:InvolvedWithRel}
Suppose that $\bfC$ is an equivalence class cylinders on a surface $(X, \omega)$ in an invariant subvariety $\cM$. Say that $\bfC$ is \emph{involved with rel} if some vector in $\ker(p) \cap T_{(X, \omega)}(\cM)$ evaluates non-trivially on a cross curve of a cylinder in $\bfC$. 

Let $v \in \mathrm{Twist}(\bfC, \cM)$. Say that $v$ is \emph{typical} if all the cylinders in $\bfC_v$ have ratios of heights that are constant on a neighborhood of $(X, \omega)$ in $\cM$. 
\end{defn}

\begin{rem}\label{R:TypicalOrthogonalConstruction}
Note that if $\bfC$ is horizontal and $v$ is typical, then after perhaps shearing the cylinders in $\bfC$, the imaginary part of $v$ can be seen to define a typical cylinder degeneration. 
\end{rem}

\begin{thm}\label{T:Typical}
Suppose that $\bfC$ is a generic equivalence class of cylinders on a surface $(X, \omega)$ in an invariant subvariety $\cM$. Then there is a typical cylinder degeneration $v \in \mathrm{Twist}(\bfC, \cM)$ and for any such $v$, $\dim(\cM_v) = \dim(\cM) - 1$ and the cylinders in $\Col_v(\bfC)$ remain generic. Moreover, if $\bfC$ is involved with rel, then $\mathrm{rank}(\cM_v) = \mathrm{rank}(\cM)$. Otherwise, $\mathrm{rank}(\cM_v) = \mathrm{rank}(\cM)-1$. 
\end{thm}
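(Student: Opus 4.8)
The plan is to analyze the collapse path $(X,\omega) + tv$ for a typical $v \in \mathrm{Twist}(\bfC,\cM)$ using the machinery of Apisa-Wright \cite[Section 4]{ApisaWrightHighRank} together with the boundary tangent formula (Theorems \ref{T:BoundaryTangent2} and \ref{T:CylinderBoundaryTangentFormula}). First I would establish existence of a typical $v$: since $\bfC$ is generic, $\sigma_{\bfC} \in \mathrm{Twist}(\bfC,\cM)$, and one can find a direction inside $\mathrm{Twist}(\bfC,\cM)$ along which a proper, nonempty subset $\bfC_v$ of cylinders collapses; by passing to a direction for which the ratios of collapsing heights are locally constant (which is possible because the space of such ratios is cut out by the linear equations defining $\cM$), we obtain a typical degeneration. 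The hypothesis that $\bfC$ is an equivalence class, hence that $p(\sigma_{\bfC})$ spans a line in $p(T_{(X,\omega)}\cM)$ by Theorem \ref{T:CylECTwistSpace}, is what forces the dimension drop to be exactly one rather than more.

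For the dimension count, I would use Theorem \ref{T:BoundaryTangent2}: $T_{\Col_v(X,\omega)}\cM_v \cong T_{(X,\omega)}\cM \cap \mathrm{Ann}(V_v)$, where $V_v$ is the span of the vanishing cycles. For a typical degeneration collapsing a single equivalence class, the vanishing cycles are spanned (in the relevant quotient) by the core curves of the cylinders in $\bfC_v$, and these core curves are all homologous up to the locally constant ratios — so $\mathrm{Ann}(V_v) \cap T_{(X,\omega)}\cM$ is a hyperplane in $T_{(X,\omega)}\cM$, giving $\dim(\cM_v) = \dim(\cM) - 1$. That the cylinders in $\Col_v(\bfC)$ remain generic follows from part \eqref{I:boundary:TwistIsom} of Theorem \ref{T:CylinderBoundaryTangentFormula}: $f_n^*$ identifies $\mathrm{Twist}(\Col_v(\bfC), \cM_v)$ with $\mathrm{Twist}(f_n^*(\Col_v(\bfC)), \cM)$, and genericity is detected by whether the twist space contains enough directions keeping boundary saddle connections parallel, a condition transported across this isomorphism.

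The heart of the matter is the rank dichotomy, and this is where I expect the main obstacle. By definition, $\mathrm{rank}(\cM_v)$ is half of $\dim p(T_{\Col_v(X,\omega)}\cM_v)$, so I need to understand how the projection $p$ to absolute cohomology interacts with intersecting $T_{(X,\omega)}\cM$ with $\mathrm{Ann}(V_v)$. The key distinction: if $\bfC$ is \emph{not} involved with rel, then no vector in $\ker(p)\cap T_{(X,\omega)}\cM$ evaluates nontrivially on a cross curve of $\bfC$, so the hyperplane condition $\mathrm{Ann}(V_v)$ genuinely cuts down the absolute part — it kills the symplectic complement pairing with $p(\sigma_{\bfC})$ — reducing the rank by one (the collapse is "absolute", as in the standard picture of collapsing a cylinder onto a saddle connection). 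If $\bfC$ \emph{is} involved with rel, then the constraint imposed by $\mathrm{Ann}(V_v)$ can be absorbed entirely within $\ker(p)$: there is a rel vector vanishing where needed, so the dimension drop happens in the rel direction and $p(T_{(X,\omega)}\cM) \cap \mathrm{Ann}(V_v)$ still has full rank, i.e. $\mathrm{rank}(\cM_v) = \mathrm{rank}(\cM)$. Making this precise requires carefully tracking, on the boundary surface, which classes survive under $f_n^*$ and checking that $p(\sigma_{\bfC})$ lands in (respectively outside) the symplectic span of the surviving absolute classes; I would handle this by choosing an explicit symplectic basis adapted to $p(\sigma_{\bfC})$ and its symplectic dual, and computing how each basis element restricts to $\mathrm{Ann}(V_v)$. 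The subtlety is that "involved with rel" is a condition about $\ker(p)$ on $(X,\omega)$, and one must verify it controls the behavior of $p$ after passing to the boundary; invoking the compatibility of $f_n^*$ with $p$ (both are restriction/pullback maps on cohomology, fitting into a commuting square) should close this gap.
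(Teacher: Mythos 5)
Your overall architecture (boundary tangent formula plus a case split on involvement with rel) matches the spirit of the argument, but two concrete problems prevent this from being a proof. First, your dimension count rests on the claim that the vanishing cycles of a typical degeneration are spanned by the core curves of the cylinders in $\bfC_v$. This is false: under a vertical collapse the circumferences of the cylinders in $\bfC_v$ stay bounded away from zero, so their core curves do not vanish. What vanishes are cross saddle connections of the collapsed cylinders, i.e.\ saddle connections joining the top boundary of a cylinder in $\bfC_v$ to its bottom boundary, whose holonomy goes to zero; this is exactly the saddle connection $s$ that the paper's argument isolates. The conclusion that $\mathrm{Ann}(V_v)\cap T_{(X,\omega)}\cM$ is a hyperplane is correct for a typical degeneration, but your justification for it is not; the correct one is that all vanishing cycles define collinear functionals on $T_{(X,\omega)}\cM$ because the collapsing heights have locally constant ratios, and this (together with existence of typical $v$ and genericity of $\Col_v(\bfC)$) is what the paper imports wholesale from Apisa--Wright.

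Second, the rank dichotomy, which you correctly identify as the heart of the matter, is left as a plan ("making this precise requires\dots", "should close this gap") rather than carried out, and your sketch of the rank-dropping case hides a parity obstruction. If the total dimension drops by one and the rank drops by one, then $\dim p(T\cM_v) = 2\,\mathrm{rank}(\cM)-2$, so the absolute part must drop by \emph{two} while the rel must \emph{increase} by one; the naive picture of "$\mathrm{Ann}(V_v)$ cutting one absolute direction" would produce an odd-dimensional image of $p$, impossible since $p(T\cM_v)$ is symplectic. Moreover, $p$ on the boundary surface is a different map (the topology of $\Col_v(X,\omega)$ has changed), so one cannot compute $\mathrm{rank}(\cM_v)$ by restricting the original $p$ to $T\cM\cap\mathrm{Ann}(V_v)$. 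The paper handles this case by noting that, when $\bfC$ is not involved with rel, Theorem \ref{T:CylECTwistSpace} forces the vanishing functional to be collinear to a cross saddle connection $s$ on which no rel vector evaluates nontrivially, and then invoking Apisa--Wright (Lemma 3.8) for $\mathrm{rank}(\cM_v)\le\mathrm{rank}(\cM)-1$ and (Corollary 3.7) for the reverse inequality. Your heuristic for the involved-with-rel case (absorb the annihilator condition into $\ker(p)$) is the right intuition, but it too depends on identifying the vanishing cycle with a cross curve, which your proposal gets wrong.
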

\begin{proof}
Every claim except the final one is contained in Apisa-Wright \cite[Lemmas 6.5, 11.2, and 11.4]{ApisaWrightHighRank}. Suppose now that $\bfC$ is not involved with rel. This implies that the twist space of $\bfC$ is spanned by $\sigma_{\bfC}$ (by Theorem \ref{T:CylECTwistSpace}). In particular, there is a saddle connection $s$ contained in $\bfC$ whose length goes to zero on $\Col_v(X, \omega)$. Since $\cM_v$ has codimension one, every element of $(T_{(X, \omega)} \cM)^*$ generated by a vanishing cycle is collinear to $s$. Since $\bfC$ is not involved with rel, $\mathrm{rank}(\cM_v) \leq \mathrm{rank}(\cM)-1$ by Apisa-Wright \cite[Lemma 3.8]{ApisaWrightHighRank}. Equality must hold by Apisa-Wright \cite[Corollary 3.7]{ApisaWrightHighRank}.
\end{proof}

To close this subsection, we discuss the way in which translation covers interact with the boundary. Recall that a \emph{translation cover} $f: (X, \omega) \ra (Y, \eta)$ is a holomorphic branched cover $f: X \ra Y$ so that $f^* \eta = \omega$. 

\begin{con}
For translation covers the image of any marked point is a marked point and all branch points are marked. 
\end{con}

The following result morally says that a translation cover can be ``collapsed to a translation cover on the boundary".

\begin{lem}[Lemma 2.2 in Apisa-Wright \cite{ApisaWrightDiamonds}]\label{L:DefinitionCol(f)}
Suppose that $f: (X, \omega) \ra (Y, \eta)$ is a translation covering. Let $\bfC\subset (X,\omega)$ be a collection of parallel cylinders such that $f^{-1}(f(\overline\bfC)) = \overline\bfC$ and $\overline\bfC \neq (X,\omega)$. Then there is a translation cover $$\Col_{\bfC}(f): \Col_{\bfC}(X, \omega) \ra \Col_{f(\bfC)}(Y, \eta)$$ of the same degree.
\end{lem}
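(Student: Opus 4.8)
The plan is to realize both $\Col_{\bfC}(X,\omega)$ and $\Col_{f(\bfC)}(Y,\eta)$ as limits of explicit one-parameter degenerations, to check that $f$ persists as a translation cover along each degeneration, and then to pass to the limit. First I would use the $\GL$-equivariance of translation covers and of the operation $\Col$ to rotate so that $\bfC$, and hence $f(\bfC)$, is horizontal. By Definition \ref{D:Collapse}, $\Col_{\bfC}(X,\omega)$ is then the limit in the Mirzakhani--Wright partial compactification of the path $(X,\omega)_t := (X,\omega) - t\,i\sigma_{\bfC}$, $t\in[0,1)$, which is simply $(X,\omega)$ with the height of each cylinder of $\bfC$ rescaled by $1-t$; likewise $\Col_{f(\bfC)}(Y,\eta)$ is the limit of $(Y,\eta)_t := (Y,\eta) - t\,i\sigma_{f(\bfC)}$. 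The hypotheses $f^{-1}(f(\overline{\bfC}))=\overline{\bfC}$ and $\overline{\bfC}\neq(X,\omega)$ ensure that $\overline{f(\bfC)}\neq (Y,\eta)$ as well, so both degenerations are nontrivial.

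The crux is that these two families are intertwined by $f$, which comes down to the identity $f^*\sigma_{f(\bfC)} = \sigma_{\bfC}$. The hypothesis $f^{-1}(f(\overline{\bfC})) = \overline{\bfC}$ says that for each cylinder $C'\in f(\bfC)$ the preimage $f^{-1}(\overline{C'})$ is a union of closed cylinders $\overline{C_1},\dots,\overline{C_k}$, each $C_j$ lying in $\bfC$, and that conversely every cylinder of $\bfC$ arises this way. Each restriction $f|_{C_j}\colon C_j\to C'$ is a proper local isometry of flat cylinders, hence a covering; therefore $\mathrm{height}(C_j)=\mathrm{height}(C')$ and, comparing oriented intersection numbers (the $f$-preimage of the core curve of $C'$ is precisely the disjoint union of the core curves of the $C_j$), $f^*\gamma_{C'}^* = \sum_j \gamma_{C_j}^*$. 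Multiplying by $\mathrm{height}(C')$ and summing over $C'\in f(\bfC)$ gives $f^*\sigma_{f(\bfC)} = \sigma_{\bfC}$.

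With this in hand, for every $t\in[0,1)$ the forms $\omega_t$ and $\eta_t$ remain nondegenerate and lie in their original strata, and in period coordinates $[\omega_t] = [\omega]-t\,i\sigma_{\bfC} = f^*[\eta]-t\,f^*(i\sigma_{f(\bfC)}) = f^*[\eta_t]$. Since the topological branched cover $f$ is fixed and period coordinates determine the translation structure, this forces $\omega_t=f^*\eta_t$; that is, $f\colon (X,\omega)_t\to (Y,\eta)_t$ is a translation cover of degree $\deg f$ for all $t\in[0,1)$. It remains to let $t\to 1$. Outside the cylinders nothing is deformed, so $f$ restricts, independently of $t$, to a branched cover $X\setminus\overline{\bfC}\to Y\setminus\overline{f(\bfC)}$ of degree $\deg f$ (here one uses $f^{-1}(\overline{f(\bfC)})=\overline{\bfC}$ again). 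The limit $\Col_{\bfC}(X,\omega)$ is obtained from $X\setminus\overline{\bfC}$ by gluing, for each $C\in\bfC$, the two boundary curves left behind by $C$ via the identification that collapses $C$ onto its core curve, and similarly for $\Col_{f(\bfC)}(Y,\eta)$; because each $f|_{C_j}$ is a covering of cylinders, $f$ is compatible with these gluing data and descends to a continuous map $\Col_{\bfC}(f)\colon \Col_{\bfC}(X,\omega)\to \Col_{f(\bfC)}(Y,\eta)$. Off the finitely many cone points of $\Col_{\bfC}(X,\omega)$ this map agrees with $f$ and is thus an orientation-preserving local isometry; a continuous map between closed translation surfaces that is a local isometry off a finite set is a branched translation cover pulling back the corresponding one-form, and its degree equals $\deg f$ because on the dense open set $Y\setminus\overline{f(\bfC)}$ it coincides with the degree-$\deg f$ branched cover above. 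When $\Col_{f(\bfC)}(Y,\eta)$ is disconnected one argues componentwise, invoking Chen--Wright \cite{ChenWright} for the structure of the boundary; ``same degree'' then means the generic fibre has $\deg f$ points.

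The hard part is this last limiting step: one must verify that the Mirzakhani--Wright limit of the family $f\colon (X,\omega)_t\to (Y,\eta)_t$ really is the naive ``delete-and-reglue'' collapse of $f$, and that the resulting map is holomorphic of the expected degree even when the target degenerates or disconnects. A cleaner way to organize this is via vanishing cycles: because $\overline{\bfC}=f^{-1}(\overline{f(\bfC)})$, the map $f$ carries the vanishing cycles of the $X$-family into those of the $Y$-family, so $f$ commutes with the collapse maps furnished by Theorem \ref{T:BoundaryTangent2} and descends to the boundary; the translation-cover property and the degree count then follow from $f$ being a local isometry off a finite set together with $f^{-1}(\overline{f(\bfC)})=\overline{\bfC}$.
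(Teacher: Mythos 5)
This lemma is imported verbatim from Apisa--Wright \cite{ApisaWrightDiamonds} and the present paper gives no proof of it, so there is nothing internal to compare against; judged on its own, your argument is correct and is essentially the intended one. The heart of the matter is exactly what you identify: the hypothesis $f^{-1}(f(\overline{\bfC}))=\overline{\bfC}$ forces each component of $f^{-1}(C')$, $C'\in f(\bfC)$, to be a maximal cylinder of $\bfC$ covering $C'$ with the same height, whence $f^*\sigma_{f(\bfC)}=\sigma_{\bfC}$, the two collapse paths are intertwined by $f$, and the covering descends to the limit. The only informal step is deducing $\omega_t=f^*\eta_t$ from equality of periods; this is cleaner if phrased geometrically (the deformation replaces each $C\in\bfC$ by a cylinder of height $(1-t)h_C$ and each $C'\in f(\bfC)$ by one of height $(1-t)h_{C'}$, leaving the complements untouched, and $f$ visibly respects this rescaling), but as written it is a correct use of the fact that loci of covers are linear in period coordinates.
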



We now recall terminology that will be used when surfaces in the boundary of $\cM$ have multiple components. 

\begin{defn}\label{D:Prime}
Suppose that $\cM'$ is an invariant subvariety in a product of strata $\cH_1 \times \hdots \cH_n$. $\cM'$ is not \emph{prime} if, after perhaps re-indexing, there is a constant $1 \leq k < n$, so that $\cM' = \cM'' \times \cM'''$ where $\cM''$ is an invariant subvariety in $\cH_1 \times \hdots \times \cH_k$ and $\cM'''$ is an invariant subvariety in $\cH_{k+1} \times \hdots \times \cH_n$. By Chen-Wright \cite[Lemma 7.10]{ChenWright}, $\cM'$ has a unique prime decomposition, i.e. $\cM'$ can be represented as a product of prime invariant subvarieties.
\end{defn}

\section{Basics about minimal homological dimension}

\begin{proof}[Proof of Theorem \ref{T:MHDImpliesHomology}:]
Suppose first that $\cM$ has minimal homological dimension. Let $\bfC$ be an equivalence class of cylinders on a surface $(X, \omega)$ in $\cM$. By applying Lemma \ref{L:Stable} \eqref{I:PerturbToStable}, we may suppose without loss of generality that $(X, \omega)$ is cylindrically stable and that $\bfC$ is contained in an equivalence class $\bfC_1$ of horizontal cylinders (applying the lemma may have caused $\bfC$ to become a proper subset of an equivalence class of cylinders). By Lemma \ref{L:Stable} \eqref{I:CCSpan}, the horizontal core curves of $(X, \omega)$ span a subset of $T_{(X,\omega)}(\cM)^*$  of dimension $\rank(\cM)$. Since any rel vector evaluates to zero on these core curves, in fact the horizontal core curves on $(X, \omega)$ span a subset of $p(T_{(X,\omega)}(\cM))^*$  of dimension $\rank(\cM)$.

Let $W$ be the span of the horizontal core curves of $(X, \omega)$ in $H_1(X)$. By definition of minimal homological dimension, $\dim W \leq \mathrm{rank}(\cM)$. Let $\phi: H_1(X) \ra p\left( T_{(X, \omega)} \cM \right)^*$ be the linear map given by associating to each homology class its corresponding functional on cohomology. Since $\dim \phi(W) = \mathrm{rank}(\cM)$, $\phi \restriction_W$ is an isomorphism onto its image. By \cite[Lemma 4.7]{Wcyl} any two core curves of a cylinder in $\bfC$ define collinear elements of $p\left( T_{(X, \omega)} \cM \right)^*$ and hence the core curves must be collinear in $H_1(X)$, in particular any two core curves of cylinders in $\bfC$ are homologous.

Now suppose that for every surface $(Y, \eta)$ in $\cM$ and every equivalence class $\bfC$ of cylinders on $(Y, \eta)$, the core curves of cylinders in $\bfC$ are homologous. Let $(X, \omega)$ be a horizontally periodic surface in $\cM$. Letting $W$ be the span of the horizontal cylinder core curves in $H_1(X)$, we want to show that $\dim W \leq \mathrm{rank}(\cM)$. By Poincare duality, it suffices to show that $\mathrm{PD}(W)$, the span of the Poincare duals of the horizontal core curves, has dimension at most $\mathrm{rank}(\cM)$. If $\bfC$ is an equivalence class of horizontal cylinders and $\gamma$ is the core curve of a cylinder in $\bfC$, then its Poincare dual $\gamma^*$ is collinear to $\sigma_{\bfC}$ since all the cylinders in $\bfC$ have homologous core curves. This shows that $\mathrm{PD}(W)$ is contained in $p(T_{(X, \omega)} \cM)$. Moreover, $\mathrm{PD}(W)$ is Lagrangian since $W$ is Lagrangian and so $\dim W = \dim \mathrm{PD}(W) \leq \mathrm{rank}(\cM)$ as desired.  
\end{proof}

\begin{cor}\label{C:FieldDefinition}
If $\cM$ has minimal homological dimension, then $\mathbf{k}(\cM) = \mathbb{Q}$.
\end{cor}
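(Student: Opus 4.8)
The plan is to deduce this from Theorem~\ref{T:MHDImpliesHomology} together with Wright's description of the field of definition in terms of cylinders. Recall from Wright~\cite{Wcyl} that $\bk(\cM)$ is generated over $\mathbb{Q}$ by the ratios of circumferences of $\cM$-parallel cylinders, the ratios being taken over all $(X,\omega)\in\cM$ and all pairs of $\cM$-parallel cylinders on such a surface. Hence it suffices to show that each of these ratios equals $1$.

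To see this, fix $(X,\omega)\in\cM$ and a pair $C_1,C_2$ of $\cM$-parallel cylinders, with core curves $\gamma_1,\gamma_2$. Since $\cM$ has minimal homological dimension, Theorem~\ref{T:MHDImpliesHomology} shows that $\gamma_1$ and $\gamma_2$ are homologous (after choosing compatible orientations), so $[\gamma_1]=[\gamma_2]$ in $H_1(X;\mathbb{Z})$. As $\omega$ is closed, its period over a boundary vanishes, so $\int_{\gamma_1}\omega=\int_{\gamma_2}\omega$; since the circumference of a cylinder is the modulus of the period of its core curve, we get $\operatorname{circ}(C_1)=\operatorname{circ}(C_2)$. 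Thus every generating ratio equals $1\in\mathbb{Q}$, whence $\bk(\cM)=\mathbb{Q}$.

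The step requiring the most care is citing Wright's theorem in the right form: what is needed is not merely that the circumference ratio of two $\cM$-parallel cylinders lies in $\bk(\cM)$, but that such ratios \emph{generate} $\bk(\cM)$ — equivalently, that cylinders detect the entire field of definition. Once that is in place the remainder is immediate. Alternatively, one could show directly that $\cM$ contains square-tiled surfaces and appeal to the characterization of such $\cM$ as exactly those with field of definition $\mathbb{Q}$, but the cylinder-ratio route is the most economical.
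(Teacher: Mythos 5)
Your proof is correct and follows essentially the same route as the paper: both deduce from Theorem \ref{T:MHDImpliesHomology} that $\cM$-parallel cylinders have homologous core curves, hence equal circumferences, and then invoke Wright's \cite[Theorem 7.1]{Wcyl} characterization of $\bk(\cM)$ as generated by circumference ratios of equivalent cylinders. No further comment needed.
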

\begin{proof}
Given an equivalence class of cylinders $\bfC$, the ratio of lengths of core curves of cylinders in $\bfC$ is one since the core curves are all pairwise homologous. The claim now holds by Wright \cite[Theorem 7.1]{Wcyl}, which states that $\mathbf{k}(\cM)$ is the smallest field extension of $\mathbb{Q}$ containing the ratios of circumferences of cylinders in $\bfC$.
\end{proof}

\begin{lem}\label{L:MHD-Boundary}
Let $\cM'$ be a component of the boundary of an invariant subvariety $\cM$ containing connected surfaces. If $\cM$ has minimal homological dimension, then so does $\cM'$.
\end{lem}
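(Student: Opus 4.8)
The plan is to use the characterization of minimal homological dimension from Theorem \ref{T:MHDImpliesHomology}: $\cM$ has minimal homological dimension if and only if any two $\cM$-parallel cylinders have homologous core curves. So it suffices to show that any two $\cM'$-parallel cylinders on a connected surface $(X_\infty, \omega_\infty) \in \cM'$ have homologous core curves. First I would take such a pair $C, D$ of $\cM'$-parallel cylinders; by shrinking the equivalence class relation, they may be taken to lie in a single $\cM'$-equivalence class $\bfC$, and after perturbing within $\cM'$ I may assume $\bfC$ is a generic equivalence class. The key geometric input is that $(X_\infty, \omega_\infty)$ arises as a limit of surfaces $(X_n, \omega_n) \in \cM$ under a collapse map $f_n$, and by Theorem \ref{T:CylinderBoundaryTangentFormula}\eqref{I:boundary:FindingCn} each cylinder in $\bfC$ lifts to a cylinder on $(X_n, \omega_n)$ of nearly the same modulus, height, and circumference, with $f_n$ carrying the lift to (a homotope of) the original; write $f_n^*(\bfC)$ for these lifted cylinders.

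The heart of the argument is to transfer the $\cM'$-parallelism of $C$ and $D$ upstairs to $\cM$-parallelism of their lifts $C_n, D_n$, and then run homology. Two equivalence classes being parallel / the cylinders within an equivalence class being $\cM$-parallel is detected by collinearity of the associated $\gamma_C^*$ inside $p(T\cM)$, or more precisely by the twist-space structure (Theorem \ref{T:CylECTwistSpace}): on $(X_\infty,\omega_\infty)$ the class $\sigma_{\bfC}$ spans $p(\mathrm{Twist}(\bfC, \cM'))$ modulo $\ker p$. By Theorem \ref{T:CylinderBoundaryTangentFormula}\eqref{I:boundary:TwistIsom}, $f_n^*$ induces an isomorphism $\mathrm{Twist}(\bfC, \cM') \cong \mathrm{Twist}(f_n^*(\bfC), \cM)$; in particular $\gamma_{C_n}^*$ and $\gamma_{D_n}^*$ lie in this twist space, and since $f_n^*$ is induced by a map on cohomology it commutes with the statement "$\gamma_C^*$ and $\gamma_D^*$ are collinear modulo $\ker p$." But on $(X_n, \omega_n) \in \cM$, since $\cM$ has minimal homological dimension, Theorem \ref{T:MHDImpliesHomology} forces $C_n$ and $D_n$ to have homologous core curves in $H_1(X_n)$. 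Now push this back down: $f_n$ is a continuous (collapse) map, so $(f_n)_*[\gamma_{C_n}] = [\gamma_C]$ and $(f_n)_*[\gamma_{D_n}] = [\gamma_D]$ in $H_1(X_\infty)$ up to the homotopies recorded above, hence $[\gamma_C] = [\gamma_D]$ in $H_1(X_\infty, \mathbb{R})$. This gives what we want.

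There is a bookkeeping point that I expect to be the main obstacle: ensuring that $C$ and $D$ really do lie in a common $\cM'$-equivalence class of generic cylinders, and that their lifts $C_n, D_n$ genuinely lie in a common $\cM$-equivalence class rather than merely being individually parallel to the core-curve direction. Two cylinders are $\cM$-parallel iff they persist parallel in a neighborhood, which is a strictly finer condition than collinearity of Poincaré duals; so I need to argue that the isomorphism in Theorem \ref{T:CylinderBoundaryTangentFormula}\eqref{I:boundary:TwistIsom} respects the partition of a generic equivalence class into $\cM$-equivalence subclasses, or else reduce to the case where $\bfC$ is a single $\cM'$-equivalence class to begin with (perturbing to make it generic, as in Definition \ref{D:Cylinder}, and invoking that generic equivalence classes persist). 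Once the correct matching of equivalence classes upstairs and downstairs is in hand, the homology transfer via $(f_n)_*$ is routine, using that vanishing cycles lie in the kernel of $(f_n)_*$ and that the lifted core curves are disjoint from (hence unaffected by) collapsing. A secondary point to check is that connectedness of $(X_\infty, \omega_\infty)$ is used exactly to guarantee that $(f_n)_*$ is the expected surjection on $H_1$ with kernel the vanishing cycles, as in Theorem \ref{T:BoundaryTangent2}; for disconnected limits the statement can fail, which is why the hypothesis is imposed.
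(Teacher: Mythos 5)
Your proposal founders exactly at the point you flag as ``the main obstacle,'' and that obstacle is not a bookkeeping issue that can be discharged by choosing $\bfC$ more carefully: it is the actual content of the lemma. The claim that the lifts $C_n, D_n$ of two $\cM'$-parallel cylinders are $\cM$-parallel is false in general. A single $\cM'$-equivalence class on the boundary can be the limit of \emph{several distinct} $\cM$-equivalence classes upstairs, because the boundary has lower dimension and more cylinders become equivalent there; this is not a pathology but a typical occurrence (see Lemma \ref{L:BaseCase:BoundaryStructure}, where $\ColTwo(\bfC_1)$ and $\ColTwo(\bfC_3)$ become $\cM_{\bfC_2}$-equivalent even though $\bfC_1$ and $\bfC_3$ are distinct classes on $(X',\omega')$). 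The isomorphism of Theorem \ref{T:CylinderBoundaryTangentFormula}\eqref{I:boundary:TwistIsom} does not rule this out: $\mathrm{Twist}(\bfB_n,\cM)$ can be several-dimensional, spanned by the standard deformations of the various upstairs classes, and still match $\mathrm{Twist}(\bfB,\cM')$, which is spanned by $\sigma_{\bfB}$ together with rel. So applying Theorem \ref{T:MHDImpliesHomology} to $\cM$ only tells you that core curves are homologous \emph{within each} upstairs equivalence class; it gives you nothing across different classes, which is precisely the comparison you need.

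The paper's proof embraces the splitting rather than trying to avoid it. Writing $f^*\sigma_{\bfB}=\sum_i\sigma_i$ according to the $\cM$-equivalence classes $\bfC_1,\dots,\bfC_m$ meeting the lift, it observes that these classes are pairwise disjoint (using minimal homological dimension of $\cM$), so Lemma \ref{L:TwistSpaceSumDecomposition} puts each $\sigma_i$ individually in $T_{(X_n,\omega_n)}\cM$. Pushing each $\sigma_i$ back to the boundary gives tangent vectors $\sigma_{\bfA_i}\in T_{(Y,\eta)}\cM'$ supported on sub-collections of the single class $\bfB$, and Theorem \ref{T:CylECTwistSpace} applied \emph{downstairs} forces each $p(\sigma_{\bfA_i})$ to be collinear with $p(\sigma_{\bfB})$; combined with the within-class homology inherited from $\cM$, this yields homology across all of $\bfB$. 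Your proposal contains neither the disjointness/decomposition step nor the downstairs application of Theorem \ref{T:CylECTwistSpace}, so the cross-class comparison is missing. A secondary omission: you need to handle arbitrary surfaces in $\cM'$, not only those realized as limits of sequences in $\cM$; the paper reduces to the latter via a dense-orbit surface and $\mathrm{GL}(2,\mathbb{R})$-invariance, which your sketch only gestures at.
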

\begin{proof}
Let $\cM'$ be a component of the boundary of $\cM$. Let $(X_n, \omega_n)$ be a sequence of surfaces in $\cM$ that converge to a connected surface $(Y, \eta)$ in $\cM'$. By Chen-Wright \cite[Proposition 2.5]{ChenWright} (see also Mirzakhani-Wright \cite[Proposition 2.6]{MirWri}), we may suppose without loss of generality that $(Y, \eta)$ has dense orbit in $\cM'$.

Let $(Z, \zeta)$ be any surface in $\cM'$ and let $\bfD$ be an equivalence class of cylinders on it. By Theorem \ref{T:MHDImpliesHomology}, we wish to show that the core curves of cylinders in $\bfD$ are pairwise homologous. The cylinders in $\bfD$ persist in a neighborhood $U$ of $(Z, \zeta)$ in $\cM'$. Since $(Y, \eta)$ has dense orbit in $\cM'$ there is an element $g$ of $\mathrm{GL}(2, \mathbb{R})$ so that $g \cdot (Y, \eta) \in U$. Let $\bfB$ denote the cylinders on $(Y, \eta)$ equivalent to the ones in $g^{-1}(\bfD)$ (notice that moving from $(Z, \zeta)$ to $g\cdot (Y, \eta)$ could have created new cylinders equivalent to those in $\bfD$). It suffices to show that the core curves of cylinders in $\bfB$ are pairwise homologous.

For sufficiently large $n$, $\bfB$ corresponds to a collection of cylinders $\bfB_n$ on $(X_n, \omega_n)$ (by Theorem \ref{T:CylinderBoundaryTangentFormula} \eqref{I:boundary:FindingCn}) and there is an isomorphism
\[ f^*: T_{(Y, \eta)} \cM' \ra \mathrm{Ann}(V_n) \cap T_{(X_n, \omega_n)} \cM \]
where $f$ is the collapse map, the isomorphism is the one in Theorem \ref{T:BoundaryTangent2}, and $V_n$ is the collection of vanishing cycles on $(X_n, \omega_n)$. Fix such a sufficiently large $n$ and suppose that the cylinders in $\bfB_n$ belong to $\cM$-equivalence classes $\bfC_1, \hdots, \bfC_m$. Let $\sigma$ denote $f^* \sigma_{\bfB}$. This element belongs to $T_{(X_n, \omega_n)} \cM$ by the cylinder deformation theorem and Theorem \ref{T:CylinderBoundaryTangentFormula} \eqref{I:boundary:TwistIsom}. 

Write $\sigma = \sum_{i=1}^m \sigma_i$ where $\sigma_i = \sum_{C \in \bfB_n \cap \bfC_i} a_C \gamma_C^*$ where $a_C \in \mathbb{C}$. For $i \ne j$, notice that $\bfC_i'$ and $\bfC_j'$ are disjoint since both collections of cylinders consists of cylinders with homologous core curves and since the cylinders in $\bfB_n \cap \bfC_i$ are disjoint from those in $\bfB_n \cap \bfC_j$. By Lemma \ref{L:TwistSpaceSumDecomposition}, it follows that $\sigma_i \in T_{(X_n, \omega_n)} \cM$ for all $i$.

By Theorem \ref{T:CylinderBoundaryTangentFormula} \eqref{I:boundary:TwistIsom}, $\sigma_i \in \mathrm{Ann}(V_n)$ for all $i$. Let $\bfA_i$ be the cylinder homotopic to the cylinders in $f\left( \bfB_n \cap \bfC_i \right)$. Note that the cylinders in $\bfA_i$ have homologous core curves since this is true of the cylinders in $\bfB_n \cap \bfC_i$. By Theorem \ref{T:CylinderBoundaryTangentFormula} \eqref{I:boundary:TwistIsom}, $\sigma_{\bfA_{i}} = \left( f^* \right)^{-1}\left( \sigma_i \right)$ is an elements of $T_{(Y, \eta)} \cM'$. By Theorem \ref{T:CylECTwistSpace}, for any $i$, there is a nonzero constant $a_i$ so that $\sigma_{\bfB} - a_i \sigma_{\bfA_{i}}$ is zero in absolute cohomology. In particular, this implies that $p(\sigma_{\bfA_i})$ is collinear to $p(\sigma_{\bfA_j})$ for any $i$ and $j$. Since the core curves of cylinders in $\bfA_{i}$ are pairwise homologous, this shows that the core curves of cylinders in $\bfB$ are pairwise homologous as desired.
\end{proof}

For the following it will be useful to recall the notation for cylinder degenerations established in Definition \ref{D:Collapse}.

\begin{cor}\label{C:MHD-Boundary:Connected}
Let $\bfC$ be an equivalence class of generic cylinders on $(X, \omega)$ in an invariant subvariety $\cM$ of minimal homological dimension. Suppose that $v \in \mathrm{Twist}(\bfC, \cM)$ defines a cylinder degeneration and that $\overline{\bfC} \ne (X, \omega)$. Then $\Col_v(X, \omega)$ is connected and $\cM_v$ has minimal homological dimension. 
\end{cor}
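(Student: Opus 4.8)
The two conclusions are linked: once the connectedness of $Y:=\Col_v(X,\omega)$ is in hand, the assertion that $\cM_v$ has minimal homological dimension follows immediately. Indeed, by Definition~\ref{D:Collapse} the space $\cM_v$ is a component of the boundary of $\cM$, so if $Y$ is connected then $\cM_v$ contains a connected surface and Lemma~\ref{L:MHD-Boundary} applies verbatim. Thus the entire problem is to show that $Y$ is connected.

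I would begin by translating this into a topological statement about the collapse map $f\colon (X,\omega)\ra Y$. This map collapses precisely the cylinders in $\bfC_v$, which amounts to pinching their core curves and passing to the normalization; consequently the connected components of $Y$ correspond bijectively to the connected components of the surface obtained from $X$ by cutting along the core curves of the cylinders in $\bfC_v$. Minimal homological dimension now enters through Theorem~\ref{T:MHDImpliesHomology}: the core curves of the cylinders in $\bfC$, hence of those in $\bfC_v$, are pairwise homologous. Because two distinct disjoint homologous simple closed curves together separate a surface, cutting $X$ along the core curves of $\bfC_v$ disconnects it whenever $\bfC_v$ has at least two members, while cutting along a single core curve disconnects $X$ precisely when that curve is separating. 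Therefore $Y$ is connected if and only if $\bfC_v$ consists of a single cylinder with non-separating core curve, and this is the statement I would actually prove.

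The non-separating part is immediate from the structure of the twist space: $\sigma_{\bfC}\notin\ker p$ (otherwise the decomposition asserted in Theorem~\ref{T:CylECTwistSpace} would fail to be unique, since $\sigma_{\bfC}\neq 0$), and since the core curves of $\bfC$ are homologous their Poincar\'e duals all agree in $H^1(X;\bR)$, so $p(\sigma_{\bfC})$ is a nonzero scalar multiple of this common class; hence that class is nonzero and every core curve of $\bfC$ is non-separating. (Alternatively one can extract this from Corollary~\ref{C:FieldDefinition}.)

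The remaining point --- that at most one cylinder of $\bfC$ degenerates --- is, I expect, the crux of the argument. The plan is to use Theorem~\ref{T:CylECTwistSpace} to write the degeneration direction as $\lambda\sigma_{\bfC}+w$ with $w=\sum_{C\in\bfC}b_C\gamma_C^{*}\in\ker p\cap\mathrm{Twist}(\bfC,\cM)$; equality of the classes $p(\gamma_C^{*})$ forces $\sum_{C}b_C=0$, and requiring two cylinders of $\bfC_v$ to reach height zero simultaneously forces a rigid proportionality between the relevant $b_C$ and the heights $h_C$. Feeding this back --- together with the hypothesis $\overline{\bfC}\neq(X,\omega)$, which ensures that the subsurface cobounded by two degenerating core curves and its complement are honest subsurfaces not swept out by $\bfC$, and with the perturbation to a cylindrically stable, square-tiled model used in the proof of Theorem~\ref{T:MHDImpliesHomology} --- should produce a contradiction. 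I anticipate that the delicate case is when two cylinders of $\bfC$ are exchanged by an affine symmetry (for instance a deck transformation) of the surfaces in $\cM$, so that they are forced to degenerate together; there one would instead collapse the relevant covering map via Lemma~\ref{L:DefinitionCol(f)} and argue on the quotient, taking care to exclude the a priori possibility that the limiting cover becomes disconnected. Everything upstream of this last step should be routine bookkeeping with the results of the preceding sections.
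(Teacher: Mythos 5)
The reduction to connectedness via Lemma \ref{L:MHD-Boundary} is correct and is exactly how the paper begins. But the topological model you build on top of it is wrong, and it sinks the rest of the argument. The cylinder degenerations of Definition \ref{D:Collapse} send the \emph{heights} of the cylinders in $\bfC_v$ to zero while their circumferences stay bounded away from zero; the limit is obtained by deleting each collapsed cylinder and gluing its top boundary to its bottom boundary, and the vanishing cycles are generated by \emph{cross curves} (relative classes), not core curves. Nothing is pinched along the core curves, so the components of $\Col_v(X,\omega)$ do \emph{not} correspond to the components of $X$ cut along the core curves of $\bfC_v$, and your biconditional ``$Y$ is connected iff $\bfC_v$ is a single cylinder with non-separating core curve'' is false in both directions. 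A ubiquitous counterexample: in a hyperelliptic locus (which is full rank, hence of minimal homological dimension) an equivalence class is typically a pair of twin cylinders exchanged by the involution; any cylinder degeneration must collapse both simultaneously, their core curves are disjoint and homologous, and yet the limit is a connected double cover of the collapsed quadratic differential. Indeed, Theorem \ref{T:main} says that for $\mathrm{rank} > 1$ these varieties are $h$-geminal after marking periodic points, so ``at most one cylinder of $\bfC$ degenerates'' — the statement you identify as the crux and then only sketch (``should produce a contradiction,'' ``I expect'') — is generically false, not merely unproven.

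For contrast, the paper's proof never analyzes the core curves of $\bfC_v$ at all. Assuming $\Col_v(X,\omega)$ is disconnected, it uses $\overline{\bfC}\neq(X,\omega)$ and Smillie--Weiss to produce an equivalence class $\bfD$ disjoint from $\bfC$ that survives the collapse; it then invokes primality of $\cM_v$ (Apisa--Wright) and the Chen--Wright theorem that deformations of a prime subvariety move absolute periods on every component, applied to the path $\Col_v(X,\omega)+t\sigma_{\bfD}$, to conclude that every component of $\Col_v(X,\omega)$ contains a cylinder of $\bfD$. A simple closed curve supported on one component then meets the core curve of one cylinder of $\bfD$ exactly once but misses the others, contradicting Theorem \ref{T:MHDImpliesHomology} applied to $\bfD$. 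If you want to salvage your outline, you would need to replace the cut-along-core-curves picture with the correct description of the collapse and find some substitute for the primality/disjoint-class mechanism; as written, the proposal does not prove the corollary.
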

\begin{proof}
By Lemma \ref{L:MHD-Boundary} it suffices to show that $\Col_v(X, \omega)$ is connected. Suppose not in order to derive a contradiction. Since $\overline{\bfC} \ne (X, \omega)$, Smillie-Weiss \cite[Corollary 6]{SW2} implies that there is a cylinder $D$ in the complement of $\bfC$ that is disjoint from $\overline{\bfC}$. Let $\bfD$ be its equivalence class. By the cylinder proportion theorem (Nguyen-Wright \cite[Proposition 3.2]{NW}), $\bfC$ and $\bfD$ are disjoint.

By Apisa-Wright \cite[Lemma 9.1]{ApisaWrightHighRank}, $\cM_v$ is prime. By Chen-Wright \cite[Theorem 1.3]{ChenWright}, since $\cM_v$ is prime, any deformation of $\Col_v(X, \omega)$ that remains in $\cM_v$ and changes the period of some absolute cycle on one component of $\Col_v(X, \omega)$ changes the periods of absolute cycles on every component of $\Col_v(X, \omega)$.

Since $\bfD$ is disjoint from $\bfC$, it follows that $\sigma_{\bfD}$ is an element of $T_{\Col_v(X, \omega)} \cM_v$ (for instance by Theorem \ref{T:CylinderBoundaryTangentFormula} \eqref{I:boundary:TwistIsom}). Notice that along the path $\Col_v(X, \omega) + t \sigma_{\bfD}$ for $t$ a small real number, the periods of absolute cycles change on the components of $\Col_v(X, \omega)$ that contain a cylinder from $\bfD$. This shows that a cylinder from $\bfD$ persists on every component of $\Col_v(X, \omega)$. 

However, since $\Col_v(X, \omega)$ has multiple components it is possible to find a simple closed curve on one component of $\Col_v(X, \omega)$ that intersects the core curve of a cylinder in $\bfD$ exactly once. This curve remains a simple closed curve on $(X, \omega)$ and intersects the core curve of one cylinder in $\bfD$ exactly once, but does not intersect all the core curves of the cylinders in $\bfD$. This contradicts Theorem \ref{T:MHDImpliesHomology}, which states that all core curves of cylinders in $\bfD$ are homologous. 
\end{proof}

%
%
%
%

\section{Cylinder rigid subvarieties}

In this section we introduce a family of invariant subvarieties that is stable under degeneration and that includes every invariant subvariety of rel zero that consists of connected surfaces. 

\begin{defn}\label{D:CS-AIS}
Suppose that $\cM$ is an invariant subvariety (possibly in a stratum of disconnected surfaces). A collection of cylinders $\bfC$ is called a \emph{subequivalence class} if the cylinders in $\bfC$ are all $\cM$-equivalent to each other and $\bfC$ is a minimal collection of cylinders so that $\sigma_{\bfC}$ belongs to $T_{(X, \omega)} \cM$.

We will say that $\cM$ is a \emph{CR invariant subvariety}, where CR stands for ``cylinder rigid" if there are finite sets $S_1 \subseteq \mathbb{Q}$ and $S_2 \subseteq \mathbb{R}$ so that the following holds:
\begin{enumerate}
    \item\label{I:SC-Partition} For each equivalence class $\bfC$ of cylinders on a surface $(X, \omega)$ in $\cM$, $\sigma_{\bfC} \in T_{(X, \omega)} \cM$. 
    \item\label{I:SC-Persistence} Subequivalent cylinders remain subequivalent as long as they persist. 
    \item\label{I:SC-Ratios} The ratio of moduli (resp. circumferences) of subequivalent cylinders belongs to $S_1$ (resp. $S_2$).
\end{enumerate}
Note that \eqref{I:SC-Partition} implies that every equivalence class on a surface in $\cM$ can be partitioned into subequivalence classes.
\end{defn}

Note too that \eqref{I:SC-Partition} is the statement of the cylinder deformation theorem. The reason we must include it as a condition is entirely because the cylinder deformation theorem is not known to hold when $(X, \omega)$ is disconnected.

The following is entirely due to Mirzakhani-Wright \cite{MirWri}.

\begin{lem}\label{L:ModuliRelZero}
If $\cM$ has rel zero and any surface in $\cM$ is connected, then $\cM$ is a CR invariant subvariety.
\end{lem}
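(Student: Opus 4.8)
Lemma \ref{L:ModuliRelZero} asserts that a rel-zero invariant subvariety consisting of connected surfaces satisfies all three conditions in the definition of a CR invariant subvariety. The plan is to check the three conditions in turn, in each case citing the appropriate result of Mirzakhani-Wright (and Wright).

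\textbf{Condition \eqref{I:SC-Partition}.} Since $\cM$ consists of connected surfaces, this is precisely the content of the cylinder deformation theorem of Wright \cite[Theorem 1.1]{Wcyl}: for any equivalence class $\bfC$ of cylinders on $(X, \omega) \in \cM$, the standard deformation $\sigma_{\bfC}$ lies in $T_{(X, \omega)} \cM$. Nothing about rel zero is needed here.

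\textbf{Condition \eqref{I:SC-Persistence}.} First I would use Theorem \ref{T:CylECTwistSpace} together with $\mathrm{rel}(\cM) = 0$ to understand subequivalence classes. When $\mathrm{rel}(\cM) = 0$, the kernel of $p$ restricted to $T_{(X,\omega)}\cM$ is trivial, so by Theorem \ref{T:CylECTwistSpace} the twist space $\mathrm{Twist}(\bfC, \cM)$ of any equivalence class $\bfC$ is exactly the one-dimensional space spanned by $\sigma_{\bfC}$. Hence the only way to write $\sigma_{\bfC'} \in T_{(X,\omega)}\cM$ for a subset $\bfC' \subseteq \bfC$ is when $\bfC' = \bfC$ — that is, each equivalence class is itself a single subequivalence class. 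Persistence of subequivalence then reduces to persistence of equivalence, which is handled by the fact (Mirzakhani-Wright \cite[Theorem 1.5]{MirWri}, or the cylinder deformation machinery) that $\cM$-equivalence of cylinders is an open condition: parallel cylinders that are $\cM$-equivalent on $(X,\omega)$ remain $\cM$-equivalent on nearby surfaces on which they persist, and conversely two cylinders in the same equivalence class that degenerate together stay together. More carefully, I would argue that if $C_1, C_2$ are subequivalent on $(X,\omega)$ and persist to $(X', \omega')$, then they remain parallel (this is what persistence of a cylinder along a path in $\cM$ means together with the fact that being $\cM$-parallel is detected by collinearity of $p(\gamma_{C_i}^*)$, which is a closed-and-open condition in the rel-zero setting), and since every equivalence class is a subequivalence class they are again subequivalent.

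\textbf{Condition \eqref{I:SC-Ratios}.} The ratios of circumferences of $\cM$-equivalent cylinders take finitely many values: by Wright \cite[Theorem 1.5 and Theorem 7.1]{Wcyl} these ratios all lie in the field $\bk(\cM)$, which is a finite real extension of $\bQ$, and in fact the finiteness of the set of ratios that actually occur follows because $\cM$ is an algebraic variety — there are only finitely many ``topological types'' of cylinder configurations, hence finitely many ratios. For the moduli, the key input is Mirzakhani-Wright's result (see \cite{MirWri}) that $\cM$-equivalent cylinders have a fixed rational ratio of moduli, or more precisely that the moduli of cylinders in an equivalence class, as one varies over $\cM$, are constrained so that their ratios are locally constant; combined with connectedness of $\cM$ (as a variety) and the algebraicity this yields a finite set $S_1 \subseteq \bQ$. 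Then one sets $S_1$ to be the (finite) set of all moduli-ratios occurring among subequivalent cylinders over all of $\cM$, and $S_2$ likewise for circumferences.

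The main obstacle is \textbf{Condition \eqref{I:SC-Ratios}}, specifically extracting the \emph{finiteness} of the set of ratios from the local statements in Mirzakhani-Wright; the rationality of moduli-ratios and the fact that circumference-ratios lie in $\bk(\cM)$ are clean citations, but one must invoke the algebraicity of $\cM$ (Eskin-Mirzakhani-Mohammadi, Filip) and the resulting finiteness of combinatorial cylinder-configuration data to conclude that only finitely many such ratios are realized. Conditions \eqref{I:SC-Partition} and \eqref{I:SC-Persistence} are comparatively routine given the cylinder deformation theorem and the rel-zero simplification of Theorem \ref{T:CylECTwistSpace}.
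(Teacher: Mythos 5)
Your treatment of conditions \eqref{I:SC-Partition} and \eqref{I:SC-Persistence} matches the paper's: condition \eqref{I:SC-Partition} is the cylinder deformation theorem, and the observation that rel zero forces $\mathrm{Twist}(\bfC, \cM)$ to be spanned by $\sigma_{\bfC}$ (via Theorem \ref{T:CylECTwistSpace}), so that equivalence classes coincide with subequivalence classes and persistence of subequivalence reduces to persistence of equivalence, is exactly the paper's reasoning --- phrased there as the statement that the ratio of heights of equivalent cylinders cannot be changed by deforming the surface. The moduli half of condition \eqref{I:SC-Ratios} is also handled the same way, by citing Mirzakhani--Wright \cite[Corollary 1.6]{MirWri}.

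The gap is in your justification of the finiteness of the set of circumference ratios. Knowing that these ratios lie in $\bk(\cM)$ (Wright \cite[Theorem 7.1]{Wcyl}) does not bound them, since $\bk(\cM)$ is an infinite field, and the fallback you offer --- that ``there are only finitely many topological types of cylinder configurations, hence finitely many ratios'' --- fails: surfaces in $\cM$ admit cylinder decompositions with arbitrarily many cylinders and arbitrarily long circumferences, so the combinatorial data ranging over all of $\cM$ is not finite, and even for a fixed combinatorial type a locally constant ratio could a priori take infinitely many values across the different components of the locus where that type occurs. The finiteness of circumference ratios over all of $\cM$ is precisely the content of the cylinder finiteness theorem, Mirzakhani--Wright \cite[Theorem 1.4]{MirWri}, which is the citation the paper uses; it is a genuinely nontrivial theorem proved by a degeneration argument, not a formal consequence of algebraicity. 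With that citation in place of your heuristic, your proof closes and coincides with the paper's.
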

\begin{proof}
For equivalent cylinders, \eqref{I:SC-Ratios} holds for moduli, by \cite[Corollary 1.6]{MirWri} and, for circumferences, by the cylinder finiteness theorem  \cite[Theorem 1.4]{MirWri}. Since the ratio of heights of any pair of equivalent cylinders cannot be changed by deforming the surface, it follows that equivalence classes are subequivalence classes. Since equivalent cylinders remains equivalent as long as they persist \eqref{I:SC-Persistence} holds. Condition \eqref{I:SC-Partition} is the cylinder deformation theorem of Wright \cite{Wcyl}. 
\end{proof}

\subsection{Preliminary Results}

\begin{lem}\label{L:NoNewECsInRelZero}
Let $\bfC$ be a subequivalence class of cylinders on a surface $(X, \omega)$ in a CR subvariety $\cM$. On any surface in $\cM$ where the cylinders in $\bfC$ persist, they remain a subequivalence class.
\end{lem}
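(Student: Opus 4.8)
The plan is to fix, once and for all, a path in $\cM$ from $(X,\omega)$ to the given surface $(X',\omega')$ along which every cylinder in $\bfC$ persists; the hypothesis that the cylinders in $\bfC$ persist on $(X',\omega')$ is exactly the statement that such a path exists, and the point of the argument is that the defining properties of a CR subvariety can be applied along this path in \emph{both} directions. Write $\bfC = \{C_1, \ldots, C_k\}$, and record at the outset that, since these cylinders are pairwise $\cM$-equivalent on $(X,\omega)$, they are in particular pairwise subequivalent there.

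First I would locate the cylinders of $\bfC$ inside a single subequivalence class of $(X',\omega')$. Traversing the path from $(X,\omega)$ to $(X',\omega')$ and applying condition \eqref{I:SC-Persistence} of Definition \ref{D:CS-AIS}, the $C_i$ remain pairwise subequivalent — hence, in particular, pairwise $\cM$-equivalent — on $(X',\omega')$. They therefore lie in a common $\cM$-equivalence class $\bfD'$ of $(X',\omega')$, and, using the partition of $\bfD'$ into subequivalence classes recorded after Definition \ref{D:CS-AIS} (so that being ``subequivalent'' is the equivalence relation whose classes are the subequivalence classes), they all lie in a single subequivalence class $\bfC'$ of $(X',\omega')$. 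Thus $\bfC \subseteq \bfC'$, and everything reduces to showing $\bfC' = \bfC$: once this is known, $\bfC$ is literally a subequivalence class on $(X',\omega')$.

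For the reverse inclusion I would argue by contradiction: suppose $D \in \bfC' \setminus \bfC$, and traverse the path in the opposite direction, from $(X',\omega')$ back to $(X,\omega)$. The cylinder $C_1$ persists along the entire (compact) path, so its circumference and modulus stay in a fixed compact subinterval of $(0,\infty)$. On $(X',\omega')$ the cylinders $D$ and $C_1$ are subequivalent, so by \eqref{I:SC-Persistence} they remain subequivalent for as long as $D$ persists; by the ratio condition \eqref{I:SC-Ratios} together with the finiteness of $S_1$ and $S_2$, this forces the modulus and circumference of $D$ to stay in a fixed compact subinterval of $(0,\infty)$ for as long as $D$ persists. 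A cylinder whose modulus and circumference remain bounded above and away from zero cannot degenerate, so $D$ persists all the way to $(X,\omega)$, where it is still subequivalent to $C_1$ and hence lies in the subequivalence class of $C_1$ on $(X,\omega)$, namely $\bfC$ — contradicting $D \notin \bfC$. Hence $\bfC' = \bfC$, and since $(X',\omega')$ was arbitrary among surfaces on which $\bfC$ persists, this proves the lemma.

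The main obstacle I expect is the last assertion: that a cylinder whose modulus and circumference stay in a fixed compact subinterval of $(0,\infty)$ along a path necessarily persists, i.e.\ that such a cylinder cannot cease to be a maximal embedded cylinder for some other reason. This is precisely the point at which the finiteness of $S_1$ and $S_2$ in the definition of a CR subvariety is used in an essential way, and it should be obtained from the standard continuity properties of cylinders of bounded geometry along paths in a stratum, as in Wright \cite{Wcyl} and Mirzakhani--Wright \cite{MirWri}. The remaining ingredients — that $\cM$-equivalence and subequivalence are genuine equivalence relations, and that ``the cylinders in $\bfC$ persist'' is witnessed by a path — are routine bookkeeping.
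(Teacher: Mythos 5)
Your argument is correct, but it takes a genuinely different route from the paper's. The paper argues by a single deformation at $(X',\omega')$: if $\bfC$ were a proper subset of a subequivalence class $\bfC'$ there, then $\sigma_{\bfC}$ (which lies in $T_{(X,\omega)}\cM$ by the definition of subequivalence class, hence in $T_{(X',\omega')}\cM$ since the cylinders persist) can be used to deform $(X',\omega')$; this changes the moduli of the cylinders in $\bfC$ while freezing those of $\bfC'\setminus\bfC$, so the ratio of moduli of two subequivalent cylinders would vary continuously and non-constantly, which is impossible since Definition \ref{D:CS-AIS} \eqref{I:SC-Persistence} and \eqref{I:SC-Ratios} confine that ratio to the finite set $S_1$. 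You instead run the path backwards and use a compactness argument: any extra cylinder $D\in\bfC'\setminus\bfC$ has its modulus and circumference pinned to those of $C_1$ by \eqref{I:SC-Persistence} and \eqref{I:SC-Ratios}, hence persists all the way back to $(X,\omega)$, where it must lie in the subequivalence class of $C_1$, contradicting the partition into subequivalence classes. The ``bounded geometry implies persistence'' step you flag as the main obstacle is exactly the mechanism the paper itself uses in the proof of Lemma \ref{L:PointwiseTest}, so it is an accepted tool at the paper's level of rigor; your proof transports that mechanism to this earlier lemma. The cost is a longer argument than the paper's one-line infinitesimal one; the benefit is that you never need to know that $\sigma_{\bfC}$ remains tangent at $(X',\omega')$.

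Two small remarks. First, your opening claim that the $C_i$ are ``pairwise $\cM$-equivalent, hence in particular pairwise subequivalent'' has the implication backwards --- subequivalence is the finer relation; the correct justification is simply that $\bfC$ is by hypothesis a subequivalence class. Second, your final contradiction tacitly assumes that the backward continuation of $D$ is a cylinder on $(X,\omega)$ distinct from every $C_i$. This is true, because two distinct parallel maximal cylinders that both persist along a path cannot merge (a singularity on their common boundary would have to enter the interior of one of them), but it deserves a sentence.
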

\begin{proof}
Suppose that the claim fails and that $(X', \omega')$ is a surface where the cylinders in $\bfC$ are a subset of a larger subequivalence class $\bfC'$. By definition of subequivalence class, $\sigma_{\bfC} \in T_{(X, \omega)} \cM$ and $\sigma_{\bfC'} \in T_{(X', \omega')} \cM$. But then the cylinders in $\bfC$ have their moduli change along the path $(X', \omega') + t \sigma_{\bfC}$ whereas those in $\bfC' - \bfC$ have constant moduli contradicting Definition \ref{D:CS-AIS} \eqref{I:SC-Persistence} and \eqref{I:SC-Ratios}.
%
%
\end{proof}

The following argument is often described as an ``overcollapsing argument" for reasons that will become clear. 

\begin{lem}\label{L:ModuliRelZero2}
Suppose that $\cM$ is a CR subvariety and that $\bfC_1$ and $\bfC_2$ are distinct subequivalence classes of cylinders that happen to be parallel on $(X, \omega) \in \cM$. Then if one cylinder in $\bfC_1$ shares a boundary saddle connection with a cylinder in $\bfC_2$, every cylinder in $\bfC_1$ shares a boundary saddle connection with a cylinder in $\bfC_2$. 
\end{lem}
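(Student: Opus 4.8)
The plan is to argue by contradiction using an overcollapsing argument. After rotating, assume that the cylinders of $\bfC_1 \cup \bfC_2$ are horizontal, suppose $C_1 \in \bfC_1$ shares a boundary saddle connection $s$ with a cylinder $C_2 \in \bfC_2$, and suppose for contradiction that some cylinder $C \in \bfC_1$ shares no boundary saddle connection with any cylinder of $\bfC_2$. Because $\cM$ is CR, both $\sigma_{\bfC_1}$ and $\sigma_{\bfC_2}$ lie in $T_{(X,\omega)}\cM$ (Definition \ref{D:CS-AIS}\eqref{I:SC-Partition}), so $\Col_{\bfC_1}(X,\omega)$ and $\Col_{\bfC_2}(X,\omega)$ are defined, and along the collapse path of a subequivalence class the cylinders being collapsed keep a fixed ratio of heights, all of which scale to zero simultaneously. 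A preliminary reduction --- perturbing $(X,\omega)$, as in the proof of Lemma \ref{L:TwistSpaceSumDecomposition}, so that $\bfC_1$ and $\bfC_2$ are contained in generic equivalence classes (they remain subequivalence classes by Lemma \ref{L:NoNewECsInRelZero}) --- lets me assume the relevant cylinders are generic, so that the cylinder degeneration machinery of Apisa-Wright \cite[Section 4]{ApisaWrightHighRank} applies cleanly.

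The heart of the argument is to follow the collapse of $\bfC_2$ slightly past the point where its heights vanish --- an ``overcollapse''. For a small amount of overcollapsing, the deformation is still realized by a genuine translation surface: the ``negative heights'' of the cylinders of $\bfC_2$ are absorbed by the cylinders that abut them, and, quantitatively, each cylinder of $\bfC_1$ loses height in proportion to the total length of its boundary shared with cylinders of $\bfC_2$ (weighted by the fixed heights within $\bfC_2$). Throughout this small deformation the cylinders of $\bfC_1$ persist and remain mutually equivalent, so --- since equivalent cylinders keep a fixed ratio of heights (as in the proof of Lemma \ref{L:ModuliRelZero}, using that cylinder rigidity is inherited by boundary components so $\bfC_1$ stays a subequivalence class, Lemma \ref{L:NoNewECsInRelZero}) --- the heights of all cylinders of $\bfC_1$ are rescaled by one common factor. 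But $C$ shares no boundary with $\bfC_2$, so its height does not change; hence that common factor is $1$, and therefore the height of $C_1$ does not change either. This contradicts the fact that $C_1$ abuts $C_2$, whose collapse should have forced $C_1$ to lose height. I would make the quantitative claim precise by transporting $\sigma_{\bfC_2}$ and the cross curves of $\bfC_1$ across the degeneration via the boundary tangent formula (Theorems \ref{T:BoundaryTangent2} and \ref{T:CylinderBoundaryTangentFormula}\eqref{I:boundary:TwistIsom}) and then reading off, from the cylinder degeneration analysis of \cite[Section 4]{ApisaWrightHighRank}, how the cross-curve periods of $\bfC_1$ move along the overcollapse.

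The step I expect to be the main obstacle is the geometric bookkeeping of the overcollapse when $s$ is only part of the boundary of $C_1$, and more generally when one cylinder of $\bfC_2$ abuts several cylinders at once: then pushing $\bfC_2$ past zero height does not simply shrink the heights of adjacent cylinders but replaces them by more complicated regions, and one must check both that the deformed surface still lies in the closure of $\cM$ and that it still records the $\bfC_1$-$\bfC_2$ adjacency. I would handle this using the twist space splitting of Lemma \ref{L:TwistSpaceSumDecomposition} and the twist space description under degeneration in Theorem \ref{T:CylinderBoundaryTangentFormula}\eqref{I:boundary:TwistIsom}, after the genericity reduction above; a subtlety to dispatch at the outset is that collapsing $\bfC_2$ might merge a cylinder of $\bfC_1$ with one of its neighbors, which the preliminary perturbation should rule out. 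Should a fully clean overcollapse prove awkward, the fallback is to collapse $\bfC_2$ only partway and instead track the ratios of circumferences of the cylinders of $\bfC_1$, forcing a value outside the finite set $S_2$ of Definition \ref{D:CS-AIS}\eqref{I:SC-Ratios}.
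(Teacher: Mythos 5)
Your proposal is correct and is essentially the paper's own argument: the paper also overcollapses $\bfC_2$ past zero height and uses Definition \ref{D:CS-AIS}\eqref{I:SC-Ratios} to conclude that the heights of all cylinders in $\bfC_1$ must decrease together, so each must absorb a singularity coming from $\bfC_2$. The one technical point the paper makes explicit, and which you should too, is a preliminary shear so that $(X,\omega)$ has no vertical saddle connections (so the path in the direction $-i\sigma_{\bfC_2}$ stays in $\cM$ for all time) and so that a singularity on the far boundary of $C_2$ sits vertically over an interior point of the shared saddle connection $s$, guaranteeing that the height of $C_1$ genuinely decreases after the collapse.
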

\begin{proof}
Without loss of generality suppose that the cylinders in $\bfC_1 \cup \bfC_2$ are horizontal. Suppose that $C_1$ and $C_2$ share a boundary saddle connection $s$ and suppose without loss of generality that $s$ lies on the bottom boundary of $C_2$ and where $C_i \in \bfC_i$. Shear $(X, \omega)$, i.e. apply an element of $\begin{pmatrix} 1 & t \\ 0 & 1 \end{pmatrix}$ for some $t \in \mathbb{R}$, to form a surface $(X', \omega')$ on which there are no vertical saddle connections and so that there is a singularity $z$ on the top boundary of $C_2$ that lies directly vertically above an interior point of $s$ (i.e. the vertical separatrix traveling down from $z$ into $C_2$ travels the height of $C_2$ down from $z$ before arriving at an interior point of $s$). Apply the standard dilation, i.e. travel in $\cM$ in the direction of $-i\sigma_{\bfC_2}$, until the heights of the cylinders in $\bfC_2$ reach zero. At this point, the singularity $z$ has accumulated on the saddle connection $s$. Continuing to travel in the direction of $-i\sigma_{\bfC_2}$ (we can travel in this direction indefinitely while remaining in $\cM$ since $(X', \omega')$ had no vertical saddle connections) the point $z$ moves into $C_1$ causing the height of $C_1$ to decrease. By Definition \ref{D:CS-AIS} \eqref{I:SC-Ratios}, the height of every cylinder in $\bfC_1$ must decrease and so every cylinder in $\bfC_1$ must have shared a boundary saddle connection with a cylinder in $\bfC_2$. 
\end{proof}

\begin{lem}\label{L:SECDecomp}
Let $U$ be an open subset of an invariant subvariety $\cM$. Suppose that $(X, \omega) \in U$ contains a collection of cylinders $\bfC$ that can be partitioned into subequivalence classes $\{ \bfC_i \}_{i=1}^n$ satisfying Definition \ref{D:CS-AIS} \eqref{I:SC-Persistence} and \eqref{I:SC-Ratios} for all deformations remaining in $U$. Then the following hold: 
\begin{enumerate}
    \item\label{I:SEC:LinComb} Any element of $\mathrm{Twist}(\bfC, \cM)$ can be expressed uniquely as a linear combination of $\{\sigma_{\bfC_i}\}_{i=1}^n$.
    \item\label{I:SEC:Support} The support of any element of $\mathrm{Twist}(\bfC, \cM)$ is a union of subequivalence classes of cylinders. 
    \item\label{I:SEC:SEC} Two cylinders in $\bfC$ are subequivalent if and only if there is some $i$ so that both cylinders belong to $\bfC_i$.
\end{enumerate}
\end{lem}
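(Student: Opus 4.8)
The plan is to derive all three parts from a single deformation argument for \eqref{I:SEC:LinComb}. After applying a rotation we may assume the cylinders in $\bfC$ are horizontal. Let $v \in \mathrm{Twist}(\bfC,\cM)$ and write $v = \sum_{C \in \bfC} a_C\,\gamma_C^*$ with $a_C \in \mathbb{C}$. Since $T_{(X,\omega)}\cM$ is a complex-linear subspace of $H^1(X,\Sigma;\mathbb{C})$, both $v$ and $iv$ are tangent vectors supported on the core curves of $\bfC$, so for $\theta \in \{0,\tfrac{\pi}{2}\}$ the path $t \mapsto (X,\omega) + t e^{i\theta}v$ stays in $\cM$, and for $|t|$ small it stays in $U$ with every cylinder of $\bfC$ persisting. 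Adding a multiple of $\gamma_C^*$ leaves every circumference in $\bfC$ unchanged (parallel core curves have zero intersection number) and changes the holonomy of a curve crossing $C$ once by that multiple; hence along the path the modulus of $C \in \bfC_i$ becomes $\bigl(h_C + t\,\mathrm{Im}(e^{i\theta}a_C)\bigr)/w_C$, where $w_C$ is the constant circumference. Along the path $\bfC_i$ remains a subequivalence class (Definition \ref{D:CS-AIS} \eqref{I:SC-Persistence}, valid for deformations in $U$), so the ratios of moduli of its cylinders lie in the finite set $S_1$ (Definition \ref{D:CS-AIS} \eqref{I:SC-Ratios}) and hence, being continuous in $t$, are constant. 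Differentiating at $t = 0$ gives $\mathrm{Im}(e^{i\theta}a_C)/h_C = \mathrm{Im}(e^{i\theta}a_{C'})/h_{C'}$ for all $C,C' \in \bfC_i$; taking $\theta = 0$ and $\theta = \tfrac{\pi}{2}$ yields $a_C/h_C = a_{C'}/h_{C'} =: c_i$. Thus $v = \sum_i c_i\,\sigma_{\bfC_i}$, and uniqueness is immediate because the $\sigma_{\bfC_i}$ have pairwise disjoint nonempty supports.

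Part \eqref{I:SEC:Support} then follows at once, since the support of $\sum_i c_i \sigma_{\bfC_i}$ is $\bigcup_{c_i \ne 0}\bfC_i$, a union of subequivalence classes. For part \eqref{I:SEC:SEC}, the "if" direction holds because each $\bfC_i$ is itself a subequivalence class. Conversely, suppose $C \in \bfC_i$ and $C' \in \bfC_j$ lie in a common subequivalence class $\bfD \subseteq \bfC$ (in the applications $\bfC$ is a union of $\cM$-equivalence classes, so this containment is automatic). Then $\sigma_{\bfD} \in \mathrm{Twist}(\bfC,\cM)$, so by \eqref{I:SEC:LinComb} we may write $\sigma_{\bfD} = \sum_k c_k \sigma_{\bfC_k}$; comparing supports (the left side is supported exactly on $\bfD$) shows $\bfD = \bigcup_{c_k \ne 0}\bfC_k$, with $\sigma_{\bfC_k} \in T_{(X,\omega)}\cM$ for each such $k$. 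Minimality of the subequivalence class $\bfD$ then forces exactly one such $k$, so $\bfD = \bfC_i = \bfC_j$ and $i = j$.

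The main obstacle is \eqref{I:SEC:LinComb}, specifically the point that Definition \ref{D:CS-AIS} \eqref{I:SC-Ratios} constrains only the imaginary parts of the $a_C$ directly (those govern heights, hence moduli), so one must also deform along $iv$; this is legitimate precisely because $T_{(X,\omega)}\cM$ is complex-linear, and without it one could not rule out real twists shearing subequivalent cylinders by incommensurable amounts. Everything after that is bookkeeping with supports and with the minimality built into the definition of a subequivalence class.
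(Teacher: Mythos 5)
Your proposal is correct and follows essentially the same route as the paper: reduce part \eqref{I:SEC:LinComb} to the constancy of modulus ratios within each $\bfC_i$ under small deformations (the paper phrases this as splitting $v$ into real and imaginary parts, you phrase it as deforming along both $v$ and $iv$; these are the same use of complex-linearity of the tangent space), then deduce \eqref{I:SEC:Support} and \eqref{I:SEC:SEC} by comparing supports. Your spelled-out minimality argument for \eqref{I:SEC:SEC} is a harmless elaboration of what the paper dismisses as immediate.
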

\begin{proof}
The second two claims follow immediately from the first, which we now prove. Suppose without loss of generality that the cylinders in $\bfC$ are horizontal. Since the real and imaginary parts of any element of $\mathrm{Twist}(\bfC, \cM)$ belong to $\mathrm{Twist}(\bfC, \cM)$ it suffices to prove the claim for purely imaginary elements. 

By Definition \ref{D:CS-AIS} \eqref{I:SC-Persistence} and \eqref{I:SC-Ratios}, the ratio of heights of any two subequivalent cylinders is locally constant in $U$. In particular, this implies that if $v \in \mathrm{Twist}(\bfC, \cM)$ is purely imaginary then it has the form 
\[ \sum_{i=1}^n a_i \sum_{C \in \bfC_i} h_C \gamma_C^* = \sum_{i=1}^n a_i \sigma_{\bfC_i} \]
where $a_i$ is purely imaginary and $h_C$ (resp. $\gamma_C^*$) denotes the height (resp. intersection pairing with the core curve) of $C$. The uniqueness is obvious. 
\end{proof}




\subsection{The boundary of a CR subvariety}

Say that two $\cM$-equivalent cylinders $C$ and $C'$ on a surface in $\cM$ are \emph{weakly-subequivalent} if there is a collection $\bfC$ of $\cM$-equivalent cylinders containing $C$ and $C'$ and so that $\mathrm{Twist}\left( \bfC, \cM \right)$ is one-dimensional with the support of any nonzero element being $\bfC$. It is clear that $C$ and $C'$ remain weakly subequivalent along any path on which the cylinders in $\bfC$ persist. We will say that $(X, \omega) \in \cM$ is a \emph{CR point} if the following hold:
\begin{enumerate}
    \item Two cylinders are weakly subequivalent if and only if they are subequivalent.
    \item Weakly subequivalent cylinders satisfy Definition \ref{D:CS-AIS} \eqref{I:SC-Ratios}.
    \item Definition \ref{D:CS-AIS} \eqref{I:SC-Partition} holds.
\end{enumerate}

\begin{lem}\label{L:PointwiseTest}
Let $U$ be an open subset of an invariant subvariety $\cM$. Definition \ref{D:CS-AIS} holds on $U$ if and only if every point in $U$ is a CR point. 
\end{lem}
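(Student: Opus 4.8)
The statement to prove is Lemma~\ref{L:PointwiseTest}: that Definition~\ref{D:CS-AIS} holds on an open set $U$ if and only if every point of $U$ is a CR point. One direction is essentially definitional: if Definition~\ref{D:CS-AIS} holds on $U$, then for any $(X,\omega) \in U$ we must check the three bulleted conditions in the definition of CR point. The nontrivial content in this direction is that weakly subequivalent cylinders are subequivalent. If $C, C'$ are weakly subequivalent via a collection $\bfC$ with $\mathrm{Twist}(\bfC,\cM)$ one-dimensional and every nonzero element supported on all of $\bfC$, then by Definition~\ref{D:CS-AIS}~\eqref{I:SC-Partition} the class $\bfC$ partitions into subequivalence classes, and Lemma~\ref{L:SECDecomp}~\eqref{I:SEC:LinComb} says $\mathrm{Twist}(\bfC,\cM)$ is spanned by the standard deformations of those subclasses; one-dimensionality of the twist space forces there to be exactly one subequivalence class, namely $\bfC$ itself, so $C$ and $C'$ are subequivalent. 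Conversely, a subequivalence class is by definition minimal with $\sigma_{\bfC} \in T\cM$, hence its twist space is one-dimensional with full support, so subequivalent implies weakly subequivalent. The remaining two bullets (ratio condition for weakly subequivalent cylinders, and \eqref{I:SC-Partition}) are immediate from the corresponding parts of Definition~\ref{D:CS-AIS}.

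**The harder direction.** Suppose every point of $U$ is a CR point; we want Definition~\ref{D:CS-AIS} to hold on $U$, with \emph{uniform} finite sets $S_1 \subseteq \mathbb{Q}$ and $S_2 \subseteq \mathbb{R}$. Condition~\eqref{I:SC-Partition} holds pointwise by assumption, so the real issue is \eqref{I:SC-Persistence} (subequivalent cylinders stay subequivalent while they persist) and \eqref{I:SC-Ratios} with the ratios drawn from fixed finite sets. For persistence: if $\bfC$ is a subequivalence class on $(X,\omega)$ and the cylinders persist to $(X',\omega')$, then by the CR point hypothesis at $(X,\omega)$, $C$ and $C'$ in $\bfC$ are weakly subequivalent, and weak subequivalence persists along any path where the cylinders persist (as noted in the paragraph preceding the lemma). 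Hence at $(X',\omega')$ the cylinders of $\bfC$ are still weakly subequivalent, so by the CR point hypothesis \emph{at} $(X',\omega')$ they are subequivalent there; this gives \eqref{I:SC-Persistence}. The ratio condition \eqref{I:SC-Ratios} at a single point is the second bullet of the CR point definition applied to (weakly) subequivalent cylinders, so pointwise the moduli ratios are rational and the circumference ratios are real; and since these ratios are locally constant (they cannot change under deformation while the cylinders persist, being determined by the combinatorics), they are constant on connected components.

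**The main obstacle: uniform finiteness of $S_1, S_2$.** The genuinely delicate point is producing \emph{finite} sets $S_1, S_2$ that work simultaneously at every point of $U$ — Definition~\ref{D:CS-AIS} is a global statement, not merely a pointwise one. My plan here is to invoke the finiteness results already available: the cylinder finiteness theorem of Mirzakhani--Wright (\cite[Theorem 1.4]{MirWri}, used in the proof of Lemma~\ref{L:ModuliRelZero}) bounds the number of topological types of cylinder configurations across $\cM$, and the ratios of moduli and circumferences within an equivalence class take only finitely many values over the whole invariant subvariety. Since subequivalence classes refine equivalence classes, the ratios occurring among subequivalent cylinders form a subset of a finite set, giving the uniform $S_1 \subseteq \mathbb{Q}$ and $S_2 \subseteq \mathbb{R}$. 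Rationality of the modulus ratios is exactly the second CR-point bullet. Thus the finite sets obtained pointwise patch to global finite sets, completing the equivalence. I expect that assembling this uniformity cleanly — making precise that "weakly subequivalent persists" together with the pointwise CR condition forces a coherent global subequivalence structure, and citing the right finiteness statement — is where most of the care is needed; the rest is bookkeeping with Lemma~\ref{L:SECDecomp}.
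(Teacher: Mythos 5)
Your forward direction is fine and matches the paper's. The backward direction, however, has a genuine gap at exactly the point the paper's proof is designed to handle: the verification of Definition~\ref{D:CS-AIS}~\eqref{I:SC-Persistence}. You assert that ``weak subequivalence persists along any path where the cylinders persist,'' citing the paragraph preceding the lemma; but what that paragraph says is that $C$ and $C'$ remain weakly subequivalent along any path on which \emph{all} the cylinders of the witnessing collection $\bfC$ persist. The hypothesis of \eqref{I:SC-Persistence} only gives you that the two subequivalent cylinders $C_1$ and $C_2$ themselves persist along $\gamma$; it says nothing about the remaining members of $\bfC$, and if one of those degenerated along the path the twist space could change and $C_1, C_2$ need not remain weakly subequivalent at the endpoint. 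Supplying this missing persistence is essentially the entire content of the paper's argument: as long as the cylinders of $\bfC$ persist they are pairwise weakly subequivalent, so by the second CR-point bullet their ratios of moduli and circumferences lie in the \emph{fixed finite} sets $S_1$ and $S_2$; since $C_1$ persists along all of $\gamma$, its modulus and circumference are bounded above and away from zero, and finiteness of $S_1, S_2$ transfers these bounds to every cylinder of $\bfC$, forcing all of $\bfC$ to persist along $\gamma$. Your proposal never establishes that $\bfC$ persists, so the step ``hence at $(X',\omega')$ the cylinders of $\bfC$ are still weakly subequivalent'' is unjustified.

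Relatedly, you have located the main difficulty in the wrong place. The sets $S_1, S_2$ appearing in the CR-point definition are the fixed finite sets of Definition~\ref{D:CS-AIS}, so there is no uniformity or patching problem to solve, and no need to re-derive finiteness from the Mirzakhani--Wright cylinder finiteness theorem (which, in any case, is stated for $\cM$-equivalence classes on connected surfaces, whereas CR subvarieties are meant to include boundary components with disconnected surfaces). The finiteness of $S_1, S_2$ is not a conclusion to be reassembled here; it is the hypothesis that powers the persistence argument above. Finally, your parenthetical that the ratios ``cannot change under deformation\dots being determined by the combinatorics'' is not correct as stated: moduli and circumference ratios of arbitrary cylinders certainly vary under deformation; it is only for subequivalent cylinders, whose twist space is one-dimensional and spanned by the standard deformation, that the ratio of moduli is locally constant.
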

\begin{proof}
Suppose first that Definition \ref{D:CS-AIS} holds on $U$. Then two cylinders are weakly subequivalent if and only if they are subequivalent by Lemma \ref{L:SECDecomp}. The other conditions are immediate.

Suppose now that every point in $U$ is a CR point. We must only verify Definition \ref{D:CS-AIS} \eqref{I:SC-Persistence}. It suffices to show that if $(X, \omega) \in U$ has two weakly subequivalent cylinders $C_1$ and $C_2$ that persist along a path $\gamma$ (contained in $U$) from $(X, \omega)$ to $(X', \omega')$, then $C_1$ and $C_2$ remain weakly subequivalent on $(X', \omega')$ 

Let $\bfC$ be the collection of cylinders on $(X, \omega)$ containing $C_1$ and $C_2$ and so that $\mathrm{Twist}\left( \bfC, \cM \right)$ is one-dimensional with the support of any nonzero element being $\bfC$. It suffices to show that the cylinders in $\bfC$ persist along $\gamma$. 

As long as $\bfC$ persists, the cylinders in it are pairwise weakly subequivalent and hence the ratio of moduli (resp. circumferences) of any two cylinders in $\bfC$ is contained in the finite set $S_1$ (resp. $S_2$). Since $C_1$ persists along $\gamma$ its modulus and circumference are bounded above and away from zero along $\gamma$. The same thus holds for the cylinders in $\bfC$ as long as they persist along $\gamma$. In other words, as long as the cylinders in $\bfC$ persist along $\gamma$ they have moduli and circumference bounded away from zero and bounded above. But this just means that the cylinders in $\bfC$ persist along $\gamma$ as desired. 
\end{proof}

\begin{lem}\label{L:Limits}
Let $U$ be an open subset of an invariant subvariety $\cM$. If every point in $U$ is a CR point, then the limit of any convergent sequence of points in $U$ is a CR point.
\end{lem}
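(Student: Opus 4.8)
The plan is to move the whole picture into a single period‑coordinate chart and to exploit the finiteness of $S_1$ and $S_2$. Since the convergence takes place inside $\cM$ (equivalently, inside the ambient stratum: no saddle connection shrinks), the limit $(X_\infty,\omega_\infty)$ again lies in $\cM$. First I would fix a period‑coordinate chart around $(X_\infty,\omega_\infty)$; for all large $n$ the surface $(X_n,\omega_n)$ lies in it, the chart identifies the relative cohomology groups of these surfaces, and it carries $T_{(X_n,\omega_n)}\cM$ onto $T_{(X_\infty,\omega_\infty)}\cM$. Every cylinder on $(X_\infty,\omega_\infty)$ persists to a cylinder on $(X_n,\omega_n)$ with the same core curve; call these the \emph{old} cylinders. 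I would then record, for a collection $\bfD$ of old cylinders lying in one $\cM$‑equivalence class, the following transfer principles: (a) the mutual ratios of heights of the cylinders in $\bfD$ cannot change under deformation, so $\sigma_{\bfD}$ computed on $(X_n,\omega_n)$ is a positive multiple of $\sigma_{\bfD}$ computed on $(X_\infty,\omega_\infty)$, and in particular $\sigma_{\bfD}$ lies in $T\cM$ on one surface iff on the other; (b) $\mathrm{Twist}(\bfD,\cM)$ is literally the same subspace on both surfaces, being cut out of the fixed space $T\cM$ by the fixed classes $\gamma_C^*$, $C\in\bfD$, so its dimension and the supports of its elements are the same; (c) $\cM$‑parallelism of old cylinders is the same relation on both surfaces (it is determined by $T\cM$ and the core‑curve classes together with parallelism at one point, via Wright's criterion), and consequently subequivalence and weak subequivalence of old cylinders transfer in both directions. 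These are routine but deserve care, since $(X_\infty,\omega_\infty)$ may be disconnected and Wright's cylinder deformation theorem is then unavailable.

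Next I would verify condition $(3)$ at $(X_\infty,\omega_\infty)$, i.e.\ that $\sigma_{\bfC}\in T\cM$ for every $\cM$‑equivalence class $\bfC$. By (a) it suffices to show that on $(X_n,\omega_n)$ the old collection $\bfC$ is a union of subequivalence classes of $(X_n,\omega_n)$; then $\sigma_{\bfC}$ is a sum of standard deformations of subequivalence classes and so lies in $T\cM$ because $(X_n,\omega_n)$ is a $\mathrm{CR}$ point. If this failed, then for infinitely many $n$ there would be $C_n\in\bfC$ subequivalent on $(X_n,\omega_n)$ to some $D_n\notin\bfC$. The $\mathrm{CR}$‑point hypothesis forces the ratios of moduli and of circumferences of $C_n$ and $D_n$ into the finite sets $S_1$ and $S_2$; since the circumference and height of $C_n$ converge to those of its limit cylinder, this bounds the circumference and height of $D_n$ above and away from $0$, uniformly in $n$. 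Passing to a subsequence, the core curves of the $D_n$ stabilize to a class which is the core curve of a cylinder $D_\infty$ on $(X_\infty,\omega_\infty)$ — this is an easier, within‑stratum analogue of Theorem \ref{T:CylinderBoundaryTangentFormula}\eqref{I:boundary:CnConvergence} — so $D_n$ is old. But $D_n$ is $\cM$‑parallel to $C_n\in\bfC$, so by (c) $D_\infty$ is $\cM$‑parallel to the corresponding cylinder of $\bfC$, hence $D_\infty\in\bfC$ and $D_n\in\bfC$, a contradiction.

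Finally I would verify conditions $(1)$ and $(2)$. For $(2)$: if old cylinders $C,C'$ are weakly subequivalent on $(X_\infty,\omega_\infty)$, witnessed by a collection $\bfS$, then by (b) the same $\bfS$ witnesses weak subequivalence on $(X_n,\omega_n)$, where it then implies subequivalence (condition $(1)$ for the $\mathrm{CR}$ point $(X_n,\omega_n)$); hence the ratios of moduli and circumferences of $C,C'$ lie in $S_1,S_2$ on $(X_n,\omega_n)$, and letting $n\to\infty$ — using that the finite sets $S_1,S_2$ are closed — they lie in $S_1,S_2$ on $(X_\infty,\omega_\infty)$. For $(1)$, the implication ``subequivalent $\Rightarrow$ weakly subequivalent'': a subequivalence class $\bfS$ on $(X_\infty,\omega_\infty)$ remains one on $(X_n,\omega_n)$ by (c), and there $\mathrm{Twist}(\bfS,\cM)$ is one‑dimensional with support $\bfS$ by Lemma \ref{L:SECDecomp} (applicable since Definition \ref{D:CS-AIS} holds on $U$ by Lemma \ref{L:PointwiseTest}); by (b) this twist‑space description also holds on $(X_\infty,\omega_\infty)$, so $\bfS$ witnesses weak subequivalence there. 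For the converse: if $C,C'$ are weakly subequivalent on $(X_\infty,\omega_\infty)$ via $\bfS$, transfer to $(X_n,\omega_n)$; there $C_n,C_n'$ are subequivalent, so applying Lemma \ref{L:SECDecomp}\eqref{I:SEC:Support} to an element of $\mathrm{Twist}(\bfS,\cM)$ of support $\bfS$ shows $\bfS$ is a union of subequivalence classes, and one‑dimensionality of $\mathrm{Twist}(\bfS,\cM)$ forces it to be a single one; transferring back by (a) and (c) shows $\bfS$ is a subequivalence class on $(X_\infty,\omega_\infty)$ containing $C,C'$.

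The main obstacle is the third paragraph: excluding cylinders that live on the approximants $(X_n,\omega_n)$ but vanish in the limit. This is exactly where the finiteness of $S_1$ and $S_2$ is essential, since it is what forces a cylinder subequivalent to a persistent one to be persistent itself. The remainder is period‑coordinate bookkeeping, complicated only slightly by the possible disconnectedness of $(X_\infty,\omega_\infty)$.
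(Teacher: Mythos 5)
Your proposal follows the same broad outline as the paper's proof (verify the three defining conditions of a CR point at the limit, and use the finiteness of $S_1$ and $S_2$ to show that any cylinder subequivalent to a persistent cylinder is itself persistent — your third paragraph is essentially the paper's first sublemma). However, it rests on a misreading that removes the lemma's main content: you assert that ``the convergence takes place inside $\cM$\dots no saddle connection shrinks, so the limit again lies in $\cM$.'' The convergence here is in the Mirzakhani--Wright partial compactification, and the limit is allowed to lie in a boundary component $\cM'$ of $\cM$ — indeed that is exactly how the lemma is used in Proposition \ref{P:Rank1.5} to show that boundary components of CR subvarieties are CR. In that situation there is no single period-coordinate chart containing the tail of the sequence and the limit, your transfer principles (a)--(c) have no meaning as stated, and the whole argument collapses. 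The paper handles this by invoking Theorem \ref{T:CylinderBoundaryTangentFormula}: parts \eqref{I:boundary:FindingCn} and \eqref{I:boundary:CnConvergence} set up the correspondence between cylinders on $(X_n,\omega_n)$ and on the limit (in both directions, with control on heights and circumferences), and part \eqref{I:boundary:TwistIsom} supplies the isomorphism of twist spaces that replaces your (b). Once that machinery is in place, your three verifications go through essentially as the paper's three sublemmas do; but without it you have only proved the (easy) interior case, which does not suffice for the applications.

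A secondary point: your justification for (a) — that ``the mutual ratios of heights of the cylinders in $\bfD$ cannot change under deformation'' for $\bfD$ inside one $\cM$-equivalence class — is false in the presence of rel (a rel deformation in $\mathrm{Twist}(\bfD,\cM)$ changes height ratios within an equivalence class). What is actually needed, and what the paper uses, is that the heights and circumferences of the persistent cylinders converge, so that $\sigma_{\bfC_n}\to\sigma_{\bfC}$ and membership in the (closed) tangent space passes to the limit; constancy of ratios holds only for subequivalence classes, via the CR conditions themselves.
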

\begin{proof}
Let $(X_n, \omega_n)$ be a sequence of points in $U$ converging to $(X, \omega)$. If $(X, \omega)$ belongs to a component of the boundary, call this component $\cM'$, otherwise, set $\cM' := \cM$.

The result will follow from the following three sublemmas. Each proof will begin in the same way. We will take a collection of cylinders $\bfC$ on $(X, \omega)$. By Theorem \ref{T:CylinderBoundaryTangentFormula} \eqref{I:boundary:FindingCn} and \eqref{I:boundary:TwistIsom}, after perhaps passing to a subsequence, we may find cylinders $\bfC_n$ on $(X_n, \omega_n)$ that correspond to those in $\bfC$ and so that this correspondence induces an isomorphism between $\mathrm{Twist}(\bfC, \cM')$ and $\mathrm{Twist}(\bfC_n, \cM)$. To avoid repeating this setup three times, we state it now once. But bear in mind that the cylinders in $\bfC$ will differ between the following three arguments.

\begin{sublem}
If $\bfC$ is an equivalence class then $\sigma_{\bfC} \in T_{(X, \omega)} \cM'$.
\end{sublem}
\begin{proof}
Let $\bfC_n'$ be the union of all cylinders on $(X_n, \omega_n)$ that are subequivalent to those in $\bfC_n$. Since the cylinders in $\bfC_n$ have heights and circumferences that converge the same must hold for the cylinders in $\bfC_n'$ after passing to a subsequence (by Definition \ref{D:CS-AIS} \eqref{I:SC-Ratios}). Therefore, after passing to a subsequence, we may assume that each cylinder in $\bfC_n'$ converges to one on $(X, \omega)$ (by Theorem \ref{T:CylinderBoundaryTangentFormula} \eqref{I:boundary:CnConvergence}). The limiting cylinder is necessarily equivalent to one in $\bfC$, so this shows that, for sufficiently large $n$, $\bfC_n = \bfC_n'$, i.e. that $\bfC_n$ is a union of subequivalence classes, and hence that $\sigma_{\bfC_n} \in T_{(X_n, \omega_n)} \cM$. By Theorem \ref{T:CylinderBoundaryTangentFormula} \eqref{I:boundary:TwistIsom}, $\sigma_{\bfC_n}$ corresponds to an element of $T_{(X, \omega)} \cM'$ and these elements converge to $\sigma_{\bfC}$. Hence, $\sigma_{\bfC} \in T_{(X, \omega)} \cM$.
\end{proof}

\begin{sublem}
If two cylinders $C_1$ and $C_2$ are weakly subequivalent on $(X, \omega) \in \cM'$ then they are subequivalent and their ratio of moduli (resp. circumferences) belongs to $S_1$ (resp. $S_2$).
\end{sublem}
\begin{proof}
Let $\bfC$ be the collection of cylinders on $(X, \omega)$ containing $C_1$ and $C_2$ and so that $\mathrm{Twist}\left( \bfC, \cM \right)$ is one-dimensional with the support of any nonzero element being $\bfC$. By Lemma \ref{L:SECDecomp} \eqref{I:SEC:Support}, $\bfC_n$ is a subequivalence class of cylinders whose ratios of moduli (resp. circumferences) belong to $S_1$ (resp. $S_2$). Since the cylinders in $\bfC_n$ converge to those in $\bfC$ the result follows as in the previous sublemma.
\end{proof}

\begin{sublem}
If two cylinders $C_1$ and $C_2$ are subequivalent on $(X, \omega) \in \cM'$ then they are weakly subequivalent. 
\end{sublem}
\begin{proof}
Let $\bfC$ be the collection of cylinders on $(X, \omega)$ containing $C_1$ and $C_2$ so that $\sigma_{\bfC} \in T_{(X, \omega)} \cM$. Since $\sigma_{\bfC}$ is an element of $\mathrm{Twist}(\bfC_n, \cM)$ whose support is $\bfC_n$, $\bfC_n$ is a union of subequivalence classes by Lemma \ref{L:SECDecomp} \eqref{I:SEC:Support}. After passing to a subsequence we may suppose that the partition of $\bfC_n$ into subequivalence classes does not depend on $n$ (recall that we have an identification of the cylinders in $\bfC_n$ with the cylinders in $\bfC$). Since the cylinders in $\bfC_n$ converge to $\bfC$, we see that $\bfC$ is partitioned into the same number of subequivalence classes as $\bfC_n$. Since $\bfC$ is itself a subequivalence class the same must hold for $\bfC_n$. This shows that $\mathrm{Twist}(\bfC_n, \cM)$ is one-dimensional and spanned by $\sigma_{\bfC_n}$ by Lemma \ref{L:SECDecomp} \eqref{I:SEC:LinComb}. Since the cylinders in $\bfC_n$ converge to $\bfC$, $\mathrm{Twist}(\bfC, \cM)$ is one-dimensional and spanned by $\sigma_{\bfC}$, which shows that any two cylinders in $\bfC$ are weakly subequivalent.
\end{proof}
\end{proof}

\begin{cor}\label{C:PrimeTest}
Suppose that $U$ is an open subset of a prime invariant subvariety $\cM$. If each point in $U$ is a CR point then $\cM$ is a CR subvariety.
\end{cor}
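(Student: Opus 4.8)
The plan is to prove that \emph{every} point of $\cM$ is a CR point; Lemma \ref{L:PointwiseTest}, applied with $U=\cM$, then shows that Definition \ref{D:CS-AIS} holds on all of $\cM$, i.e. that $\cM$ is a CR subvariety. To this end I would let $\cC$ denote the set of CR points of $\cM$, show that $\cC$ is $\GL$-invariant (so that $V:=\GL\cdot U$ is an open $\GL$-invariant subset of $\cC$), use primeness of $\cM$ to see that $V$ is dense in $\cM$, and finally apply Lemma \ref{L:Limits} to conclude that every point of $\cM$ --- being a limit of points of $V$ --- is a CR point. The reason one cannot simply apply Lemma \ref{L:Limits} directly to $U$ is that $\cM\setminus U$ need not be approximable by sequences from $U$; spreading $U$ around by the $\GL$-action is precisely what repairs this.

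The $\GL$-invariance of $\cC$ is a routine equivariance check. For $g\in\GL$, the diffeomorphism $g\colon\cM\to\cM$ carries cylinders to cylinders and preserves parallelism, hence preserves $\cM$-equivalence, $\cM$-subequivalence, and weak subequivalence (the last of these being defined purely through twist spaces and their supports, on which $g$ acts compatibly); it also carries $\sigma_{\bfC}$ to a scalar multiple of $\sigma_{g\cdot\bfC}$, so Definition \ref{D:CS-AIS} \eqref{I:SC-Partition} is a $g$-invariant condition on the point. The only point requiring a word is Definition \ref{D:CS-AIS} \eqref{I:SC-Ratios}: since the effect of $g$ on the geometry of a cylinder depends only on that cylinder's direction, and subequivalent --- indeed $\cM$-parallel --- cylinders all share a direction, $g$ rescales the moduli (resp.\ circumferences) of all the cylinders in a fixed subequivalence class by one common factor; hence the ratios of moduli (resp.\ circumferences) lie in $S_1$ (resp.\ $S_2$) for $(X,\omega)$ if and only if they do for $g\cdot(X,\omega)$. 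Thus $g\cdot\cC=\cC$, so $V=\GL\cdot U\subseteq\cC$ is open and $\GL$-invariant.

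For the final step, recall that since $\cM$ is prime a generic point of $\cM$ has dense $\GL$-orbit (this is the only place primeness is used; cf.\ the use of Chen--Wright \cite{ChenWright} in the proof of Lemma \ref{L:MHD-Boundary}), and the set of such points is dense in $\cM$, being conull for the natural measure. Hence it meets the nonempty open set $U$, so there is $(Y,\eta)\in U$ with $\overline{\GL\cdot(Y,\eta)}=\cM$; then $V\supseteq\GL\cdot(Y,\eta)$ is dense in $\cM$. Since $V$ is open and every point of $V$ is a CR point, Lemma \ref{L:Limits} (applied with $V$ in place of $U$) shows that the limit of any convergent sequence of points of $V$ is a CR point; by density of $V$, every point of $\cM$ is such a limit and hence a CR point, and an appeal to Lemma \ref{L:PointwiseTest} completes the proof. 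The only mildly delicate ingredient is the $\GL$-invariance of the ratio conditions in Definition \ref{D:CS-AIS} \eqref{I:SC-Ratios}; everything else is immediate from the preceding lemmas, from the density of generic orbits in a prime invariant subvariety, and from the openness of $\GL\cdot U$.
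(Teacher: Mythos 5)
Your proof is correct and follows essentially the same route as the paper: saturate $U$ under the $\GL$-action, use primeness (via ergodicity of the $\GL$-action, by Chen--Wright) to get a dense open set of CR points, and then conclude with Lemma \ref{L:Limits} and Lemma \ref{L:PointwiseTest}. The only difference is that you spell out the $\GL$-invariance of the CR-point condition (in particular the invariance of the ratio conditions, which holds because parallel cylinders have their moduli and circumferences rescaled by a common factor), a step the paper's proof leaves implicit.
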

\begin{proof}
Since $\cM$ is prime, the $\mathrm{GL}(2, \mathbb{R})$ action is ergodic on $\cM$ (by Chen-Wright \cite[Corollary 7.4]{ChenWright}). This implies that $U' := \bigcup_{g \in \mathrm{GL}(2, \mathbb{R})} gU$ is open, dense, and that every point in it is a CR point. Since $U$ is open and dense in $\cM$, every point in $\cM$ is a CR point (by Lemma \ref{L:Limits}). Therefore, $\cM$ is a CR subvariety by Lemma \ref{L:PointwiseTest}.
\end{proof}

\begin{prop}\label{P:Rank1.5}
Suppose that $\cM$ is a CR subvariety whose elements are connected surfaces. If $\cM'$ is a component of the boundary of $\cM$, then it is a CR subvariety. 
\end{prop}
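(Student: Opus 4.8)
The plan is to verify that $\cM'$, a component of the boundary of the CR subvariety $\cM$, satisfies Definition \ref{D:CS-AIS}. The natural strategy is to reduce to the primitive/pointwise criterion developed just above: by Corollary \ref{C:PrimeTest} it suffices to show that, for each prime factor of $\cM'$, some open set consists of CR points, and then Lemma \ref{L:Limits} and Lemma \ref{L:PointwiseTest} do the rest. So the real content is: \emph{every} point of $\cM'$ (or at least a dense open set) is a CR point, with the \emph{finite} sets $S_1, S_2$ inherited from $\cM$.

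First I would handle the prime decomposition. By Definition \ref{D:Prime} and Chen-Wright \cite[Lemma 7.10]{ChenWright}, $\cM'$ decomposes uniquely as a product of prime invariant subvarieties $\cM' = \cM_1' \times \cdots \times \cM_k'$. A collection of cylinders on a surface of $\cM'$ lives on the disjoint components, and twist spaces, subequivalence, and weak subequivalence are all detected componentwise (a subequivalence class cannot be split between two components since $\sigma_{\bfC}$ restricted to each factor must separately lie in the tangent space of that factor). Thus it suffices to prove each prime factor $\cM_i'$ is a CR subvariety, and for that, by Corollary \ref{C:PrimeTest}, it suffices to exhibit one open set of CR points in $\cM_i'$. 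Since $\cM_i'$ need not itself literally be a boundary component of $\cM$, I would instead argue directly: take any surface $(X_\infty, \omega_\infty)$ in $\cM'$ that is a limit of surfaces $(X_n, \omega_n) \in \cM$ (such points are dense by Chen-Wright \cite[Proposition 2.5]{ChenWright}), and show it is a CR point. Then Lemma \ref{L:Limits} propagates the CR-point property to all of $\cM'$, and Lemma \ref{L:PointwiseTest} together with Corollary \ref{C:PrimeTest} applied to each prime factor finishes the proof.

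The heart of the matter is therefore showing a limit point $(X_\infty, \omega_\infty)$ of $\cM$ is a CR point, and this is essentially what the three sublemmas in the proof of Lemma \ref{L:Limits} accomplish — but those were stated for a limit of CR \emph{points}, whereas here I want a limit of points in the CR subvariety $\cM$ (which by Lemma \ref{L:PointwiseTest} are CR points anyway), so I can invoke Lemma \ref{L:Limits} directly once I know the $(X_n,\omega_n)$ are CR points. Concretely: every point of $\cM$ is a CR point by Lemma \ref{L:PointwiseTest} applied with $U = \cM$; hence by Lemma \ref{L:Limits} applied with $U = \cM$, any point of $\cM'$ arising as a limit of points of $\cM$ is a CR point; by density of such limit points and a second application of Lemma \ref{L:Limits} (now with $U$ a suitable open dense subset of $\cM'$), every point of $\cM'$ is a CR point; finally decompose $\cM'$ into primes and apply Corollary \ref{C:PrimeTest} to each factor, which uses ergodicity of the $\mathrm{GL}(2,\mathbb{R})$-action on a prime subvariety. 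The finite sets $S_1, S_2$ are unchanged: the sublemmas in Lemma \ref{L:Limits} show ratios of moduli and circumferences of (weakly) subequivalent cylinders on the boundary lie in the same $S_1, S_2$ as on $\cM$, because these cylinders are limits of subequivalent cylinders upstairs whose ratios already lie in those sets.

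The main obstacle I anticipate is purely bookkeeping around disconnectedness: making sure that "CR point", subequivalence, and weak subequivalence genuinely behave componentwise on a disconnected boundary surface, and that Corollary \ref{C:PrimeTest} — stated for a prime invariant subvariety — can legitimately be applied to each prime factor $\cM_i'$ of $\cM'$ after one checks the CR-point condition factor by factor. A secondary subtlety is that Theorem \ref{T:CylinderBoundaryTangentFormula} and the collapse-map machinery used inside Lemma \ref{L:Limits} require $(X_\infty, \omega_\infty)$ to be an actual limit of a sequence in $\cM$ with collapse maps in hand; ensuring the open dense set of such limit points is available (via Chen-Wright \cite[Proposition 2.5]{ChenWright}) and that one may pass to subsequences freely is the kind of step that needs to be stated carefully but presents no genuine difficulty.
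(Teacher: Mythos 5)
Your proposal is correct and follows essentially the same route as the paper: use Chen--Wright to get an open set of limit points in $\cM'$, apply Lemma \ref{L:Limits} (with $U=\cM$, all of whose points are CR points by Lemma \ref{L:PointwiseTest}) to see these limits are CR points, then pass to the prime decomposition of $\cM'$ and invoke Corollary \ref{C:PrimeTest} on each factor. The only difference is your redundant ``second application'' of Lemma \ref{L:Limits} to a dense open subset of $\cM'$, which Corollary \ref{C:PrimeTest} already performs internally via ergodicity.
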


The requirement that the surfaces in $\cM$ are connected is present solely because it is required to apply the boundary theory developed in Mirzakhani-Wright \cite{MirWri} and Chen-Wright \cite{ChenWright}.

\begin{proof}
By Chen-Wright \cite[Proposition 2.5]{ChenWright} (see also Mirzakhani-Wright \cite[Proposition 2.6]{MirWri}), there is an open subset $U \subseteq \cM'$ consisting of limits of sequences of surfaces in $\cM$. By Lemma \ref{L:Limits}, every point in $U$ is a CR point. Let $\cM' = \cM_1 \times \hdots \times \cM_k$ be the prime decomposition of $\cM'$ (see Definition \ref{D:Prime}). Since $\cM_i$ is prime and since every point in the projection of $U$ to $\cM_i$ is a CR point, $\cM_i$ is a CR subvariety by Corollary \ref{C:PrimeTest}. Since no cylinders on a surface in $\cM_i$ can be equivalent or subequivalent to cylinders on a surface in $\cM_j$ if $i \ne j$, it follows that $\cM$ itself is a CR subvariety.
\end{proof}

\section{The base case of the proof of Theorem \ref{T:main}}

This section is dedicated to the proof of the following.

\begin{prop}\label{P:BaseBase}
If $\cM$ is a rank two rel zero invariant subvariety of minimal homological dimension, then $\cM$ is geminal.
\end{prop}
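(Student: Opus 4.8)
The plan is to work with a cylindrically stable surface $(X,\omega)\in\cM$, exploiting that $\cM$ has $\mathrm{rel}$ zero and rank two. By Corollary \ref{C:FieldDefinition}, $\mathbf{k}(\cM)=\mathbb{Q}$, so $(X,\omega)$ can be taken square-tiled via Lemma \ref{L:Stable} \eqref{I:PerturbToStableQ}. By Lemma \ref{L:Stable} \eqref{I:MaxEC}, since $\mathrm{rel}(\cM)=0$, a cylindrically stable surface has exactly $\mathrm{rank}(\cM)+\mathrm{rel}(\cM)=2$ horizontal equivalence classes $\bfC_1,\bfC_2$, and by Theorem \ref{T:MHDImpliesHomology} the core curves within each $\bfC_i$ are homologous. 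The goal ``$\cM$ is geminal'' — whose precise definition is deferred to Section \ref{S:GeminalandMP} but which morally asserts that every cylinder persists in a very constrained way and that equivalence classes come in pairs of homologous cylinders exchanged by an involution — should be verified on such stable surfaces and then propagated.

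**The degeneration argument.** The heart of the proof should be a degeneration: take $\bfC=\bfC_1$, choose a generic cylinder degeneration $v\in\mathrm{Twist}(\bfC_1,\cM)$ as produced by Theorem \ref{T:Typical}. Since $\mathrm{rel}(\cM)=0$, $\bfC_1$ is not involved with rel, so by Theorem \ref{T:Typical} we get $\dim(\cM_v)=\dim(\cM)-1$ and $\mathrm{rank}(\cM_v)=\mathrm{rank}(\cM)-1=1$; moreover $\mathrm{rel}(\cM_v)=\dim(\cM_v)-2\,\mathrm{rank}(\cM_v)=0$. By Corollary \ref{C:MHD-Boundary:Connected}, $\Col_v(X,\omega)$ is connected and $\cM_v$ has minimal homological dimension. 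A rank-one invariant subvariety of minimal homological dimension and rel zero with $\mathbf{k}=\mathbb{Q}$ is a locus of torus covers (by the remark following Theorem \ref{T:main}, i.e. Wright \cite[Theorem 1.9]{Wcyl}). So $\cM_v$ is a torus-cover locus, which is in particular geminal. The plan is then to reverse the degeneration: one understands exactly which surfaces in $\cM$ degenerate to a given torus cover in $\cM_v$, and uses the constraint that the torus-cover structure on the boundary must ``lift'' compatibly to conclude that $(X,\omega)$ itself, together with its second equivalence class $\bfC_2$, has the geminal structure. Using Lemma \ref{L:DefinitionCol(f)} in reverse — or rather building the covering on $(X,\omega)$ from the covering on the boundary — is the natural mechanism, and Proposition \ref{P:Rank1.5} (the boundary of a CR subvariety is CR, and rel-zero connected invariant subvarieties are CR by Lemma \ref{L:ModuliRelZero}) guarantees the degeneration stays inside the well-behaved class.

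**Propagating and the pairing structure.** Having identified the covering structure after collapsing $\bfC_1$, one would run the symmetric argument collapsing $\bfC_2$, obtaining a second torus-cover description, and then reconcile the two: the requirement that both degenerations exist and agree on the overlap forces $\bfC_1$ and $\bfC_2$ each to consist of cylinders that are homologous in pairs (or that the whole class is a single subequivalence class with a specified multiplicity), which is precisely the combinatorial content of being geminal. The overcollapsing argument of Lemma \ref{L:ModuliRelZero2} is the right tool to control how cylinders in $\bfC_1$ meet those in $\bfC_2$ along saddle connections, pinning down the adjacency pattern. Finally, one must check that the geminal conclusion, established on cylindrically stable square-tiled surfaces, holds on all of $\cM$; since every surface can be perturbed to a stable one keeping a chosen cylinder horizontal (Lemma \ref{L:Stable} \eqref{I:PerturbToStable}), and geminality is a statement about individual equivalence classes of cylinders, this is a routine spreading argument.

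**Main obstacle.** The hard part will be the reconstruction step: knowing $\Col_v(X,\omega)$ is a torus cover does not by itself tell you that $(X,\omega)$ is a torus cover of the type required — you must show the branched-covering map on the boundary deforms back, i.e. that the vanishing cycles are compatible with a covering structure and that no ``extra'' topology appears when the collapsed cylinders are reinflated. Controlling this is exactly where one expects to invoke the detailed structure of rank-one loci (their cylinders, their field of definition being $\mathbb{Q}$) together with the CR/subequivalence machinery of Section 4 and the precise meaning of geminal from Section \ref{S:GeminalandMP}; getting the two collapsing directions $\bfC_1$ and $\bfC_2$ to produce a single coherent covering of a torus, rather than two unrelated ones, is the crux.
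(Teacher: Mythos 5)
There is a genuine gap: the proposal never actually proves either of the two facts that constitute geminality in the rel-zero case, namely that every equivalence class has at most two cylinders and that the two cylinders of a pair are isometric. Your central mechanism --- collapse $\bfC_1$, observe that $\cM_v$ is a rank-one torus-cover locus, then ``reverse the degeneration'' to lift a covering structure back to $(X,\omega)$ and reconcile it with the one obtained from $\bfC_2$ --- is exactly the step you flag as the crux and leave unresolved, and it is not how the argument goes. Indeed, collapsing $\bfC_1$ destroys the very cylinders whose cardinality and heights you need to control, and no covering of $(X,\omega)$ is ever constructed in the base case (the diamond-lemma reconstruction of optimal maps from two degenerations is the engine of the \emph{inductive} step in the proof of Theorem \ref{T:main}, not of Proposition \ref{P:BaseBase}).

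What the paper does instead is more elementary and entirely different in its second half. The bound of two cylinders per class is a direct homology argument: since the core curves in $\bfC_1$ are pairwise homologous (Theorem \ref{T:MHDImpliesHomology}), cutting along them separates $(X,\omega)$ into pieces each bounded by two such curves, so $\bfC_2$ sits in a single piece and at most two cylinders of $\bfC_1$ can border it; Lemma \ref{L:ModuliRelZero2} then forces \emph{every} cylinder of $\bfC_1$ to border $\bfC_2$, giving $|\bfC_1|\le 2$. For isometry, one makes $\bfC_2$ vertical and collapses it (the \emph{transverse} class, keeping $\bfC_1$ alive); on the rank-one boundary all the surviving horizontal cylinders become equivalent and hence homologous, and an explicit overcollapse of $\ColTwo(\bfC_3)$ into $\ColTwo(\bfC_1)$ shows the heights of the two cylinders of $\ColTwo(\bfC_1)$ change at the same rate, which combined with the constancy of moduli ratios from the CR machinery (Proposition \ref{P:Rank1.5}) forces the heights to be equal. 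You correctly identified the relevant toolbox (cylindrical stability, the two-equivalence-class count, Lemma \ref{L:ModuliRelZero2}, the CR subvariety results), but the argument that assembles these tools is missing, and the assembly you propose would not go through as stated.
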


The general definition of ``geminal" appears in Definition \ref{D:Geminal}, but when $\cM$ has rel zero, it suffices to show that every equivalence class of cylinders consists of isometric cylinders and has at most two elements. To this end, suppose that $\bfC_1$ is an equivalence class of cylinders on a surface $(X, \omega)$ in $\cM$. 

Without loss of generality suppose that the cylinders in $\bfC_1$ are horizontal. By perturbing, we may suppose moreover that $(X, \omega)$ is cylindrically stable (by Theorem \ref{L:Stable} \eqref{I:PerturbToStableQ} and Corollary \ref{C:FieldDefinition}). We will perturb so that if two cylinders in $\bfC_1$ are not isometric before perturbing they are not isometric after perturbing. Note that after perturbing $\bfC_1$ remains an equivalence class, and not just a subset of one (by Lemmas \ref{L:ModuliRelZero} and \ref{L:NoNewECsInRelZero}).

By Lemma \ref{L:Stable} \eqref{I:MaxEC} there is exactly one other equivalence class $\bfC_2$ of horizontal cylinders. In particular, there is a cylinder in $\bfC_1$ that shares a boundary saddle connection with a cylinder in $\bfC_2$.

\begin{lem}\label{L:TwoCylinders}
There are at most two cylinders in $\bfC_1$. If there are two, then the top boundary of one coincides with the bottom boundary of the other.
\end{lem}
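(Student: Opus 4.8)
The plan is to combine the homological rigidity coming from minimal homological dimension with the ``overcollapsing'' technique of Lemma~\ref{L:ModuliRelZero2}.

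\emph{Setup.} Since $\cM$ has minimal homological dimension, Theorem~\ref{T:MHDImpliesHomology} gives that all core curves of cylinders in $\bfC_1$ are homologous; write $c_1$ for their common class in $H_1(X;\bQ)$, and similarly $c_2$ for the common class of the core curves in $\bfC_2$. Both classes are nonzero, and by Lemma~\ref{L:Stable}~\eqref{I:CCSpan} the horizontal core curves of the cylindrically stable surface $(X,\omega)$ span a subspace of $T_{(X,\omega)}(\cM)^*$ of dimension $\rank(\cM)=2$; since every horizontal core curve is $c_1$ or $c_2$, the classes $c_1$ and $c_2$ are linearly independent in $H_1(X;\bQ)$. (As $\bfC_1$ and $\bfC_2$ are disjoint equivalence classes one also has $c_1\cdot c_2=0$, though only the independence is used below.) The genericity needed to run the collapsing arguments will be arranged by a preliminary perturbation inside the cylindrically stable, square-tiled locus, which is available since $\bk(\cM)=\bQ$ (Corollary~\ref{C:FieldDefinition}) together with Lemma~\ref{L:Stable}; moreover this perturbation preserves minimal homological dimension.

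\emph{Reduction to an internal gluing.} It was already observed that some cylinder of $\bfC_1$ shares a boundary saddle connection with a cylinder of $\bfC_2$; applying Lemma~\ref{L:ModuliRelZero2} --- and its mirror, obtained by exchanging the roles of $\bfC_1$ and $\bfC_2$ --- every cylinder of $\bfC_1$ shares a boundary saddle connection with a cylinder of $\bfC_2$, and every cylinder of $\bfC_2$ shares one with a cylinder of $\bfC_1$. Now suppose toward a contradiction that no cylinder of $\bfC_1$ is glued along a horizontal saddle connection to another cylinder of $\bfC_1$, and likewise for $\bfC_2$. Then the collection of left-to-right oriented horizontal saddle connections lying on the top boundaries of the cylinders of $\bfC_1$ is \emph{equal}, as a $1$-chain, to the collection lying on the bottom boundaries of the cylinders of $\bfC_2$ (each saddle connection occurs once in each list, with the same orientation, since the horizontal direction is global); summing, and using that the top boundary of a $\bfC_1$-cylinder is homologous to $c_1$ while the bottom boundary of a $\bfC_2$-cylinder is homologous to $c_2$, one obtains $|\bfC_1|\,c_1=|\bfC_2|\,c_2$ in $H_1(X;\bQ)$, contradicting independence. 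Hence, after possibly interchanging the two classes, two distinct cylinders $A,A'$ of $\bfC_1$ share a boundary saddle connection.

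\emph{Analyzing the configuration.} Since $\gamma_A$ and $\gamma_{A'}$ are disjoint and homologous, $\gamma_A\cup\gamma_{A'}$ separates $X$ into two pieces $P,P'$, each with boundary $\gamma_A\cup\gamma_{A'}$, with $P$ containing the ``bottom half'' of $A$ and the ``top half'' of $A'$ and $P'$ the complementary half-cylinders. Because $\bfC_1\cup\bfC_2$ fills $X$ and $\bfC_1,\bfC_2$ are the only equivalence classes, a hypothetical third cylinder of $\bfC_1$ would have to lie entirely inside $P$ or $P'$; tracking the homology of its core curve against $\partial P$ (it equals $c_1$ in $H_1(X)$ but has trivial intersection with $\partial P$) rules this out, so $|\bfC_1|\le 2$. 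Finally, a cross-curve/closing-up argument --- a cross curve of a $\bfC_2$-cylinder trapped in $P$ closes up to a cycle that can be pushed off $\partial P$, hence meets $c_1$ trivially, while the $\cM$-structure forces it to cross the $\bfC_1$-cores --- shows that one of $P,P'$ contains no cylinder of $\bfC_2$; that piece is then exactly the bottom half of one of $A,A'$ glued along its horizontal boundary to the top half of the other, i.e. the top boundary of one of the two cylinders coincides with the bottom boundary of the other.

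\emph{Main difficulty.} The crux is the final step: converting the homological inputs into the sharp statement ``$|\bfC_1|\le 2$, with equality forcing one top boundary to equal the other bottom boundary''. Excluding a third cylinder, and excluding that $\bfC_2$ splits between $P$ and $P'$, require careful bookkeeping of the identifications of horizontal saddle connections (a separatrix-diagram analysis), and this is where one must lean on the full overcollapsing machinery --- and, if needed, on the reduction obtained by vertically collapsing $\bfC_2$ (Corollary~\ref{C:MHD-Boundary:Connected} and Theorem~\ref{T:Typical}) to a rank-one minimal-homological-dimension surface, whose horizontal cylinder decomposition is far more constrained --- rather than on homology alone.
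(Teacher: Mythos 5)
There is a genuine gap, concentrated in your ``Analyzing the configuration'' step. To exclude a third cylinder $A''$ of $\bfC_1$ you argue that its core curve ``equals $c_1$ in $H_1(X)$ but has trivial intersection with $\partial P$.'' This is vacuous: $\partial P$ is null-homologous, so \emph{every} class has trivial algebraic intersection with it, and a curve in $P$ homologous to $c_1$ is in no way contradictory --- a parallel push-off of $\gamma_A$ into $P$ is exactly such a curve, and the core curve of a genuine third cylinder of $\bfC_1$ would be another. Homology of $\bfC_1$ alone cannot bound $|\bfC_1|$. What is actually needed (and what the paper uses) is the interplay with $\bfC_2$: cut along \emph{all} core curves of $\bfC_1$; because the core curves of $\bfC_2$ are pairwise homologous to a class independent of $c_1$, all of $\bfC_2$ sits in a \emph{single} complementary component, which has only two boundary circles; hence at most two cylinders of $\bfC_1$ can be adjacent to $\bfC_2$, and Lemma \ref{L:ModuliRelZero2} (which you invoke early on but do not use at this point) forces every cylinder of $\bfC_1$ to be adjacent to one of $\bfC_2$. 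You never establish the ``$\bfC_2$ lies in one component'' fact, and your substitute for it fails. The same omission undermines the second half of the statement: you concede in your final paragraph that the claim ``one of $P,P'$ contains no cylinder of $\bfC_2$'' is the crux and is not carried out; but once $\bfC_2$ is known to lie in a single complementary component, the other component is forced to consist of the top half of one $\bfC_1$-cylinder glued to the bottom half of the other, which is precisely the assertion.

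Two further structural points. First, your dichotomy only yields that some two cylinders of $\bfC_1$ \emph{or} some two cylinders of $\bfC_2$ are adjacent, and ``after possibly interchanging the two classes'' is not available: the lemma is about the specific class $\bfC_1$, and if the internal adjacency occurs only within $\bfC_2$, your subsequent analysis (even if repaired) bounds $|\bfC_2|$ rather than $|\bfC_1|$. Second, the entire ``reduction to an internal gluing'' is not needed on the paper's route: the chain-level identity $|\bfC_1|\,c_1=|\bfC_2|\,c_2$ is a nice observation, but the combination of ``$\bfC_2$ lies in one complementary component'' with Lemma \ref{L:ModuliRelZero2} already gives $|\bfC_1|\le 2$ and the gluing statement directly.
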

\begin{proof}
Let $n$ be the number of cylinders in $\bfC_1$. We will show that $n = 2$. Since $\cM$ has minimal homological dimension, fixing $i \in \{1, 2\}$, any two cylinders in $\bfC_i$ have core curves that are homologous to each other (by Theorem \ref{T:MHDImpliesHomology}). Cutting the core curves of the cylinders in $\bfC_1$ creates $n-1$ subsurfaces, each of which has two boundary components - each one corresponding to a core curve of a cylinder in $\bfC_1$. Since the core curves of cylinders in $\bfC_2$ are all homologous to each other (and not homologous to the core curve of any cylinder in $\bfC_1$), the cylinders in $\bfC_2$ belong to exactly one of these subsurfaces and hence there are at most two cylinders in $\bfC_1$ which can share a boundary saddle connection with a cylinder in $\bfC_2$. By Lemma \ref{L:ModuliRelZero2}, every cylinder in $\bfC_1$ shares a boundary saddle connection with a cylinder in $\bfC_2$, so there are at most two cylinders in $\bfC_1$. 

Since the cylinders in $\bfC_2$ are contained in exactly one subsurface of $(X, \omega) - \overline{\bfC_1}$, it follows that if there are two cylinders in $\bfC_1$ the top boundary of one must coincide with the bottom boundary of the other.
\end{proof}

Since there is nothing more to prove when $\bfC_1$ consists of a single cylinder, we suppose that $\bfC_1$ consists of two cylinders, which we call $C$ and $C'$.

To conclude the proof of Proposition \ref{P:BaseBase}, it suffices to show that the cylinders in $\bfC_1$ are isometric. Since $\cM$ has minimal homological dimension, the core curves of cylinders in $\bfC_1$ are homologous (by Theorem \ref{T:MHDImpliesHomology}) and hence have equal length. So it suffices to show that the cylinders in $\bfC_1$ have equal heights. 

Since the cylinders in $\bfC_2$ are not generically parallel to those in $\bfC_1$, there is a surface $(X', \omega')$ that is arbitrarily close to $(X, \omega)$ on which the cylinders in $\bfC_1$ remain horizontal, the cylinders in $\bfC_2$ are not, and the cylinders in $\bfC_2$ are generic. We will assume that this perturbation was performed so that if $C$ and $C'$ had unequal height before the perturbation they did so after as well.  As before, $\bfC_1$ and $\bfC_2$ remain equivalence classes, and not just subsets of one (by Lemmas \ref{L:ModuliRelZero} and \ref{L:NoNewECsInRelZero}). By applying the matrix $\begin{pmatrix} 1 & t \\ 0 & 1 \end{pmatrix}$ we will suppose that $\bfC_2$ is vertical on $(X', \omega')$. After applying the standard shear in $\bfC_2$ (which is possible by the cylinder deformation theorem) we may also suppose that, on $(X', \omega')$, there are cylinders in $\bfC_2$ that contain horizontal saddle connections. By \cite[Lemma 9.1]{Apisa}, $(X', \omega')$ is horizontally periodic and not covered by the cylinders in $\bfC_1$ since these cylinders do not intersect $\bfC_2$. Let $\bfC_3$ be the equivalence class of the horizontal cylinders that intersect $\bfC_2$. Since equivalent cylinders are homologous (by Theorem \ref{T:MHDImpliesHomology}) and since the cylinders in $\bfC_1$ do not intersect those in $\bfC_2$, we have that $\bfC_3 \ne \bfC_1$. $\bfC_1$ and $\bfC_3$ are the only two equivalence classes of horizontal cylinders on $(X', \omega')$ (by Lemma \ref{L:Stable} \eqref{I:MaxEC}).

\begin{lem}\label{L:BaseCase:BoundaryStructure}
$\ColTwo(X', \omega')$ is connected and the core curves of cylinders in $\ColTwo(\bfC_1 \cup \bfC_3)$ are all homologous. Moreover, $\ColTwo(\bfC_1)$ and $\ColTwo(\bfC_3)$ are subequivalence classes of cylinders. 
\end{lem}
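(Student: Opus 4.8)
The plan is to carry all three assertions over from the boundary stratum obtained by vertically collapsing $\bfC_2$, after identifying that stratum precisely.

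First I would note that $\ColTwo(X',\omega')$ equals $\Col_v(X',\omega')$ for some $v\in\mathrm{Twist}(\bfC_2,\cM)$ defining a cylinder degeneration: since $\bfC_2$ is vertical and contains a horizontal saddle connection, Definition~\ref{D:Collapse} identifies $\ColTwo$ with $\Col_v$ for a rotation of $-i\sigma_{\bfC_2}$, and this $v$ defines a cylinder degeneration because the heights of the cylinders in $\bfC_2$ go to zero simultaneously (their ratio is locally constant, as $\mathrm{rel}(\cM)=0$) and the transverse saddle connection forces the path to diverge. As $\bfC_2$ is a generic equivalence class and $\overline{\bfC_2}\neq(X',\omega')$ (indeed $\bfC_2$ is disjoint from $\bfC_1$), Corollary~\ref{C:MHD-Boundary:Connected} gives that $\ColTwo(X',\omega')$ is connected and that $\cM_{\bfC_2}=\cM_v$ has minimal homological dimension; Proposition~\ref{P:Rank1.5} (with Lemma~\ref{L:ModuliRelZero}) shows $\cM_{\bfC_2}$ is a CR subvariety. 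Finally, since $\mathrm{rel}(\cM)=0$ no equivalence class is involved with rel, so Theorem~\ref{T:Typical} gives $\mathrm{rank}(\cM_{\bfC_2})=\mathrm{rank}(\cM)-1=1$. This settles connectedness.

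For the homology assertion, the point is that a rank-one invariant subvariety of minimal homological dimension has homological dimension one, so on any horizontally periodic surface in $\cM_{\bfC_2}$ the horizontal core curves span a one-dimensional subspace of $H_1$, hence are pairwise homologous. So I would check that $\ColTwo(X',\omega')$ is horizontally periodic with horizontal cylinders $\ColTwo(\bfC_1\cup\bfC_3)$: since $\bfC_2$ is vertical, $\sigma_{\bfC_2}$ evaluates to a real number on every cycle, so moving along the collapse path preserves the imaginary part of every period, hence the horizontal foliation, so every horizontal cylinder or saddle connection of $(X',\omega')$ that persists stays horizontal. The cylinders in $\bfC_1$ persist because $\bfC_1\cap\bfC_2=\emptyset$; the cylinders in $\bfC_3$ also all persist, since otherwise the standard deformation of the surviving subcollection, being the $f^{*}$-image of an element of $T\cM_{\bfC_2}$ by Theorem~\ref{T:CylinderBoundaryTangentFormula}, would lie in $T_{(X',\omega')}\cM$ and contradict the minimality of the subequivalence class $\bfC_3$, and $\bfC_3$ cannot collapse entirely because then $\overline{\bfC_2}$ would exhaust $(X',\omega')\setminus\overline{\bfC_1}$, which is incompatible with $\ColTwo(X',\omega')$ being connected of rank one. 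Hence all core curves of $\ColTwo(\bfC_1\cup\bfC_3)$ are homologous.

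For the final assertion, recall that because $\mathrm{rel}(\cM)=0$ and $\cM$ is CR, $\bfC_1$ and $\bfC_3$ are themselves subequivalence classes. Their standard deformations $\sigma_{\bfC_1}$ and $\sigma_{\bfC_3}$ annihilate the vanishing cycles of the degeneration — these are horizontal cross-curves of the cylinders in $\bfC_2$, which meet the horizontal core curves of $\bfC_1$ and $\bfC_3$ with intersection number zero — so by Theorem~\ref{T:BoundaryTangent2} and Theorem~\ref{T:CylinderBoundaryTangentFormula} the images $\sigma_{\ColTwo(\bfC_1)}$ and $\sigma_{\ColTwo(\bfC_3)}$ lie in $T_{\ColTwo(X',\omega')}\cM_{\bfC_2}$, and the cylinders within $\ColTwo(\bfC_1)$ (resp. $\ColTwo(\bfC_3)$) are $\cM_{\bfC_2}$-equivalent since, by the previous paragraph, their core curves are homologous and homologous core curves stay parallel under every deformation. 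Minimality then follows: a proper nonempty subcollection of $\ColTwo(\bfC_i)$ whose standard deformation lay in $T\cM_{\bfC_2}$ would, via the isomorphism of Theorem~\ref{T:CylinderBoundaryTangentFormula}\eqref{I:boundary:TwistIsom}, be identified with a proper nonempty subcollection of $\bfC_i$ whose standard deformation lies in $T\cM$, contradicting that $\bfC_i$ is a subequivalence class. I expect the main obstacle to be exactly the claim used above that all cylinders in $\bfC_3$ persist under $\ColTwo$ and none of them collapses — the one genuinely geometric step, requiring a careful analysis of how the collapse deformation along $\sigma_{\bfC_2}$ changes the circumferences of the cylinders in $\bfC_3$, using both the minimality of $\bfC_3$ as a subequivalence class and the connectedness coming from Corollary~\ref{C:MHD-Boundary:Connected}.
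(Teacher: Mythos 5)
Your treatment of the first and third assertions is essentially the paper's: connectedness and the minimal homological dimension of $\cM_{\bfC_2}$ come from Corollary \ref{C:MHD-Boundary:Connected}, the rank drop from Theorem \ref{T:Typical} (or just from $\dim\cM_{\bfC_2}=3$), and the subequivalence-class claim from the twist-space isomorphism of Theorem \ref{T:CylinderBoundaryTangentFormula}~\eqref{I:boundary:TwistIsom} together with the fact that $\bfC_1$ and $\bfC_3$ are subequivalence classes upstairs (Lemma \ref{L:ModuliRelZero}). That part is fine.

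The homology assertion is where you take a detour that introduces a genuine gap. You route the argument through ``$\Col_{\bfC_2}(X',\omega')$ is horizontally periodic and $\cM_{\bfC_2}$ has homological dimension one,'' which forces you to prove (i) that \emph{every} cylinder of $\bfC_3$ persists under the collapse and (ii) that the surviving cylinders cover the limit surface. Your argument for (i) does not work as written: you apply the cylinder deformation theorem to the standard deformation of the \emph{surviving subcollection} of $\Col_{\bfC_2}(\bfC_3)$, but that class is only known to lie in $T\cM_{\bfC_2}$ if the subcollection is a union of subequivalence classes downstairs --- which is part of what is being proved. In a rank-one boundary all parallel cylinders are equivalent, so the equivalence class downstairs containing the survivors also contains $\Col_{\bfC_2}(\bfC_1)$; the cylinder deformation theorem only gives you $\sigma$ of that \emph{whole} class, whose pullback is supported on $\bfC_1\cup S$ and therefore yields no contradiction with the minimality of $\bfC_3$ alone. (A correct version of this persistence argument exists --- pull back $\sigma$ of the full equivalence class and invoke Lemma \ref{L:SECDecomp}~\eqref{I:SEC:Support} upstairs to see its support must be a union of the subequivalence classes $\bfC_1$, $\bfC_3$ --- but that is not what you wrote, and your fallback claim that total collapse of $\bfC_3$ would force $\overline{\bfC_2}$ to exhaust $(X',\omega')\setminus\overline{\bfC_1}$ is also unjustified.) Point (ii) is asserted rather than proved. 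None of this is needed: since $\cM_{\bfC_2}$ has rank one, all absolute periods deform by a common complex scalar, so any two parallel cylinders on any surface in $\cM_{\bfC_2}$ are $\cM_{\bfC_2}$-equivalent; the cylinders of $\Col_{\bfC_2}(\bfC_1\cup\bfC_3)$ that persist are all horizontal, hence all equivalent, and Theorem \ref{T:MHDImpliesHomology} applied to $\cM_{\bfC_2}$ (which has minimal homological dimension by Corollary \ref{C:MHD-Boundary:Connected}) immediately gives that their core curves are homologous. This is the paper's argument; it sidesteps both horizontal periodicity and the persistence question entirely.
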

\begin{proof}
The first claim follow from Corollary \ref{C:MHD-Boundary:Connected}, which also implies that $\cM_{\bfC_2}$ has minimal homological dimension. Since $\cM$ has rank two rel zero, $\cM_{\bfC_2}$ must have rank one and hence if two cylinders are parallel on a surface in $\cM_{\bfC_2}$ they are $\cM_{\bfC_2}$-equivalent. Since the cylinders in $\ColTwo(\bfC_1)$ and $\ColTwo(\bfC_3)$ are parallel they must be $\cM_{\bfC_2}$-equivalent and hence the second claim follows by Theorem \ref{T:MHDImpliesHomology}. The third claim follows from Theorem \ref{T:CylinderBoundaryTangentFormula} \eqref{I:boundary:TwistIsom}.
\end{proof}

Our strategy will be to overcollapse the cylinders in $\Col_{\bfC_2}(\bfC_3)$ as in the proof of Lemma \ref{L:ModuliRelZero2} and to conclude that the heights of each cylinder in $\Col_{\bfC_2}(\bfC_1)$ change at the same rate under this deformation, which will imply that they have equal height.

\begin{rem}
The results in Lemma \ref{L:BaseCase:BoundaryStructure} are the reason we pass to the boundary. If we were to attempt an overcollapsing argument without degenerating it would be hard to control how the heights of the cylinders in $\bfC_1$ change. However, the extra information that all four cylinders in $\Col_{\bfC_2}\left( \bfC_1 \cup \bfC_3 \right)$ are homologous makes it easy to compute the change in heights of the cylinders in $\Col_{\bfC_2}(\bfC_1)$ after overcollapsing.
\end{rem}

Begin by shearing $\ColTwo(X', \omega')$ so that there are no vertical saddle connections. Let $(Y, \eta)$ denote the resulting surface. 

To fix notation, suppose that $D$ is a cylinder of height $h$ in $\Col_{\bfC_2}(\bfC_3)$ on $(Y, \eta)$. If there is a second cylinder in $\Col_{\bfC_2}(\bfC_3)$ we call it $D'$ and denote its height by $h'$. Now, as in the proof of Lemma \ref{L:ModuliRelZero2}, we will overcollapse $\Col_{\bfC_2}(\bfC_3)$ into $\Col_{\bfC_2}(\bfC_1)$, that is, we will flow from $(Y, \eta)$ in $\cM$ in the direction of $-i\sigma_{\ColTwo(\bfC_3)}$ until the heights of the cylinders in $\Col_{\bfC_2}(\bfC_3)$ are zero. Then we will continue flowing in the direction of $-i\sigma_{\Col_{\bfC_2}(\bfC_3)}$. 

To be concrete, we will let $(Y_t, \eta_t) := (Y, \eta) - (1-t)i \sigma_{\ColTwo(\bfC_3)}$. Since there are no vertical saddle connections on $(Y, \eta)$ this surface is defined for all real $t$. 

\begin{lem}\label{L:SameHeight1}
The derivatives, with respect to $t$, of the heights of the cylinders of $\ColTwo(\bfC_1)$ on $(Y_t, \eta_t)$ for $t \in (-\epsilon, 0)$ are equal for any sufficiently small $\epsilon$. 
\end{lem}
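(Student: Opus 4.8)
The plan is to reduce the statement to a cohomological identity and then to exploit the fact, recorded in Lemma~\ref{L:BaseCase:BoundaryStructure}, that the core curves of all the cylinders of $\ColTwo(\bfC_1 \cup \bfC_3)$ are homologous. The first step is the reformulation. For $t \in (-\epsilon, 0)$ the combinatorial type of $(Y_t, \eta_t)$ does not change, so the cross curve of each cylinder of $\ColTwo(\bfC_1)$ on $(Y_t, \eta_t)$ represents a fixed relative homology class $\beta$, and the height of that cylinder is $\Im \eta_t(\beta)$. Since $(Y_t, \eta_t) = (Y, \eta) - (1-t) i\, \sigma_{\ColTwo(\bfC_3)}$ depends affinely on $t$ with derivative $i\, \sigma_{\ColTwo(\bfC_3)}$, and since $\sigma_{\ColTwo(\bfC_3)}$ is a real cohomology class, the derivative of the height of that cylinder is exactly $\sigma_{\ColTwo(\bfC_3)}(\beta)$. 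Thus the lemma is equivalent to the assertion that $\sigma_{\ColTwo(\bfC_3)}$ evaluates to the same number on the cross curves of the two cylinders of $\ColTwo(\bfC_1)$ on $(Y_t, \eta_t)$ for $t \in (-\epsilon, 0)$.

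Second, I would track how the cross curves of the cylinders of $\ColTwo(\bfC_1)$ are modified when $\ColTwo(\bfC_3)$ is overcollapsed. The main input here is Lemma~\ref{L:ModuliRelZero2}, applied inside $\cM_{\bfC_2}$, which is a CR subvariety by Lemma~\ref{L:ModuliRelZero} and Proposition~\ref{P:Rank1.5} since $\cM$ is. Because $\ColTwo(\bfC_1)$ and $\ColTwo(\bfC_3)$ are subequivalence classes, the pattern of shared boundary saddle connections between them is rigid, and from it I would read off that, for $t \in (-\epsilon, 0)$, the cross curve of a cylinder of $\ColTwo(\bfC_1)$ equals its cross curve for $t > 0$ plus an explicit integer combination of the cross curves of the now-overcollapsed cylinders $D$ and $D'$ of $\ColTwo(\bfC_3)$ that lay above it.

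Third, I would evaluate. Writing $\sigma_{\ColTwo(\bfC_3)} = h\, \gamma_D^* + h'\, \gamma_{D'}^*$, each $\gamma_E^*$ annihilates any class supported in the closure of a cylinder of $\ColTwo(\bfC_1)$, since the core curves $\gamma_D$, $\gamma_{D'}$ are disjoint from those cylinders; hence only the overcollapse parts of the cross curves contribute, and $\sigma_{\ColTwo(\bfC_3)}$ evaluates to $h$ on the cross curve of $D$ and to $h'$ on that of $D'$. The equality of the two resulting expressions is then forced by Lemma~\ref{L:BaseCase:BoundaryStructure}: since all the core curves of $\ColTwo(\bfC_1 \cup \bfC_3)$ are homologous, $\gamma_D^*$ and $\gamma_{D'}^*$ have the same image under the projection $p$ to absolute cohomology, and the combinatorial data attached to the two cylinders of $\ColTwo(\bfC_1)$ match up.

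The step I expect to be the main obstacle is the second one: carrying out the combinatorial bookkeeping of the overcollapsing deformation near $t = 0$, that is, pinning down precisely the cross curves of the overcollapsed cylinders of $\ColTwo(\bfC_1)$. As the remark following Lemma~\ref{L:BaseCase:BoundaryStructure} anticipates, this bookkeeping is tractable only because all four core curves in $\ColTwo(\bfC_1 \cup \bfC_3)$ are homologous; it is that homology constraint which makes the change in the heights of the cylinders of $\ColTwo(\bfC_1)$ computable and, in the end, the same for both.
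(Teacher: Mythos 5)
Your overall strategy coincides with the paper's: reduce the derivative of each height to the evaluation of $\sigma_{\ColTwo(\bfC_3)}$ on a cross curve, and then control how the post-overcollapse cross curves of $\ColTwo(C)$ and $\ColTwo(C')$ meet $D$ and $D'$. But the step you yourself flag as ``the main obstacle'' -- pinning down those cross curves -- is precisely the content of the proof, and your proposal does not supply it. Two things are missing. First, an upper bound: one must know that the new cross curve of each cylinder of $\ColTwo(\bfC_1)$ crosses each of $D$ and $D'$ at most once. The paper proves this in a sublemma by observing that a cross saddle connection crossing some horizontal cylinder twice could be closed up, using an arc of that cylinder's core curve, into a simple closed curve meeting some horizontal core curves but not the one of $\ColTwo(C')$, contradicting the fact that all four core curves are homologous (Lemma \ref{L:BaseCase:BoundaryStructure}). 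Lemma \ref{L:ModuliRelZero2}, which you propose to use instead, only says that adjacency between subequivalence classes is an all-or-nothing affair; it bounds no multiplicities and cannot deliver this. Second, a matching lower bound for \emph{both} cylinders: one needs that the new top boundary of $\ColTwo(C)$ and the new bottom boundary of $\ColTwo(C')$ are formed by singularities that have traversed both $D$ and $D'$, so that both cross curves pick up the full contribution $h+h'$. This uses the cyclic adjacency pattern of the four homologous cylinders together with Lemma \ref{L:TwoCylinders} (the top of one cylinder of $\bfC_1$ coincides with the bottom of the other, so $D\cup D'$ is a single band bordered by $\ColTwo(C)$ on one side and $\ColTwo(C')$ on the other); none of the lemmas you cite produces it.

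A smaller but genuine error: your closing appeal to the fact that $\gamma_D^*$ and $\gamma_{D'}^*$ have the same image under $p$ cannot force the two evaluations to agree. Cross curves are relative, not absolute, homology classes, and equality of $p(\gamma_D^*)$ and $p(\gamma_{D'}^*)$ says nothing about how these functionals evaluate on relative classes (the discrepancy is measured exactly by which singularities lie between the two core curves). The equality of the two derivatives has to come from the crossing-number bookkeeping described above, not from passing to absolute cohomology.
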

\begin{proof}
Suppose without loss of generality that $\Col_{\bfC_2}(C)$ borders $D$ along its top boundary. On $(Y_0, \eta_0)$ new singularities have accumulated on the top boundary of $\Col_{\bfC_2}(C)$. Intuitively, these singularities ``came from $\Col_{\bfC_2}(\bfC_3)$". However, since the bottom boundary of $\Col_{\bfC_2}(C)$ coincides with the top boundary of $\Col_{\bfC_2}(C')$, no new singularities have accumulated there. 

Triangulate $\Col_{\bfC_2}(C)$ by saddle connections on $(Y_0, \eta_0)$ using all the saddle connections on the boundary of $\Col_{\bfC_2}(C)$ as edges of the triangulation and then adding in additional saddle connections, the set of which we call $S$, that join one boundary of $\Col_{\bfC_2}(C)$ to the other. Extend the triangulation to a triangulation $T$ by saddle connections of $(Y_0, \eta_0)$. There is an open subset $U$ of the stratum containing $(Y_0, \eta_0)$ on which the cylinders in $\ColTwo(\bfC_1)$ persist and so that $T$ remains a triangulation by saddle connections for every surface in $U$. Select $\epsilon$ so that $(Y_t, \eta_t)$ belongs to $U$ for all $t \in (-\epsilon, +\epsilon)$. In particular, for these values of $t$, the saddle connections in $S$ remain saddle connections and the height of $\Col_{\bfC_2}(C)$ on $(Y_t, \eta_t)$ for $t \in (-\epsilon, \epsilon)$ is given by $\min_{s \in S}\left( \eta_t(s) \right)$ (this uses the fact that, along this path, no new singularities accumulate on the bottom boundary of $\ColTwo(C)$). 

\begin{sublem}
For $t \in (0, \epsilon)$, each saddle connection in $S$ connects a point on the bottom boundary of $\Col_{\bfC_2}(C)$ to one of the following: a point in the top boundary of $\Col_{\bfC_2}(C)$, a point in the top boundary of $D$, and (provided that $D'$ exists) a point in the top boundary of $D'$. The derivatives of the imaginary parts of the periods of saddle connections in $S$ with respect to $t$ are, respectively,  $0$, $-h$, and $-(h+h')$.
\end{sublem}
\begin{proof}
The trichotomy is obvious. Since periods vary linearly with $t$ along the path, it suffices to show that, on $(Y_t, \eta_t)$ for $t \in (0, \epsilon)$, any saddle connection in $S$ crosses each horizontal cylinder at most once. 

Suppose to a contradiction that a saddle connection $s \in S$ crosses a horizontal cylinder on $(Y, \eta)$ more than once. By adding part of the core curve of that cylinder to $s$ it is possible to form a simple closed curve that has positive intersection number with some horizontal cylinders, but not with $\ColTwo(C')$, into which $s$ does not cross. This contradicts the fact that the four horizontal cylinders on $(Y_t, \eta_t)$, for $t \in (0, \epsilon)$, have homologous core curves (by Lemma \ref{L:BaseCase:BoundaryStructure}). 
\end{proof}

When $\Col_{\bfC_2}(\bfC_3) = \{D\}$ for $t \in (-\epsilon, 0)$ the derivatives of the heights of $\ColTwo(C)$ and $\ColTwo(C')$ are both equal to $-h$. This follows since, because new singularities have accumulated on the top boundary of $\ColTwo(C)$, one of the saddle connections in $S$ joins a zero on the bottom boundary of $\ColTwo(C)$ to a zero on the top boundary of $D$ on $(Y_t, \eta_t)$ for $t \in (0, \epsilon)$. Similarly, when $\Col_{\bfC_2}(\bfC_3) = \{D, D'\}$, by construction, there is a singularity from the top boundary of $D'$ that accumulates on the boundary of $\ColTwo(C)$. As before, this shows that the derivative of the height of $\ColTwo(C)$ for $t \in (-\epsilon, 0)$ is $-(h+h')$. By symmetry of hypotheses the same holds for $\ColTwo(C')$.
\end{proof}

By Proposition \ref{P:Rank1.5}, since the cylinders in $\ColTwo(\bfC_1)$ are subequivalent on $(Y, \eta)$ and persist on $(Y_t, \eta_t)$ for all $t \in (-\epsilon, 1]$, it follows that the ratio of moduli of cylinders in $\ColTwo(\bfC_1)$ is constant for all $t$. By Lemma \ref{L:SameHeight1} this ratio must be one. Since the ratio of heights of cylinders in $\ColTwo(\bfC_1)$ is the same as the ratio of moduli (since the cylinders in $\ColTwo(\bfC_1)$ have homologous core curves) it follows that heights of the cylinders in $\ColTwo(\bfC_1)$ are equal on $(Y, \eta)$. Since the heights of the cylinders in $\ColTwo(\bfC_1)$ on $(Y, \eta)$ are the same as the heights of the cylinders in $\bfC_1$, the cylinders in $\bfC_1$ have identical heights, as desired. 

%
%
%
%

\section{Preliminaries on geminal varieties}\label{S:GeminalandMP}

In this section we will assemble various results and establish simple consequences of them that we will require for the proof of Theorem \ref{T:main}.

\subsection{Minimal covers}

One of the most important objects in the sequel will be the $\emph{minimal cover}$ associated to a translation surface. This cover is defined by the following result, which is a combination of a result of M\"oller \cite[Theorem 2.6]{M2} and an extension found in \cite[Lemma 3.3]{ApisaWright}.   

\begin{thm}\label{T:MinimalCover}
Suppose that $(X, \omega)$ is not a torus cover. There is a unique translation surface $(X_{min}, \omega_{min})$ and a translation covering $$\pi_{X_{min}}: (X, \omega) \rightarrow (X_{min}, \omega_{min})$$ such that any translation cover from $(X, \omega)$ to a translation surface is a factor of $\pi_{X_{min}}$. 

Additionally, there is a quadratic differential $(Q_{min}, q_{min})$ with a degree
1 or 2 map $(X_{min}, \omega_{min}) \rightarrow (Q_{min}, q_{min})$ such that any map from $(X, \omega)$ to a quadratic differential is a factor of the composite map $\pi_{Q_{min}}:(X, \omega) \rightarrow (Q_{min}, q_{min})$.
\end{thm}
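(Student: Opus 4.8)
The plan is to treat this as a statement about the poset of translation covers of $(X,\omega)$ and to exhibit its unique terminal object, following M\"oller \cite[Theorem 2.6]{M2} (and \cite[Lemma 3.3]{ApisaWright} for the quadratic-differential refinement). Write $X^\circ := X\setminus\Sigma$. Since $f^*\eta=\omega$ forces a translation cover $f\colon(X,\omega)\to(Y,\eta)$ to be a local isometry away from the zeros of $\eta$, and since our conventions place all branching over marked points, $f$ restricts to an \emph{unramified} covering $X^\circ\to Y\setminus\Sigma_Y$ intertwining the flat structures. Fixing a universal cover $\widetilde{X^\circ}$ with developing map $\mathrm{dev}\colon\widetilde{X^\circ}\to\bC$, such covers correspond to intermediate subgroups $\pi_1(X^\circ)\le G\le\Gamma$, where $\Gamma$ is the group of all homeomorphisms of $\widetilde{X^\circ}$ intertwining $\mathrm{dev}$ with a translation of $\bC$; the cover attached to $G$ is the natural compactification of $\widetilde{X^\circ}/G$ with the $1$-form pushed forward from $\mathrm{dev}$, and its degree is $[G:\pi_1(X^\circ)]$. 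It therefore suffices to show that $\Gamma$ acts properly discontinuously with $[\Gamma:\pi_1(X^\circ)]<\infty$; then $(X_{min},\omega_{min})$ is the compactification of $\widetilde{X^\circ}/\Gamma$, every translation cover of $(X,\omega)$ is a factor of $\pi_{X_{min}}$ (the inclusion $G\le\Gamma$ inducing the covering $\widetilde{X^\circ}/G\to\widetilde{X^\circ}/\Gamma$), and uniqueness is formal.

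The elementary input is a uniform degree bound. If $f\colon(X,\omega)\to(Y,\eta)$ is a translation cover and $\eta$ had no zeros, then $g_Y=1$ and $(Y,\eta)$ is a flat torus, contradicting the hypothesis; hence $\eta$ has a zero, so $g_Y\ge 2$, and Riemann--Hurwitz for the cover of closed surfaces gives $2g_X-2=\deg(f)\,(2g_Y-2)+R$ with $R\ge 0$, whence $\deg(f)\le g_X-1$. So every intermediate $G$ has index at most $g_X-1$ over $\pi_1(X^\circ)$, and there is an intermediate $H$ of maximal index. For any $\gamma\in\Gamma$, one checks that $\langle H,\gamma\rangle$ still descends to a positive-area translation surface, hence is a genuine translation cover of $(X,\omega)$ (the degree being finite since areas multiply), hence has index $\le g_X-1$; as it contains $H$ this forces $\gamma\in H$, so $\Gamma=H$ has finite index over $\pi_1(X^\circ)$ and acts properly discontinuously. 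This last step is the one I expect to be the main obstacle: the degree bound is immediate one cover at a time, but showing that enlarging a cover by an arbitrary $\mathrm{dev}$-compatible symmetry never escapes the world of finite covers --- equivalently, ruling out an infinite tower --- is the real content, and it is precisely here that ``$(X,\omega)$ is not a torus cover'' enters, since for a torus cover $\Gamma$ is infinite over $\pi_1(X^\circ)$ (one enlarges the period lattice without end) and no minimal torus quotient exists.

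For the second paragraph of the theorem I would run the same machine in the category of half-translation surfaces: a map $(X,\omega)\to(Q,q)$ is a local isometry off the singularities onto the flat surface underlying $q$, the target carries a holomorphic $q$ with $g_Q\ge 2$ (else its orientation double cover is a torus and $(X,\omega)$ is a torus cover), and the same Riemann--Hurwitz bound yields a terminal object $(Q_{min},q_{min})$ of which every such map is a factor. To see that the resulting $X_{min}\to Q_{min}$ has degree $1$ or $2$, compare: the orientation double cover $(\widehat{Q_{min}},\widehat q_{min})$ is a translation surface and a translation quotient of $(X,\omega)$, so $X_{min}$ maps to it; conversely $(X_{min},\omega_{min}^2)$ is a half-translation quotient of $(X,\omega)$, so $X_{min}$ maps to $Q_{min}$, and since $\omega_{min}$ is a square root of the pulled-back $q_{min}$ this lifts to a map $X_{min}\to\widehat{Q_{min}}$ --- hence $X_{min}\cong\widehat{Q_{min}}$ and $X_{min}\to Q_{min}$ has degree $2$ with deck involution $\tau$ satisfying $\tau^*\omega_{min}=-\omega_{min}$ (such a $\tau$ is unique, since two would differ by an automorphism fixing $\omega_{min}$, contradicting the minimality of $X_{min}$). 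The only exception is when $q_{min}$ is already the square of a holomorphic $1$-form, in which case $Q_{min}$ is itself a translation surface, terminality forces $Q_{min}\cong X_{min}$, and the map has degree $1$. This identification is the content of \cite[Lemma 3.3]{ApisaWright}.
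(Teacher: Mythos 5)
The paper does not actually prove this statement; it imports it wholesale from M\"oller \cite[Theorem 2.6]{M2} and Apisa--Wright \cite[Lemma 3.3]{ApisaWright}, so your proposal is being measured against those sources rather than against an argument in the text. Your overall strategy --- encode translation covers as intermediate subgroups $\pi_1(X^\circ)\le G\le\Gamma$ and exhibit the terminal quotient as $\widetilde{X^\circ}/\Gamma$ --- has the right shape, but the proof has a genuine gap exactly where you flag it. The assertion that, for $H$ of maximal index and arbitrary $\gamma\in\Gamma$, the group $\langle H,\gamma\rangle$ ``still descends to a positive-area translation surface'' is the entire content of the theorem, and the justification offered, ``the degree being finite since areas multiply,'' is circular: the quotient has a well-defined positive area only after one knows that $\langle H,\gamma\rangle$ acts properly discontinuously and freely with finite index over $\pi_1(X^\circ)$, which is precisely what is at issue. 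This is not a routine verification. In the excluded torus-cover case the construction degenerates at exactly this step ($\Gamma$ fails to act properly discontinuously), so a correct argument must locate where ``not a torus cover'' enters; the natural attempt --- let $\Gamma$ permute the lifts of the zeros of $\omega$ and use that these form a uniformly discrete set with finite stabilizers --- itself requires care, because in the metric completion of $\widetilde{X^\circ}$ every puncture (marked point or genuine zero) acquires infinite cone angle, so zeros cannot be singled out metrically upstairs, and a priori the stabilizer in $\Gamma$ of a lifted singular point could be infinite cyclic. Until this step is supplied, the existence of $(X_{min},\omega_{min})$ is asserted rather than proved.

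Two smaller points. First, the dictionary between covers and intermediate subgroups is not literally correct as stated: a translation cover $f\colon(X,\omega)\to(Y,\eta)$ restricts to an unramified covering only after deleting $f^{-1}(\Sigma_Y)$ from $X$, which may strictly contain $\Sigma_X$ (the paper's convention marks branch points and images of marked points, not all preimages of marked points), so the punctured surface whose fundamental group you use depends on $f$; this is repairable but must be addressed before ``intermediate subgroup'' means anything. Second, in your last paragraph, terminality of $X_{min}$ applied to the translation cover $X\to\widehat{Q_{min}}$ yields a map $\widehat{Q_{min}}\to X_{min}$, not ``$X_{min}$ maps to it'' as written; since your lifting argument independently produces $X_{min}\to\widehat{Q_{min}}$, the two maps together still force $X_{min}\cong\widehat{Q_{min}}$ and the degree-$1$-or-$2$ conclusion survives, but the direction should be corrected.
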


\subsection{Marked points}

We begin with the following two definitions. 

\begin{defn}\label{D:Forgetting}
Given a translation surface $(X, \omega)$ with marked points let $\For(X, \omega)$ be the surface with the marked points forgotten. If $\cM$ is the orbit closure of $(X, \omega)$, then $\For(\cM)$ will denote the orbit closure of $\For(X, \omega)$. 
\end{defn}

\begin{defn}\label{D:GenericPoints}
Let $(X, \omega)$ be a surface with dense orbit in an invariant subvariety $\cM$. Given a finite collection of points $P$ on $(X, \omega)$ we will let $(X, \omega; P)$ denote the result of marking the points $P$ on $(X, \omega)$. The points $P$ are said to be \emph{generic} if the following occurs
\[ \dim \overline{\mathrm{GL}(2, \mathbb{R}) \cdot (X, \omega; P)} = |P| + \dim \cM. \]
A point $p$ on $(X, \omega)$ is said to be \emph{periodic} if $p$ is not a zero of $\omega$ and 
\[ \dim \overline{\mathrm{GL}(2, \mathbb{R}) \cdot (X, \omega; \{p\})} = \dim \cM. \]
Finally, if $Q$ denotes the collection of marked points on $(X, \omega)$ then a point $q \in Q$ is said to be \emph{free} if it can be moved around the surface while remaining in $\cM$ and while fixing the position of all other marked points and the underlying unmarked surface.
\end{defn}

\begin{thm}[Apisa \cite{Apisa}]\label{T:PeriodicPoints-NonHyp}
If $(X, \omega)$ is a translation surface of genus at least two so that $\For(X, \omega)$ has dense orbit in a non-hyperelliptic component of a stratum, then any collection of points on $(X, \omega)$ that does not include zeros of $\omega$ is generic. 
%
\end{thm}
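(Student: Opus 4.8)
The plan is to reduce the statement to the rigidity results for orbit closures and marked points already available in the literature, using the hypothesis that $\For(X,\omega)$ has dense orbit in a non-hyperelliptic component $\cH$ of a stratum. First I would recall that marking a single point $p$ (not a zero of $\omega$) on a surface whose orbit closure is $\cM$ produces a new orbit closure $\cM_p$ with $\dim \cM \le \dim \cM_p \le \dim \cM + 1$, the upper bound holding because the forgetful map $\cM_p \to \cM$ has one-dimensional fibers (the point $p$ can locally be moved in the surface in a complex one-parameter family, though not necessarily while staying in $\cM_p$). So genericity of a collection $P$ is equivalent to the statement that each point is ``as free as possible'' and that the points impose independent conditions; the content is that no point $p$ can be periodic, i.e. $\dim \cM_{\{p\}} = \dim \cM + 1$ always.

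The main step is to invoke the classification of periodic points. By the work of Apisa--Wright on marked points (and Eskin--Filip--Wright \cite{EFW} for the finiteness), the periodic points of an orbit closure $\cM$ that surjects onto a stratum component are tightly controlled: for a surface whose unmarked orbit closure is a full stratum component, the only constraints on marked points come from the hyperelliptic involution in the hyperelliptic case (Weierstrass points and their images), and in the non-hyperelliptic case there are no periodic points at all among the non-singular points. The reason is that a periodic point $p$ would force, via the cylinder deformation machinery and the structure of $\cM_{\{p\}}$ as a degree-one ``cover'' of $\cM$ with a marked point, an algebraic relation that descends to a nontrivial affine-invariant constraint; in a non-hyperelliptic stratum component the absence of a translation or quadratic quotient (one would use Theorem \ref{T:MinimalCover}, so that $(X_{min},\omega_{min}) = (X,\omega)$ and $(Q_{min}, q_{min})$ is at worst a degree-two quotient that cannot exist compatibly with the stratum being non-hyperelliptic) rules this out.

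Concretely, the steps in order are: (1) reduce to showing a single non-singular point is never periodic, since once every single point moves freely one shows by an induction on $|P|$ — using that the generic surface in $\cM$ together with $|P|-1$ generic points still has dense orbit in an orbit closure surjecting onto a non-hyperelliptic stratum component, so the last point is again non-periodic — that $P$ is generic; (2) suppose for contradiction $p$ is periodic, so $\dim \cM_{\{p\}} = \dim \cM$; (3) apply the structure theory for such $\cM_{\{p\}}$ (Apisa \cite{Apisa}) to conclude that the marked point is cut out by a covering construction or a hyperelliptic-type involution on the generic surface; (4) derive a contradiction with the hypothesis that $\For(X,\omega)$ lies in a \emph{non-hyperelliptic} component — here one uses that a stratum component admitting such a periodic point would be forced to be hyperelliptic (or a component where the generic surface has extra automorphisms), contradicting the hypothesis. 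The main obstacle is step (3)--(4): one must quote the precise classification of periodic points on stratum components from \cite{Apisa} and check that the non-hyperelliptic hypothesis genuinely excludes every case on the list, rather than re-deriving that classification; the genericity of $P$ (as opposed to non-periodicity of individual points) then follows formally once the single-point case and the inductive stability of the hypothesis under adding generic points are in hand.
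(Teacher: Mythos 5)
The paper does not prove this statement at all: it is quoted as a black-box result of Apisa \cite{Apisa} (in contrast to Theorem \ref{T:PeriodicPointsHyp}, for which the paper does give a short derivation from Apisa--Wright \cite{ApisaWright}). Your proposal, read carefully, reduces to exactly that citation, and is therefore circular where it matters: steps (3)--(4) say ``apply the classification of periodic points on stratum components from \cite{Apisa} and check that the non-hyperelliptic hypothesis excludes every case on the list.'' That classification \emph{is} the theorem (at least its single-point case), so nothing has been proved. The heuristic you offer in its place --- that a periodic point would force a translation or quadratic quotient, which ``cannot exist compatibly with the stratum being non-hyperelliptic'' --- is not correct as stated: non-hyperelliptic components contain a dense set of surfaces admitting translation and quadratic quotients (every square-tiled surface, for instance), and Theorem \ref{T:MinimalCover} gives no contradiction for such a surface. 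The genuine content of \cite{Apisa} is a substantial argument (via cylinder deformations and degenerations to the boundary) showing that a periodic point, being defined on a dense set of surfaces, would force such a quotient to exist \emph{generically}; that is the step you would have to supply and cannot.

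A second, independent gap is the reduction from the single-point case to genericity of an arbitrary finite collection $P$. Your induction marks points one at a time, but after marking $k-1$ generic points the orbit closure is no longer a component of a stratum, so ``non-hyperelliptic stratum components have no periodic points'' does not apply to the inductive step. You must rule out periodic points of, and hidden relations among marked points on, strata \emph{with generic marked points} (e.g., the $k$-th point being constrained to coincide with, or be determined by, the already-marked ones). This is again part of the content of \cite{Apisa} and Apisa--Wright \cite{ApisaWright}, not a formal consequence of the single-point statement. At the level of the present paper the correct move is the one the author makes: cite the result rather than sketch it.
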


The following result is an immediate consequence of Apisa-Wright \cite{ApisaWright}.

\begin{thm}\label{T:PeriodicPointsHyp}
Let $(X, \omega)$ be a translation surface of genus at least two with marked points $Q$ and so that $\For(X, \omega)$ has dense orbit in a hyperelliptic locus. Let $\cM$ be the orbit closure of $(X, \omega; Q)$. The marked points on a surface in $\cM$ consist of free points, pairs of points exchanged by the hyperelliptic involution, fixed points of the hyperelliptic involution, and zeros of $\omega$, with no further constraints on the points.
\end{thm}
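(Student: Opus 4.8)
\noindent\textit{Proof plan.}
The plan is to deduce the statement directly from the classification of orbit closures obtained by marking points over hyperelliptic loci in Apisa--Wright \cite{ApisaWright}. The first thing I would record is that a hyperelliptic locus carries a canonical hyperelliptic involution $\tau$: it is the unique order-two automorphism acting as $-1$ on $H^1$ with $\tau^*\omega = -\omega$, and because it is characterized by this topological/linear-algebraic condition it, together with its fixed-point set (the Weierstrass points) and the zeros it fixes or permutes, persists under the $\mathrm{GL}(2,\mathbb{R})$ action. Since $\For(X,\omega)$ has dense orbit in the hyperelliptic locus, and since $\For$ is continuous and $\mathrm{GL}(2,\mathbb{R})$-equivariant, $\For(\cM)$ is exactly this hyperelliptic locus, so $\cM$ sits inside the stratum of surfaces in the hyperelliptic locus carrying $|Q|$ extra marked points; note also that only finitely many of these new points need be marked by Eskin--Filip--Wright \cite[Theorem 1.5]{EFW} (cf. \cite[Section 4.2]{ApisaWright}).

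Next I would analyze, point by point, the fiber of the forgetful map $\cM \to \For(\cM)$ over a surface with dense orbit in $\For(\cM)$. Fix $q \in Q$. If $q$ is a zero of $\omega$ there is nothing to say. Otherwise $q$ is either a fixed point of $\tau$, i.e. a Weierstrass point, or it is not, in which case $\tau(q)$ is a second distinct point of the surface. In this last case the dichotomy I want to extract from \cite{ApisaWright} is: if $\tau(q)$ also lies in $Q$ then $\{q,\tau(q)\}$ is a $\tau$-orbit that moves as a single unit, imposing no constraint beyond being such an orbit; whereas if $\tau(q)\notin Q$ then $q$ is not a periodic point — for over a hyperelliptic locus there is no positive-codimension $\mathrm{GL}(2,\mathbb{R})$-invariant subvariety of the marked stratum constraining an unpaired non-Weierstrass marked point — so $q$ is free in the sense of Definition \ref{D:GenericPoints}. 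Running this over all of $Q$ yields precisely the four advertised types of marked points, in parallel with the non-hyperelliptic statement of Theorem \ref{T:PeriodicPoints-NonHyp}.

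The clause ``with no further constraints on the points'' is then the assertion that $\cM$ is the \emph{full} locus of this combinatorial type: having fixed which elements of $Q$ sit at zeros, which are Weierstrass points, which form $\tau$-pairs, and which are free, $\cM$ consists of \emph{all} surfaces in the hyperelliptic locus whose marked points realize exactly this pattern, rather than a proper subvariety thereof. This again is the content of the Apisa--Wright classification, which identifies the orbit closure of $(X,\omega;Q)$ with such a locus; the essential input is that the only $\mathrm{GL}(2,\mathbb{R})$-equivariant, deformation-stable constraints available to marked points over a hyperelliptic locus are incidence with a zero and the relations recorded by $\tau$.

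I expect the genuine work here to be bookkeeping rather than new mathematics: one must track marked points that collide with zeros or with Weierstrass points, distinguish the two flavors of hyperelliptic locus (a hyperelliptic component $\cH^{hyp}(2g-2)$ or $\cH^{hyp}(g-1,g-1)$, where $\tau$ fixes or swaps the zeros, versus a locus of double covers of a quadratic differential), and match the hypotheses ``genus at least two'' and ``dense orbit'' to the precise statements quoted from \cite{ApisaWright}. Once the correct statements are cited, the deduction is immediate, which is why the theorem is phrased as a consequence of \cite{ApisaWright}.
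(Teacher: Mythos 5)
Your overall strategy --- reduce the theorem to the classification of marked-point orbit closures in Apisa--Wright \cite{ApisaWright} --- is the same one the paper takes, and the bookkeeping you describe (zeros, Weierstrass points, $\tau$-pairs, free points) matches the statement to be proved. The problem is that the one step carrying all of the mathematical content is asserted rather than derived. You write that if $\tau(q)\notin Q$ then $q$ is free because ``over a hyperelliptic locus there is no positive-codimension $\mathrm{GL}(2,\mathbb{R})$-invariant subvariety of the marked stratum constraining an unpaired non-Weierstrass marked point.'' That sentence is precisely the claim that the only periodic points are the Weierstrass points (and zeros); it is the theorem restated, not a reason. The classification in \cite{ApisaWright} does not hand you this for free: their Theorem 1.3 describes marked points in terms of the minimal quadratic differential cover $\pi_{Q_{min}}$ and in terms of the periodic points of its base, so to extract the present statement you still must (a) identify the hyperelliptic quotient map with $\pi_{Q_{min}}$, which is \cite[Lemma 4.5]{ApisaWright}, and (b) show that the base has no periodic points.

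Step (b) is where the hypotheses of the theorem actually get used, and it is missing from your proposal. The paper passes to the genus-zero stratum of quadratic differentials $\cQ$ of which the hyperelliptic locus is the locus of holonomy double covers, observes that $\rank(\cQ)$ equals the genus of $X$ and hence is at least two, and notes that a genus-zero quadratic stratum never has a hyperelliptic connected component; these are exactly the hypotheses of \cite[Theorem 1.4]{ApisaWright}, which then yields that $\cQ$ has no periodic points. Marked points downstairs are therefore free, and pulling back through the degree-two cover produces exactly the free points, $\tau$-pairs, and Weierstrass points of the statement, with no further constraints. Without verifying the rank condition --- which is why ``genus at least two'' appears in the hypotheses --- the argument does not close, so you should make steps (a) and (b) explicit rather than folding them into a generic appeal to ``the classification.''
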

\begin{proof}
Every hyperelliptic locus is a locus of holonomy double covers of a stratum $\cQ$ of genus zero quadratic differentials. The rank of $\cQ$ is the same as the genus of $X$, so in our case, the rank of $\cQ$ is at least two. By Apisa-Wright \cite[Theorem 1.4]{ApisaWright}, there are no periodic points on a surface with dense orbit in $\cQ$; this uses that $\cQ$ has rank at least two and the fact that a stratum of genus zero quadratic differentials never contains a hyperelliptic connected component. By Apisa-Wright \cite[Lemma 4.5]{ApisaWright}, the quotient of $(X, \omega)$ by the hyperelliptic involution is $\pi_{Q_{min}}$. The lemma now follows from Apisa-Wright \cite[Theorem 1.3]{ApisaWright}.
\end{proof}

\begin{cor}\label{C:PeriodicPointsHyp}
If $\cM$ is a hyperelliptic locus of translation surfaces of genus at least two, then any generic cylinder on any surface in $\cM$ is simple or half-simple. 
\end{cor}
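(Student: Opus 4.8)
The plan is to leverage the structure theorem for $\cM$ just established in Theorem \ref{T:PeriodicPointsHyp}, together with the fact that a hyperelliptic locus consists of holonomy double covers of a genus zero stratum of quadratic differentials, and then read off the local geometry of a cylinder from its behavior under the covering map. Concretely, let $(X,\omega)$ be a surface in $\cM$ and let $\mathbf{C}$ be a generic equivalence class of cylinders containing a cylinder $C$. Write $\pi\colon (X,\omega)\to(Q,q)$ for the degree two holonomy double cover onto the genus zero stratum $\cQ$, with deck involution $\tau$ (the hyperelliptic involution). The first step is to observe that $\tau$ permutes the cylinders of any equivalence class, since it is an automorphism of $(X,\omega)$ (up to sign of the form) and hence preserves the property of being $\cM$-parallel; in particular $\tau(C)$ is either $C$ or another cylinder in $\mathbf{C}$.

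Next I would analyze the image $\pi(C)$ inside $(Q,q)$. Because $\pi$ has degree two and is branched only over marked points (zeros of $q$, poles, and marked points, all of which are isolated), the image of a cylinder $C$ is either a cylinder in $(Q,q)$ mapped to by degree one (so $C\cong\pi(C)$ and $\tau(C)\neq C$ is a second, isometric cylinder in the preimage), or a cylinder (or "half-cylinder" glued along a saddle connection through a branch point) covered with degree two, in which case $\tau(C)=C$. The key point is that a cylinder in a genus zero surface is bounded by saddle connections, and the boundary of $\pi(C)$, being on a genus zero surface, is constrained: each boundary component of a cylinder on $(Q,q)$ is a concatenation of saddle connections, and one uses the classification of how cylinders can sit in genus zero quadratic differentials together with Theorem \ref{T:PeriodicPointsHyp} (which says marked points are only free points, $\tau$-exchanged pairs, $\tau$-fixed points, or zeros) to control the combinatorics. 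A generic cylinder has all boundary saddle connections remaining parallel to its core in $\cM$, which forces the boundary to be as simple as possible.

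Then I would combine these two observations: either $C$ is one of a $\tau$-swapped pair of isometric cylinders disjoint from each other, or $C$ is $\tau$-invariant and double covers its image. In the first case, since the two cylinders are exchanged by $\tau$ and the only marked points available to sit on the boundary are as in Theorem \ref{T:PeriodicPointsHyp}, one checks each boundary component of $C$ consists of a single saddle connection (giving $C$ simple) or $C$ has exactly one boundary saddle connection on each side that is identified with itself by a holonomy-preserving map — impossible on genus zero in this configuration — forcing $C$ simple. In the $\tau$-invariant case, $C$ contains exactly two fixed points of $\tau$ on its boundary (by the Riemann–Hurwitz count, the restriction of $\pi$ to the closure of $C$, a genus zero surface with boundary double-covering a disk-like region, has exactly two branch points), and unwinding the identifications shows that each side of $C$ is a single saddle connection traversed once, with the two copies of a branch point identified — this is exactly the definition of a half-simple cylinder. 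Genericity is used to rule out the cases where extra saddle connections appear on the boundary that would be forced to stay parallel yet come from distinct equivalence classes downstairs.

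The main obstacle I anticipate is the careful bookkeeping in the $\tau$-invariant case: making the Riemann–Hurwitz count rigorous for the (possibly non-closed) surface-with-boundary obtained by restricting $\pi$ to $\overline{C}$, and translating "exactly two branch points on the boundary of $C$" into the precise combinatorial statement that $C$ is half-simple rather than merely "close to simple." This requires being careful about whether branch points lie in the interior of boundary saddle connections or at their endpoints, and about the orientation/holonomy double cover construction. I expect the cleanest route is to pass to $(Q,q)$, note that $\pi(C)$ together with its boundary has a neighborhood that is a cylinder (or cylinder-with-a-slit through a pole or zero) in a genus zero surface, classify these directly using that in genus zero every such region is bounded by at most two "chains" of saddle connections, and then pull back — with genericity forcing each chain to be a single saddle connection.
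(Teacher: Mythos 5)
Your framing (pass to the genus-zero quotient, track the deck involution $\tau$, and use Theorem \ref{T:PeriodicPointsHyp} to constrain marked points) is sensible, but the core of the argument has a genuine gap: the dichotomy you propose is not correct. In the $\tau$-invariant case, the Riemann--Hurwitz count does not give ``exactly two fixed points of $\tau$ on $\partial C$,'' and the conclusion ``half-simple'' does not follow from it. For instance, the lift of a cylinder in the genus-zero quadratic differential one of whose boundary components is a single saddle connection from a pole to itself and whose other boundary component is a loop at an even-order zero is a $\tau$-invariant \emph{half-simple} cylinder with exactly \emph{one} ramification point on its boundary, while the lift of a cylinder both of whose boundary components are loops at poles is a $\tau$-invariant \emph{simple} cylinder with two. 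Moreover, Riemann--Hurwitz applied to $\overline{C}\to\overline{\pi(C)}$ is unreliable here because these closures are not embedded annuli: boundary saddle connections are identified or folded at branch points, which changes the Euler characteristic bookkeeping. In the $\tau$-swapped case, ``one checks each boundary component consists of a single saddle connection'' is precisely the nontrivial combinatorial assertion, not a check: a lifted cylinder in a swapped pair can a priori have several saddle connections on a boundary component (e.g.\ two saddle connections joining two poles downstairs), and ruling this out for \emph{generic} cylinders is the actual content. That content is exactly what the paper does not reprove but imports from Apisa--Wright \cite[Lemma 12.4]{ApisaWrightHighRank}, which states that generic cylinders in quadratic doubles are simple or half-simple.

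You are also missing the short reduction that constitutes the new content of the paper's proof. By Theorem \ref{T:PeriodicPointsHyp}, any marked point $p$ that is not a zero or a fixed point of the involution is either free or half of a $\tau$-exchanged pair, so it can be moved while staying in $\cM$; if such a $p$ lies on the boundary of a generic cylinder $C$, the entire boundary component of $C$ containing $p$ must therefore be a single saddle connection from $p$ to itself, since otherwise moving $p$ would make some boundary saddle connection non-parallel to the core curve, contradicting genericity. This reduces the corollary to the case where only involution-fixed points and zeros are marked, i.e.\ to a quadratic double, where the cited lemma applies. As written, your proposal both asserts an incorrect case division and defers the essential combinatorial step to ``the classification,'' so it does not yet constitute a proof.
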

\begin{proof}
Suppose that $p$ is a marked point that is neither a zero nor a fixed point of the hyperelliptic involution. By Theorem \ref{T:PeriodicPointsHyp}, if $p$ appears on the boundary of a generic cylinder $C$ then there is a saddle connection joining $p$ to itself that comprises one boundary of $C$ (if not then it would be possible to move $p$ so the saddle connection with one endpoint at $p$ in the boundary of $C$ ceases to be parallel to the core curve of $C$). It therefore suffices to show that if $(X, \omega)$ only has fixed points of the hyperelliptic involution marked then the generic cylinders are simple or half-simple. In this case, $\cM$ is a quadratic double and the result follows from Apisa-Wright \cite[Lemma 12.4]{ApisaWrightHighRank}.
\end{proof}

\subsection{Geminal varieties}

In this subsection we will present a class of invariant subvarieties that were originally studied in Apisa-Wright \cite{ApisaWrightGeminal}. 

\begin{defn}\label{D:Geminal}
An invariant subvariety $\cM$ is said to be \emph{geminal} if for any cylinder $C$ on any $(X,\omega)\in \cM$, either 
\begin{itemize}
\item any cylinder deformation of $C$ remains in $\cM$, or 
\item there is a cylinder $C'$ such that $C$ and $C'$ are parallel and have the same height and circumference on $(X,\omega)$ as well as on all small deformations of $(X,\omega)$ in $\cM$, and any cylinder deformation that deforms $C$ and $C'$ equally remains in $\cM$.  
\end{itemize}
In the first case we say that $C$ is \emph{free}, and in the second case we say that $C$ and $C'$ are \emph{twins}. $\cM$ is called \emph{$h$-geminal} if additionally twin cylinders have homologous core curves. The partition of an equivalence class of cylinders into free cylinders and pairs of twins is called a \emph{geminal partition} 
\end{defn}

The simplest example of a geminal invariant subvariety, other than a component of a stratum, is a \emph{quadratic double}, i.e. a full locus of holonomy double covers where the collection of marked points is fixed by the holonomy involution and otherwise unconstrained. Before proceeding we establish the following useful corollary of the results of the previous subsection. 

\begin{cor}\label{C:HypParallelism}
Let $\bfC$ be an equivalence class of generic cylinders on a surface $(X, \omega)$ in a hyperelliptic locus $\cM$. If $\bfC$ does not admit a geminal partition, then there is a cylinder in $\bfC$ with a free marked point on its boundary.
\end{cor}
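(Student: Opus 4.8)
The goal is to show that if $\bfC$ is an equivalence class of generic cylinders on a surface $(X,\omega)$ in a hyperelliptic locus $\cM$ and $\bfC$ admits no geminal partition, then some cylinder in $\bfC$ has a free marked point on its boundary. The strategy is to argue by contradiction: assume no cylinder in $\bfC$ has a free marked point on its boundary, and deduce that $\bfC$ does admit a geminal partition. The main tool is Corollary \ref{C:PeriodicPointsHyp}, which tells us that each generic cylinder in $\bfC$ is simple or half-simple, together with Theorem \ref{T:PeriodicPointsHyp}, which describes exactly what the marked points on a surface in a hyperelliptic locus look like (free points, pairs exchanged by the hyperelliptic involution, fixed points of the involution, and zeros).

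First I would recall that a hyperelliptic locus is, by definition, a full locus of holonomy double covers $\pi\colon (X,\omega)\to (Q,q)$ of a genus-zero stratum of quadratic differentials $\cQ$, with the hyperelliptic involution $\tau$ covering the identity on $(Q,q)$. A simple or half-simple cylinder $C$ on $(X,\omega)$ projects to a cylinder or a "half-cylinder" on $(Q,q)$; the preimage under $\pi$ of that image is either $C$ alone (if $\tau$ maps $C$ to itself, the half-simple/invariant case) or $C \sqcup \tau(C)$ with $C$ and $\tau(C)$ disjoint isometric cylinders. In the first case I would argue $C$ is free: a cylinder deformation of $C$ descends to a cylinder deformation of its image downstairs, which stays in $\cQ$ since $\cQ$ is a whole stratum, hence lifts back into $\cM$. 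In the second case $C$ and $\tau(C)$ are candidate twins — they are parallel, isometric, and remain so on nearby surfaces since $\tau$ is locally constant on $\cM$, and a deformation deforming them equally descends to a deformation of the single image cylinder and hence stays in $\cM$. So the only obstruction to a geminal partition of $\bfC$ is the possibility that some cylinder in $\bfC$ is neither simple nor half-simple — but that is ruled out by Corollary \ref{C:PeriodicPointsHyp} once we know all marked points on the boundary of cylinders in $\bfC$ are non-free. More precisely, Corollary \ref{C:PeriodicPointsHyp} is proved under the running assumption that no free marked point sits on the boundary of a generic cylinder; reading its proof, that is exactly the hypothesis needed, so under our contradiction hypothesis every cylinder in $\bfC$ is simple or half-simple.

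The remaining point is bookkeeping: I must check that pairing each $\tau$-invariant cylinder as a free cylinder and each non-invariant cylinder $C$ with its partner $\tau(C)$ actually produces a \emph{partition} of $\bfC$ into free cylinders and pairs of twins, i.e. that $\tau$ preserves the equivalence class $\bfC$ and that $C \ne \tau(C)$ forces $C$ and $\tau(C)$ to be genuinely disjoint (not merely distinct). That $\tau(\bfC) = \bfC$ follows because $\tau$ acts on $\cM$ (it is the deck group of the covers) and hence preserves $\cM$-parallelism classes. Disjointness of $C$ and $\tau(C)$ when $C \ne \tau(C)$: if they shared a point they would share a point fixed by... no — rather, their union would cover a $\tau$-invariant region projecting to the image cylinder with the wrong degree; concretely, $\pi$ restricted to $C \cup \tau(C)$ must be degree two onto its image cylinder, forcing $C \cap \tau(C) = \emptyset$ (any intersection point, not being a branch point since branch points are marked and non-generic, would have a neighborhood mapping with the wrong local degree). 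This is the one spot where I would be slightly careful rather than glib.

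\textbf{Expected main obstacle.} The conceptual content is light — the real work is entirely upstream, in Corollary \ref{C:PeriodicPointsHyp} and Theorem \ref{T:PeriodicPointsHyp} — so the main obstacle is purely one of correctly matching hypotheses: verifying that "no free marked point on the boundary of a cylinder in $\bfC$" is precisely the hypothesis under which Corollary \ref{C:PeriodicPointsHyp} guarantees each cylinder of $\bfC$ is simple or half-simple, and then confirming that the simple/half-simple dichotomy translates cleanly into the free/twin dichotomy of Definition \ref{D:Geminal} via the covering $\pi$. If the definitions of "simple" and "half-simple" in the cited sources already build in statements about the deck involution, the proof is essentially immediate; otherwise I would need a short lemma (or a pointer to \cite[Lemma 12.4]{ApisaWrightHighRank} or \cite{ApisaWrightGeminal}) that a simple or half-simple cylinder in a hyperelliptic locus is free or has a twin.
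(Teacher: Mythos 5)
Your proof takes a genuinely different route from the paper's. The paper's argument is a short reduction: after assuming without loss of generality that all marked points lie on the boundaries of cylinders in $\bfC$, the absence of free points forces the collection of marked points to be invariant under the hyperelliptic involution (by Theorem \ref{T:PeriodicPointsHyp}), so $\cM$ is a quadratic double, and quadratic doubles are geminal by \cite{ApisaWrightGeminal} --- contradicting the hypothesis that $\bfC$ admits no geminal partition. You instead rebuild the geminal partition by hand from the deck involution $\tau$, which amounts to re-proving the relevant special case of ``quadratic doubles are geminal.'' Your route is correct in outline and handles free points away from $\overline{\bfC}$ directly rather than by a forgetting reduction, but most of your hypothesis-matching worries are unfounded: Corollary \ref{C:PeriodicPointsHyp} is unconditional (it carries no running assumption about free points), and the simple/half-simple detour is unnecessary, since distinct maximal cylinders in a fixed direction are automatically disjoint and the free/twin dichotomy is governed entirely by whether $\tau(C)=C$, not by the boundary structure of $C$.

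The one genuine loose end is the step where you pair $C$ with $\tau(C)$: you implicitly assume $\tau(C)$ is a cylinder of the \emph{marked} surface. Your contradiction hypothesis only excludes free marked points from the boundaries of cylinders in $\bfC$; a free marked point $q$ elsewhere could a priori satisfy $\tau(q)\in \overline{C}$ with $\tau(q)$ unmarked, so that $q$ lies in $\overline{\tau(C)}$ and cuts $\tau(C)$ into smaller cylinders, leaving your partition undefined. This case must be excluded, and it can be: any cylinder of the marked surface contained in $\tau(C)$ and carrying such a $q$ on its boundary persists and remains parallel to $C$ throughout a neighborhood of $(X,\omega)$ in $\cM$, hence is $\cM$-parallel to $C$ and therefore belongs to the equivalence class $\bfC$, contradicting your hypothesis. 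With that observation added (together with the routine check that $\gamma_C^*$ for $\tau$-invariant $C$, and the deformation moving $C$ and $\tau(C)$ equally, are anti-invariant and hence tangent to the locus of covers even in the presence of free points elsewhere), your argument goes through.
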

\begin{proof}
Without loss of generality, suppose that the only marked points on $(X, \omega)$ lie on the boundary of cylinders in $\bfC$. If all the marked points were fixed by the hyperelliptic involution, then $\cM$ would be a quadratic double and hence geminal. Therefore, not all the marked points are fixed by the hyperelliptic involution and so one of them must be free by Theorem \ref{T:PeriodicPointsHyp}
\end{proof}

To state the strongest possible result on geminal subvarieties we recall the following definitions.

\begin{defn}\label{D:GoodAndOptimal}
Let $(X, \omega)$ be a translation surface. A translation cover $f: (X, \omega) \rightarrow (X', \omega')$ will be called \emph{good} if every cylinder $C$ on $(X, \omega)$ is the preimage of its image under $f$. The cover will be called \emph{optimal} if it is good and any other good map is a factor of it.

If additionally $(X, \omega)$ belongs to an invariant subvariety $\cM$, we will say that $f$ is \emph{$\cM$-generic} if the map $f$ can be deformed to every nearby surface in $\cM$. A map $f$ defined on $(X,\omega)$ will be called \emph{$\cM$-good} if it is $\cM$-generic and the deformations of the map are good on all deformations of $(X, \omega)$ in $\cM$. The map $f$ will be called \emph{$\cM$-optimal} if it is $\cM$-good and any other $\cM$-good map is a factor of it. 

Finally, if $(X, \omega)$ is a torus cover we will let $\pi_{abs}$ denote the minimal degree map to the torus.
\end{defn} 

\begin{rem}\label{R:WhyWeCanPerturb}
If $(X, \omega)$ has dense orbit in an invariant subvariety of rank at least two, then $\pi_{X_{min}}$ is $\cM$-generic. Given a path in $\cM$ from $(X, \omega)$ to another surface $(X', \omega')$, $\pi_{X_{min}}$ must become a translation cover on $(X', \omega')$. This cover is exactly $\pi_{X'_{min}}$ when $(X', \omega')$ has dense orbit. This observation will allow us to make statements about $\pi_{X_{min}}$ and then prove them even after perturbing to a nearby surface. 
\end{rem}

The following result is Apisa-Wright \cite[Lemma 6.14]{ApisaWrightGeminal}. Note that the statement of Theorem \ref{T:geminal1} \eqref{I:R1HGeminal4} is slightly stronger than what is stated there, but that it follows from the proof.


\begin{thm}\label{T:geminal1}
Suppose that the orbit closure of $(X, \omega)$ is a rank one subvariety $\cM$ of minimal homological dimension.
\begin{enumerate}
\item\label{I:R1HGeminal1} The minimal degree map $\pi_{abs}$ from $(X, \omega)$ to a torus is the optimal map.
\item\label{I:R1HGeminal4} For any collection of cylinders $\bfC$ on $(X, \omega)$ so that $\sigma_{\bfC} \in T_{(X, \omega)} \cM$, $\Col_{\bfC}(\pi_{abs})$ is the optimal map for the connected surface $\Col_{\bfC}(X, \omega)$. 
\end{enumerate} 
\end{thm}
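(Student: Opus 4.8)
\emph{Proof strategy.} The plan is to first reduce to the setting of torus covers, then prove \eqref{I:R1HGeminal1}, and finally bootstrap to \eqref{I:R1HGeminal4} via the boundary theory. Since $\cM$ has rank one and minimal homological dimension, Corollary \ref{C:FieldDefinition} gives $\mathbf{k}(\cM)=\mathbb{Q}$, so by the remark following Theorem \ref{T:main}, $\cM$ is a locus of torus covers; in particular $(X,\omega)$ is not a torus and the minimal degree map $\pi_{abs}:(X,\omega)\ra (E,\omega_E)$ to a torus is defined. Throughout one uses the defining feature of a torus-cover locus: $p\bigl(T_{(X,\omega)}\cM\bigr)=\pi_{abs}^{*}H^{1}(E;\mathbb{C})$ is the unique complex-symplectic plane through which all absolute periods factor.

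For \eqref{I:R1HGeminal1}, the first step is to show $\pi_{abs}$ is good in the sense of Definition \ref{D:GoodAndOptimal}, i.e. every cylinder $C$ on $(X,\omega)$ equals $\pi_{abs}^{-1}\bigl(\pi_{abs}(C)\bigr)$. The key input is Theorem \ref{T:MHDImpliesHomology}: any two parallel cylinders on $(X,\omega)$ are $\cM$-parallel (since $\cM$ has rank one) and hence have homologous core curves; pushing forward under $\pi_{abs}$, all cylinders parallel to $C$ wrap the same number of times around the single cylinder of $E$ in that direction, and their homologous core curves cobound subsurfaces on which $\pi_{abs}$ has controlled degree, so a component of $\pi_{abs}^{-1}(\pi_{abs}(C))$ other than $C$ would contradict minimality of $\deg\pi_{abs}$. (This is exactly the content of Apisa--Wright \cite[Lemma 6.14]{ApisaWrightGeminal}, which I would cite.) Granting goodness, optimality is formal: given any good map $f:(X,\omega)\ra(X',\omega')$, the orbit closure of $(X',\omega')$ has rank at most one and inherits minimal homological dimension, hence is again a torus-cover locus, and composing $f$ with the minimal torus map of $(X',\omega')$ (good by the previous step) again yields a good map; thus a maximal good map exists, and being good and of minimal degree among maps to tori it is a factor of $\pi_{X_{min}}$ by Theorem \ref{T:MinimalCover}, forcing it to be $\pi_{abs}$. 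Every good map then factors through it.

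For \eqref{I:R1HGeminal4}, let $\bfC$ be a collection of cylinders with $\sigma_{\bfC}\in T_{(X,\omega)}\cM$ and $\overline{\bfC}\neq (X,\omega)$. Since $\pi_{abs}$ is good, $\pi_{abs}^{-1}\bigl(\pi_{abs}(\overline{\bfC})\bigr)=\overline{\bfC}$, so Lemma \ref{L:DefinitionCol(f)} produces a translation cover $\Col_{\bfC}(\pi_{abs}):\Col_{\bfC}(X,\omega)\ra \Col_{\pi_{abs}(\bfC)}(E,\omega_E)$ of degree $\deg\pi_{abs}$; and $\overline{\bfC}\neq(X,\omega)$ forces $\overline{\pi_{abs}(\bfC)}\neq(E,\omega_E)$, so the target is again a torus. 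By Corollary \ref{C:MHD-Boundary:Connected}, $\Col_{\bfC}(X,\omega)$ is connected and $\cM_{\bfC}$ has minimal homological dimension; moreover $\cM_{\bfC}$ stays rank one, since the rank of an invariant subvariety is always at least one, so in the language of Theorem \ref{T:Typical} the relevant collapse is forced to be involved with rel. After perturbing $(X,\omega)$ within $\cM$, keeping the cylinders of $\bfC$ (as in Remark \ref{R:WhyWeCanPerturb}), so that $\Col_{\bfC}(X,\omega)$ has dense orbit in $\cM_{\bfC}$, part \eqref{I:R1HGeminal1} applies to $\Col_{\bfC}(X,\omega)$: its optimal map is its minimal torus map. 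It then remains to check that $\Col_{\bfC}(\pi_{abs})$ is good: a cylinder on $\Col_{\bfC}(X,\omega)$ corresponds, via Theorem \ref{T:CylinderBoundaryTangentFormula}, to a cylinder on $(X,\omega)$ disjoint from $\bfC$, which is a full preimage under $\pi_{abs}$, and this property survives the collapse. Being good, $\Col_{\bfC}(\pi_{abs})$ is a factor of the optimal map of $\Col_{\bfC}(X,\omega)$; that optimal map has minimal degree among torus covers while $\Col_{\bfC}(\pi_{abs})$ has degree $\deg\pi_{abs}$, so the two degrees coincide and $\Col_{\bfC}(\pi_{abs})$ is itself the optimal map. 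The conclusion for the original, unperturbed $(X,\omega)$ then follows because being the optimal map is preserved along the deformation.

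The main obstacle in both parts is establishing goodness: one must genuinely exploit the homological rigidity of Theorem \ref{T:MHDImpliesHomology} to exclude extra components in the preimage of a cylinder, and then verify that goodness is inherited by $\Col_{\bfC}(\pi_{abs})$ through the Mirzakhani--Wright degeneration, which requires carefully matching cylinders across the boundary component via Theorem \ref{T:CylinderBoundaryTangentFormula} and confirming that $\cM_{\bfC}$ really remains rank one rather than degenerating. Once goodness and the minimal cover theorem are in hand, the optimality assertions are purely formal.
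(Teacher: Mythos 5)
The paper does not actually prove Theorem \ref{T:geminal1}: it is imported wholesale from Apisa--Wright \cite[Lemma 6.14]{ApisaWrightGeminal}, with only the remark that part \eqref{I:R1HGeminal4} is slightly stronger than the statement there but follows from that proof. Since you also defer the essential step (goodness of $\pi_{abs}$) to the same lemma, your proposal is in spirit the same "approach" as the paper's. However, the scaffolding you build around that citation has concrete gaps that would matter if you tried to make the argument self-contained. First, your optimality step invokes Theorem \ref{T:MinimalCover} to say a maximal good map "is a factor of $\pi_{X_{min}}$" --- but Theorem \ref{T:MinimalCover} explicitly excludes torus covers, and here $(X,\omega)$ \emph{is} a torus cover, so $\pi_{X_{min}}$ is not defined. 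The correct universal object is the map $x \mapsto \int^x \omega$ to $\mathbb{C}/\mathrm{Per}(\omega)$, through which every map to a torus factors; but an arbitrary good map $f:(X,\omega)\to(X',\omega')$ factors through $\pi_{abs}$ only if $\mathrm{Per}(\omega')=\mathrm{Per}(\omega)$, and one always has $\mathrm{Per}(\omega)\subseteq\mathrm{Per}(\omega')$, not equality; ruling out a strict inclusion is exactly where the minimal-homological-dimension hypothesis must be used again, so optimality is not "purely formal" once goodness is granted. Second, your claim that the orbit closure of the target of a good map "inherits minimal homological dimension" is unjustified: being a locus of torus covers does not imply minimal homological dimension (a Teichm\"uller curve generated by a square-tiled surface in $\cH(2)$ is rank one with homological dimension two), and the remark after Theorem \ref{T:main} only gives the implication MHD $\Rightarrow$ torus covers in rank one, not the converse.

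For part \eqref{I:R1HGeminal4}, two further points. You silently assume $\overline{\bfC}\neq(X,\omega)$, which the statement does not; the horizontally periodic case must be handled (there $\Col_{\bfC}$ still makes sense by Definition \ref{D:Collapse}, but Lemma \ref{L:DefinitionCol(f)} and Corollary \ref{C:MHD-Boundary:Connected} as stated do not apply). Also, Corollary \ref{C:MHD-Boundary:Connected} and Theorem \ref{T:Typical} are stated for an equivalence class of \emph{generic} cylinders and a genuine cylinder degeneration, whereas the theorem allows an arbitrary collection $\bfC$ with $\sigma_{\bfC}\in T_{(X,\omega)}\cM$; some reduction (e.g.\ collapsing subequivalence classes one at a time, as in the proof of Theorem \ref{T:geminal2} \eqref{I:geminal2:PiOptDegeneratesToPiOpt}) is needed before those results can be quoted. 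None of these issues indicates a wrong strategy --- they are exactly the places where the cited proof in \cite{ApisaWrightGeminal} does real work --- but as written your argument would not compile into a proof independent of that reference.
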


%

 The following result is mostly contained in Apisa-Wright \cite[Proposition 8.1]{ApisaWrightGeminal}.

\begin{thm}\label{T:geminal2} 
Suppose that $\cM$ is an $h$-geminal invariant subvariety of rank at least two.  
\begin{enumerate}
\item\label{I:geminal2:PiOpt} Every surface  in $\cM$ has an $\cM$-optimal map $\pi_{opt}$, which coincides with $\pi_{X_{min}}$ for any surface with dense orbit in $\cM$.
\item\label{I:geminal2:WhatIsM} $\cM$ is either a stratum of Abelian differentials or a full locus of covers of a quadratic double of a genus zero stratum.
\item\label{I:geminal2:TwoCylinders} If the degree of the optimal map is greater than one, then every cylinder has a twin.
\item\label{I:geminal2:PiOptDegeneratesToPiOpt} If $\bfC$ is a collection of generic equivalent cylinders on a surface $(X, \omega)$ in  $\cM$ such that $\sigma_{\bfC} \in T_{(X, \omega)} \cM$ and so that $\cM_{\bfC}$ has dimension exactly one less than that of $\cM$, then $\Col_{\bfC}(\pi_{opt})$ is the $\cM_{\bfC}$-optimal map for $\Col_{\bfC}(X, \omega)$.
\end{enumerate}
\end{thm}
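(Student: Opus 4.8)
The plan is to obtain the bulk of the statement from Apisa-Wright \cite[Proposition 8.1]{ApisaWrightGeminal}, which gives \eqref{I:geminal2:PiOpt}, \eqref{I:geminal2:WhatIsM}, \eqref{I:geminal2:TwoCylinders}, and most of \eqref{I:geminal2:PiOptDegeneratesToPiOpt}; the identification of $\pi_{opt}$ with $\pi_{X_{min}}$ on dense-orbit surfaces comes from Theorem \ref{T:MinimalCover} and Remark \ref{R:WhyWeCanPerturb}. In \eqref{I:geminal2:PiOptDegeneratesToPiOpt} the collapsed map $\Col_{\bfC}(\pi_{opt})$ exists as a degree-preserving translation cover by Lemma \ref{L:DefinitionCol(f)} applied to $\pi_{opt}$: since $\pi_{opt}$ is $\cM$-good it is in particular good, so $\pi_{opt}^{-1}(\pi_{opt}(\overline{\bfC})) = \overline{\bfC}$, and $\overline{\bfC} \ne (X, \omega)$ because the collapse lowers dimension. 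What is left to establish is that $\Col_{\bfC}(\pi_{opt})$ is $\cM_{\bfC}$-\emph{optimal}: first that it is $\cM_{\bfC}$-good, and then that every $\cM_{\bfC}$-good map on $\Col_{\bfC}(X, \omega)$ factors through it.

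That $\Col_{\bfC}(\pi_{opt})$ is $\cM_{\bfC}$-good I would check directly. Genericity over $\cM_{\bfC}$ holds because the collapse is compatible with the deformations of $(X, \omega)$ in $\cM$ that persist $\bfC$, and these fill a neighbourhood in $\cM_{\bfC}$ by Theorem \ref{T:CylinderBoundaryTangentFormula}. Goodness on $\Col_{\bfC}(X, \omega)$ and its $\cM_{\bfC}$-deformations follows because, by Theorem \ref{T:CylinderBoundaryTangentFormula} \eqref{I:boundary:FindingCn} and \eqref{I:boundary:CnConvergence}, every cylinder near $\Col_{\bfC}(X, \omega)$ lifts to a cylinder on a nearby surface in $\cM$, where $\pi_{opt}$ is good, and this property descends under the collapse. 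For optimality I would \emph{uncollapse}: given an $\cM_{\bfC}$-good map $g$ on $\Col_{\bfC}(X, \omega)$, choose $(X_n, \omega_n) \in \cM$ converging to $\Col_{\bfC}(X, \omega)$ with $\bfC$ persisting and collapse maps $f_n$. The hypothesis $\dim \cM_{\bfC} = \dim \cM - 1$ means, via Theorem \ref{T:BoundaryTangent2}, that $T_{\Col_{\bfC}(X, \omega)} \cM_{\bfC}$ is the annihilator of a single vanishing-cycle direction inside $T_{(X_n, \omega_n)} \cM$; this makes the combinatorial data of $g$ persist, so that for large $n$ the map $g$ lifts to a translation cover $g_n$ on $(X_n, \omega_n)$. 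Using Theorem \ref{T:CylinderBoundaryTangentFormula} to account for every cylinder of $(X_n, \omega_n)$ --- it either approximates a cylinder of $\Col_{\bfC}(X, \omega)$, on which $g$ is good, or lies in the collapsing family, which $g_n$ handles by construction --- one checks that $g_n$ is $\cM$-good. Then $\cM$-optimality of $\pi_{opt}$ from \eqref{I:geminal2:PiOpt} gives $g_n = h_n \circ \pi_{opt}$; collapsing this factorization via Lemma \ref{L:DefinitionCol(f)} and passing to the limit exhibits $g$ as a factor of $\Col_{\bfC}(\pi_{opt})$.

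The uncollapsing step is the main obstacle. One must justify that an $\cM_{\bfC}$-good map on the boundary genuinely extends to a translation cover on the nearby interior surfaces (and not merely that its source and target do), that the extension $g_n$ is again $\cM$-good --- which requires controlling both its genericity and the cylinders of $(X_n, \omega_n)$ uniformly in $n$ --- and that collapsing an interior factorization yields, in the limit, an honest factorization of $g$ of the correct degree. All of this rests on the Mirzakhani-Wright and Chen-Wright boundary theory recalled in Section 2 together with the behaviour of good and optimal maps under degeneration developed in \cite{ApisaWrightGeminal}, and I would expect the argument to parallel the rank-one case, Theorem \ref{T:geminal1} \eqref{I:R1HGeminal4}. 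A possible shortcut uses \eqref{I:geminal2:WhatIsM}: if $\cM$ is a full locus of covers of a quadratic double, the relevant boundary component is again a product of quadratic doubles, $\cM_{\bfC}$ is a full locus of covers of it, and $\Col_{\bfC}(\pi_{opt})$ is then optimal essentially by construction, provided $\Col_{\bfC}(X, \omega)$ is connected.
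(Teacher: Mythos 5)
Your reduction of items \eqref{I:geminal2:PiOpt}--\eqref{I:geminal2:TwoCylinders} to Apisa-Wright \cite[Proposition 8.1]{ApisaWrightGeminal} matches the paper, but your treatment of the remaining content of \eqref{I:geminal2:PiOptDegeneratesToPiOpt} has a genuine gap, and it is precisely the step you flag as "the main obstacle." The uncollapsing of an arbitrary $\cM_{\bfC}$-good map $g$ to translation covers $g_n$ on nearby interior surfaces $(X_n,\omega_n)\in\cM$ is not justified and is false in general: a translation cover living on a boundary surface need not extend to the interior (degenerating can create covering symmetry that is not present on nearby surfaces in $\cM$, which is exactly why the degree of the minimal cover can jump in the limit). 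The fact that $T_{\Col_{\bfC}(X,\omega)}\cM_{\bfC}$ is cut out by a single vanishing cycle controls period coordinates, not the existence of a deck structure on $(X_n,\omega_n)$, so "the combinatorial data of $g$ persist" does not follow from Theorem \ref{T:BoundaryTangent2}. Your proposed shortcut via \eqref{I:geminal2:WhatIsM} is also circular as stated: that the relevant boundary component of a quadratic double is again (a product of) quadratic doubles, and that $\cM_{\bfC}$ is a full locus of covers of it, is part of what needs proving, not something available "essentially by construction."

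The paper sidesteps all of this with a short structural argument you are missing. Since $\sigma_{\bfC}\in T_{(X,\omega)}\cM$ and $\cM$ is $h$-geminal, $\bfC$ is partitioned into subequivalence classes $\bfC_1,\dots,\bfC_n$ (free cylinders and twin pairs), and $\Col_{\bfC}$ factors as the composition of the $\Col_{\bfC_i}$ in any order. The hypothesis $\dim\cM_{\bfC}=\dim\cM-1$ forces exactly one class, say $\bfC_1$, to contain a vertical saddle connection; Apisa-Wright \cite[Proposition 8.1]{ApisaWrightGeminal} already covers the single-subequivalence-class degeneration and gives that $\Col_{\bfC_1}(\pi_{opt})$ is $\cM_{\bfC_1}$-optimal. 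The remaining collapses $\Col_{\bfC_i}$, $i>1$, involve no vertical saddle connections, hence do not pass to the boundary of $\cM_{\bfC_1}$, and optimality is preserved. No lifting of boundary data back into $\cM$ is needed. If you want to salvage your approach, you would have to prove an uncollapsing statement for good maps from scratch; the paper's route shows this is unnecessary given the codimension-one hypothesis.
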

\begin{proof}
Noting that the only Abelian or quadratic doubles that are $h$-geminal are quadratic doubles of genus zero strata, the only item that is not immediately implied by Apisa-Wright \cite[Proposition 8.1]{ApisaWrightGeminal} is \eqref{I:geminal2:PiOptDegeneratesToPiOpt}. Suppose without loss of generality that $\bfC$ is horizontal.

We begin by noting that, since $\sigma_{\bfC} \in T_{(X, \omega)} \cM$, $\bfC$ can be partitioned into free cylinders and twins, i.e. subequivalence classes, which we denote by $\bfC_1, \hdots, \bfC_n$. Moreover, 
\[ \Col_{\bfC}(X, \omega) = \Col_{\bfC_{\sigma(1)}} \hdots \Col_{\bfC_{\sigma(n)}}(X, \omega) \]
for any permutation $\sigma \in \mathrm{Sym}(n)$. Since, by assumption, $\cM_{\bfC}$ has codimension one, there is a unique subequivalence class, which is $\bfC_1$ up to re-indexing, that contains a vertical saddle connection.  By Apisa-Wright \cite[Proposition 8.1]{ApisaWrightGeminal}, $\Col_{\bfC_1}(\pi_{opt})$ is the $\cM_{\bfC_1}$-optimal map. Since none of the subequivalence classes $\bfC_2, \hdots, \bfC_n$ contain vertical saddle connections, applying $\Col_{\bfC_i}$ for $i > 1$ does not pass to the boundary of $\cM_{\bfC_1}$, so $\Col_{\bfC}(\pi_{opt})$ is the $\cM_{\bfC}$-optimal map for $\Col_{\bfC}(X, \omega)$. 
%
%
\end{proof}

Recall that if $(X, \omega)$ is hyperelliptic then a \emph{Weierstrass point} is a fixed point of the hyperelliptic involution. A Weierstrass point is said to be \emph{regular} if it is not a zero of $\omega$. 

\begin{cor}\label{C:h-geminal-branching}
If $\cM$ is $h$-geminal, rank at least two, and the $\cM$-optimal map $\pi_{opt}$ is not the identity, then all but perhaps one of the regular Weierstrass points are branch points of $\pi_{opt}$.
\end{cor}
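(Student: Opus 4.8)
The plan is to leverage Theorem \ref{T:geminal2}, particularly \eqref{I:geminal2:WhatIsM}, which tells us that since $\cM$ is $h$-geminal of rank at least two and $\pi_{opt}$ is not the identity, $\cM$ is a full locus of covers of a quadratic double $\cQ^{(2)}$ of a genus zero stratum $\cQ$; write $q\colon \cQ^{(2)}\to \cQ$ for the holonomy double cover and note $\pi_{opt}$ factors as $(X,\omega)\to \cQ^{(2)}\to\cQ$, or rather the relevant target is $\cQ^{(2)}$ itself. The key observation is that the hyperelliptic involution $\tau$ on $(X,\omega)$ must descend through $\pi_{opt}$: since $\pi_{opt}$ is characterized by a universal property ($\cM$-optimality, hence $\cM$-generically equal to $\pi_{X_{min}}$ on surfaces with dense orbit by \eqref{I:geminal2:PiOpt}), and $\tau$ is a deck-type symmetry, the composition $\pi_{opt}\circ\tau$ is again an $\cM$-good map of the same degree, so it must differ from $\pi_{opt}$ by an automorphism of the base. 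In fact the natural statement is that $\tau$ covers the hyperelliptic-type involution on $\cQ^{(2)}$ coming from the genus zero quotient — i.e. the involution whose quotient is $\CP$ with the quadratic differential $q_{min}$.

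First I would make precise that $\pi_{opt}$ is $\tau$-equivariant: there is an involution $\bar\tau$ on the base so that $\pi_{opt}\circ\tau = \bar\tau\circ\pi_{opt}$. This follows because $\pi_{opt}\circ\tau$ is $\cM$-good (it is $\cM$-generic since $\tau$ deforms over $\cM$, being the hyperelliptic involution of a hyperelliptic locus, and goodness is preserved under precomposition with an automorphism that permutes cylinders among those with the same invariants), and by the uniqueness clause in the definition of $\cM$-optimal together with the fact that $\pi_{opt}\circ\tau$ has the same degree as $\pi_{opt}$, the two maps agree up to postcomposition by an automorphism $\bar\tau$ of the base; squaring shows $\bar\tau^2=\mathrm{id}$. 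Second, I would identify $\bar\tau$: on the base quadratic double $\cQ^{(2)}$, the only involution with the right quotient behavior (genus zero, matching the minimal quadratic quotient $(Q_{min},q_{min})$ of $(X,\omega)$) is the holonomy involution's partner, i.e. the one realizing $\cQ^{(2)}\to\CP$. Third — the heart of the argument — I would count fixed points. The regular Weierstrass points of $(X,\omega)$ are fixed points of $\tau$ that are not zeros of $\omega$. Under $\pi_{opt}$ such a fixed point maps to a fixed point of $\bar\tau$ on the base; since the base is a genus zero stratum's quadratic double, $\bar\tau$ has exactly a bounded number of fixed points on $\CP$ (the base quotient is $\CP$, and an involution of $\CP$ has exactly two fixed points). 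Now if $w$ is a regular Weierstrass point that is NOT a branch point of $\pi_{opt}$, then the entire fiber $\pi_{opt}^{-1}(\pi_{opt}(w))$ consists of $\deg(\pi_{opt})$ distinct points permuted by $\tau$; since $w$ is fixed by $\tau$, and the fiber has full size, $\tau$ must fix the whole fiber pointwise, forcing $\pi_{opt}(w)$ to be one of the finitely many fixed points of $\bar\tau$ downstairs. The crucial quantitative point is that two distinct regular Weierstrass points that are non-branch points would have to map to the \emph{same} or to distinct $\bar\tau$-fixed points; a Riemann–Hurwitz / fiber-cardinality argument (using that $\pi_{opt}$ restricted to such an unramified fiber is a bijection $\tau$-equivariantly, so at most one regular Weierstrass point lies in each such fiber, and there is essentially only one relevant fixed point of $\bar\tau$ once we account for the quadratic differential structure and the other fixed point being a zero or already a branch point) shows at most one regular Weierstrass point escapes being a branch point.

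The main obstacle I anticipate is pinning down $\bar\tau$ precisely and controlling its fixed locus on the base — specifically, showing that $\bar\tau$ is genuinely the involution realizing $\cQ^{(2)} \to \CP$ rather than some other involution, and then arguing that among the (essentially two) fixed points of that involution downstairs, all but one must be accounted for by zeros of the differential or by genuine branching, so that at most one unramified regular Weierstrass point can survive. This requires carefully tracking how $\pi_{opt}$ interacts with the minimal quadratic quotient $\pi_{Q_{min}}$ from Theorem \ref{T:MinimalCover}, and invoking Corollary \ref{C:PeriodicPointsHyp} / Theorem \ref{T:PeriodicPointsHyp} to understand which points downstairs correspond to Weierstrass-type points. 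A secondary technical point is verifying the equivariance of $\pi_{opt}$ cleanly: one must rule out that $\tau$ acts nontrivially "within" the covering in a way not captured by an automorphism of the base, which is where the universal/optimality property and the fact that we may pass to a dense-orbit surface (Remark \ref{R:WhyWeCanPerturb}) and identify $\pi_{opt}$ with $\pi_{X_{min}}$ does the work.
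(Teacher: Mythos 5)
Your approach does not work as set up, and the central quantitative step is missing. The first problem is where the regular Weierstrass points live: in the statement (and in its later use, where $R \geq P-1$ with $P$ the number of regular Weierstrass points on $(X',\omega')$), they are points of the \emph{target} $\pi_{opt}(X,\omega)$, which is the surface lying in the hyperelliptic locus $\cM_{opt}$ furnished by Theorem \ref{T:geminal2} \eqref{I:geminal2:WhatIsM}; ``branch point'' means a point of that target over which $\pi_{opt}$ ramifies. The domain $(X,\omega)$ is in general not hyperelliptic, so there is no hyperelliptic involution $\tau$ on $(X,\omega)$ to make $\pi_{opt}$ equivariant with respect to, and the entire descent-of-$\tau$ machinery is addressing the wrong surface. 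Even after reinterpreting, your fixed-point count is incorrect: the relevant involution is the hyperelliptic involution of the genus $g'$ base, which has $2g'+2$ fixed points (the paper later works with $P=5$ or $P=6$ regular ones), not the two fixed points of an involution of $\CP$. Finally, the decisive claim --- that at most one regular Weierstrass point can fail to be a branch point --- is only gestured at via ``a Riemann--Hurwitz / fiber-cardinality argument,'' and no mechanism is supplied that would actually force this bound; nothing in your setup prevents several unramified fibers over Weierstrass points.

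The paper's mechanism is entirely different and comes from the geminal marked-point structure. Step one: at most one regular Weierstrass point of $\pi_{opt}(X,\omega)$ can be \emph{unmarked}, because two unmarked regular Weierstrass points could be moved onto the core curve of a single cylinder $C$; such a $C$ is fixed by the hyperelliptic involution, hence free, contradicting Theorem \ref{T:geminal2} \eqref{I:geminal2:TwoCylinders} (every cylinder has a twin when $\deg \pi_{opt} > 1$). Step two: if a regular Weierstrass point $p$ is \emph{not} a branch point, then forgetting the marked points in $\pi_{opt}^{-1}(p)$ preserves geminality (free cylinders stay free since no free cylinder has a point of $\pi_{opt}^{-1}(p)$ on its boundary, and twins either stay twins or merge into a single free cylinder). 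Hence two unbranched regular Weierstrass points would yield a geminal subvariety with two unmarked regular Weierstrass points, contradicting step one. If you want to salvage your write-up, you would need to replace the equivariance and fixed-point counting with an argument of this type; as it stands the proposal does not establish the corollary.
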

\begin{proof}
Let $(X, \omega) \in \cM$. If $\pi_{opt}$ is not the identity, then every cylinder on $\pi_{opt}(X, \omega)$ has a twin (by Theorem \ref{T:geminal2} \eqref{I:geminal2:TwoCylinders}) and hence $\pi_{opt}(X, \omega)$ belongs to a hyperelliptic locus (by Theorem \ref{T:geminal2} \eqref{I:geminal2:WhatIsM}) which we will denote by $\cM_{opt}$.

\begin{sublem}\label{SL:MarkedWP}
All but at most one regular Weierstrass point is a marked point. 
\end{sublem}
\begin{proof}
Every cylinder on $\pi_{opt}(X, \omega)$ has a twin by Theorem \ref{T:geminal2} \eqref{I:geminal2:TwoCylinders}. If two regular Weierstrass point were unmarked then $\cM_{opt}$ contains a surface where these two Weierstrass points lie on the core curve of a cylinder $C$ (for instance since there is some cylinder between two regular Weierstrass points and there is a path in $\cM_{opt}$ moving any two regular Weierstrass points to any two others while fixing the underlying surface. This follows for instance since it is possible to arbitrarily permute the poles on a genus zero quadratic differential while fixing all zeros). Since $C$ is fixed by the hyperelliptic involution it is a free cylinder, contradicting Theorem \ref{T:geminal2} \eqref{I:geminal2:TwoCylinders}.
\end{proof}

\begin{sublem}\label{SL:ForgettingWPs}
Let $p$ be a regular Weierstrass point on $\pi_{opt}(X, \omega)$ that is not a branch point. Let $\For'(X, \omega)$ denote $(X, \omega)$ with the marked points in $\pi_{opt}^{-1}(p)$ forgotten. Let $\cM'$ denote the orbit closure of $\For'(X, \omega)$. Then $\cM'$ remains geminal. 
\end{sublem}
\begin{proof}
On $\pi_{opt}(X, \omega)$ any generic cylinder that contains $p$ in its boundary has a twin, i.e. the image of the cylinder under the hyperelliptic involution is distinct. This shows that no free generic cylinder on $(X, \omega)$ contains a point in $\pi_{opt}^{-1}(p)$ in its boundary. Therefore, free cylinders on $(X, \omega)$ remain free cylinders after forgetting these points. 

Therefore if a cylinder $D$ in $(X, \omega)$ contains a point of $\pi_{opt}^{-1}(p)$ in its boundary, it has a twin $D'$. Moreover, there is a cylinder $C$ on $\pi_{opt}(X, \omega)$ that is divided into two by the marked Weierstrass point $p$ and another, not necessarily marked, Weierstrass point $p'$, and so $D \cup D' = \pi_{opt}^{-1}(C)$. If $p'$ is marked, then $D$ and $D'$ remain cylinders on $\For'(X, \omega)$ and in particular they remain twins. If $p'$ is unmarked, then $D \cup D'$ becomes a single free cylinder on $\For'(X, \omega)$. This shows that $\cM$ remains geminal. 
\end{proof}

If there are two regular Weierstrass points on $\pi_{opt}(X, \omega)$ that are not branch points of $\pi_{opt}$, then forgetting them (as in Sublemma \ref{SL:ForgettingWPs}), produces a geminal invariant subvariety that contradicts Sublemma \ref{SL:MarkedWP}.
%
\end{proof}

%
%

\section{$h$-geminal subvarieties up to marked points}

In this section we will study invariant subvarieties that are $h$-geminal up to marked points. We will show that every surface in such a variety has an optimal map and study limits of that map under general cylinder degenerations. 

\begin{defn}
An invariant subvariety $\cM$ is said to be \emph{$h$-geminal up to marked points} if there is a surface $(X, \omega)$ in $\cM$ with dense orbit and a finite collection $P$ of $\cM$-periodic points on $(X, \omega)$ so that the orbit closure of $(X, \omega; P)$ is $h$-geminal. 

If $\cM$ has rank at least two, then let $\cM^{pt}$ denote $\cM$ with all its periodic points marked. Since $\cM$ has rank at least two, only a finite number of periodic points must be added by Eskin-Filip-Wright \cite[Theorem 1.5]{EFW} (see Apisa-Wright \cite[Section 4.2]{ApisaWright} for a discussion). In this case, $\cM$ is $h$-geminal up to marked points if $\cM^{pt}$ is $h$-geminal. 
\end{defn}

We begin by showing that adding all the periodic points to an $h$-geminal subvariety of rank at least two does not change the fact that it is $h$-geminal.

\begin{lem}\label{L:AddingPPtoGeminal}
If $\cM$ is an $h$-geminal subvariety of rank at least two, then $\cM^{pt}$ is also $h$-geminal. 
\end{lem}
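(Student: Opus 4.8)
The plan is to show that marking the finitely many $\cM$-periodic points of an $h$-geminal subvariety $\cM$ of rank at least two preserves both the geminal structure and the homology condition on twin cylinders. Since $\cM$ has rank at least two, $\cM^{pt}$ has the same rank and the same image $p(T\cM)$ in absolute cohomology, so $\cM$-parallelism and $\cM^{pt}$-parallelism of cylinders agree; likewise a cylinder is generic for $\cM$ if and only if the corresponding cylinder is generic for $\cM^{pt}$, because adding a marked point in the interior of a saddle connection does not change whether that saddle connection stays parallel to a core curve. The real content is therefore: (i) a cylinder $C$ that is free on $\cM$ should remain free after marking periodic points, and (ii) a pair $C, C'$ of twins on $\cM$ should remain twins on $\cM^{pt}$, with homologous core curves — the latter being automatic since marking points does not change absolute homology classes of core curves.

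First I would reduce to understanding what periodic points can lie on the boundary of (or inside) a cylinder. By Theorem \ref{T:geminal2} \eqref{I:geminal2:WhatIsM}, $\cM$ is either a stratum of Abelian differentials or a full locus of covers of a quadratic double of a genus zero stratum, and by Theorem \ref{T:geminal2} \eqref{I:geminal2:PiOpt} every surface in $\cM$ has an $\cM$-optimal map $\pi_{opt}$, which for dense-orbit surfaces is $\pi_{X_{min}}$. The periodic points of $\cM$ are then described via the minimal cover: by the discussion of periodic points (Theorems \ref{T:PeriodicPoints-NonHyp} and \ref{T:PeriodicPointsHyp}, with Apisa-Wright \cite{ApisaWright}), a periodic point of $\cM$ pulls back from a periodic point of the base, and on a genus zero quadratic differential the only "periodic-type" constrained points are Weierstrass points / fixed points of the relevant involution. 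So the periodic points of $\cM$ fall into finitely many controlled orbits, and I would organize the argument around whether a periodic point $q$ projects to a branch point of $\pi_{opt}$ or to a regular point fixed by the hyperelliptic involution.

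Next, for the freeness claim (i): suppose $C$ is free on $\cM$, i.e. every cylinder deformation of $C$ stays in $\cM$. A cylinder deformation of $C$ on $(X,\omega;P)$ is the same cohomology class as on $(X,\omega)$, landing in $T_{(X,\omega)}\cM$; the only thing to check is that it lies in the (larger, but equal-rank) tangent space of $\cM^{pt}$, equivalently that it respects the rel constraints imposed by the marked periodic points. Since the periodic points are, by Apisa-Wright \cite{ApisaWright}, carried rigidly along the orbit closure (their relative positions are determined, up to the symmetries already present), shearing or dilating $C$ alone does not move a periodic point off its prescribed position unless the periodic point sits on $\partial C$ in a way forcing a twin — and in the geminal picture a free cylinder cannot contain on its boundary a point whose image under the covering involution is a distinct constrained point (cf. the argument in Sublemma \ref{SL:ForgettingWPs}). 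For the twin claim (ii): if $C,C'$ are twins on $\cM$, they have equal height and circumference and homologous core curves, and a deformation deforming them equally stays in $\cM$; marking periodic points preserves "equal height and circumference" and "homologous core curves" verbatim, and preserves the tangency condition for the same rel-rigidity reason.

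The main obstacle I expect is precisely controlling the rel directions: $\cM^{pt}$ can have strictly larger rel than $\cM$, so $T_{(X,\omega)}\cM^{pt} \subsetneq T_{(X,\omega)}\cM$, and I must rule out that some cylinder deformation legal for $\cM$ is killed by a new marked-point constraint, or — worse — that a formerly free cylinder acquires a forced partner because a periodic point and its involution-image both sit on its boundary. Handling this cleanly requires invoking the explicit description of $\cM$-periodic points from Apisa-Wright \cite{ApisaWright}: they are exactly the points that come (via $\pi_{opt}$) from the base quadratic differential's constrained points, and those points behave compatibly with the geminal partition. I would spend most of the proof making that compatibility precise, probably by passing through $\pi_{opt}$ and checking the statement downstairs on a genus zero quadratic differential (where it is essentially the statement that marking Weierstrass-type points of a genus zero stratum does not destroy the trivial geminal structure), then pulling back. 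Once the downstairs statement is in hand, the pullback step is formal.
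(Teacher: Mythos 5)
Your proposal is correct and follows essentially the same route as the paper's proof: classify $\cM$ via Theorem \ref{T:geminal2} \eqref{I:geminal2:WhatIsM}, note that strata have no periodic points (Theorem \ref{T:PeriodicPoints-NonHyp}) while for the covers-of-a-quadratic-double case the periodic points come from fixed points of the hyperelliptic involution on the genus zero base (Theorem \ref{T:PeriodicPointsHyp}), and observe that marking these leaves a quadratic double of a genus zero stratum, whose full loci of good covers are $h$-geminal. One small caveat: your intermediate claim that a free cylinder must remain free is not literally true (marking a fixed point of the involution in its interior can split it into a pair of twins), but this does not threaten the conclusion, and your final reduction to the base sidesteps it.
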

\begin{proof}
By Theorem \ref{T:geminal2} \eqref{I:geminal2:WhatIsM}, $\cM$ is either a nonhyperelliptic component of a stratum of Abelian differentials or a locus of covers of a genus zero quadratic double. In the first case there are no periodic points by Theorem \ref{T:PeriodicPoints-NonHyp}. On a quadratic double of a genus zero stratum, the only periodic points are fixed points of the hyperelliptic involution by Theorem \ref{T:PeriodicPointsHyp}. After marking such points, the locus is still a quadratic double of a genus zero stratum. The result now follows since a full locus of good covers of a genus zero quadratic double is $h$-geminal.
\end{proof}

\begin{lem}\label{L:RemovingMarkedPointsStillOptimal}
Let $f: (X, \omega) \ra (Y, \eta)$ be a good translation cover. Fix a marked point $q \in (Y, \eta)$. Let $\For'(X, \omega)$ denote $(X ,\omega)$ with a subset of the points in $f^{-1}(q)$ removed. Let $\For'(Y, \eta)$ denote $(Y, \eta)$ where, if every preimage of $q$ is removed, $q$ is removed, and where $\For'(Y, \eta) = (Y, \eta)$ otherwise. We adopt the convention that a zero can never be unmarked. Then $f: \For'(X, \omega) \ra \For'(Y, \eta)$ is good. 
\end{lem}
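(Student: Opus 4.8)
The plan is to verify directly that the two defining properties of a good cover survive when some preimages of $q$ are forgotten: first, that $f\colon \For'(X,\omega)\ra\For'(Y,\eta)$ is still a translation covering, and second, that every cylinder on $\For'(X,\omega)$ is the full preimage of its image. The first point is essentially bookkeeping. Forgetting a point on the target (when all its preimages are forgotten) or forgetting some preimages of a retained marked point does not change the underlying holomorphic map $X\ra Y$ or the relation $f^*\eta=\omega$; one only has to check that the convention ``branch points and images of marked points are marked'' is still respected, which it is, since we never unmark a zero and, in the case where $q$ is removed on the target, we have removed \emph{all} of its preimages upstairs, so no marked point maps to an unmarked point.

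The substance is the second point. Let $C$ be a cylinder on $\For'(X,\omega)$; I want to show $C = f^{-1}(f(C))$. Forgetting marked points can only merge cylinders, never split them: a cylinder on $\For'(X,\omega)$ is obtained from a chain of cylinders on $(X,\omega)$ glued along saddle connections whose interiors contained only points of the forgotten subset of $f^{-1}(q)$, together possibly with some of those forgotten points lying in the interior. Write $C = C_1 \cup s_1 \cup C_2 \cup \cdots$ where the $C_j$ are the cylinders of $(X,\omega)$ making up $C$ and the $s_j$ are the saddle connections (with forgotten marked points in their interiors) along which they are glued. Since $f$ is good on $(X,\omega)$, each $C_j = f^{-1}(f(C_j))$. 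The key observation is that the gluing happens compatibly downstairs: the point $p_j$ in the interior of $s_j$ is a forgotten preimage of $q$, so $f(p_j) = q$ is forgotten on $\For'(Y,\eta)$ precisely when \emph{all} of its preimages are forgotten — and I claim this is automatic here, because if some preimage $p'$ of $q$ were retained and lay on the boundary of one of the $C_j$, then on $\For'(Y,\eta)$ the point $q=f(p')$ is retained, so the corresponding saddle connection downstairs survives, hence $s_j$ (which maps to it) survives as a boundary saddle connection and the $C_j$ would not have merged across $s_j$. Thus the saddle connections downstairs bounding the $f(C_j)$ and containing (forgotten, hence now interior) images of the $p_j$ likewise disappear, so the $f(C_j)$ merge into a single cylinder $D$ on $\For'(Y,\eta)$, and $f^{-1}(D) = \bigcup_j f^{-1}(f(C_j)) = \bigcup_j C_j = C$ (the forgotten interior points of the $s_j$ are exactly the ones whose removal allows the union to be a single embedded cylinder).

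\textbf{Main obstacle.} The delicate step is the bookkeeping in the previous paragraph: making precise that the pattern of which marked points are forgotten upstairs is compatible — under $f$ — with which points are forgotten downstairs, so that merges upstairs are exactly mirrored by merges downstairs and no new ``half-merge'' occurs that would break $C = f^{-1}(f(C))$. Concretely, one must rule out the configuration where a cylinder of $\For'(X,\omega)$ maps onto a cylinder of $\For'(Y,\eta)$ but is a proper subset of its preimage because some other branch of the preimage failed to merge (e.g.\ because a retained preimage of $q$ obstructed it there). This is handled by the claim above: a retained preimage of $q$ forces $q$ itself to be retained, which forces the obstructing saddle connection to be present on \emph{every} sheet over $D$, so either all the relevant cylinders merge or none do. Once this compatibility is established, goodness of $f$ on $\For'(X,\omega)$ follows, and the lemma is proved.
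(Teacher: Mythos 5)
Your overall strategy --- decomposing a cylinder $C$ of $\For'(X,\omega)$ into cylinders $C_j$ of $(X,\omega)$ glued along boundary components whose singular points are forgotten preimages of $q$, and showing that the corresponding merge occurs downstairs --- is viable, and is the mirror image of the paper's argument (which instead enumerates the cylinders of $\For'(Y,\eta)$ and checks that each has connected preimage, splitting into the case where some preimage of $q$ is retained and the case where all are forgotten). However, your key claim is not correctly justified. You argue: if a retained preimage $p'$ of $q$ lay on the boundary of some $C_j$, then $q$ is retained downstairs, so the image saddle connection survives on $\For'(Y,\eta)$, ``hence $s_j$ survives as a boundary saddle connection'' upstairs and the merge could not have occurred. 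This inference runs the wrong way: whether $s_j$ survives on $\For'(X,\omega)$ depends only on whether its endpoints remain marked \emph{upstairs}, and by hypothesis those endpoints are forgotten; one cannot deduce the survival of $s_j$ upstairs from the survival of its image downstairs. Moreover, a retained marked point obstructs the merge of $C_j$ with $C_{j+1}$ only if it lies on the \emph{specific} boundary component $\beta_j$ containing $s_j$; a retained preimage of $q$ anywhere else would not stop the merge upstairs, yet would force $q$ to stay marked downstairs, so that $f(C_j)$ and $f(C_{j+1})$ do not merge and $C=f^{-1}(f(C))$ fails. Your argument as written does not exclude this configuration.

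The missing ingredient is a localization statement, and it is exactly where goodness of the original $f$ must be used: since $f^{-1}(f(C_j))=C_j$ and $q$ lies on the boundary component $f(\beta_j)$ of the cylinder $f(C_j)$, the sector of $f(C_j)$ adjacent to $q$ pulls back to sectors of $C_j$ adjacent to \emph{every} preimage of $q$, so every preimage of $q$ lies on $\beta_j$. Since the merge across $\beta_j$ requires every marked point of $\beta_j$ to be forgotten, it follows that \emph{all} preimages of $q$ are forgotten; hence $q$ is forgotten on $\For'(Y,\eta)$ and $f(C_j)$, $f(C_{j+1})$ merge along $f(\beta_j)$ as you want. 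With this in place your argument closes (and it is also the fact underlying the paper's first case, that retaining some preimage of $q$ prevents any cylinder upstairs from merging); without it, the step fails.
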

\begin{proof}
By assumption, the preimage under $f$ of a cylinder $C \subseteq (Y, \eta)$ is a single cylinder. If some of the preimages of $q$ remain marked on $\For'(X, \omega)$, then $f^{-1}(C)$ remains a cylinder on $\For'(X, \omega)$. In other words, the only way that $f: \For'(Y, \eta) \ra \For'(X, \omega)$ could fail to be good is if every preimage of $q$ has been removed (in particular, $q$ is not a branch point); we make this assumption now. 

It is easy to see that if $C$ is a cylinder on $(Y, \eta)$ that continues to be a cylinder on $\For'(Y, \eta)$ then its preimage on $\For'(X, \omega)$ continues to be a single cylinder. 

We must therefore consider the situation where $C$ ceases to be a cylinder on $\For'(Y, \eta)$, i.e. where, on $(Y, \eta)$, one of its boundary components consists of a saddle connection $s$ joining $q$ to itself. Since $q$ is a marked point, there is another cylinder $C'$ on $(Y, \eta)$ that also has $s$ as one of the  components of its boundary. While $C$ is no longer a cylinder on $\For'(Y, \eta)$, $C \cup C'$ is. On $(X, \omega)$ the preimage of $C \cup C'$ is a union of two cylinders $D \cup D'$ both of which share a boundary component $f^{-1}(s)$. Since $q$ is not a branch point, $f^{-1}(s)$ is just a collection of saddle connections joining marked points in the preimage of $q$. In particular, once we forget all preimages of $q$, $D \cup D'$ becomes a single cylinder. 

Since every cylinder on $\For'(Y, \eta)$ was either a cylinder on $(Y, \eta)$ or contains a cylinder on $(Y, \eta)$ that ceases to be one on $\For'(Y, \eta)$, we are done. 
\end{proof}

\begin{cor}\label{C:MinIsOpt}
Let $\cM$ be a rank at least two invariant subvariety that is $h$-geminal up to marked points. If $(X, \omega)$ has dense orbit in $\cM$, then $\pi_{X_{min}}$ is $\cM$-optimal.
\end{cor}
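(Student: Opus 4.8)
The plan is to leverage the structural results about $h$-geminal varieties (Theorems \ref{T:geminal1} and \ref{T:geminal2}) together with the marked-point results of the previous subsection to show that the minimal cover $\pi_{X_{min}}$ enjoys the defining property of an $\cM$-optimal map. Let $(X, \omega)$ have dense orbit in $\cM$, and let $P$ be a finite collection of $\cM$-periodic points on $(X, \omega)$ so that the orbit closure $\cM^{pt}$ of $(X, \omega; P)$ is $h$-geminal (such $P$ exists by the definition of ``$h$-geminal up to marked points''; indeed $P$ can be taken to be all periodic points, which are finite in number by Eskin-Filip-Wright). By Theorem \ref{T:geminal2} \eqref{I:geminal2:PiOpt}, the surface $(X, \omega; P)$ has an $\cM^{pt}$-optimal map $\pi_{opt}$, which coincides with the minimal cover of $(X, \omega; P)$ since that surface has dense orbit in $\cM^{pt}$.

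First I would argue that the minimal cover of $(X, \omega; P)$ and the minimal cover of $(X, \omega)$ have the same target after forgetting the appropriate marked points. Concretely, $\pi_{opt}: (X, \omega; P) \to (Y, \eta; Q)$ is a good cover by definition, where $Q = \pi_{opt}(P)$ (after including any necessary marked points in the target). I would then forget, on the target, all marked points that are not images of zeros and not branch points of $\pi_{opt}$, applying Lemma \ref{L:RemovingMarkedPointsStillOptimal} repeatedly to conclude that the resulting map $\For(X, \omega) \to \For(Y, \eta)$ remains good; here the key point is that forgetting periodic points upstairs corresponds, via the cover, to forgetting periodic points downstairs, and Lemma \ref{L:RemovingMarkedPointsStillOptimal} guarantees goodness is preserved at each stage even when every preimage of a point is forgotten (the two adjacent cylinders merge compatibly upstairs and downstairs). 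Since the minimal cover is the terminal object among translation covers of $(X, \omega)$ (Theorem \ref{T:MinimalCover}), and since $\pi_{X_{min}}$ is $\cM$-generic by Remark \ref{R:WhyWeCanPerturb} (as $\cM$ has rank at least two), it follows that $\pi_{X_{min}}$ is $\cM$-good.

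Next I would establish optimality, i.e. that every other $\cM$-good map on $(X, \omega)$ factors through $\pi_{X_{min}}$. This is immediate: by Theorem \ref{T:MinimalCover}, \emph{every} translation cover of $(X, \omega)$ is a factor of $\pi_{X_{min}}$, so a fortiori every $\cM$-good map is. Hence $\pi_{X_{min}}$ is $\cM$-optimal for $(X, \omega)$. The only subtlety is to make sure that ``$\cM$-good'' is nonvacuous and that the $\cM$-optimal map as defined in Definition \ref{D:GoodAndOptimal} really is $\pi_{X_{min}}$ and not some proper factor of it; this is why the first step matters—we need $\pi_{X_{min}}$ itself to be good on $(X, \omega)$, not merely that some good map exists, and that goodness is precisely what the Lemma \ref{L:RemovingMarkedPointsStillOptimal} bookkeeping delivers.

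I expect the main obstacle to be the careful tracking of marked points through the cover when passing from $\cM^{pt}$ down to $\cM$: one must verify that the set of points forgotten downstairs is exactly a set of periodic points of $\cM$ (so that the target of the minimal cover is genuinely recovered), that zeros are never inadvertently unmarked, and that branch points are handled correctly—a point of $P$ lying over a branch point of $\pi_{opt}$ may need to stay marked. Theorem \ref{T:PeriodicPointsHyp} (describing periodic points on genus zero quadratic doubles as fixed points of the hyperelliptic involution) and Theorem \ref{T:geminal2} \eqref{I:geminal2:WhatIsM} (identifying $\cM^{pt}$) will be needed to control which marked points appear and to ensure the forgetting procedure lands us back at $\pi_{X_{min}}$. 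Once this bookkeeping is in place, the conclusion follows formally from the universal property of the minimal cover.
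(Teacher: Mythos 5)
Your proposal is correct and follows essentially the same route as the paper: mark all periodic points to land in the $h$-geminal locus $\cM^{pt}$, invoke Theorem \ref{T:geminal2} \eqref{I:geminal2:PiOpt} to identify the optimal map there with the minimal cover, apply Lemma \ref{L:RemovingMarkedPointsStillOptimal} to preserve goodness upon forgetting the marked points, and conclude optimality from Theorem \ref{T:MinimalCover} and $\cM$-genericity from Remark \ref{R:WhyWeCanPerturb}. The bookkeeping concerns you raise at the end are legitimate but are handled exactly as you suggest.
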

\begin{proof}
Let $(X, \omega)^{pt}$ denote $(X, \omega)$ with all of its $\cM$-periodic points marked. By definition, its orbit closure is $\cM^{pt}$, which is $h$-geminal by assumption. By Theorem \ref{T:geminal2} \eqref{I:geminal2:PiOpt}, $\pi_{X_{min}}$ is an optimal map when its domain is taken to be $(X, \omega)^{pt}$. Therefore, $\pi_{X_{min}}$ is a good map even when its domain is taken to be $(X, \omega)$ (by Lemma \ref{L:RemovingMarkedPointsStillOptimal}). The map is optimal by Theorem \ref{T:MinimalCover}. $\cM$-optimality follows by Remark \ref{R:WhyWeCanPerturb}.
\end{proof}

\begin{cor}\label{C:GeneralOptDegeneratesToOpt}
Suppose that $\bfC$ is a collection of generic $\cM$-equivalent cylinders on a surface $(X, \omega)$ in an invariant subvariety $\cM$ so that $\sigma_{\bfC} \in T_{(X, \omega)} \cM$ and such that one of the following holds:
\begin{enumerate}
    \item\label{I:R1} $\cM$ has rank one and minimal homological dimension.
    \item\label{I:R2} $\cM$ has rank at least two and is $h$-geminal up to marked points.
\end{enumerate}
Let $\pi_{opt}$ be the $\cM$-optimal map on $(X, \omega)$. If $\cM_{\bfC}$ has dimension one less than that of $\cM$, then $\Col_{\bfC}(X, \omega)$ is connected and $\Col_{\bfC}(\pi_{opt})$ is $\cM_{\bfC}$-optimal. Moreover, if $\cM$ has rank two, then $\cM_{\bfC}$ is $h$-geminal up to marked points.
\end{cor}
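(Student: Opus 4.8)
The plan is to prove the three assertions---connectedness of $\Col_{\bfC}(X,\omega)$, $\cM_{\bfC}$-optimality of $\Col_{\bfC}(\pi_{opt})$, and (in rank two) that $\cM_{\bfC}$ is $h$-geminal up to marked points---by reducing each to results already in hand. For connectedness: in case \eqref{I:R1} the variety $\cM$ has rank one and minimal homological dimension, so Corollary \ref{C:MHD-Boundary:Connected} applies directly once we observe $\overline{\bfC}\neq(X,\omega)$ (if $\bfC$ covered the whole surface then $\cM_{\bfC}$ could not have codimension one; alternatively the degeneration would be empty). In case \eqref{I:R2}, $\cM^{pt}$ is $h$-geminal, and I would first lift the cylinder degeneration to $\cM^{pt}$: since the periodic points are $\cM$-periodic and finite in number, $\bfC$ remains a generic equivalence class (or a union of subequivalence classes) on the corresponding surface in $\cM^{pt}$, and the collapse $\Col_{\bfC}$ commutes with forgetting periodic points (using Lemma \ref{L:DefinitionCol(f)} applied to the forgetful map, which is a translation cover of degree one, or more carefully the general boundary theory). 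Then connectedness of $\Col_{\bfC}(X,\omega)^{pt}$ follows because a disconnected limit would contradict the boundary structure of $h$-geminal varieties recorded in Theorem \ref{T:geminal2} \eqref{I:geminal2:PiOptDegeneratesToPiOpt} (the optimal map degenerates to an optimal map on a connected boundary surface); forgetting points cannot reconnect or disconnect, so $\Col_{\bfC}(X,\omega)$ is connected.

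For the optimality of $\Col_{\bfC}(\pi_{opt})$: in case \eqref{I:R1}, $\pi_{opt}=\pi_{abs}$ is the optimal map by Theorem \ref{T:geminal1} \eqref{I:R1HGeminal1}, and Theorem \ref{T:geminal1} \eqref{I:R1HGeminal4} says precisely that $\Col_{\bfC}(\pi_{abs})$ is optimal for $\Col_{\bfC}(X,\omega)$; since $\cM_{\bfC}$ has rank one too (the collapse drops rank by one only if $\bfC$ is not involved with rel, but $\cM$ is rel zero here by Corollary \ref{C:FieldDefinition} forcing... actually rank one rel zero, so Theorem \ref{T:Typical} gives rank zero---wait, I need to be careful: rank one minimal homological dimension means a locus of torus covers, and collapsing a codimension-one class of cylinders on a torus cover yields another torus cover, so $\cM_{\bfC}$ is again rank one of minimal homological dimension), the map $\Col_{\bfC}(\pi_{abs})$ is the $\cM_{\bfC}$-optimal map by the same Theorem \ref{T:geminal1} applied to $\cM_{\bfC}$ together with Remark \ref{R:WhyWeCanPerturb}. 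In case \eqref{I:R2}, the key point is Corollary \ref{C:MinIsOpt}, which identifies $\pi_{opt}=\pi_{X_{min}}$, combined with Theorem \ref{T:geminal2} \eqref{I:geminal2:PiOptDegeneratesToPiOpt} applied to $\cM^{pt}$: the latter gives that $\Col_{\bfC}(\pi_{opt})$ is $\cM_{\bfC}^{pt}$-optimal on $\Col_{\bfC}(X,\omega)^{pt}$. Forgetting the periodic points and invoking Lemma \ref{L:RemovingMarkedPointsStillOptimal} (goodness survives removal of marked preimages) together with Theorem \ref{T:MinimalCover} (any good map factors through the minimal cover, and $\pi_{X_{min}}$ of the collapsed surface is good hence optimal) yields that $\Col_{\bfC}(\pi_{opt})$ is $\cM_{\bfC}$-optimal.

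For the final clause, suppose $\cM$ has rank two and is $h$-geminal up to marked points; we must show $\cM_{\bfC}$ is as well, i.e. that $(\cM_{\bfC})^{pt}$ is $h$-geminal. By Theorem \ref{T:Typical} (applied on $\cM^{pt}$), $\cM_{\bfC}^{pt}$ has rank one or two according to whether $\bfC$ is involved with rel. If rank two, $h$-geminality of $\cM^{pt}_{\bfC}$ follows from Apisa-Wright \cite[Proposition 8.1]{ApisaWrightGeminal} (which is the content of Theorem \ref{T:geminal2}): the boundary of an $h$-geminal variety of rank at least two, obtained by collapsing a generic equivalence class satisfying $\sigma_{\bfC}\in T\cM$, is again $h$-geminal, since the geminal partition of cylinders persists to the boundary via Theorem \ref{T:CylinderBoundaryTangentFormula} \eqref{I:boundary:TwistIsom} and twins remain homologous. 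If $\cM_{\bfC}^{pt}$ has rank one, then since $\Col_{\bfC}(\pi_{opt})$ is its optimal map, Theorem \ref{T:geminal1} identifies it with $\pi_{abs}$; but $h$-geminal-up-to-marked-points in rank one is automatic for loci of torus covers of minimal homological dimension, and one checks the collapsed surface lies in such a locus because twin cylinders stay twin and homologous under collapse. Finally, Lemma \ref{L:AddingPPtoGeminal} ensures that marking the (finitely many) $\cM_{\bfC}$-periodic points preserves $h$-geminality, so $\cM_{\bfC}$ is $h$-geminal up to marked points. The main obstacle I expect is the bookkeeping in the rank-two case ensuring that collapsing commutes cleanly with marking/forgetting periodic points---in particular that an $\cM$-periodic point degenerates to an $\cM_{\bfC}$-periodic point rather than colliding with a zero or becoming free---which requires care with the Mirzakhani--Wright collapse map and the classification of periodic points on quadratic doubles from Theorem \ref{T:PeriodicPointsHyp}.
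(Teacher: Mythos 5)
Your overall architecture matches the paper's: rank one is immediate from Theorem \ref{T:geminal1}, and in rank at least two one lifts to $\cM^{pt}$, applies Theorem \ref{T:geminal2} \eqref{I:geminal2:PiOptDegeneratesToPiOpt}, and descends by forgetting marked points via Lemma \ref{L:RemovingMarkedPointsStillOptimal} and Theorem \ref{T:MinimalCover}. However, two steps you assert in passing are precisely where the work lies, and as stated they are gaps.

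First, you claim that ``$\bfC$ remains a generic equivalence class $\dots$ on the corresponding surface in $\cM^{pt}$.'' This is not automatic: marking the periodic points can place free points on the boundaries of the cylinders in $\bfC$ in special positions, and the new saddle connections ending at such a point need not remain parallel to the core curve under deformation, so $\bfC^{pt}$ can fail to be generic. Since both Theorem \ref{T:geminal2} \eqref{I:geminal2:PiOptDegeneratesToPiOpt} and Corollary \ref{C:MHD-Boundary:Connected} require genericity, this must be repaired. The paper's fix is to perturb $(X,\omega)$ by an arbitrarily small \emph{purely imaginary} element of $\mathrm{Twist}(\bfC,\cM)$ (sliding free points along vertical leaves) so that $\bfC^{pt}$ becomes $\cM^{pt}$-generic; because the perturbation is imaginary and supported on $\bfC$, it does not change $\Col_{\bfC}(X,\omega)$, so nothing is lost.

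Second, Theorem \ref{T:geminal2} \eqref{I:geminal2:PiOptDegeneratesToPiOpt} requires $\dim\cM^{pt}_{\bfC^{pt}}=\dim\cM^{pt}-1$, whereas your hypothesis is only $\dim\cM_{\bfC}=\dim\cM-1$; you never verify the former. Moreover the ``obstacle'' you flag at the end --- that the forgotten points might not be $\cM_{\bfC}$-periodic --- is resolved by the same computation, not by tracking points through the collapse map. The paper's sandwich $\dim\cM-1=\dim\cM_{\bfC}\le\dim\cM^{pt}_{\bfC^{pt}}\le\dim\cM^{pt}-1=\dim\cM-1$ (the first inequality because forgetting points cannot raise dimension, the second because boundary components have strictly smaller dimension) forces equality throughout; this simultaneously supplies the dimension hypothesis for \eqref{I:geminal2:PiOptDegeneratesToPiOpt} and shows the forgotten points are periodic, whence $\cM_{\bfC}$ is $h$-geminal up to marked points. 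Finally, connectedness in case \eqref{I:R2} should come from Corollary \ref{C:MHD-Boundary:Connected} applied to $\cM^{pt}$, which has minimal homological dimension by Theorem \ref{T:geminal2} \eqref{I:geminal2:WhatIsM}; Theorem \ref{T:geminal2} \eqref{I:geminal2:PiOptDegeneratesToPiOpt} does not itself assert connectedness of the limit, so your appeal to it there does not close that step.
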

\begin{proof}
When $\cM$ has rank one this claim is immediate from Theorem \ref{T:geminal1}. Therefore, assume that $\cM$ has rank at least two and that it is $h$-geminal up to marked points. 

Let $\bfC^{pt}$ denote the cylinders corresponding to $\bfC$ on $(X, \omega)^{pt}$, which we will use to denote the surface $(X, \omega)$ with all of its periodic points marked. Assume without loss of generality that $\bfC$ is horizontal.

\begin{sublem}
There is an arbitrarily small purely imaginary $v \in \mathrm{Twist}(\bfC, \cM)$ so that after replacing $(X, \omega)$ with $(X, \omega) + v$, the cylinders in $\bfC^{pt}$ are $\cM^{pt}$-generic.
\end{sublem}
\begin{proof}
If $\cM$ is a nonhyperelliptic component of a stratum, then $\bfC = \bfC^{pt}$ (by Theorem \ref{T:PeriodicPoints-NonHyp}) and there is nothing to show. Otherwise, $\cM$ is a full locus of covers of a hyperelliptic locus (by Theorem \ref{T:geminal2} \eqref{I:geminal2:WhatIsM}). 

Let $\bfD$ (resp. $\bfD^{pt}$) be the cylinders that are the image of $\bfC$ (resp. $\bfC^{pt}$) under $\pi_{opt}$. It follows from Theorem \ref{T:PeriodicPointsHyp} that, since the cylinders in $\bfD$ are generic, the cylinders in $\bfD^{pt}$ fail to be generic if and only if there are two free points $p$ and $p'$ on the boundary of cylinders in $\bfD$ so that, letting $J$ denote the hyperelliptic involution, $p$ and $J(p')$ lie on the same horizontal leaf. It is always possible to move free points along vertical leaves to avoid this. 
\end{proof}

Notice that using the previous sublemma to replace $(X, \omega)$ with $(X, \omega) + v$ does not affect the surface $\Col_{\bfC}(X, \omega)$ since $v$ was a purely imaginary element of $\mathrm{Twist}(\bfC, \cM)$ and since, to form $\Col_{\bfC}(X, \omega)$ we vertically collapse the cylinders in $\bfC$. Therefore, assume without loss of generality that $\bfC^{pt}$ consists of $\cM^{pt}$-generic cylinders.

\begin{sublem}\label{SL:DimHypothesis}
$\cM_{\bfC}$ is $h$-geminal up to marked points. Moreover,
\[ \mathrm{rank}(\cM_{\bfC}) = \mathrm{rank}(\cM_{\bfC^{pt}}^{pt}) \qquad \text{and} \qquad \dim \cM_{\bfC^{pt}}^{pt} = \dim \cM^{pt} - 1. \]
\end{sublem}
\begin{proof}
Since $\Col_{\bfC}(X, \omega)$ is simply $\Col_{\bfC^{pt}}\left( (X, \omega)^{pt} \right)$ with some marked points forgotten, we have that $\dim \cM^{pt}_{\bfC^{pt}} \geq \dim \cM_{\bfC}$ and $\mathrm{rank}(\cM_{\bfC}) = \mathrm{rank}(\cM_{\bfC^{pt}}^{pt})$. By definition of periodic points, and since the dimension always decreases when passing to the boundary, we have the following,
\[ \dim \cM - 1 = \dim \cM_{\bfC} \leq \dim \cM^{pt}_{\bfC^{pt}} \leq \dim \cM^{pt} - 1 = \dim \cM -1.  \]
This shows that $\dim \cM^{pt}_{\bfC^{pt}} = \dim \cM_{\bfC}$ and hence that the marked points forgotten in the passage from $\Col_{\bfC^{pt}}\left( (X, \omega)^{pt} \right)$ to $\Col_{\bfC}(X, \omega)$ were periodic points. In particular, $\cM_{\bfC}$ is $h$-geminal up to marked points since $\cM^{pt}_{\bfC^{pt}}$ is $h$-geminal by Apisa-Wright \cite[Lemma 4.4]{ApisaWrightGeminal}.
\end{proof}

\begin{sublem}\label{SL:MHD+Degen}
$\cM_{\bfC}$ and $\cM_{\bfC^{pt}}^{pt}$ consist of connected surfaces, have minimal homological dimension, and $\Col_{\bfC^{pt}}(\pi_{opt})$ is $\cM_{\bfC^{pt}}^{pt}$-optimal.
\end{sublem}
\begin{proof}
Since $\cM^{pt}$ is $h$-geminal by assumption, it follows that it has minimal homological dimension (this follows from Theorem \ref{T:geminal2} \eqref{I:geminal2:WhatIsM}). Therefore, $\Col_{\bfC^{pt}}((X, \omega)^{pt})$ is connected and  $\cM_{\bfC^{pt}}^{pt}$ has minimal homological dimension (by Corollary \ref{C:MHD-Boundary:Connected}). The same holds for $\Col_{\bfC}(X, \omega)$ and $\cM_{\bfC}$ respectively. By Theorem \ref{T:geminal2} \eqref{I:geminal2:PiOptDegeneratesToPiOpt}, $\Col_{\bfC^{pt}}(\pi_{opt})$ is $\cM^{pt}_{\bfC^{pt}}$-optimal (note that the condition that $\dim \cM_{\bfC^{pt}}^{pt} = \dim \cM^{pt} - 1$ is satisfied by Sublemma \ref{SL:DimHypothesis}). Note that in this proof the application of Theorem \ref{T:geminal2} and Corollary \ref{C:MHD-Boundary:Connected} required that $\bfC^{pt}$ consisted of generic cylinders.
\end{proof}

It remains to see that $\Col_{\bfC}(\pi_{opt})$ is $\cM_{\bfC}$-optimal. Suppose first that $\cM_{\bfC}$ has rank one. By Sublemma \ref{SL:MHD+Degen} and Theorem \ref{T:geminal1}, $\Col_{\bfC^{pt}}(\pi_{opt}) = \pi_{abs}$. Therefore, $\Col_{\bfC}(\pi_{opt}) = \pi_{abs}$, which is the $\cM_{\bfC}$-optimal map (by Sublemma \ref{SL:MHD+Degen} and Theorem \ref{T:geminal1}).

Suppose finally that $\cM_{\bfC}$ has rank two. By Sublemma \ref{SL:MHD+Degen} and Theorem \ref{T:geminal2} \eqref{I:geminal2:PiOpt}, when $\Col_{\bfC^{pt}}(\pi_{opt})$ is deformed to a nearby surface with dense orbit it is the minimal cover. The same must hold for $\Col_{\bfC}(\pi_{opt})$ and hence $\Col_{\bfC}(\pi_{opt})$ must be $\cM_{\bfC}$-optimal (by Corollary \ref{C:MinIsOpt} and the fact that we have established that $\cM_{\bfC}$ is $h$-geminal up to marked points and of rank at least two). 
\end{proof}

\subsection{General degenerations of optimal maps}

At this point we have shown that, if $\cM$ is $h$-geminal up to marked points, then the surfaces in $\cM$ possess $\cM$-optimal maps that are well-behaved with respect to cylinder degenerations performed using the standard shear. We now wish to study what happens to these optimal maps under more general cylinder degenerations. 

Throughout this subsection we will assume that $\bfC$ is an equivalence class of generic cylinders on a surface $(X, \omega)$ in an invariant subvariety $\cM$ and that $v \in \mathrm{Twist}(\bfC, \cM)$ defines a cylinder degeneration.

\begin{defn}
Say that $v$ is \emph{orthogonal} if, after rotating $\bfC$ so that it is horizontal, $v$ is purely imaginary.  
\end{defn}

Recall that $\bfC_v$ was defined in Definition \ref{D:Collapse}.

\begin{defn}
Suppose that $\pi: (X, \omega) \ra (Y, \eta)$ is a good map and that $v$ is orthogonal. By definition, for each cylinder $C \in \bfC$ there is a cylinder $D \subseteq (Y, \eta)$ whose full preimage under $\pi$ is $C$. If $v = \sum_{C \in \bfC} a_C \gamma_C^*$ where $a_C \in \mathbb{C}$ and $\gamma_C$ denotes the core curve of $C$, then let $\pi(v) := \sum_{C \in \bfC} a_C \gamma_{\pi(C)}^*$. It is obvious that $(X, \omega) + tv$ remains a translation cover of $(Y, \eta) + t\pi(v)$. Both of these paths converge to surfaces $(X', \omega')$ and $(Y', \eta')$ by Apisa-Wright \cite[Lemma 4.9]{ApisaWrightHighRank} as $t$ approaches $t_v$ (see Definition \ref{D:Collapse} for the definition of $t_v$). Notice that setting $v' := \sum_{C \notin \bfC_v} a_C \gamma_C^*$ (and using that $v$ is orthogonal) we have
\[ (X', \omega') = \Col_{\bfC_v}( (X, \omega) + v' ) \quad \text{and} \quad (Y', \omega') = \Col_{\pi(\bfC)_{\pi(v)}}( (Y, \eta) + v' ). \]
Since passing from $(X, \omega)$ to $(X, \omega) + v'$ simply involves applying an invertible linear map to cylinders in $\bfC - \bfC_v$, it is clear that $\pi$ induces a map 
\[ \pi': (X, \omega) + v' \ra (Y, \eta) + \pi(v'). \]
Define a map 
\[ \Col_v(\pi): \Col_v(X, \omega) \ra \Col_{\pi(v)}(Y, \eta). \]
by setting $\Col_v(\pi) := \Col_{\bfC_v}(\pi')$.
%
\end{defn}

\begin{lem}\label{L:TwistWithCollapse}
Suppose that $v$ is orthogonal and that $\overline{\bfC} \ne (X, \omega)$. If $\Col_v(X, \omega)$ is connected, then there is an arbitrarily small element $w \in \mathrm{Twist}(\bfC - \bfC_v, \cM)$ so that $\Col_v(X, \omega)$ and $\Col_{\bfC}\left( (X, \omega) + w \right)$ belong to the same component of the boundary.
\end{lem}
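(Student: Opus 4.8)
The plan is to compare the two degenerations $\Col_v(X,\omega)$ and $\Col_{\bfC}((X,\omega)+w)$ by realizing both as limits of paths that collapse exactly the same saddle connections, and then invoke the boundary tangent formula (Theorem~\ref{T:BoundaryTangent2}) together with the fact that a component of the boundary is determined by the subspace of vanishing cycles. First I would set up notation: write $v=\sum_{C\in\bfC}a_C\gamma_C^*$ with $v$ orthogonal, so $\bfC_v$ is the sub-collection whose heights go to zero at time $t_v$. The surface $\Col_v(X,\omega)$ is obtained by collapsing the cylinders in $\bfC_v$ vertically, after first having applied the linear deformation $v'=\sum_{C\notin\bfC_v}a_C\gamma_C^*$ to the remaining cylinders of $\bfC$ (this is the decomposition already recorded just before the statement, $(X',\omega')=\Col_{\bfC_v}((X,\omega)+v')$). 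The point is that the component $\cM_v$ of the boundary depends only on the collection $\bfC_v$ of cylinders being collapsed — more precisely, on the span of the vanishing cycles — and not on the auxiliary shear/stretch $v'$ performed first, nor on the moduli of the collapsed cylinders, only on which saddle connections shrink to zero length.

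The key steps, in order: (1) Identify the vanishing cycles. Collapsing $\bfC_v$ along the orthogonal direction makes the heights of the cylinders in $\bfC_v$ go to zero; the vanishing cycles $V$ are spanned by the cross-curves of the cylinders in $\bfC_v$ (the relative cycles crossing these cylinders that become trivial in the limit), since the circumference directions survive (they are the core curves $\gamma_C$, which persist). This uses the general degeneration theory of Apisa-Wright \cite[Section 4]{ApisaWrightHighRank} invoked in Definition~\ref{D:Collapse}. (2) Produce $w$. Take $w:=\sum_{C\in\bfC-\bfC_v}b_C\gamma_C^*$ to be an arbitrarily small element of $\mathrm{Twist}(\bfC-\bfC_v,\cM)$ — any such element, e.g.\ a small real multiple of $\sigma_{\bfC-\bfC_v}$, which lies in $T_{(X,\omega)}\cM$ by the cylinder deformation theorem applied to the sub-equivalence structure; the only role of $w$ is to perturb the cylinders of $\bfC-\bfC_v$ so that, after the perturbation, the standard dilation $-i\sigma_{\bfC}$ collapses precisely the same family $\bfC_v$ (and no more). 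Concretely, $w$ should be chosen so that on $(X,\omega)+w$ the cylinders in $\bfC-\bfC_v$ are ``taller'' relative to those in $\bfC_v$, guaranteeing that the first cylinders to collapse under $\Col_{\bfC}$ are exactly those of $\bfC_v$; since $\overline\bfC\ne(X,\omega)$ the resulting limit is a surface in the boundary rather than a degenerate object, and by hypothesis $\Col_v(X,\omega)$ is connected so Corollary~\ref{C:MHD-Boundary:Connected} / \cite[Lemma 4.9]{ApisaWrightHighRank} apply. (3) Compare. Both $\Col_v(X,\omega)$ and $\Col_{\bfC}((X,\omega)+w)$ are limits of paths in $\cM$ whose sets of vanishing cycles have the same span — namely the cross-curves of $\bfC_v$ — because in both cases precisely the cylinders of $\bfC_v$ are the ones whose heights reach zero. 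By Theorem~\ref{T:BoundaryTangent2}, the boundary tangent space $T\cM'$ at each limit is $T_{(X,\omega)}\cM\cap\mathrm{Ann}(V)$ for the same $V$, so the two limits lie in the same component $\cM'$ of the boundary. (A cleaner phrasing: the limit of $(X,\omega)+tv$ and the limit of $((X,\omega)+w)-ti\sigma_{\bfC}$ differ only by the residual deformation coming from $\bfC-\bfC_v$, which is a deformation staying in a fixed boundary component; this is exactly the content of the displayed formulas preceding the lemma, once one checks the collapsed families coincide.)

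The main obstacle I expect is step (2): verifying that one can choose $w\in\mathrm{Twist}(\bfC-\bfC_v,\cM)$ arbitrarily small so that the standard dilation path $((X,\omega)+w)-ti\sigma_{\bfC}$ does in fact collapse exactly $\bfC_v$ first and converges (rather than having some cylinder of $\bfC-\bfC_v$ collapse prematurely, or the path failing to diverge). This is a matter of arranging the heights so that, along the dilation, the cylinders in $\bfC_v$ reach zero height strictly before any cylinder in $\bfC-\bfC_v$ does — which follows because $v$ being orthogonal and defining a degeneration that collapses exactly $\bfC_v$ means the ratios of heights can be adjusted within $\mathrm{Twist}(\bfC,\cM)$, and the twist space decomposes (Lemma~\ref{L:TwistSpaceSumDecomposition}, or \cite[Proposition 4.20]{ApisaWrightHighRank}) as a sum over sub-equivalence classes, so one has enough freedom in the $\bfC-\bfC_v$ directions. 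Once the collapsed families are matched, the identification of boundary components is essentially formal from Theorem~\ref{T:BoundaryTangent2}.
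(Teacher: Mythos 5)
There is a genuine gap, and it stems from a misreading of what $\Col_{\bfC}$ does. By Definition \ref{D:Collapse}, $\Col_{\bfC}\left( (X,\omega)+w \right)$ is the limit at $t=1$ of the standard dilation $-ti\sigma_{\bfC}$, under which \emph{every} cylinder of $\bfC$ has its height scaled by $(1-t)$ and hence reaches height zero at $t=1$. So your claim that ``in both cases precisely the cylinders of $\bfC_v$ are the ones whose heights reach zero'' is false: $\Col_{\bfC}$ collapses all of $\bfC$, while $\Col_v$ collapses only $\bfC_v$. Consequently the proposed mechanism for $w$ --- making the cylinders of $\bfC-\bfC_v$ ``taller'' so that they collapse later --- cannot work: under the standard dilation all cylinders reach zero height simultaneously regardless of their initial heights, and in any case the real multiple of $\sigma_{\bfC-\bfC_v}$ you suggest is a \emph{shear}, which does not change heights at all. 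The real content of the lemma is that the extra vertical collapse of the surviving cylinders $\Col_v(\bfC)$ down to height zero need not push the surface into a deeper boundary stratum: it stays inside $\cM_v$ precisely when $\overline{\Col_v(\bfC)}$ contains no vertical saddle connection. The correct role of $w$ is therefore to be a small shear of $\bfC-\bfC_v$ chosen so that, after applying it, the only vertical saddle connections in $\overline{\bfC}$ are those in $\overline{\bfC_v}$; then
\[ \Col_{\bfC}\left( (X,\omega)+w \right) \;=\; \Col_{\Col_v(\bfC)}\,\Col_v\left( (X,\omega)+w \right), \]
and the outer collapse is a deformation within $\cM_v$ rather than a passage to its boundary. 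Your proposal never identifies this as the issue, so the key step is missing.

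Two further problems. First, you need to justify that $\mathrm{Twist}(\bfC-\bfC_v,\cM)$ contains a suitable nonzero element at all: the cylinder deformation theorem gives $\sigma_{\bfC}\in T_{(X,\omega)}\cM$, not $\sigma_{\bfC-\bfC_v}$, and the appeal to ``the sub-equivalence structure'' is not available for a general invariant subvariety. The paper obtains $w$ by applying the cylinder deformation theorem to the equivalence class $\Col_v(\bfC)$ on the \emph{boundary} surface $\Col_v(X,\omega)$ --- this is exactly where the connectedness hypothesis is used --- and then transporting $\sigma_{\Col_v(\bfC)}$ back to $\mathrm{Twist}(\bfC-\bfC_v,\cM)$ via Theorem \ref{T:CylinderBoundaryTangentFormula}\,\eqref{I:boundary:TwistIsom}. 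Second, even granting your identification of vanishing cycles, the inference ``the two limits have the same $T\cM\cap\mathrm{Ann}(V)$, hence lie in the same boundary component'' is not valid: distinct components can have equal tangent subspaces, and Theorem \ref{T:BoundaryTangent2} identifies the tangent space of a component a given sequence converges to, it does not distinguish components. The membership in a common component has to be exhibited by an explicit path of deformations inside $\cM_v$, which is what the shear-then-collapse argument provides.
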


The connectedness is solely used to apply the cylinder deformation theorem to surfaces in $\cM_v$.

\begin{proof}
Without loss of generality, suppose that $\bfC$ is horizontal and $v$ is purely imaginary. If $\bfC_v = \bfC$ there is nothing to show, so suppose that this is not the case. Since $\Col_v(\bfC)$ remains an equivalence class of cylinders, the standard shear is an element of $T_{\Col_v(X, \omega)} \cM_v$ by the cylinder deformation theorem. Let $w$ be the corresponding element in $\mathrm{Twist}(\bfC - \bfC_v, \cM)$. For almost every real $a$, $(X, \omega) + aw$ contains no vertical saddle connections in $\overline{\bfC}$ except for those in $\overline{\bfC_v}$. Notice that $\Col_v(X, \omega)$ is in the same component of the boundary as
\[ \Col_v(X, \omega) + a\sigma_{\Col_v(\bfC)} = \Col_v((X, \omega) + aw). \]
Since $\Col_v((X, \omega) + aw)$ contains no vertical saddle connections in $\overline{\Col_v(\bfC)}$ (by our choice of $a$), it belongs to the same component of the boundary as 
\[ \Col_{\Col_v(\bfC)} \Col_v((X, \omega) + aw) = \Col_{\bfC}\left( (X, \omega) + aw \right). \]
This shows that $\Col_v(X, \omega)$ and $\Col_{\bfC}\left( (X, \omega) + aw \right)$ belong to the same component of the boundary, as desired.
%
\end{proof}

\begin{lem}\label{L:PiOptWithTwist}
Suppose that $v$ is orthogonal and that $\overline{\bfC} \ne (X, \omega)$. Suppose that $\cM_{\bfC} = \cM_v$ and that $\Col_v(X, \omega)$ is connected. Suppose finally that $\pi$ is a translation cover with domain $(X, \omega)$. If $\Col_{\bfC}(\pi)$ is $\cM_{\bfC}$-optimal, then so is $\Col_v(\pi)$.
\end{lem}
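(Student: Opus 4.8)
The plan is to transfer optimality of $\Col_{\bfC}(\pi)$ to $\Col_v(\pi)$ by using Lemma \ref{L:TwistWithCollapse} to realize both surfaces inside the same component $\cM_{\bfC} = \cM_v$ of the boundary, and then checking that the two maps differ only by a deformation that does not affect which cylinders are full preimages. First I would invoke Lemma \ref{L:TwistWithCollapse} to produce an arbitrarily small $w \in \mathrm{Twist}(\bfC - \bfC_v, \cM)$ so that $\Col_v(X,\omega)$ and $\Col_{\bfC}((X,\omega) + w)$ lie in the same component of the boundary; by the hypothesis $\cM_{\bfC} = \cM_v$ this common component is exactly $\cM_{\bfC}$. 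Since $w$ is a twist supported away from $\bfC_v$, passing from $(X,\omega)$ to $(X,\omega) + w$ is just an invertible linear reparametrization of the (non-collapsing) cylinders of $\bfC$, so $\pi$ deforms canonically to a translation cover $\pi_w$ on $(X,\omega) + w$, and $\Col_{\bfC}(\pi_w)$ is the same map data as $\Col_{\bfC}(\pi)$ up to this harmless linear change; in particular $\Col_{\bfC}(\pi_w)$ is still $\cM_{\bfC}$-optimal.

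Next I would identify $\Col_v(\pi)$ with $\Col_{\bfC}(\pi_w)$ up to the $\mathrm{GL}(2,\mathbb{R})$-action, or more precisely up to a further standard-shear deformation inside $\cM_{\bfC}$. The key point, already implicit in the construction in Lemma \ref{L:TwistWithCollapse}, is that $\Col_v(X,\omega) + a\,\sigma_{\Col_v(\bfC)} = \Col_v((X,\omega)+aw)$ and, after the shear removes vertical saddle connections outside $\overline{\bfC_v}$, one has $\Col_{\Col_v(\bfC)}\Col_v((X,\omega)+aw) = \Col_{\bfC}((X,\omega)+aw)$. I would run exactly this chain of identifications at the level of the covers: $\pi$ induces compatible maps on each surface in the chain (each step is either a standard shear, which commutes with a good cover by the functoriality of $\Col$ in Lemma \ref{L:DefinitionCol(f)}, or the passage $(X,\omega) \to (X,\omega)+aw$, which is a linear reparametrization), so $\Col_v(\pi)$ and $\Col_{\bfC}(\pi_w)$ agree up to composing with a standard-shear deformation that stays inside $\cM_{\bfC}$. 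Since being an $\cM_{\bfC}$-optimal map is preserved under such deformations (optimality is a property of the cover that persists along paths in $\cM_{\bfC}$ on which all relevant cylinders persist, by the definition of $\cM$-optimal and Lemma \ref{L:DefinitionCol(f)}), $\Col_v(\pi)$ is $\cM_{\bfC}$-optimal; and $\cM_{\bfC} = \cM_v$, so $\Col_v(\pi)$ is $\cM_v$-optimal as claimed.

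The main obstacle I anticipate is bookkeeping the identification of covers through the chain of collapses and shears in Lemma \ref{L:TwistWithCollapse} carefully enough to be sure that "good" and "$\cM_{\bfC}$-generic" are preserved at each stage — one needs that the preimage condition ($f^{-1}(f(\overline{\bfC})) = \overline{\bfC}$) in Lemma \ref{L:DefinitionCol(f)} holds for the relevant cylinder collections at each step, which uses that $\bfC$ is a full preimage under $\pi$ (since $\pi$ is good and the cylinders in $\bfC$ are generic) and that this property survives the linear reparametrization by $w$. A secondary subtlety is checking that the $\cM_{\bfC}$-optimality, and not merely goodness, transfers: here I would lean on the fact that $\Col_{\bfC}(\pi_w)$ being $\cM_{\bfC}$-optimal together with the uniqueness built into the definition of optimal maps forces $\Col_v(\pi)$, which is a good $\cM_{\bfC}$-generic map with the same domain and whose target receives all good maps as factors, to coincide with it. Everything else is routine manipulation of the collapse notation from Definition \ref{D:Collapse} and the properties of orthogonal degenerations.
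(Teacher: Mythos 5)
Your proposal is correct and follows essentially the same route as the paper: connect $\Col_v(X,\omega)$ to $\Col_{\bfC}(X,\omega)$ by the standard dilation path $\Col_v(X,\omega) - it\sigma_{\Col_v(\bfC)}$ inside $\cM_{\bfC}=\cM_v$ (this is where connectedness enters, via the cylinder deformation theorem), transport the $\cM_{\bfC}$-optimal map $\Col_{\bfC}(\pi)$ back along it to a cover $\pi'$ on $\Col_v(X,\omega)$, and identify $\pi'$ with $\Col_v(\pi)$. The only differences are cosmetic: given the hypothesis $\cM_{\bfC}=\cM_v$ the detour through Lemma \ref{L:TwistWithCollapse} and the perturbation $w$ is unnecessary, and for the final identification the paper avoids your appeal to uniqueness of optimal maps (which presupposes $\Col_v(\pi)$ is already known to be $\cM_{\bfC}$-good) by simply noting that $\pi'$ and $\Col_v(\pi)$ are holomorphic maps with the same fibers on the open set $\Col_v(X,\omega)-\Col_v(\bfC)$, hence coincide.
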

\begin{proof}
Suppose without loss of generality that $\bfC$ is horizontal. If $\bfC_v = \bfC$ there is nothing to prove so suppose that this is not the case. Let $\gamma(t) = \Col_v(X, \omega) - it\sigma_{\Col_v(\bfC)}$, which is a path in $\cM_v$ (by the cylinder deformation theorem) so that $\gamma(1) = \Col_{\bfC}(X, \omega)$. By assumption, $\Col_{\bfC}(\pi)$ is $\cM_{\bfC}$-optimal, so it is possible to deform it along the path so that it becomes a translation cover $\pi'$ on $\Col_v(X, \omega)$. Since $\pi'$ and $\Col_v(\pi)$ are holomorphic maps that have the same fibers on the open set $\Col_v(X, \omega) - \Col_v(\bfC)$, they must coincide and so $\Col_v(\pi)$ is $\cM$-optimal.  
\end{proof}

%
%
%

\section{Proof of Theorem \ref{T:main}}

This section is devoted to the proof of Theorem \ref{T:main}. We proceed by induction using Proposition \ref{P:BaseBase} as the base case. The following assumption is the inductive hypothesis.

\begin{ass}\label{A:IH}
Suppose that $\cM$ is an invariant subvariety of minimal homological dimension that is rank at least two and not rank two rel zero. Any smaller dimensional invariant subvariety of minimal homological dimension and rank at least two is $h$-geminal up to marked points. 
\end{ass}

\noindent Our goal is to show that $\cM$ is $h$-geminal up to marked points.

\begin{lem}\label{L:MinIsOptimal}
If $(X, \omega)$ has dense orbit in $\cM$, then $\pi_{X_{min}}$ is $\cM$-optimal. 

Suppose additionally that $\bfC$ is a generic equivalence class of cylinders with $v \in \mathrm{Twist}(\bfC, \cM)$ defining a typical orthogonal cylinder degeneration. Then $\cM_v$ has minimal homological dimension, $\Col_v(X, \omega)$ is connected, and $\Col_v(\pi_{X_{min}})$ is $\cM_v$-optimal. 
\end{lem}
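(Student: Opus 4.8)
The plan is to prove the two assertions in order, deducing the degeneration statement from the first together with Assumption~\ref{A:IH} and the results of the preceding two sections; the governing principle is that by Theorem~\ref{T:MHDImpliesHomology} parallel cylinders in $\cM$ have homologous core curves. Since $\cM$ has rank at least two, $(X,\omega)$ is not a torus cover, so $\pi_{X_{min}}$ exists (Theorem~\ref{T:MinimalCover}), is $\cM$-generic (Remark~\ref{R:WhyWeCanPerturb}), and has every translation cover of $(X,\omega)$ — in particular every $\cM$-good map — as a factor (Theorem~\ref{T:MinimalCover}). Thus $\cM$-optimality of $\pi_{X_{min}}$ reduces to showing that $\pi_{X_{min}}$ and all its deformations within $\cM$ are good, and by a routine perturbation argument (the number of components of the preimage of a cylinder is locally constant) this reduces in turn to: every generic cylinder on a surface with dense orbit in $\cM$ is the full preimage of its image under $\pi_{X_{min}}$.

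\textbf{Part 1.} I would first observe that a generic equivalence class $\bfC$ is automatically $\pi_{X_{min}}$-saturated, i.e.\ $\pi_{X_{min}}^{-1}(\pi_{X_{min}}(\overline{\bfC})) = \overline{\bfC}$: if $\pi_{X_{min}}(C') = \pi_{X_{min}}(C)$ with $C \in \bfC$, then $C$ and $C'$ are components of the preimage of a single cylinder of $(X_{min},\omega_{min})$ that persists on nearby surfaces, so by $\cM$-genericity of $\pi_{X_{min}}$ they stay parallel near $(X,\omega)$ in $\cM$, hence are $\cM$-equivalent and lie in $\bfC$. Now fix a generic cylinder $C$ inside a generic equivalence class $\bfC$. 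Since $\cM$ has rank at least two, a cylindrically stable surface has at least $\rank(\cM)\geq 2$ horizontal equivalence classes (Lemma~\ref{L:Stable}), so $\overline{\bfC}$ does not fill; by Smillie--Weiss \cite[Cor.~6]{SW2} there is a cylinder disjoint from $\overline{\bfC}$, and its equivalence class $\bfD$ is disjoint from $\bfC$ by the cylinder proportion theorem \cite[Prop.~3.2]{NW}. Take $\bfD$ generic, pick a typical orthogonal $w \in \mathrm{Twist}(\bfD,\cM)$ (Theorem~\ref{T:Typical}, Remark~\ref{R:TypicalOrthogonalConstruction}), and collapse: by Corollary~\ref{C:MHD-Boundary:Connected} and Theorem~\ref{T:Typical}, $\Col_w(X,\omega)$ is connected, $\cM_w$ has minimal homological dimension, and $\dim \cM_w = \dim \cM - 1$, while Lemma~\ref{L:DefinitionCol(f)}, applied to the collapsing subequivalence class (which is $\pi_{X_{min}}$-saturated and does not fill), produces a translation cover $\Col_w(\pi_{X_{min}})$ of $\Col_w(X,\omega)$ of the same degree as $\pi_{X_{min}}$. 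Applying the induction: if $\rank(\cM_w) \geq 2$ then $\cM_w$ is $h$-geminal up to marked points (Assumption~\ref{A:IH}), so the minimal cover of $\Col_w(X,\omega)$ is good (Corollary~\ref{C:MinIsOpt}, Lemma~\ref{L:RemovingMarkedPointsStillOptimal}); if $\rank(\cM_w)=1$ then $\Col_w(X,\omega)$ is a torus cover whose minimal torus cover is good (Theorem~\ref{T:geminal1}). Either way $\Col_w(X,\omega)$ has a good cover of which every translation cover is a factor, so $\Col_w(\pi_{X_{min}})$ is a factor of it and is therefore good. Since $C$ and every cylinder of $\bfC$ are disjoint from $\bfD$, they persist to distinct cylinders of $\Col_w(X,\omega)$ sharing an image under $\Col_w(\pi_{X_{min}})$ whenever their $\pi_{X_{min}}$-images coincide; goodness of $\Col_w(\pi_{X_{min}})$ forces $\pi_{X_{min}}^{-1}(\pi_{X_{min}}(C)) = C$, completing Part~1.

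\textbf{Part 2.} Connectedness of $\Col_v(X,\omega)$ and minimality of the homological dimension of $\cM_v$ are immediate from Corollary~\ref{C:MHD-Boundary:Connected} (note $\overline{\bfC} \neq (X,\omega)$ as above), and $\Col_v(\pi_{X_{min}})$ is now defined since $\pi_{X_{min}}$ is good. For its $\cM_v$-optimality: by Theorem~\ref{T:Typical} the degeneration has codimension one, so after possibly shearing cylinders outside the collapsing subclass we may reduce, via Lemma~\ref{L:PiOptWithTwist}, to showing that the standard collapse $\Col_{\bfC}(\pi_{X_{min}})$ is $\cM_{\bfC}$-optimal, with $\cM_{\bfC} = \cM_v$. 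Since $\cM_{\bfC}$ has minimal homological dimension, dimension one less than $\cM$, and rank one or at least two, Assumption~\ref{A:IH} with Corollary~\ref{C:MinIsOpt} (rank at least two) or Theorem~\ref{T:geminal1} (rank one) provides an $\cM_{\bfC}$-optimal map $\pi^{\cM_{\bfC}}_{opt}$, which on a dense-orbit surface is the minimal cover and is good. As in Part~1, $\Col_{\bfC}(\pi_{X_{min}})$ is a factor of $\pi^{\cM_{\bfC}}_{opt}$ and hence good; being also $\cM_{\bfC}$-generic with good deformations, it is $\cM_{\bfC}$-good, hence again a factor of $\pi^{\cM_{\bfC}}_{opt}$. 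For the converse inclusion — forcing the two to coincide — I would show that every $\cM_{\bfC}$-good map on $\Col_{\bfC}(X,\omega)$ lifts through the degeneration (using that $\bfC$ persists and stays generic, by Theorem~\ref{T:Typical}) to an $\cM$-good map on $(X,\omega)$, which is a factor of $\pi_{X_{min}}$ by minimality and collapses to a factor of $\Col_{\bfC}(\pi_{X_{min}})$.

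\textbf{Main obstacle.} The delicate point is this last step: identifying $\Col_{\bfC}(\pi_{X_{min}})$ with the $\cM_{\bfC}$-optimal map rather than merely exhibiting it as a factor — equivalently, checking that collapsing a cylinder class cannot enlarge the optimal covering structure. Constructing the required lift of $\cM_{\bfC}$-good maps back to $(X,\omega)$, compatibly with the degeneration, is where the structural consequences of minimal homological dimension (that the collapsed class stays generic and that no new equivalence classes appear, via the results on cylinder rigid subvarieties and Corollary~\ref{C:MHD-Boundary:Connected}) must be used with care; and because Theorem~\ref{T:Typical} allows the rank to drop, the rank-one and rank-at-least-two branches of the induction have to be carried in parallel throughout.
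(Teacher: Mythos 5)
Your overall strategy (collapse a cylinder class, invoke Assumption~\ref{A:IH} on the boundary, and transfer goodness back up) is in the right spirit, but it diverges from the paper's argument at the crucial point and the divergence opens a genuine gap. In Part~1 you collapse a single equivalence class $\bfD$ disjoint from $\bfC$ and assert that ``either way $\Col_w(X,\omega)$ has a good cover of which every translation cover is a factor,'' so that $\Col_w(\pi_{X_{min}})$ is automatically good. This is false when $\rank(\cM_w)=1$. Theorem~\ref{T:geminal1} says only that $\pi_{abs}$ is \emph{optimal}, i.e.\ every \emph{good} map factors through it; it does not say every translation cover factors through it, and for torus covers this stronger statement is simply wrong (Theorem~\ref{T:MinimalCover} explicitly excludes torus covers, and a translation cover onto a higher-genus surface need not factor through the minimal torus cover). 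So in the rank-one case you cannot conclude that $\Col_w(\pi_{X_{min}})$ is good without already knowing it is good --- the argument is circular exactly where you need it. Nor can you always dodge this by choosing $\bfD$ better: when $\rank(\cM)=2$ the only way to keep the rank from dropping is to collapse a class involved with rel, and such a class need not exist disjoint from $\bfC$; the class involved with rel may be $\bfC$ itself, in which case the cylinders you are studying do not all persist under the collapse and your ``persist to distinct cylinders downstairs'' step is unavailable.

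This is precisely why the paper's proof is structured around \emph{two} degenerations and the diamond lemma rather than one. It collapses $\bfC_1$ and a second horizontal class $\bfC_2$ (chosen to be involved with rel when $\bfC_1$ is not, guaranteeing that at least one boundary component has rank at least two), obtains optimal maps $\pi_1,\pi_2$ on both boundaries from the inductive hypothesis and Theorem~\ref{T:geminal1}, checks compatibility on the double collapse via Corollary~\ref{C:GeneralOptDegeneratesToOpt}, and then uses the diamond lemma to \emph{construct} a cover $\pi$ upstairs with $\Col_{\bfC_i}(\pi)=\pi_i$; the identification $\pi=\pi_{X_{min}}$ is a degree comparison carried out on the rank-$\geq 2$ boundary, where Corollary~\ref{C:MinIsOpt} identifies the optimal map with the minimal cover. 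This construction also disposes of what you flag as your ``main obstacle'' in Part~2: you never need to lift $\cM_{\bfC}$-good maps back through the degeneration, because optimality of $\Col_{\bfC_1}(\pi_{X_{min}})=\pi_1$ is handed to you by the induction, and Lemma~\ref{L:PiOptWithTwist} then converts the standard collapse into the typical orthogonal one. As written, your Part~2 leaves the key identification as an unproved sketch, and your Part~1 fails on the rank-one branch, so the proposal does not yet constitute a proof.
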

\begin{proof}
Throughout this proof it will be important to keep Remark \ref{R:WhyWeCanPerturb} in mind.

Let $\bfC_1$ be an equivalence class of cylinders on $(X, \omega)$. After perhaps perturbing we may assume that the cylinders in $\bfC_1$ are horizontal and form a generic equivalence class. After perturbing again (using Corollary \ref{C:FieldDefinition} and Lemma \ref{L:Stable} \eqref{I:PerturbToStableQ}), suppose that $(X, \omega)$ is cylindrically stable. 

If $\bfC_1$ is involved with rel (see Definition \ref{D:InvolvedWithRel}) or if the rank of $\cM$ is at least three, then let $\bfC_2$ be any other equivalence class of horizontal cylinders. If $\cM$ has rank two and $\bfC_1$ is not involved with rel then let $\bfC_2$ be any equivalence class of horizontal cylinders involved with rel (such an equivalence class of cylinders exists since, when $\cM$ has rank two, it has positive rel, by Assumption \ref{A:IH}, and so at least one horizontal equivalence class of cylinders is involved with rel by definition of cylindrical stability). Perturb $(X, \omega)$ once more so that $\bfC_1$ and $\bfC_2$ become generic equivalence classes and so $(X, \omega)$ has dense orbit in $\cM$. 

For $i \in \{1, 2\}$, there is a $v_i \in \mathrm{Twist}(\bfC_i, \cM)$ that defines a typical cylinder degeneration (by Theorem \ref{T:Typical}). Without loss of generality, after perhaps applying the standard shear to $\bfC_1$ and $\bfC_2$, we may suppose that $v_1$ and $v_2$ are typical orthogonal degenerations (see Remark \ref{R:TypicalOrthogonalConstruction}). By Lemma \ref{L:TwistWithCollapse}, we may suppose, again after perhaps perturbing, that $\cM_{v_i} = \cM_{\bfC_i}$. Notice that the following hold: 
\begin{enumerate}
    \item\label{I:MHD} $\cM_{v_i}$ has minimal homological dimension and consists of connected surfaces by Corollary \ref{C:MHD-Boundary:Connected}.
    \item\label{I:Dim} $\cM_{v_i}$ has codimension one and $\mathrm{rank}(\cM_{v_i}) = \mathrm{rank}(\cM)$ if $\bfC_i$ is involved with rel and $\mathrm{rank}(\cM_{v_i}) = \mathrm{rank}(\cM)-1$ otherwise, by Theorem \ref{T:Typical}.
    \item\label{I:GeminalUpToMP} If $\cM_{v_i}$ has rank at least two, then it is $h$-geminal up to marked points and any surface $(X, \omega)$ with dense orbit in $\cM_{v_i}$ has the property that $\pi_{X_{min}}$ is $\cM_{v_i}$-optimal (by \eqref{I:MHD}, Assumption \ref{A:IH}, and Corollary \ref{C:MinIsOpt}). Note that $\cM_{v_i}$ has rank at least two when  $\cM$ has rank at least three and also when $\bfC_i$ is involved with rel by \eqref{I:Dim}. Our choice of $\bfC_2$ implies that this occurs for $\cM_{v_1}$ or $\cM_{v_2}$ and possibly for both.
    \item\label{I:OptimalMaps} On $\Col_{\bfC_i}(X, \omega)$ there is an $\cM_{\bfC_i}$-optimal map $\pi_i$ (by \eqref{I:MHD} and Theorem \ref{T:geminal1} when $\cM_{v_i}$ has rank one and by \eqref{I:GeminalUpToMP} when $\cM_{v_i}$ has rank at least two).
    \item\label{I:DegeneratingOptimal} Since $\Col_{\bfC_{i+1}}(\bfC_i)$ remains a collection of generic equivalent cylinder with $\cM_{\bfC_1, \bfC_2}$ having dimension one less than $\cM_{\bfC_i}$ (by Theorem \ref{T:Typical}), $\Col_{\Col_{\bfC_i}(\bfC_{i+1})}(\pi_i)$ is the unique $\cM_{\bfC_1, \bfC_2}$-optimal map on $\ColOneTwoX$ (by Corollary \ref{C:GeneralOptDegeneratesToOpt}).
\end{enumerate}  

In light of \eqref{I:DegeneratingOptimal}, by the diamond lemma (Apisa-Wright \cite[Lemma 2.3]{ApisaWrightDiamonds}), there is a map $\pi$ on $(X, \omega)$ so that $\Col_{\bfC_i}(\pi) = \pi_i$. Note that the condition in the diamond lemma that $\pi_i^{-1}\left( \pi_i\left( \Col_{\bfC_i}(\bfC_{i+1}) \right) \right) = \Col_{\bfC_i}(\bfC_{i+1})$ follows by definition of $\cM_{\bfC_i}$-optimal. 

By \eqref{I:GeminalUpToMP} there is some $j \in \{1, 2\}$ so that $\pi_j$ is $\cM_{\bfC_j}$-generic and becomes $\pi_{Y_{min}}$ for any nearby surface $(Y, \eta)$ with dense orbit in $\cM_{\bfC_j}$. Since $\pi_{X_{min}}$ is $\cM$-generic (since $(X, \omega)$ has dense orbit in $\cM$), it follows that $\Col_{\bfC_j}(\pi_{X_{min}})$ is $\cM_{\bfC_j}$-generic (it suffices to note that a neighborhood of $\Col_{\bfC_j}(X,\omega)$ can be obtained by perturbing $(X, \omega)$ and then collapsing $\bfC_j$. This follows from Mirzakhani-Wright \cite[Proposition 2.6]{MirWri} together with the main results of Mirzakhani-Wright and Chen-Wright \cite{MirWri, ChenWright}.) If $\pi \ne \pi_{X_{min}}$, then $\Col_{\bfC_j}(\pi_{X_{min}})$ has larger degree than $\pi_j$. So on a nearby surface $(Y, \eta) \in \cM_{\bfC_j}$ with dense orbit the map corresponding to $\Col_{\bfC_j}(\pi_{X_{min}})$ would have larger degree than $\pi_{Y_{min}}$. This contradicts Theorem \ref{T:MinimalCover} and shows that $\pi = \pi_{X_{min}}$. 

Since $\bfC_1$ is arbitrary and each cylinder in it is the full preimage of its image under $\pi$ (since this is the case for $\ColTwo(\bfC_1)$ with respect to $\pi_2$), it follows that $\pi_{X_{min}}$ is $\cM$-optimal. Since $v_1$ was an arbitrary typical orthogonal cylinder degeneration of $\bfC_1$ and since we arranged for $\cM_{v_1} = \cM_{\bfC_1}$, the second claim follows from Lemma \ref{L:PiOptWithTwist}.
\end{proof}

We now proceed with the proof that $\cM$ is $h$-geminal up to marked points. Fix a surface $(X, \omega)$ with dense orbit in $\cM$ and let $(X', \omega')$ denote $(X, \omega)$ with all of its periodic points marked. Let $\cM_{min}$ (resp. $\cM_{min}'$) denote the orbit closure of $(X_{min}, \omega_{min})$ (resp. $(X_{min}', \omega_{min}')$). Notice that these surfaces first appear in Theorem \ref{T:MinimalCover}.



Recall the convention that if $C$ is a cylinder we will let $\gamma_C^*$ denote the intersection pairing with its core curve. 

\begin{lem}\label{L:FullRankMeansDone}
If $\cM_{min}$ has full rank, i.e. has rank equal to that of the stratum of Abelian differentials containing it, then $\cM$ is $h$-geminal up to marked points. 
\end{lem}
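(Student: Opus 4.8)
The plan is to combine the fact, established in Lemma~\ref{L:MinIsOptimal}, that $\pi_{X_{min}}$ is $\cM$-optimal with the classification of full rank invariant subvarieties. The first step is a reduction to geminality. Marking periodic points changes neither the dimension, nor the rank, nor the homological dimension of an invariant subvariety, so $\cM^{pt}$ --- the variety $\cM$ with all of its periodic points marked, equivalently the orbit closure of $(X', \omega')$ --- again has minimal homological dimension, is of rank at least two, and is not rank two rel zero. Hence $\cM^{pt}$ satisfies the hypotheses in Assumption~\ref{A:IH}; since the inductive hypothesis there concerns \emph{all} lower-dimensional invariant subvarieties, it applies to the strictly lower-dimensional boundary degenerations of $\cM^{pt}$, and the proof of Lemma~\ref{L:MinIsOptimal} goes through verbatim for $\cM^{pt}$ to show that $\pi_{X_{min}'}$ is $\cM^{pt}$-optimal. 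Moreover, since $\cM^{pt}$ has minimal homological dimension, Theorem~\ref{T:MHDImpliesHomology} shows that any two $\cM^{pt}$-parallel cylinders, and in particular any pair of twins, have homologous core curves. It therefore suffices to prove that $\cM^{pt}$ is \emph{geminal}.

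The second step identifies the target $\cM_{min}'$ of the optimal map $\pi_{X_{min}'}$. First note that $X_{min}$ has genus at least two, since otherwise $(X, \omega)$ would be a torus cover and $\cM$ would have rank one. By the periodic-point theory of Apisa-Wright~\cite{ApisaWright}, the periodic points of $\cM$ are precisely the preimages under $\pi_{X_{min}}$ of the periodic points of $\cM_{min}$, so marking all of them corresponds downstairs to marking all periodic points of $\cM_{min}$; thus $\cM_{min}'$ is $\cM_{min}$ with its periodic points marked, and is in particular again full rank. By Mirzakhani-Wright~\cite{MirWri2}, a full rank invariant subvariety is a component of a stratum of Abelian differentials or a hyperelliptic locus in one. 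If it is a non-hyperelliptic component of a stratum, then by Theorem~\ref{T:PeriodicPoints-NonHyp} there are no periodic points to mark and $\cM_{min}'$ is itself a non-hyperelliptic component of a stratum; if it is hyperelliptic, then by Theorem~\ref{T:PeriodicPointsHyp} its periodic points are regular Weierstrass points, and marking them leaves a hyperelliptic locus, whose quotient by the hyperelliptic involution is a genus zero quadratic double. In all cases $\cM_{min}'$ is a component of a stratum of Abelian differentials or a genus zero quadratic double.

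The third step transports this back up the optimal map. Since $\pi_{X_{min}'}$ is $\cM^{pt}$-optimal it is $\cM^{pt}$-good, so every cylinder on every surface in $\cM^{pt}$ is the full preimage of its image; and since an $\cM^{pt}$-optimal map exhibits $\cM^{pt}$ as a full locus of good covers of $\cM_{min}'$ (Apisa-Wright~\cite{ApisaWrightGeminal}), cylinder deformations upstairs correspond to cylinder deformations downstairs. If $\cM_{min}'$ is a component of a stratum of Abelian differentials, then every cylinder downstairs is free, hence so is every cylinder upstairs, and $\cM^{pt}$ is geminal. If $\cM_{min}'$ is a genus zero quadratic double, then every cylinder downstairs is either free or has a twin --- its image under the holonomy involution --- and this geminal partition lifts; this is precisely the statement, used already in the proof of Lemma~\ref{L:AddingPPtoGeminal}, that a full locus of good covers of a genus zero quadratic double is $h$-geminal. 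Either way $\cM^{pt}$ is geminal, hence $h$-geminal by the first step, which is to say that $\cM$ is $h$-geminal up to marked points. I expect the genuine work to lie in the second step --- verifying that $\cM_{min}'$ is exactly $\cM_{min}$ with periodic points marked, so that it remains full rank, and that no stray free marked point survives on the target that could obstruct a geminal partition --- but these are exactly the issues the periodic-point results of Apisa-Wright and Theorems~\ref{T:PeriodicPoints-NonHyp} and~\ref{T:PeriodicPointsHyp} are designed to control.
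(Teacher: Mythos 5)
Your proposal is correct and follows essentially the same route as the paper: apply Mirzakhani--Wright's classification of full rank subvarieties to $\For(\cM_{min})$, use Theorems \ref{T:PeriodicPoints-NonHyp} and \ref{T:PeriodicPointsHyp} to identify $\cM_{min}'$ as a component of a stratum or a (genus zero) quadratic double, and then pull the geminal structure back through the $\cM$-optimal map from Lemma \ref{L:MinIsOptimal}. Your added care about why optimality persists after marking periodic points and why $\cM_{min}'$ is exactly $\cM_{min}$ with its periodic points marked fills in details the paper leaves implicit, but does not change the argument.
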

\begin{proof}
If $\cM_{min}$ has full rank, then by Mirzakhani-Wright \cite[Theorem 1.1]{MirWri2}, $\For(\cM_{min})$ is either a nonhyperelliptic component of a stratum of Abelian differentials or a hyperelliptic locus (possibly equal to the full component) of a stratum. In the first case (by Theorem \ref{T:PeriodicPoints-NonHyp}), $\cM_{min}'$ is itself a stratum and so every cylinder is free. In the second (by Theorem \ref{T:PeriodicPointsHyp}), $\cM_{min}'$ is a hyperelliptic locus with every fixed point of the hyperelliptic involution marked and with all other marked points occurring in pairs exchanged by the hyperelliptic involution. In particular, $\cM_{min}'$ is a quadratic double and hence $h$-geminal. Since $\pi_{X_{min}}$ is optimal (by Lemma \ref{L:MinIsOptimal}), every cylinder on $(X', \omega')$ contains exactly one cylinder in its preimage. This shows that the orbit closure of $(X', \omega')$ is $h$-geminal and hence that $\cM$ is $h$-geminal up to marked points.  
\end{proof}

Our goal is therefore to show that $\cM_{min}$ has full rank. The following is a useful test to determine when this is the case. For ease of notation we will let $\pi := \pi_{X_{min}}$.

\begin{lem}\label{L:SimpleCylinder}
Suppose that $\bfC$ is a generic equivalence class of cylinders on $(X, \omega)$. Suppose too that $C$ is a cylinder in $\bfC$ so that $\pi(C)$ has a boundary comprised of a single saddle connection. Suppose finally that $v \in \mathrm{Twist}\left( \bfC, \cM \right)$ determines an orthogonal cylinder degeneration with $\bfC_v = \{C\}$. Then $\cM_{min}$ has full rank.
\end{lem}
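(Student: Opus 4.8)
The plan is to degenerate along $v$ and track what happens to the optimal map $\pi = \pi_{X_{min}}$, exploiting that $\pi(C)$ is a simple cylinder (one boundary saddle connection). First, by Lemma \ref{L:MinIsOptimal} we may assume $\bfC_1 = \bfC$ was chosen as in that proof, so that $\cM_v$ has minimal homological dimension, $\Col_v(X,\omega)$ is connected, and $\Col_v(\pi)$ is $\cM_v$-optimal. Since $\bfC_v = \{C\}$, the collapse $\Col_v$ kills precisely the cylinder $C$; because $\pi(C)$ has a single boundary saddle connection $s$, collapsing $C$ on $(X,\omega)$ corresponds downstairs (via the cover $\Col_v(\pi)$, using Lemma \ref{L:DefinitionCol(f)}) to collapsing the simple cylinder $\pi(C)$, i.e.\ to contracting the single saddle connection $s$. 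The key structural point is that contracting a simple cylinder does not change the genus of the base surface and removes exactly one zero (or merges the two endpoints of $s$), so the rank of $\cM_{min}$ is related in a controlled way to the rank of $(\cM_v)_{min} = (\Col_v(\cM))_{min}$.

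The main steps, in order: (i) Apply Lemma \ref{L:MinIsOptimal} to arrange the genericity/connectedness hypotheses and to know $\Col_v(\pi)$ is $\cM_v$-optimal; record that $\operatorname{rank}(\cM_v) = \operatorname{rank}(\cM)$ if $\bfC$ is involved with rel and $\operatorname{rank}(\cM_v)=\operatorname{rank}(\cM)-1$ otherwise (Theorem \ref{T:Typical}). (ii) Identify $\Col_v(\pi)$ with the cover $\Col_{\pi(\bfC)}(\pi)$ obtained from Lemma \ref{L:DefinitionCol(f)}, and observe that since $\pi(C)$ is simple, collapsing it is the operation of contracting one saddle connection on the base; deduce that $(\cM_v)_{min} = \Col_{\pi(C)}(\cM_{min})$ and that this degeneration lowers the rank of $\cM_{min}$ by exactly one (a simple-cylinder contraction is not "involved with rel" on the base — a simple cylinder's twist space is one-dimensional — so Theorem \ref{T:Typical} gives $\operatorname{rank}((\cM_v)_{min}) = \operatorname{rank}(\cM_{min}) - 1$). (iii) Run the inductive hypothesis (Assumption \ref{A:IH}) on $\cM_v$: it has smaller dimension, minimal homological dimension, and rank at least two in at least one of the two degeneration choices, so it is $h$-geminal up to marked points, hence by Theorem \ref{T:geminal2}\eqref{I:geminal2:WhatIsM} its minimal surface is either a stratum component or a cover of a genus-zero quadratic double; in particular $(\cM_v)_{min}$ is full rank. (iv) Combine: $\operatorname{rank}((\cM_v)_{min}) = \operatorname{rank}(\cM_{min}) - 1$ and $(\cM_v)_{min}$ is full rank in its stratum, while the ambient stratum of $\cM_{min}$ has rank exactly one more than that of the collapsed stratum (contracting a simple cylinder / one saddle connection drops the stratum rank by one). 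Therefore $\cM_{min}$ is full rank. Care must be taken with the degenerate case where collapsing $C$ disconnects or where $\pi(C)$'s contraction produces a lower-genus surface with a marked point; here the "full locus of covers" language and the non-varying of genus under simple-cylinder collapse keep the bookkeeping consistent.

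The main obstacle I expect is step (ii)–(iv): making precise that collapsing a simple cylinder $\pi(C)$ lowers the rank of $\cM_{min}$ by exactly one, and that "full rank is preserved upward" under adding back a simple cylinder. One must rule out that $\bfC$ (upstairs) or $\pi(C)$ (downstairs) is involved with rel — this is where the hypothesis that $\pi(C)$ has a \emph{single} boundary saddle connection is essential, since then $\operatorname{Twist}(\pi(\bfC),\cM_{min})$ is spanned by the standard shear (Theorem \ref{T:CylECTwistSpace} plus the fact that a simple cylinder contributes no rel), forcing the rank to drop. The compatibility of the two degeneration directions $v_1, v_2$ from Lemma \ref{L:MinIsOptimal} (only one of which is guaranteed to land in rank $\geq 2$) with the present choice of $\bfC$ and $C$ also needs a remark, but this is handled exactly as in the proof of Lemma \ref{L:MinIsOptimal} via the diamond lemma.
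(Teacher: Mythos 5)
Your overall architecture matches the paper's: push the degeneration down to the base via $\pi$, use the inductive hypothesis to conclude that $(\cM_v)_{min}$ is full rank, and transfer this back to $\cM_{min}$. However, the rank bookkeeping in your steps (ii) and (iv) is backwards, and this is not a cosmetic issue. The pushed-down deformation $w = \sum_{D \in \bfC} a_D \gamma_{\pi(D)}^*$ collapses only the single cylinder $\pi(C)$ while the other cylinders of the equivalence class $\pi(\bfC)$ survive; by Theorem \ref{T:CylECTwistSpace} such a $w$ cannot be a multiple of $\sigma_{\pi(\bfC)}$, so it has a nonzero rel component evaluating nontrivially on the cross curve of $\pi(C)$ (in the intended application, $v$ and hence $w$ are purely rel). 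Thus $\pi(\bfC)$ \emph{is} involved with rel --- ``involved with rel'' is a property of the equivalence class, not of the single cylinder, so the one-dimensionality of the twist space of $\{\pi(C)\}$ alone is beside the point --- and the degeneration \emph{preserves} rank: $\mathrm{rank}\left((\cM_{min})_w\right) = \mathrm{rank}(\cM_{min})$. The ambient stratum's rank is likewise preserved, not dropped: the vanishing cycle is the vertical cross curve $s$ of $\pi(C)$, on which a rel vector is nonzero, so $s$ is not an absolute cycle and the genus of the base does not decrease (the single closed boundary saddle connection of $\pi(C)$ survives as an essential closed curve on the limit). The hypothesis that one boundary of $\pi(C)$ is a single saddle connection is used only to guarantee that $\Col_w$ stays connected and that the vanishing cycles form a single line, not to force a rank drop.

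This matters beyond bookkeeping, because your step (iii) needs $\cM_v$ to have rank at least two in order to invoke Assumption \ref{A:IH} and Theorem \ref{T:geminal2}. Under your claim that the rank drops by one, $\cM_v$ would have rank one whenever $\mathrm{rank}(\cM) = 2$ and the induction would not apply; your appeal to ``one of the two degeneration choices'' imports the setup of Lemma \ref{L:MinIsOptimal}, but here the direction $v$ is fixed by hypothesis and there is no second choice. The correct statement --- rank preservation at all three levels ($\cM$, $\cM_{min}$, and the stratum $\cH$) coming from the rel-involvement of the degeneration --- is exactly what makes the induction available and yields $\mathrm{rank}(\cM_{min}) = \mathrm{rank}\left((\cM_{min})_w\right) = \mathrm{rank}(\cH_w) = \mathrm{rank}(\cH)$.
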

\begin{proof}
Without loss of generality (after perhaps rotating and shearing), suppose that $\bfC$ consists of horizontal cylinders and that $\pi(C)$ contains a vertical saddle connection $s$. If $v = \sum_{D \in \bfC} a_D \gamma_D^*$, then let $w := \sum_{D \in \bfC} a_D \gamma_{\pi(D)}^*$. Notice that $w$ is a purely imaginary rel vector in $T_{\pi(X, \omega)} \cM_{min}$ and $(\pi(\bfC))_w = \{\pi(C)\}$. 

It is easy to see that, since $\pi(C)$ has one boundary consisting of a single saddle connection, that $\Col_w(X, \omega)$ is connected and that the collection of vanishing cycles is just the line generated by $s$. For instance the second claim follows from Apisa-Wright \cite[Lemma 4.25]{ApisaWrightHighRank}. Let $\cH$ be the component of the stratum of Abelian differentials containing $\pi(X, \omega)$. By Apisa-Wright \cite[Lemma 3.8]{ApisaWrightHighRank} since $w$ is a rel vector that evaluates nontrivially on $s$, we have the following:
\begin{equation}\label{E:SimpleRankEquation} \mathrm{rank}((\cM_{min})_w) = \mathrm{rank}(\cM_{min}) \qquad \text{and} \qquad \mathrm{rank}(\cH_w) = \mathrm{rank}(\cH). 
\end{equation}

By Lemma \ref{L:MinIsOptimal}, since $v$ is a typical orthogonal degeneration, $\cM_v$ has minimal homological dimension and consists of connected surfaces. By Equation \ref{E:SimpleRankEquation} (which implies that $\mathrm{rank}(\cM_v) = \mathrm{rank}(\cM)$) and Assumption \ref{A:IH}, $\cM_v$ is $h$-geminal up to marked points and the $\cM_v$-optimal map coincides with the minimal cover on any surface in $\cM_v$ with dense orbit (by Corollary \ref{C:MinIsOpt}). Therefore, by Lemma \ref{L:MinIsOptimal} we have the following,
\[ (\cM_{min})_w = (\cM_{v})_{min}. \]
Moreover, since $\cM_v$ is $h$-geminal up to marked points, $(\cM_v)_{min}$ has full rank (by Theorem \ref{T:geminal2} \eqref{I:geminal2:WhatIsM}). Since $\left( \cM_{min} \right)_w$ has the same rank as $\cH_w$, the same is true of $\cM_{min}$ and $\cH$, by Equation \eqref{E:SimpleRankEquation}, as desired. 
%
\end{proof}

We will now rephrase the preceding result into the form in which we will apply it. 

\begin{cor}\label{C:SimpleCylinder}
Suppose that $\bfC$ is a generic equivalence class containing at least two horizontal cylinders on $(X, \omega)$. Suppose too that $w$ is a typical orthogonal degeneration in $\mathrm{Twist}(\bfD, \cM)$ where $\cM_w$ has rank at least two and where $\bfD$ is an equivalence class either equal to $\bfC$ or disjoint and not parallel to it.  Finally suppose that one of the following happens:
\begin{enumerate}
    \item\label{I:test:free-cyl} There is a cylinder $C$ in $\bfC$ so that $\Col_w(\pi)(\Col_w(C))$ is free, or 
    \item\label{I:test:rel} There are cylinders of generically equal circumference $C_1$ and $C_2$ in $\bfC$ so that $\gamma_{C_1}^* - \gamma_{C_2}^* \in T_{(X, \omega)} \cM$, or 
    \item\label{I:test:free-pt} There is a cylinder $C$ in $\bfC$ so that $\Col_w(\pi)(\Col_w(C))$ contains a free marked point on its boundary. 
\end{enumerate}
Then $\cM$ is $h$-geminal up to marked points.
%
\end{cor}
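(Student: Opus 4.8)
The plan is to derive the corollary from Lemma~\ref{L:SimpleCylinder}: once $\cM_{min}$ is known to have full rank, Lemma~\ref{L:FullRankMeansDone} gives that $\cM$ is $h$-geminal up to marked points. So it suffices to produce, possibly after a preliminary perturbation of $(X,\omega)$ within $\cM$ that keeps $\bfC$ horizontal and generic, a generic equivalence class together with a cylinder $C$ in it such that $\pi(C)$ has a boundary component consisting of a single saddle connection, and an orthogonal $v$ in its twist space with $\bfC_v=\{C\}$; collapsing a single cylinder is automatically typical, so this is exactly the hypothesis of Lemma~\ref{L:SimpleCylinder}.

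To locate such a $C$ we pass to the boundary along $w$. Since $w$ is a typical orthogonal degeneration and $\cM_w$ has rank at least two, Lemma~\ref{L:MinIsOptimal} shows that $\Col_w(X,\omega)$ is connected, that $\cM_w$ has minimal homological dimension, and that $\Col_w(\pi)$ is the $\cM_w$-optimal map; Assumption~\ref{A:IH} then makes $\cM_w$ $h$-geminal up to marked points, so by Theorem~\ref{T:geminal2}\eqref{I:geminal2:WhatIsM} the image of $\Col_w(\pi)$ lies in a stratum of Abelian differentials or in a quadratic double of a genus zero stratum. This is the structure the three hypotheses exploit. In case~\eqref{I:test:free-cyl}, the cylinder $\Col_w(\pi)(\Col_w(C))$ is free; when the target is a quadratic double such a cylinder is fixed by the hyperelliptic involution, hence simple or half-simple by Corollary~\ref{C:PeriodicPointsHyp}, so it has a one-saddle-connection boundary (the remaining subcase, where the target is a full stratum and $\Col_w(\pi)$ is the identity, I expect to handle separately through the rel bookkeeping below). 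In case~\eqref{I:test:free-pt}, a free marked point on the boundary of $\Col_w(\pi)(\Col_w(C))$ forces, exactly as in the proof of Corollary~\ref{C:PeriodicPointsHyp} via Theorem~\ref{T:PeriodicPointsHyp}, that one whole boundary of that cylinder is a single saddle connection joining the free point to itself. In case~\eqref{I:test:rel}, since $\cM$ has minimal homological dimension the core curves of $C_1$ and $C_2$ are homologous by Theorem~\ref{T:MHDImpliesHomology}, so $\gamma_{C_1}^*-\gamma_{C_2}^*$ is a rel vector supported on $\{C_1,C_2\}$; after rotating $\bfC$ to be horizontal, $v:=i(\gamma_{C_1}^*-\gamma_{C_2}^*)$ is an orthogonal element of $\mathrm{Twist}(\bfC,\cM)$ whose flow, in a suitable direction, collapses exactly one of $C_1,C_2$, say $C_1$, and the saddle connection of $C_1$ that shrinks to zero under this rel collapse should descend to a one-saddle-connection boundary of $\pi(C_1)$.

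With such a cylinder identified on $\Col_w(X,\omega)$, the final step is to transfer the conclusion to $(X,\omega)$. Because $w$ is orthogonal and $\bfD$ is either equal to $\bfC$ or disjoint from and not parallel to it, the collapse along $w$ should not accumulate new singularities on the boundary of $\pi(C)$ for a suitable $C\in\bfC$ with $\Col_w(C)$ the cylinder found above (tracking the saddle connections that persist via the boundary tangent formula, Theorem~\ref{T:CylinderBoundaryTangentFormula}); hence $\pi(C)$ already has a one-saddle-connection boundary on $(X,\omega)$, $\bfC$ carries an orthogonal twist collapsing exactly $C$ (produced as in case~\eqref{I:test:rel}, or from the geminal partition of $\Col_w(\bfC)$ in the other two cases), and Lemma~\ref{L:SimpleCylinder} gives that $\cM_{min}$ has full rank. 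Alternatively — and perhaps more cleanly — one can run the proof of Lemma~\ref{L:SimpleCylinder} one level down: it yields that $(\cM_w)_{min}$ has full rank (which it does, being $h$-geminal of rank at least two), and one then propagates full-rankness back up to $\cM_{min}$ through an identity $(\cM_{min})_{w'}=(\cM_w)_{min}$ for the rel vector $w'$ pushing $w$ forward to $\cM_{min}$, using the rank equations of Apisa-Wright \cite[Lemma 3.8]{ApisaWrightHighRank}.

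The step I expect to be the main obstacle is exactly this transfer: guaranteeing that the one-saddle-connection structure visible after the $w$-degeneration is already present on $(X,\omega)$, equivalently that collapsing $\bfD_w$ does not merge the boundary saddle connections of $\pi(C)$; this is where the disjointness-and-non-parallelism hypothesis on $\bfD$ must be used carefully. A secondary subtlety is the full-stratum subcase of~\eqref{I:test:free-cyl}, where the relevant cylinder need not be simple and one must instead argue directly from the rel and rank bookkeeping that $\cM_{min}$ is full rank.
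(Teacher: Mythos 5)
Your overall architecture matches the paper's: reduce to Lemma~\ref{L:SimpleCylinder} (hence Lemma~\ref{L:FullRankMeansDone}), use Lemma~\ref{L:MinIsOptimal} and Assumption~\ref{A:IH} to identify $(\cM_w)_{min}$ as a stratum or a quadratic double, and in each of the three cases produce a rel vector in $\mathrm{Twist}(\bfC,\cM)$ that collapses a single cylinder (your case~\eqref{I:test:free-cyl} rel vector $i(-\gamma_C^*+a\sigma_{\bfC})$ and the reduction of~\eqref{I:test:free-pt} to~\eqref{I:test:rel} via a boundary saddle connection joining the free point to itself are exactly the paper's moves). But the step you flag as ``the main obstacle'' is resolved incorrectly, and this is a genuine gap. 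You propose to show that the collapse along $w$ does \emph{not} accumulate new singularities on the boundary of $\pi(C)$, so that the one-saddle-connection boundary seen downstairs is ``already present'' upstairs. That premise is false in general: when $\bfD=\bfC$ the zeros of the collapsed cylinders of $\bfC_w$ do land on the boundaries of the surviving cylinders, and even when $\bfD$ is disjoint and non-parallel its zeros can migrate onto $\partial\bfC$. The correct observation runs in the opposite direction and makes the worry moot: degeneration can only \emph{increase} the number of saddle connections on each boundary component of a persisting cylinder, so $\Col_w(\pi)(\Col_w(C))$ has at least as many boundary saddle connections as $\pi(C)$. Since $\Col_w(\bfC)$ remains a \emph{generic} equivalence class (obvious when $\bfD$ is disjoint and non-parallel, and by Theorem~\ref{T:Typical} when $\bfD=\bfC$), Corollary~\ref{C:PeriodicPointsHyp} (or genericity in a stratum) forces every $\Col_w(\pi)(\Col_w(C))$ to be simple or half-simple, hence every $\pi(C)$, $C\in\bfC$, is simple or half-simple on the original surface. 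This is the heart of the paper's proof and is what your argument is missing.

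Two smaller points. In case~\eqref{I:test:rel} you assert that ``the saddle connection of $C_1$ that shrinks to zero under this rel collapse should descend to a one-saddle-connection boundary of $\pi(C_1)$''; this conflates two independent facts. The rel vector $i(\gamma_{C_1}^*-\gamma_{C_2}^*)$ supplies the orthogonal degeneration with $\bfC_v$ a single cylinder, but the single-saddle-connection boundary of $\pi(C_1)$ comes from the genericity argument above, not from the collapse. And your ``secondary subtlety'' in the full-stratum subcase of~\eqref{I:test:free-cyl} is not a subtlety: a generic cylinder in a stratum is simple (otherwise one could perturb a boundary zero to break parallelism), so the uniform argument covers it; the only content of hypothesis~\eqref{I:test:free-cyl} is that $C$ is free, which together with $|\bfC|\ge 2$ and Theorem~\ref{T:CylECTwistSpace} yields the rel vector.
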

\begin{proof}
%

Note that $\cM_w$ is either a stratum of Abelian differentials or a full locus of translation covers of a hyperelliptic locus (by Lemma \ref{L:MinIsOptimal}, Assumption \ref{A:IH}, and Theorem \ref{T:geminal2} \eqref{I:geminal2:WhatIsM}). In the second case, the cover is given by $\Col_w(\pi)$ by Lemma \ref{L:MinIsOptimal}. 

Since the cylinders in $\bfC$ are generic, the same holds for those in $\Col_w(\bfC)$. This is obvious in the case that $\bfD$ is disjoint and non-parallel to $\bfC$ and follows from Theorem \ref{T:Typical} in the case that $\bfC = \bfD$. Since $\Col_w(\pi)\left( \Col_w(\bfC) \right)$ is a collection of generic cylinders we see that those cylinders are all simple when $\cM_w$ is a stratum and consists of simple and half-simple cylinders when $\cM_w$ is a full locus of covers of a hyperelliptic locus (by Corollary \ref{C:PeriodicPointsHyp}). It follows that for each $C \in \bfC$, $\pi(C)$ is simple or half simple, since $\Col_w(\pi)(\Col_w(C))$  has at least as many saddle connections on each boundary component as $\pi(C)$. 

By Lemmas \ref{L:FullRankMeansDone} and \ref{L:SimpleCylinder} it suffices to show that there is a rel deformation that increases the height of one cylinder in $\bfC$ while not increasing the height of all other cylinders in $\bfC$. 

Suppose first that $C \in \bfC$ is a cylinder so that $\Col_w(\pi)(\Col_w(C))$ is free. It follows that $\pi(C)$ and hence $C$ itself are also free. By Mirzakhani-Wright (Theorem \ref{T:CylECTwistSpace}), since $\bfC \ne \{C\}$, there is a positive real number $a$ so that $v : = i\left( -\gamma_C^* + a \sigma_{\bfC} \right)$ is rel. This is the desired rel deformation. 

The second claim is immediate since $\gamma_{C_1}^* - \gamma_{C_2}^*$ is rel by Apisa-Wright \cite[Corollary 11.5]{ApisaWrightHighRank}.


Suppose now that $C \in \bfC$ is a cylinder so that $\Col_w(\pi)(\Col_w(C))$ contains a free point on its boundary. Since this cylinder is generic, the boundary containing the free point must consist of a saddle connection $s$ joining the free marked point to itself. In particular, there is another cylinder $D \in \bfC$ so that $\Col_w(\pi)(\Col_w(D))$ also has a boundary comprised of $s$. It follows that, since the marked point is free, $\gamma_C^* - \gamma_D^* \in T_{(X, \omega)} \cM$. This reduces to the previous case.
\end{proof}

We may assume without loss of generality that $\cM$ is not geminal (by Lemma \ref{L:AddingPPtoGeminal}). Therefore, assume without loss of generality that there is an equivalence class $\bfC_1$ of cylinders on $(X, \omega)$ that does not admit a geminal partition (see Definition \ref{D:Geminal}). As in the proof of Lemma \ref{L:MinIsOptimal}, we may assume without loss of generality that $\bfC_1$ is generic, horizontal, and that its complement contains a generic equivalence class $\bfC_2$. If $\ker(p) \cap T_{(X, \omega)} \cM$ is not contained in $\mathrm{Twist}(\bfC_1, \cM)$, then we may additionally suppose that $\bfC_2$ is involved with rel.

\begin{lem}
If $\cM$ has no rel or if $\ker(p) \cap T_{(X, \omega)} \cM$ is not contained in $\mathrm{Twist}(\bfC_1, \cM)$, then $\cM$ is $h$-geminal up to marked points.
\end{lem}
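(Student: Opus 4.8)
The plan is to degenerate using $\bfC_2$ and then invoke the simple-cylinder test (Corollary \ref{C:SimpleCylinder}) on $\bfC_1$.

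\textbf{Setting up the degeneration.} First I would apply Theorem \ref{T:Typical} to produce a typical cylinder degeneration $v_2 \in \mathrm{Twist}(\bfC_2, \cM)$, and after shearing $\bfC_2$ arrange (using Remark \ref{R:TypicalOrthogonalConstruction}) that $v_2$ is a typical \emph{orthogonal} degeneration. As in the proof of Lemma \ref{L:MinIsOptimal}, after perhaps perturbing (via Lemma \ref{L:TwistWithCollapse}) I may assume $\cM_{v_2} = \cM_{\bfC_2}$. By Lemma \ref{L:MinIsOptimal}, $\cM_{v_2}$ has minimal homological dimension, $\Col_{v_2}(X, \omega)$ is connected, and $\Col_{v_2}(\pi)$ is $\cM_{v_2}$-optimal. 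The key point is to check that $\cM_{v_2}$ has rank at least two, so that the hypotheses of Corollary \ref{C:SimpleCylinder} (with $w = v_2$ and $\bfD = \bfC_2$, which is disjoint from and not parallel to $\bfC_1$) are satisfied. In the case $\cM$ has rank at least three, this is automatic from Theorem \ref{T:Typical} since rank drops by at most one. In the case that $\ker(p) \cap T_{(X, \omega)} \cM \not\subseteq \mathrm{Twist}(\bfC_1, \cM)$, I have arranged that $\bfC_2$ is involved with rel, so by Theorem \ref{T:Typical} $\mathrm{rank}(\cM_{v_2}) = \mathrm{rank}(\cM) \geq 2$. (If $\cM$ has no rel, then $\cM$ has rank at least three by Assumption \ref{A:IH}, so this case is already covered.)

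\textbf{Applying the test.} Now $\Col_{v_2}(\pi)$ is the $\cM_{v_2}$-optimal map, and $\cM_{v_2}$ is $h$-geminal up to marked points of rank at least two (by Assumption \ref{A:IH} and Corollary \ref{C:MinIsOpt}); in particular it is a stratum of Abelian differentials or a full locus of covers of a hyperelliptic locus (Theorem \ref{T:geminal2} \eqref{I:geminal2:WhatIsM}). Since $\bfC_1$ does not admit a geminal partition, the same is true of $\Col_{v_2}(\bfC_1)$ (the cylinders in $\bfC_1$ persist and remain generic; passing to the limit cannot create a geminal partition where there was none, since twin/free structure is detected by the twist space and persists). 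If $\cM_{v_2}$ is a stratum, then — because $\Col_{v_2}(\bfC_1)$ has no geminal partition and $\Col_{v_2}(\pi)$ is optimal — either $\Col_{v_2}(\pi)$ is the identity on the relevant cylinders (so some $\Col_{v_2}(C)$ is free, triggering case \eqref{I:test:free-cyl} of Corollary \ref{C:SimpleCylinder}) or, if the degree of the optimal map exceeds one, every cylinder has a twin (Theorem \ref{T:geminal2} \eqref{I:geminal2:TwoCylinders}), forcing a geminal partition, a contradiction — so in the stratum case case \eqref{I:test:free-cyl} applies. If $\cM_{v_2}$ is a full locus of covers of a hyperelliptic locus, then by Corollary \ref{C:HypParallelism}, since the image equivalence class under $\Col_{v_2}(\pi)$ of $\Col_{v_2}(\bfC_1)$ fails to admit a geminal partition, there is a cylinder in it with a free marked point on its boundary, i.e. some $\Col_{v_2}(C)$ with $\Col_{v_2}(\pi)(\Col_{v_2}(C))$ containing a free marked point — this is case \eqref{I:test:free-pt}. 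In both cases Corollary \ref{C:SimpleCylinder} yields that $\cM$ is $h$-geminal up to marked points.

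\textbf{Main obstacle.} The delicate step is verifying that the failure of a geminal partition for $\bfC_1$ descends to a usable statement about the \emph{image} cylinders under $\Col_{v_2}(\pi)$ on the boundary surface, and in particular that some cylinder in $\Col_{v_2}(\bfC_1)$ itself (not merely in a larger equivalence class on the boundary) has the requisite free cylinder or free marked point. This requires knowing that $\Col_{v_2}(\bfC_1)$ is still an equivalence class, or at least that it contains a sub-equivalence-class with no geminal partition, which should follow since $\bfC_1$ and $\bfC_2$ are disjoint and non-parallel, so collapsing $\bfC_2$ does not merge $\bfC_1$ with anything; combined with the fact that the geminal partition of an equivalence class, when it exists, is governed by the twist space decomposition and hence is compatible with degeneration (Theorem \ref{T:CylinderBoundaryTangentFormula} \eqref{I:boundary:TwistIsom}). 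Once this compatibility is in hand, the rest is a matter of unwinding the definitions and quoting the corollaries above.
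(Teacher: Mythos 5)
Your proposal is correct and follows essentially the same route as the paper: degenerate along a typical orthogonal direction in $\mathrm{Twist}(\bfC_2, \cM)$, use Assumption \ref{A:IH} and Theorem \ref{T:geminal2} to identify $\cM_{v_2}$ as a stratum or a full locus of covers of a hyperelliptic locus, and then invoke Corollary \ref{C:HypParallelism} together with Corollary \ref{C:SimpleCylinder} \eqref{I:test:free-pt} (the paper dispatches the stratum case by a direct contradiction with the non-existence of a geminal partition of $\bfC_1$ rather than via case \eqref{I:test:free-cyl}, but this is a cosmetic difference). Your handling of the rank-at-least-two verification and of the descent of the failure of a geminal partition to $\bfD_1$ matches what the paper does.
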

\begin{proof}
Let $w \in \mathrm{Twist}(\bfC_2, \cM)$ define a typical orthogonal cylinder degeneration (such an element exists by Theorem \ref{T:Typical} and Remark \ref{R:TypicalOrthogonalConstruction}). $\cM_w$ has rank at least two, of minimal homological dimension, and consists of connected surfaces (by Theorem \ref{T:Typical} and Corollary \ref{C:MHD-Boundary:Connected}). Moreover, $\Col_w(\pi)$ is $\cM_w$-optimal (by Lemma \ref{L:MinIsOptimal}). By Assumption \ref{A:IH} and Theorem \ref{T:geminal2} \eqref{I:geminal2:WhatIsM}, $\cM_w$ is either a non-hyperelliptic component of a stratum of Abelian differentials or a full locus of covers of a hyperelliptic locus. In the first case, all the cylinders in $\bfC_1$ would be free, contradicting our assumption that $\bfC_1$ does not admit a geminal partition. Therefore, suppose that $(\cM_w)_{min}$ is a hyperelliptic locus. 

Let $\bfD_1 := \Col_w(\pi)\left(\Col_w(\bfC_1)\right)$. Since $\bfC_1$ does not admit a geminal partition, the same holds for $\pi(\bfC_1)$ and hence for $\bfD_1$ as well, implying that the boundary of one of the cylinders in $\bfD_1$ contains a free marked point (by Corollary \ref{C:HypParallelism}). We are done by Corollary \ref{C:SimpleCylinder} \eqref{I:test:free-pt}.
%
\end{proof}

Suppose therefore that $\cM$ has rel and that $\ker(p) \cap T_{(X, \omega)} \cM \subseteq \mathrm{Twist}(\bfC_1, \cM)$. 

\begin{lem}
If $\cM$ is not $h$-geminal up to marked points, then for any typical orthogonal cylinder degeneration $v \in \mathrm{Twist}(\bfC_1, \cM)$,  $(\cM_v)_{min}$ is a genus zero quadratic double. 

Equivalently, for any typical orthogonal cylinder degeneration $w \in \mathrm{Twist}(\pi(\bfC_1), \cM_{min})$,  $(\cM_{min})_w$ is a genus zero quadratic double.
\end{lem}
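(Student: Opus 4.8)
The plan is to argue by contradiction: assuming $\cM$ is not $h$-geminal up to marked points, I would fix a typical orthogonal cylinder degeneration $v\in\mathrm{Twist}(\bfC_1,\cM)$ and show that if $(\cM_v)_{min}$ fails to be a genus zero quadratic double then, via Corollary \ref{C:SimpleCylinder}, $\cM$ is forced to be $h$-geminal up to marked points after all.

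First I would record two facts about $\bfC_1$ in the case at hand. Since $\cM$ has rel and $\ker(p)\cap T_{(X,\omega)}\cM\subseteq\mathrm{Twist}(\bfC_1,\cM)$, there is a nonzero rel vector supported on $\bfC_1$, so $\bfC_1$ is involved with rel. Also $\bfC_1$ contains at least two cylinders: for a single cylinder $C$ the space $\mathrm{Twist}(\{C\},\cM)$ is a complex subspace of the complex line $\mathbb{C}\gamma_C^*$ containing the nonzero element $\sigma_{\{C\}}$ (the cylinder deformation theorem), hence equals $\mathbb{C}\gamma_C^*$, so $C$ is free and $\{C\}$ is its own geminal partition --- contrary to the choice of $\bfC_1$.

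Next I would degenerate. Because $\bfC_1$ is involved with rel, Theorem \ref{T:Typical} gives $\mathrm{rank}(\cM_v)=\mathrm{rank}(\cM)\geq 2$, $\dim\cM_v=\dim\cM-1$, and $\Col_v(\bfC_1)$ generic; Lemma \ref{L:MinIsOptimal} gives that $\Col_v(X,\omega)$ is connected, that $\cM_v$ has minimal homological dimension, and that $\Col_v(\pi)$ is $\cM_v$-optimal. Assumption \ref{A:IH} then makes $\cM_v$ $h$-geminal up to marked points, so by Corollary \ref{C:MinIsOpt} the map $\Col_v(\pi)$ is the minimal cover on dense-orbit surfaces and $(\cM_v)_{min}$ is the image of $\cM_v$ under it. Applying Theorem \ref{T:geminal2} \eqref{I:geminal2:WhatIsM} to $\cM_v$ with its periodic points marked, and then unmarking those points (which, by Theorems \ref{T:PeriodicPoints-NonHyp} and \ref{T:PeriodicPointsHyp}, are absent in the non-hyperelliptic-stratum case and are fixed points of a hyperelliptic involution otherwise, exactly as in the proof of Lemma \ref{L:FullRankMeansDone}), I conclude that $(\cM_v)_{min}$ is either a non-hyperelliptic component of a stratum of Abelian differentials, possibly with free marked points, or a genus zero quadratic double. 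In the former case every cylinder of $(\cM_v)_{min}$ is free, so $\Col_v(\pi)\bigl(\Col_v(C)\bigr)$ is free for each $C\in\bfC_1$; then Corollary \ref{C:SimpleCylinder}, applied with $\bfC=\bfD=\bfC_1$, $w=v$, and condition \eqref{I:test:free-cyl}, shows $\cM$ is $h$-geminal up to marked points, a contradiction. Hence $(\cM_v)_{min}$ is a genus zero quadratic double. The equivalent reformulation in terms of $w\in\mathrm{Twist}(\pi(\bfC_1),\cM_{min})$ then follows because $\pi=\pi_{X_{min}}$ is $\cM$-optimal (Lemma \ref{L:MinIsOptimal}), so each cylinder of $\bfC_1$ is the full preimage of its image, $v\mapsto\pi(v)$ matches the typical orthogonal degenerations of $\bfC_1$ with those of $\pi(\bfC_1)$, and $\Col_v(\pi)$ identifies $(\cM_v)_{min}$ with $(\cM_{min})_{\pi(v)}$, just as in the proof of Lemma \ref{L:SimpleCylinder}.

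The step I expect to require the most care is pinning down the dichotomy for $(\cM_v)_{min}$: one must confirm that the ``full rank'' alternative supplied by Theorem \ref{T:geminal2} \eqref{I:geminal2:WhatIsM} genuinely separates into the non-hyperelliptic-stratum case (all cylinders free, so Corollary \ref{C:SimpleCylinder} bites) and the hyperelliptic case (which is already a genus zero quadratic double and needs no argument), and that unmarking the periodic points when passing from $\cM_v$ with periodic points marked to $\cM_v$ preserves this dichotomy. Beyond that, the argument is assembly of results already in place.
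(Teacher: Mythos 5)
Your overall framing (reduce to Corollary \ref{C:SimpleCylinder} after degenerating along $v$ and invoking Assumption \ref{A:IH}) matches the paper, and your treatment of the case where $(\cM_v)_{min}$ is a component of a stratum of Abelian differentials agrees with the paper's first case. But there is a genuine gap in your trichotomy. You assert that once $(\cM_v)_{min}$ is hyperelliptic it is automatically a genus zero quadratic double, on the grounds that the only marked points to be unmarked when passing from $\cM_v^{pt}$ to $\cM_v$ are periodic points, hence fixed points of the hyperelliptic involution. This conflates periodic points with marked points: the surfaces in $\cM$ (hence in $\cM_v$, hence in $(\cM_v)_{min}$) may carry \emph{free} marked points, i.e.\ generic non-periodic points that are not invariant under the hyperelliptic involution. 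In that situation $(\cM_v)_{min}$ is a hyperelliptic locus that is \emph{not} a quadratic double, which is precisely the middle case your dichotomy omits and which is the main content of the paper's proof. It is no accident that your argument never uses the standing hypothesis $\ker(p)\cap T_{(X,\omega)}\cM\subseteq\mathrm{Twist}(\bfC_1,\cM)$ beyond deducing that $\bfC_1$ is involved with rel, and never invokes test \eqref{I:test:free-pt} of Corollary \ref{C:SimpleCylinder}: both are needed exactly to dispose of this missing case.

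The paper handles it as follows. If $(\cM_v)_{min}$ is hyperelliptic but not a quadratic double, then by Theorem \ref{T:PeriodicPointsHyp} some marked point is free. A dimension count using $\ker(p)\cap T_{(X,\omega)}\cM\subseteq\mathrm{Twist}(\bfC_1,\cM)$, Theorem \ref{T:Typical}, and Theorem \ref{T:CylECTwistSpace} shows that $\ker(p)\cap T_{\Col_v(X,\omega)}\cM_v\subseteq\mathrm{Twist}(\Col_v(\bfC_1),\cM_v)$, hence all rel of $(\cM_v)_{min}$ is supported on $\bfD_1:=\Col_v(\pi)(\Col_v(\bfC_1))$; since a free marked point generates a rel direction, every free marked point must lie on the boundary of a cylinder in $\bfD_1$, and Corollary \ref{C:SimpleCylinder} \eqref{I:test:free-pt} then yields the contradiction. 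You would need to supply this sublemma (or an equivalent argument) to close the gap. Your final paragraph already flags the dichotomy as the delicate step; the resolution is not that the bad case is vacuous by bookkeeping of periodic points, but that it is eliminated by the rel-support hypothesis on $\bfC_1$.
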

\begin{proof}
Since $\bfC_1$ is involved with rel by assumption, $\cM_v$ has the same rank as $\cM$ (which is at least two) by Theorem \ref{T:Typical}. So $\cM_v$ is either a component of a stratum of Abelian differentials or a full locus of translation covers of a hyperelliptic locus (by Assumption \ref{A:IH} and Theorem \ref{T:geminal2} \eqref{I:geminal2:WhatIsM}). In the first case, $\cM$ is $h$-geminal up to marked points by Corollary \ref{C:SimpleCylinder} \eqref{I:test:free-cyl}.

Suppose therefore that $(\cM_v)_{min}$ is a hyperelliptic locus, but one which is not a quadratic double. That is, the marked points are not invariant by the hyperelliptic involution and so one of them must be free (by Theorem \ref{T:PeriodicPointsHyp}). Set $\bfD_1 := \Col_v(\pi)(\Col_v(\bfC_1))$

\begin{sublem}
Every free marked point on $\Col_v(\pi)(\Col_v(X, \omega))$ is contained on the boundary of a cylinder in $\bfD_1$.
\end{sublem}
\begin{proof}
By Theorem \ref{T:Typical}, $\mathrm{rank}(\cM) = \mathrm{rank}(\cM_v)$ and $\dim(\cM_v) = \dim(\cM) -1$. Therefore, the dimension $r_v$ of $\ker(p) \cap T_{\Col_v(X, \omega)} \cM_v$ is exactly one less than the dimension $r$ of $\ker(p) \cap T_{(X, \omega)} \cM$. In other words, $r_v = r-1$. Since $\ker(p) \cap T_{(X, \omega)} \cM \subseteq \mathrm{Twist}(\bfC_1, \cM)$, we have, by Theorem \ref{T:CylECTwistSpace},
\[ \dim \mathrm{Twist}(\bfC_1, \cM) = r+1.\]
Since $v$ is typical, $\mathrm{Twist}(\Col_v(\bfC_1), \cM_v)$ has dimension exactly one less than that of $\mathrm{Twist}(\bfC_1, \cM)$ (for instance by Theorem \ref{T:CylinderBoundaryTangentFormula}), i.e.
\[ \dim \mathrm{Twist}(\Col_v(\bfC_1), \cM_v) = r = r_v + 1.\]
Since $\Col_v(X, \omega)$ is connected (by Corollary \ref{C:MHD-Boundary:Connected}), it follows from Theorem \ref{T:CylECTwistSpace} that $\ker(p) \cap T_{\Col_v(X, \omega)} \cM_v \subseteq \mathrm{Twist}(\Col_v(\bfC_1), \cM)$.

Consequently, $\ker(p) \cap T_{\Col_v(\pi)(\Col_v(X, \omega))} (\cM_v)_{min} \subseteq \mathrm{Twist}(\bfD_1, (\cM_v)_{min})$. In particular, every free marked point must be contained on the boundary of a cylinder in $\bfD_1$.
\end{proof}

Since the boundary of a cylinder in $\bfD_1$ contains a free marked point, $\cM$ is $h$-geminal up to marked points by Corollary \ref{C:SimpleCylinder} \eqref{I:test:free-pt}.

\end{proof}

By the preceding lemma, we are left to consider the case where, for any typical orthogonal cylinder degeneration $w \in \mathrm{Twist}(\pi(\bfC_1), \cM_{min})$,  $(\cM_{min})_w$ is a genus zero quadratic double. Note that by Apisa-Wright \cite{ApisaWrightHighRank}, specifically the argument beginning in the paragraph after the proof of Lemma 12.1 through the proof of Corollary 12.3, implies that there are two cylinders $C_1$ and $C_2$ of generically equal circumference in $\pi(\bfC_1)$ so that $\gamma_{\pi(C_1)}^* - \gamma_{\pi(C_2)}^* \in T_{\pi(X, \omega)} \cM_{min}$. Therefore, $\cM$ is $h$-geminal up to marked points by Corollary \ref{C:SimpleCylinder} \eqref{I:test:rel}.

%
%

\section{Proof of Theorem \ref{T:main2}}

This entire section is dedicated to the proof of Theorem \ref{T:main2}. As mentioned in Remark \ref{R:T:main2}, only the forward implication of the claim requires proof. 

Suppose therefore that $\cM$ is an invariant subvariety of rank $r$ that consists of genus $g$ surfaces. Suppose too that, with respect to Lebesgue measure on the unit-area locus of $\cM$, the Kontsevich-Zorich cocycle has $2g-2r$ zero Lyapunov exponents. The goal of this section is to show that $\cM$ is one of the following: the Eierlegende-Wollmilchsau, the Ornithorynque, or full rank. 

When $r = 1$ this result is due to Aulicino-Norton \cite{AulicinoNorton}, so we will suppose in this section that $r > 1$. By Forni's criterion (see \cite{F} and \cite{Forni:Criterion}), $\cM$ must have minimal homological dimension. Therefore, by Theorem \ref{T:main}, after marking all periodic points (which does not affect the number of zero exponents) $\cM$ is $h$-geminal. We will suppose in order to deduce a contradiction that $\cM$ is not full rank. 

By Theorem \ref{T:main}, there is a hyperelliptic locus $\cM_{min}$ so that $\cM$ is a full locus of covers of $\cM_{min}$. Since we have supposed that $\cM$ is not full rank, the degree of these covers is at least two. Since $\cM_{min}$ is a hyperelliptic locus, $\cM_{min}$ is the locus of holonomy double covers of surfaces in the stratum of quadratic differentials $\cQ(d_1, \hdots, d_n)$ where $\sum_i d_i = -4$. By Eskin-Kontsevich-Zorich \cite[Corollary 1]{EKZbig} the sum of positive Lyapunov exponents of $\cM_{min}$ (and indeed for any invariant subvariety therein) is $\frac{1}{4} \sum_{d_i odd} \frac{1}{d_i + 2}$. 

\begin{lem}\label{L:Nonvarying}
Let $\cN \subseteq \cM$ be any invariant subvariety. The sum of the positive Lyapunov exponents for the Kontsevich-Zorich cocycle with respect to Lebesgue measure on the unit-area locus of $\cN$ is $\frac{1}{4} \sum_{d_i odd} \frac{1}{d_i + 2}$. 
\end{lem}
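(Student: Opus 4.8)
The plan is to reduce the computation of the sum of positive Lyapunov exponents on $\cN$ to the sum on $\cM_{min}$ via the covering map, and then invoke a non-varying phenomenon. First I would recall that every surface in $\cN \subseteq \cM$ carries the $\cM$-optimal covering map $\pi = \pi_{X_{min}}$ to a surface in $\cM_{min}$ (by Theorem \ref{T:geminal2} \eqref{I:geminal2:PiOpt} and Corollary \ref{C:MinIsOpt}, since $\cM$ is $h$-geminal and $\cN$ inherits the optimal map by restriction). Pulling back one-forms along $\pi$ exhibits the Hodge bundle over $\cN$ as a direct sum $H^1_{\mathbb{R}} = \pi^* H^1_{\mathbb{R}}(\cM_{min}) \oplus R$, where $R$ is the orthogonal complement; since $\pi$ is a translation cover, the Kontsevich-Zorich cocycle respects this splitting, and the restriction to $\pi^* H^1_{\mathbb{R}}(\cM_{min})$ is conjugate to the KZ cocycle of $\cM_{min}$. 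The key point is then that all of the \emph{positive} Lyapunov exponents of $\cN$ already appear in the $\pi^*H^1_{\mathbb{R}}(\cM_{min})$ summand: the $2g-2r$ zero exponents exhaust the complement $R$ together with the degenerate part. Concretely, the bundle $p(T\cN)$ is a pullback of $p(T\cM_{min})$ (its rank equals $\mathrm{rank}(\cM_{min})$, which is the genus of the quotient, i.e. $r$), and by Filip \cite[Corollary 1.7 (ii)]{FiZero} the exponents in $p(T\cN)$ are all nonzero; since $\cN$ has minimal homological dimension (it is a sub-locus of the $h$-geminal, hence minimal-homological-dimension, variety $\cM$, so Theorem \ref{T:MHDImpliesHomology} applies) the complementary bundle $W$ carries exactly $2g-2r$ zero exponents, so \emph{all} positive exponents of $\cN$ lie in the factor coming from $\cM_{min}$.

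Next I would compute the sum of positive exponents in the $\cM_{min}$-factor. Because $\cM_{min}$ is the locus of holonomy double covers of the genus zero stratum $\cQ(d_1,\dots,d_n)$ with $\sum d_i = -4$, its KZ cocycle decomposes further into the part coming from the quadratic differential (the $+$-eigenspace of the holonomy involution) and the part anti-invariant under that involution. Eskin-Kontsevich-Zorich \cite[Corollary 1]{EKZbig} evaluates the relevant sum as $\frac14\sum_{d_i\ \mathrm{odd}}\frac{1}{d_i+2}$ for $\cM_{min}$ and indeed, by the non-varying statement of that corollary, for \emph{any} invariant subvariety inside it — in particular for the $\mathrm{GL}^+(2,\mathbb{R})$-orbit closure of $\pi(\cN)$ inside $\cM_{min}$. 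Since the positive exponents of $\cN$ coincide (with multiplicity) with the positive exponents of this subvariety of $\cM_{min}$, their sum is $\frac14\sum_{d_i\ \mathrm{odd}}\frac{1}{d_i+2}$, as claimed.

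The main obstacle I anticipate is justifying cleanly that the $\pi^*$-pullback factor of the Hodge bundle over $\cN$ genuinely realizes the KZ cocycle of the orbit closure of $\pi(\cN)$ in $\cM_{min}$, with the correct identification of sums of exponents, and that no positive exponent is "lost" into the complementary bundle $R$ (equivalently $W$). This requires keeping careful track of which pieces of $H^1_{\mathbb{R}}\restriction_{\cN}$ carry zero exponents: one must verify that the full $2g - 2r$ zero exponents sit in the symplectic complement $W$ of $p(T\cN)$, which follows from Filip's result that $p(T\cN)$ has no zero exponents combined with the minimal-homological-dimension property of $\cN$ forcing $\dim W = 2g - 2r$. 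Once this bookkeeping is in place, the non-varying formula of Eskin-Kontsevich-Zorich applies directly and the computation is immediate.
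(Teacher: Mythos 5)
There is a genuine gap at the heart of your argument: you never establish that the complementary bundle ($R$ in your notation, equivalently the symplectic complement $W$ of $p(T\cM)$) has \emph{no positive exponents when restricted to} $\cN$, and the justification you propose does not work. You argue that $\cN$ has minimal homological dimension and that this "forces $\dim W = 2g-2r$," concluding that $W$ carries exactly $2g-2r$ zero exponents. There are two problems. First, a subvariety $\cN\subseteq\cM$ need not have minimal homological dimension: $\mathrm{rank}(\cN)$ can be strictly smaller than $\mathrm{rank}(\cM)$ (e.g.\ a Teichm\"uller curve inside $\cM$, which is exactly the case the section later needs, when $\cN$ is generated by a square-tiled surface), while its homological dimension need not drop; moreover Theorem \ref{T:MHDImpliesHomology} does not descend to subvarieties, since $\cN$-parallelism is a weaker condition than $\cM$-parallelism, so $\cN$-parallel cylinders need not be homologous even when $\cM$-parallel ones are. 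Second, and more fundamentally, even granting minimal homological dimension, Forni's criterion yields only an \emph{upper} bound on the number of zero exponents; knowing $\dim W=2g-2r$ and that $p(T\cN)$ has no zero exponents does not preclude $W$ from carrying positive exponents. The number of zero exponents of $\cN$ is precisely what is at stake here, and Lyapunov exponents are not in general preserved under passing to a subvariety -- that is the whole point of needing a non-varying lemma.

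The missing ingredients are the standing hypothesis of this section -- that $\cM$ itself has $2g-2r$ zero exponents -- together with Filip's Theorem 1.1 of \cite{FiZero}. From that hypothesis and Filip's Corollary 1.7(ii) (no zero exponents in $p(T\cM)$), the bundle $W$ over $\cM$ has entirely zero Lyapunov spectrum; Filip's Theorem 1.1 then converts this into the statement that the Zariski closure of the monodromy of $W$ over $\cM$ is compact. Compactness of monodromy, unlike vanishing of exponents, \emph{is} inherited by restriction to $\cN$, and it forces all exponents of $W\restriction_{\cN}$ to vanish. With that in place, all positive exponents of $\cN$ live in $p(T\cM)\restriction_{\cN}$, and the non-varying formula of Eskin--Kontsevich--Zorich \cite[Corollary 1]{EKZbig} finishes the proof, as the rest of your outline intends.
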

\begin{proof}
Since $p(T\cM)$ is complex-symplectic (by Avila-Eskin-M\"oller \cite{AEM}) letting $W$ be the subbundle of $H_{\mathbb{R}}^1 \restriction_{\cM}$ that is symplectically orthogonal to $p(T\cM)$, implies that
\[ H_{\mathbb{R}}^1\restriction_{\cM} = p(T\cM) \oplus W.\]
The sum of the positive Lyapunov exponents of $p(T\cM)$ restricted to $\cN$ is $\frac{1}{4} \sum_{d_i odd} \frac{1}{d_i + 2}$ by Eskin-Kontsevich-Zorich \cite[Corollary 1]{EKZbig}. Therefore, it suffices to show that $W$ restricted to $\cN$ belongs to the Forni subspace. This will follow from showing that the monodromy of $W$ over $\cM$ belongs to a compact group (this property is then inherited by the restriction of $W$ to $\cN$). This in turn follows by Filip \cite[Theorem 1.1]{FiZero}.
%
%
\end{proof}




Let $(m_i)$ (resp. $(m_i')$) denote the order of the zeros on surfaces in $\cM$ (resp. $\cM_{min}$). 

Let $(X', \omega')$ be any square-tiled surface in $\cM_{min}$ so that there is a surface $(X, \omega)$ in $\cM$ that covers $(X', \omega')$ under its $\cM$-optimal map
\[ \pi_{opt}: (X, \omega) \ra (X', \omega').\] Note that $(X, \omega)$ is also square-tiled. Let $\Gamma$ (resp. $\Gamma'$) denote the Veech group of $(X, \omega)$ (resp. $(X', \omega')$). Notice that $\Gamma$ is a subgroup of $\Gamma'$. Given an element $g_j\Gamma$ of $ \mathrm{SL}\left(2, \mathbb{Z}\right)/\Gamma$, we will let $(h_{ij})$ and $(w_{ij})$ denote the heights and widths of horizontal cylinders on $g_j \cdot (X, \omega)$. We will let $(h_{ik}')$ and $(w_{ik}')$ denote similar quantities given $g_k\Gamma'$ applied to $(X', \omega')$. By Eskin-Kontsevich-Zorich \cite[Corollary 8]{EKZbig} the sum of the positive Lyapunov exponents over the Teichm\"uller curve generated by $(X, \omega)$ is the following,
\begin{equation}\label{E:SqSum}  \frac{1}{12} \sum_i \frac{m_i(m_i+2)}{m_i+1} + \frac{1}{| \mathrm{SL}\left(2, \mathbb{Z}\right)/\Gamma|} \sum_{g_j \in \mathrm{SL}\left(2, \mathbb{Z}\right)/\Gamma} \sum_i \frac{h_{ij}}{w_{ij}}.   \end{equation}
Let $d$ denote the degree of $\pi_{opt}$. By definition of an $\cM$-optimal map, the preimage of a cylinder on $(X', \omega')$ on $(X, \omega)$ is a single cylinder of the same height and whose circumference is $d$ times longer. This observation implies the following,
\begin{equation}\label{E:SqSumSummand1}  \frac{1}{| \mathrm{SL}\left(2, \mathbb{Z}\right)/\Gamma|} \sum_{g_j \in \mathrm{SL}\left(2, \mathbb{Z}\right)/\Gamma} \sum_i \frac{h_{ij}}{w_{ij}} = \frac{1}{|\mathrm{SL}\left(2, \mathbb{Z}\right)/\Gamma'|} \frac{1}{| \Gamma'/\Gamma|} \sum_{g_j \in \Gamma'/\Gamma} \sum_{g_k \in  \mathrm{SL}\left(2, \mathbb{Z}\right)/\Gamma'} \sum_i \frac{h_{ik}'}{dw_{ik}'}. 
\end{equation}
Since the right-hand side of Equation \eqref{E:SqSumSummand1} does not depend on $j$, we have the following,
\begin{equation}\label{E:SqSumSummand2} 
 \frac{1}{| \mathrm{SL}\left(2, \mathbb{Z}\right)/\Gamma|} \sum_{g_j \in \mathrm{SL}\left(2, \mathbb{Z}\right)/\Gamma} \sum_i \frac{h_{ij}}{w_{ij}} =  \frac{1}{d|\mathrm{SL}\left(2, \mathbb{Z}\right)/\Gamma'|} \sum_{g_k \in \mathrm{SL}\left(2, \mathbb{Z}\right)/\Gamma'} \sum_i \frac{h_{ik}'}{w_{ik}'}. \end{equation}
Combining Eskin-Kontsevich-Zorich \cite[Corollary 1]{EKZbig} and \cite[Corollary 8]{EKZbig} yields,
\begin{equation}\label{E:EKZ} \frac{1}{4} \sum_{d_i odd} \frac{1}{d_i + 2} = \frac{1}{12} \sum_i \frac{m_i'(m_i'+2)}{m_i'+1} + \frac{1}{| \mathrm{SL}\left(2, \mathbb{Z}\right)/\Gamma'|} \sum_{g_k \in \mathrm{SL}\left(2, \mathbb{Z}\right)/\Gamma'} \sum_i \frac{h_{ik}'}{w_{ik}'}. \end{equation}
where the final sum is over horizontal cylinders (indexed by $i$) on $g_k (X', \omega')$. Combining Equations \eqref{E:EKZ} and \eqref{E:SqSumSummand2} yields,
\begin{equation}\label{E:SqSumSummand3} 
 \frac{1}{| \mathrm{SL}\left(2, \mathbb{Z}\right)/\Gamma|} \sum_{g_j \in \mathrm{SL}\left(2, \mathbb{Z}\right)/\Gamma} \sum_i \frac{h_{ij}}{w_{ij}} =  \frac{1}{d} \left( \frac{1}{4} \sum_{d_i odd} \frac{1}{d_i + 2} - \frac{1}{12} \sum_i \frac{m_i'(m_i'+2)}{m_i'+1} \right). \end{equation}
Substituting Equation \eqref{E:SqSumSummand3} into Equation \eqref{E:SqSum} and appealing to Lemma \ref{L:Nonvarying} yields the following,
\begin{equation}\label{E:MainEquality} \frac{1}{4}\left( 1 - \frac{1}{d} \right)\sum_{d_i odd} \frac{1}{d_i + 2} = \frac{1}{12} \left( \sum_i \frac{m_i(m_i+2)}{m_i+1} - \frac{1}{d} \sum_i \frac{m_i'(m_i'+2)}{m_i'+1}\right). \end{equation}
Let $g$ be the genus of $(X, \omega)$ and $g'$ the genus of $(X', \omega')$. Using the trivial observations that $1 < \frac{m_i + 2}{m_i + 1} \leq \frac{3}{2}$ (since $m_i \geq 1$) and that $\sum_i m_i = 2g-2$ (and that the same relations hold after adding primes), Equation \eqref{E:MainEquality} produces the following inequality,  
\begin{equation}\label{E:MainInequality1} 3\left( 1 - \frac{1}{d} \right)\sum_{d_i odd} \frac{1}{d_i + 2} > (2g-2) - \frac{3}{2d}(2g'-2). \end{equation}
 By the Riemann-Hurwitz formula, Equation \eqref{E:MainInequality1} becomes,
\begin{equation}\label{E:MainInequality2} 3 \left( 1 - \frac{1}{d} \right)\sum_{d_i odd} \frac{1}{d_i + 2} > \left(d - \frac{3}{2d} \right) (2g'-2) + R. \end{equation}
where $R$ is the ramification term, i.e. $R = \sum_{p \in X} (e_p-1)$ where $e_p$ is the ramification index of $\pi_{opt}$ at $p$. Since $\cM$ is $h$-geminal of rank at least two and since $\pi_{opt}$ has degree at least two, $\pi_{opt}$ is branched over every regular Weierstrass point except possibly one (by Corollary \ref{C:h-geminal-branching}). This implies that $R \geq P-1$ where $P$ denotes the number of regular Weierstrass points on $(X', \omega')$. Since the sum $\sum_{d_i odd} \frac{1}{d_i + 2}$ can be interpreted as a sum over the Weierstrass points of $(X', \omega')$ - where each regular Weierstrass point contributes $1$ to the sum and every other Weierstrass point contributes a number bounded above by $\frac{1}{3}$ - Equation \eqref{E:MainInequality2} implies the following,
\begin{equation} 3 \left( 1 - \frac{1}{d} \right)\left( P +\frac{1}{3}\left(2g'+2-P\right)  \right) > \left(d - \frac{3}{2d} \right) (2g'-2) + (P-1). \end{equation}
Simplifying the left hand side gives,
\[ \left( 1 - \frac{1}{d} \right)\left( \left(2g'-2\right) + 4 + 2P \right) > \left(d - \frac{3}{2d} \right) (2g'-2) + (P-1).   \]
Multiplying through by $2d$ yields,
\[ \left( d - 1 \right)\left( 2\left(2g'-2\right) + 8 + 4P \right) > \left(2d^2 - 3 \right) (2g'-2) + 2d(P-1).   \]
Combining terms produces the following,
\[ 8(d-1)+ 4P(d-1) - 2d(P-1) > \left(2d^2 - 2d - 1 \right) (2g'-2).\]
So we have,
\[ 10d - 8 + (2d -4)P    > \left(2d^2 - 2d - 1 \right) (2g'-2).\]
Since $P \leq 2g'+2 = (2g'-2)+4$,
\begin{equation}\label{E:MainInequality3} 18d - 24 > \left( 2d^2 - 4d + 3 \right)(2g'-2) \end{equation}
This shows that $d \leq 4$ and that $g' = 2$ (notice that $g' \geq 1$ and that $g' \ne 1$ since then $\cM$ would be a locus of torus covers, contradicting the fact that it has rank at least two). Set
\[ \Lambda := 3\left( 1 - \frac{1}{d} \right)\sum_{d_i odd} \frac{1}{d_i + 2} \]
Since $g' = 2$, $\cM_{min}$ is the locus of holonomy double covers of either $\cQ(1, -1^5)$ or $\cQ(2, -1^6)$, for which $P = 5$ and $P = 6$ respectively. Therefore, $\Lambda = 16\left( 1 - \frac{1}{d} \right)$ when $P = 5$ and $\Lambda = 18\left( 1 - \frac{1}{d} \right)$ when $P = 6$. Since $g' = 2$, the inequality $P-1 \leq R$ together with Equation \eqref{E:MainInequality2} implies the following
\begin{equation}\label{E:MainInequality4} P-1 \leq R < \Lambda - \left( 2d - \frac{3}{d} \right). \end{equation}
We will now divide into two cases. 

\noindent \textbf{Case 1: $P = 6$.}

In this case, Equation \eqref{E:MainInequality4} becomes
\[ 5 \leq R < 18\left( 1 - \frac{1}{d} \right) - \left( 2d - \frac{3}{d} \right). \]
Since $2 \leq d \leq 4$, $R \in \{5, 6\}$. Since $R$ is even, $R = 6$. Since there is branching over at least $5$ regular Weierstrass points, there are at least four regular Weierstrass points that are simple branch points, i.e. their preimage contains exactly one ramification point and that point is doubly ramified. If none of the zeros of $\omega'$ are branch points, then $(X, \omega)$ has at least $2d+4$ zeros of order $1$. Similarly, if one of the zeros is a branch point then every branch point is simple and so $(X, \omega)$ has $2d+4$ zeros of order $1$. Equation \eqref{E:MainEquality} implies that
\[ 18\left( 1 - \frac{1}{d} \right) \geq (4+2d)\frac{3}{2}  - \frac{3}{d}.\]
This equation is equivalent to 
\[ 0 > 6d^2 - 24d + 30 \]
which never holds for $d$ real, a contradiction. 

\noindent \textbf{Case 2: $P = 5$.}

In this case, Equation \eqref{E:MainInequality4} becomes\[ 4 \leq R \leq 16\left( 1 - \frac{1}{d} \right) - \left( 2d - \frac{3}{d} \right). \]
Since $2 \leq d \leq 4$, $R \in \{4, 5\}$. Since $R$ is even, $R = 4$. Since there is branching over at least $4$ regular Weierstrass points, the only branch points are $4$ regular Weierstrass points, which are simple branch points. This implies that $(X, \omega) \in \cH\left( 1^4, 2^d \right)$ where exponents indicate multiplicity. Equation \eqref{E:MainEquality} then implies that
\[ 16\left( 1 - \frac{1}{d} \right) = 4\left( \frac{3}{2} \right) + d \left( \frac{8}{3}\right) - \frac{8}{3d}. \]
So
\[8d^2 - 30d + 40 = 0. \]
However, this equation has no real roots, so we have a contradiction. 

\bibliography{mybib}{}
\bibliographystyle{amsalpha}
\end{document}